\documentclass[12pt,twosides]{amsart}
\usepackage{amssymb,amsmath,amsthm, amscd, enumerate, mathrsfs}
\usepackage{graphicx, hhline}
\usepackage[all]{xy}
\usepackage{braket}
\usepackage[usenames]{color}
\usepackage{hyperref}
\usepackage{fancyhdr}
\usepackage[top=30truemm,bottom=30truemm,left=30truemm,right=30truemm]{geometry}

\hypersetup{colorlinks=true}

\title{On minimal model theory for log abundant lc pairs}
\author{Kenta Hashizume, Zheng-Yu Hu}
\date{2019/08/25, version 0.56}
\keywords{log abundant lc pairs, lc pairs with big boundaries, log MMP with scaling}
\subjclass[2010]{14E30}
\address{Graduate School of Mathematical Sciences, 
The University of Tokyo, 3-8-1 Komaba Meguro-ku Tokyo 153-8914, Japan}
\email{hkenta@ms.u-tokyo.ac.jp}

\address{Mathematical Sciences Research Center, 
	Chongqing University of Technology, No.69 Hongguang Avenue, Chongqing, 400054, China}
\email{zhengyuhu16@gmail.com}



\newtheorem{thm}{Theorem}[section]

\newtheorem{lem}[thm]{Lemma}

\newtheorem{prop}[thm]{Proposition}

\theoremstyle{definition}
\newtheorem{defn}[thm]{Definition}
\newtheorem{rem}[thm]{Remark}

\newtheorem*{ack}{Acknowledgments} 
\newtheorem*{divisor}{Divisors and morphisms} 
\newtheorem*{sing}{Singularities of pairs} 
\newtheorem*{model}{Models}

\newtheorem{step1}{Step}
\newtheorem{step2}{Step}
\newtheorem{step3}{Step}
\newtheorem{step4}{Step}

\newtheorem*{claim*}{Claim}
\begin{document}

\maketitle

\begin{abstract}
Under the assumption of the minimal model theory for projective klt pairs of dimension $n$, we establish the minimal model theory for lc pairs $(X/Z,\Delta)$ such that the log canonical divisor is relatively log abundant and its restriction to any lc center has relative numerical dimension at most $n$. 
We also give another detailed proof of results by the second author, and study termination of log MMP with scaling. 
\end{abstract}

\tableofcontents

\section{Introduction}\label{sec1}

Throughout this paper we will work over the complex number field $\mathbb{C}$. \\
\textbf{Log abundant lc pairs.} In the first half of this paper, we study log MMP for log abundant lc pairs.  
Let $\pi \colon X\to Z$ be a projective morphism, and let $(X,\Delta)$ be an lc pair. 
Then the divisor $K_{X}+\Delta$ is $\pi$-{\em log abundant} with respect to $(X,\Delta)$ if it is abundant over $Z$ and for any lc center $S$ of $(X,\Delta)$ with the normalization $S^{\nu}$, the pullback of $K_{X}+\Delta$ to $S^{\nu}$ is abundant over $Z$. 
For precise definition, see Subsection \ref{subsec2.2}. 
In this paper, non-pseudo-effective divisors are abundant by convention. 

The following theorem is the main result of this paper. 

\begin{thm}\label{thmmain}
Assume the existence of good minimal models or Mori fiber spaces for all projective klt pairs of dimension $n$. 

Let $\pi \colon X\to Z$ be a projective morphism of normal quasi-projective varieties, and let $(X,\Delta)$ be an lc pair such that $K_{X}+\Delta$ is $\pi$-log abundant with respect to $(X,\Delta)$ and the inequality $\kappa_{\sigma}(S^{\nu}/Z, (K_{X}+\Delta)|_{S^{\nu}})\leq n$ holds for any lc center $S$ of $(X,\Delta)$ with the normalization $S^{\nu}$. 

Then, $(X,\Delta)$ has a good minimal model or a Mori fiber space over $Z$. 
\end{thm}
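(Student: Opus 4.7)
\medskip
\noindent\textbf{Proof proposal.} I would begin by replacing $(X,\Delta)$ with a $\mathbb{Q}$-factorial dlt blow-up $(Y,\Gamma)$, where $K_Y+\Gamma=f^*(K_X+\Delta)$ for the dlt blow-up $f\colon Y\to X$. Both the log abundance of $K_Y+\Gamma$ and the bound $\kappa_\sigma\leq n$ on lc centers descend through $f$, because every stratum of $\lfloor\Gamma\rfloor$ maps birationally onto an lc center of $(X,\Delta)$ and abundance is preserved by birational pullback. After this reduction I may freely run MMP on $Y/Z$, and the entire problem becomes one of constructing a good minimal model or a Mori fiber space for the dlt pair $(Y,\Gamma)$ over $Z$.

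I would then induct on the maximal dimension of an lc center of $(Y,\Gamma)$, using the convention that $Y$ itself counts as an lc center when $(Y,\Gamma)$ is non-klt. In the base case, when $(Y,\Gamma)$ is klt, log abundance combined with $\kappa_\sigma(Y/Z,K_Y+\Gamma)\leq n$ supplies (on a suitable birational model) an Iitaka-type contraction $g\colon Y'\to V$ over $Z$ with $\dim V\leq n$ and $K_Y+\Gamma$ numerically trivial on the general fibre. The canonical bundle formula then produces a klt pair $(V,\Delta_V)$ of dimension at most $n$; the assumed minimal model theory in dimension $n$ furnishes a good minimal model or Mori fiber space for $(V,\Delta_V)$ over $Z$, which I would pull back and combine with relative semi-ampleness on the fibres to produce the desired structure on $(Y,\Gamma)$.

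For the inductive step I would run a $(K_Y+\Gamma)$-MMP over $Z$ with scaling of a sufficiently general ample$/Z$ divisor $A$. Dlt adjunction shows that for any proper lc center $S$ the restricted pair $(S^\nu,\Delta_{S^\nu})$ is again dlt with log abundant log canonical divisor and $\kappa_\sigma\leq n$, so by the inductive hypothesis each such pair admits a good minimal model over $Z$. Fujino's special termination then guarantees that after finitely many steps of the MMP no flipping or divisorial locus meets the non-klt locus, and at that point a lifting argument of Birkar--Hacon--McKernan type (extending sections across $\lfloor\Gamma\rfloor$ via pl-flips, as refined in the second author's earlier work) promotes the good models on the lc centers, together with the semi-ampleness on the klt open part obtained in the base case, into a global good log minimal model of $(Y,\Gamma)$; if $K_Y+\Gamma$ fails to be pseudo-effective over $Z$, the same MMP with scaling instead terminates at a Mori fiber space.

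The most delicate step is this final lifting/gluing: combining the inductively obtained good models on the various lc centers with the semi-ampleness on the klt open locus into a single global good minimal model. Making this precise requires a careful choice of the scaling divisor $A$ so that special termination harmonises with adjunction and with the Iitaka fibration used in the base case, plus enough extension-of-sections control across $\lfloor\Gamma\rfloor$ to propagate semi-ampleness from the non-klt locus to a neighborhood of $\lfloor\Gamma\rfloor$. Handling non-pseudo-effective $K_Y+\Gamma$ uniformly with the pseudo-effective case—so that the MMP either enters the pseudo-effective regime where the previous argument applies or terminates at a Mori fiber contraction—is a secondary technical point that I would address during the same scaling MMP.
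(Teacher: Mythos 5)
Your overall architecture (dlt blow-up, induction on dimension with the klt case handled by the Iitaka fibration and the canonical bundle formula, special termination for the non-klt locus) matches the paper's skeleton, but the proposal skips over the single point that makes this theorem hard, and the step where you invoke the inductive hypothesis does not go through as written. During a $(K_Y+\Gamma)$-MMP the restricted pairs $(S_i^{\nu},\Delta_{S_i^{\nu}})$ on the lc centers of the intermediate models are \emph{not} known to be log abundant over $Z$: log abundance is a hypothesis on the initial pair only, and (as the introduction of the paper stresses) one cannot check from the construction that each step of the MMP again produces a log abundant pair. So "by the inductive hypothesis each such pair admits a good minimal model over $Z$" is exactly the assertion that needs proof, not a consequence of dlt adjunction. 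The paper's way around this is the content you have compressed into "a careful choice of the scaling divisor $A$": one first passes to a crepant model on which $K_{X}+\Delta\sim_{\mathbb{R},Z}G+H$ with ${\rm Supp}\,G\subset{\rm Supp}\lfloor\Delta\rfloor$ and $H$ adapted to the Iitaka fibration, runs the MMP with scaling of $H$ along a sequence $e_i\to 0$ so that each $(X_i,\Delta_i+e_iH_i)$ is an honest good minimal model of $(X,\Delta+e_iH)$, builds auxiliary crepant models $T_i\to S_i$ and a limiting boundary $\Psi_\infty^{(m)}$ on $T_m$, and then uses the monotonicity of discrepancies back to the original lc center $S$ together with Lemma \ref{lem--abundant-birat-2} (abundance and $\kappa_\sigma\le n$ descend along birational maps that do not decrease discrepancies) to verify the hypotheses of the inductive Theorem \ref{thmmain} for $(T_m,\Psi_\infty^{(m)})$. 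None of this apparatus, nor any substitute for it, appears in your outline; without it the special-termination step has no input.

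The final "lifting/gluing" step is also pointed in the wrong direction. Once special termination yields a log minimal model $(Y',\Gamma')$, the paper does not extend sections across $\lfloor\Gamma'\rfloor$ in the style of pl-flips (that technique needs a big boundary); instead it re-establishes, again via Lemma \ref{lem--abundant-birat-2} and the discrepancy inequalities of the MMP, that $K_{Y'}+\Gamma'$ is nef and log abundant over $Z$, and then quotes the semi-ampleness criterion for nef, log abundant lc divisors (\cite[Lemma 3.4]{has-class}, resting on the Fujino--Gongyo/Hacon--Xu abundance results). You flag this step as delicate but propose a mechanism that would not work; identifying the correct criterion is essential. Finally, note that the klt base case and the non-pseudo-effective case are genuinely the easy parts (Lemma \ref{lem--abundantmmp} and a Mori fiber space from any MMP with scaling), so your emphasis is inverted: the missing content is precisely the preservation of log abundance through the limiting process on the lc centers.
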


In Theorem \ref{thmmain}, $\kappa_{\sigma}(S^{\nu}/Z, \;\cdot\;)$ denotes the relative numerical dimension with respect to the morphism $S^{\nu}\to Z$ (see Subsection \ref{subsec2.2}). 

The class of log abundant lc pairs is known as an important class in the minimal model theory. 
The class is very useful for inductive arguments on dimension of varieties. 
For example, the abundance conjecture for abundant klt pairs and log abundant lc pairs are studied by a lot of people (\cite{kawamata-abund}, \cite{reid-shokurov}, \cite{fujino-abund-logbig}, \cite{fukuda}, \cite{fujino-abund-saturation}, \cite{fujino-gongyo}, \cite{haconxu}), and the conjecture is currently known in full generality (\cite{haconxu}, and \cite{fujino-gongyo} for projective case). 
In klt case, the existence of good minimal models or Mori fiber spaces is known (\cite{lai} and \cite{gongyolehmann}). 
On the other hand, little is known about log MMP for log abundant lc pairs because we cannot check from construction that log MMP for log abundant lc pairs produces a log abundant lc pair in each step. 
Ambro and Koll\'ar \cite{ambrokollar} study behavior of non-klt loci of lc pairs under log MMP.
Birkar and the second author showed in \cite[Example 5.2]{birkarhu-arg} that the minimal model theory for log big lc pairs implies the abundance theorem in a special situation. 
Thus, the minimal model theory for log abundant lc pairs is a very difficult problem. 
Nevertheless, Theorem \ref{thmmain} shows that it holds true if we assume the minimal model theory in lower dimensions and an assumption on the restriction of log canonical divisor to lc centers. 
We note that Theorem \ref{thmmain} is one of inductive arguments.

With Theorem \ref{thmmain}, we strengthen the main result of \cite{bchm} and give further applications. 

\begin{thm}[=Theorem \ref{thm--gentype}]\label{thm--gen-type}
Assume the existence of good minimal models or Mori fiber spaces for all projective klt pairs of dimension $n$. 

Let $\pi\colon X\to Z$ be a projective morphism of normal quasi-projective varieties, and let $(X,\Delta)$ be an lc pair such that
\begin{itemize}
\item
$K_{X}+\Delta$ is abundant over $Z $ or $\Delta$ is big over $Z$, and 
\item
for any lc center $S$ of $(X,\Delta)$, we have ${\rm dim}S-{\rm dim}\,\pi(S)\leq n$. 
\end{itemize}
Then, $(X,\Delta)$ has a good minimal model or a Mori fiber space over $Z$.  
\end{thm}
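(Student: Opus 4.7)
The plan is to reduce Theorem \ref{thm--gen-type} to Theorem \ref{thmmain} by verifying its two hypotheses: (a) $K_{X}+\Delta$ is $\pi$-log abundant with respect to $(X,\Delta)$, and (b) $\kappa_{\sigma}(S^{\nu}/Z, (K_{X}+\Delta)|_{S^{\nu}})\leq n$ for every lc center $S$ with normalization $S^{\nu}$. Hypothesis (b) is immediate from the relative dimension bound: on any variety, the relative numerical dimension over $Z$ is bounded above by the relative dimension of the source, so $\kappa_{\sigma}(S^{\nu}/Z, (K_{X}+\Delta)|_{S^{\nu}}) \leq \dim S - \dim \pi(S) \leq n$. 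So the real work lies in verifying $\pi$-log abundance.

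I would first take a dlt blowup $f\colon (Y,\Gamma)\to (X,\Delta)$ and replace $(X,\Delta)$ by $(Y,\Gamma)$. Both the abundance and bigness hypotheses are preserved, as is the bound on the relative dimension of lc centers. In the dlt case, the lc centers are precisely the strata of $\lfloor \Gamma \rfloor$, which I will analyze via adjunction. The argument proceeds by induction on $\dim X$, with the inductive hypothesis being the full statement of Theorem \ref{thm--gen-type} in strictly smaller dimensions.

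For the case when $\Delta$ is big over $Z$: Kodaira's lemma allows me to write $\Delta \sim_{\mathbb{R},Z} A+B$ with $A$ ample over $Z$ and $B\geq 0$. For each lc center $S^{\nu}$, adjunction yields $(K_{X}+\Delta)|_{S^{\nu}} = K_{S^{\nu}}+\Delta_{S^{\nu}}$ with $\Delta_{S^{\nu}}$ inheriting the ample part $A|_{S^{\nu}}$, hence big over $Z$. Since the relative dimension bound passes to $S^{\nu}$, the inductive hypothesis applied to $(S^{\nu},\Delta_{S^{\nu}})$ gives a good minimal model over $Z$, and in particular $(K_{X}+\Delta)|_{S^{\nu}}$ is abundant over $Z$. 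Thus (a) holds and Theorem \ref{thmmain} applies.

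For the case when $K_{X}+\Delta$ is abundant over $Z$: This is subtler because abundance does not automatically pass to lc centers. For each lc center $S^{\nu}$, adjunction again writes $(K_{X}+\Delta)|_{S^{\nu}} = K_{S^{\nu}}+\Delta_{S^{\nu}}$, but I must argue separately that $K_{S^{\nu}}+\Delta_{S^{\nu}}$ is abundant over $Z$. Here I would exploit the Iitaka fibration of $K_{X}+\Delta$ over $Z$ together with the canonical bundle formula, comparing numerical and Iitaka dimensions of the restriction using the fact that these coincide upstairs. Once abundance of the restriction is established, the inductive hypothesis yields a good minimal model on $(S^{\nu},\Delta_{S^{\nu}})$, hence abundance there, and Theorem \ref{thmmain} then applies. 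The main obstacle is precisely this propagation of abundance from the ambient pair to all lc centers; the big case is handled cleanly via Kodaira's lemma, while the abundant case forces a more delicate analysis relying essentially on the dimension bound $\dim S - \dim \pi(S) \leq n$ to keep the induction within the regime governed by the assumption on klt pairs of dimension $n$.
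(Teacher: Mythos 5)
Your reduction to Theorem \ref{thmmain} is the right frame, and your disposal of the numerical-dimension hypothesis is correct: $\kappa_{\sigma}(S^{\nu}/Z,(K_{X}+\Delta)|_{S^{\nu}})$ is bounded by the dimension of a general fiber of $S^{\nu}\to\pi(S)$. But the verification of $\pi$-log abundance has two genuine gaps. In the big case, log abundance requires abundance of $K_{X}+\Delta$ itself over $Z$, not only of its restrictions to lc centers, and you never address this. For an lc (non-klt) pair with big boundary this is precisely the hard content of the paper's Theorem \ref{thm--abund-gentype}: one cannot invoke \cite{bchm} directly, and the proof there runs a dichotomy on whether $K_{X}+\Delta-\epsilon\llcorner\Delta\lrcorner$ is pseudo-effective, using \cite[Lemma 3.1]{gongyo-nonvanishing} in the negative case to produce a fibration with relatively trivial log canonical divisor over which a component of $\llcorner\Delta\lrcorner$ dominates, and then Lemma \ref{lem--abundant-birat-2} to transfer abundance back. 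None of this is in your proposal. Moreover, your claim that $\Delta_{S^{\nu}}$ ``inherits the ample part'' and is big over $Z$ is false in general: for $X=\mathbb{P}^{2}$, $\Delta=L$ a line and $S=L$, adjunction gives $\Delta_{S}=0$, which is not big. A linear equivalence $\Delta\sim_{\mathbb{R},Z}A+B$ does not restrict to the different.

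In the abundant case your argument is not an argument: ``exploit the Iitaka fibration together with the canonical bundle formula, comparing numerical and Iitaka dimensions'' gestures at a mechanism without supplying one, and as written it is circular, since the inductive hypothesis of Theorem \ref{thm--gen-type} applied to $(S^{\nu},\Delta_{S^{\nu}})$ requires as input either abundance of $K_{S^{\nu}}+\Delta_{S^{\nu}}$ over $Z$ or bigness of $\Delta_{S^{\nu}}$ over $Z$ --- exactly what you are trying to establish. The key idea you are missing is the paper's use of relative triviality rather than induction on the theorem itself: on a dlt model $f\colon(Y,\Gamma)\to(X,\Delta)$ one has $K_{Y}+\Gamma=f^{*}(K_{X}+\Delta)$, so for an lc center $T$ of $(Y,\Gamma)$ dominating $S$ the adjoint divisor satisfies $K_{T}+\Gamma_{T}\sim_{\mathbb{R},S}0$. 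Taking Stein factorizations $T\to S'\to S''$ of $T\to\pi(S)$, a general fiber $T_{z}\to S'_{z}$ is a projective lc pair with $K_{T_{z}}+\Gamma_{T_{z}}\sim_{\mathbb{R},S'_{z}}0$ and $\dim S'_{z}\leq n$, so \cite[Theorem 1.5]{has-mmp} (which is where the dimension-$n$ klt assumption enters) gives a good minimal model or Mori fiber space, hence abundance and $\kappa_{\sigma}\leq n$; Lemma \ref{lem--abundant-general} then globalizes this over $Z$. This handles the lc centers uniformly in both cases, which is why the paper first reduces the big case to the abundant case via Theorem \ref{thm--abund-gentype} and only then checks the hypotheses of Theorem \ref{thmmain}.
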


\begin{thm}\label{thm--calabiyau}
Let $(X,B)$ be a projective Calabi--Yau pair, that is, a projective lc pair $(X,B)$ such that $K_{X}+B\equiv0$. 
Suppose that $-K_{X}$ is big and any lc center of $(X,B)$ is at most $3$-dimensional. 

Then, for any effective $\mathbb{Q}$-Cartier divisor $D$ whose support contains no lc centers of $(X,B)$, the graded $\mathbb{C}$-algebra $\mathcal{R}(X,D)=\bigoplus_{m\geq0}H^{0}(X,\mathcal{O}_{X}(\llcorner mD\lrcorner))$ is finitely generated. 
In particular, if $K_{X}$ is $\mathbb{Q}$-Cartier and $(X,(1+t)B)$ is lc for some $t>0$, then  $\mathcal{R}(X,-K_{X})$ is finitely generated, and $X$ admits a birational contraction to an lc Fano variety. 
\end{thm}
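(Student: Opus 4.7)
The strategy is to apply Theorem~\ref{thm--gen-type} (which is unconditional when $n=3$, since good minimal models or Mori fiber spaces for projective klt threefolds are known) to suitable perturbations $(X, B + tD)$ of the given Calabi--Yau pair. The output is finite generation of the log canonical ring of such a perturbation, which I would then transfer to $\mathcal{R}(X, D)$ via a $\mathbb{Q}$-linear equivalence of the form $K_X + B + tD \sim_{\mathbb{Q}} tD$.

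\textbf{Preliminary step: $K_X + B \sim_{\mathbb{Q}} 0$.} Since $-K_X$ is big, so is $B$, and every lc center of $(X,B)$ has dimension at most $3$, so Theorem~\ref{thm--gen-type} applies directly to $(X, B)$ and produces a good minimal model $\phi\colon X \dashrightarrow X'$ (the Mori fiber space case is excluded as $K_X + B \equiv 0$ is pseudo-effective). Because $K_X + B$ is already nef, a standard negativity-lemma argument on a common resolution shows that $\phi$ is crepant, so the semi-ampleness of $K_{X'} + B'$ descends to $K_X + B$; combined with numerical triviality, this forces $K_X + B \sim_{\mathbb{Q}} 0$.

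\textbf{Part~1.} For $t > 0$ sufficiently small, $(X, B + tD)$ is lc with the same lc centers as $(X, B)$ (all of dimension $\le 3$): the hypothesis that Supp$(D)$ avoids every lc center of $(X,B)$ guarantees, by a log discrepancy computation, that lc places of $(X, B)$ remain lc places and no new ones appear. Moreover $B + tD$ is big, and the preliminary step gives $K_X + B + tD \sim_{\mathbb{Q}} tD$. Theorem~\ref{thm--gen-type} applied to $(X, B + tD)$ then yields a good minimal model, so the log canonical ring $\bigoplus_m H^0(X, \lfloor m(K_X + B + tD) \rfloor)$ is finitely generated; by the $\mathbb{Q}$-linear equivalence, clearing denominators shows the same holds for $\mathcal{R}(X, D)$.

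\textbf{Part~2 and the main obstacle.} Assuming $K_X$ is $\mathbb{Q}$-Cartier and $(X, (1+t)B)$ is lc, a log discrepancy check on a log resolution shows that no lc place of $(X, B)$ can have positive multiplicity in the log pullback of $B$ (else $a_E(X, (1+t)B) < 0$), so no lc center of $(X, B)$ is contained in Supp$(B)$. Part~1 with $D = B$ then gives $\mathcal{R}(X, B)$ finitely generated, and $-K_X \sim_{\mathbb{Q}} B$ (from the preliminary step, using that $K_X$ is $\mathbb{Q}$-Cartier) yields $\mathcal{R}(X, -K_X)$ finitely generated. For the birational contraction, I would run the MMP for $(X, (1+t')B)$ with $t' > 0$ small: since $K_X + (1+t')B \sim_{\mathbb{Q}} -t' K_X$, this is an anticanonical MMP, which Theorem~\ref{thm--gen-type} terminates at a good minimal model $X \dashrightarrow X^+$; the induced semi-ample fibration $X^+ \to Y$ is birational because $-K_X$ is big, and $-K_Y$ is ample, producing the required lc Fano variety $Y$. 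The main obstacle throughout is the preliminary step: without the upgrade of $K_X + B \equiv 0$ to $K_X + B \sim_{\mathbb{Q}} 0$, the semi-ampleness of the log canonical divisor $K_X + B + tD$ cannot be transferred to $D$, as numerically equivalent effective $\mathbb{Q}$-Cartier divisors need not have isomorphic section rings.
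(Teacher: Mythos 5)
Your proposal is correct and follows essentially the same route as the paper's (very terse) proof: perturb to $(X,B+tD)$, which is lc with big boundary and lc centers of dimension at most $3$, apply Theorem \ref{thm--gen-type} with $n=3$, and transfer semi-ampleness to $tD$ via $K_{X}+B\sim_{\mathbb{R}}0$ — the paper simply quotes this last fact (it is the abundance theorem for numerically trivial lc pairs), whereas you rederive it by applying Theorem \ref{thm--gen-type} to $(X,B)$ itself, which also works since $B$ is big. The only caveat is that $B$ is an $\mathbb{R}$-divisor, so one only gets $K_{X}+B\sim_{\mathbb{R}}0$ rather than $\sim_{\mathbb{Q}}0$; this changes nothing in Part~1, but in Part~2 you should avoid the detour through $\mathcal{R}(X,B)$ (which need not make sense for an $\mathbb{R}$-divisor) and instead note directly that on the good minimal model $X^{+}$ of $(X,(1+t')B)$ the $\mathbb{Q}$-Cartier divisor $-K_{X^{+}}$ is $\mathbb{R}$-linearly equivalent to the semi-ample divisor $\tfrac{1}{t'}\bigl(K_{X^{+}}+(1+t')B^{+}\bigr)$ and is therefore itself semi-ample and big, which gives both the finite generation of $\mathcal{R}(X,-K_{X})$ and the birational contraction to an lc Fano.
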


In case when $-K_{X}$ is nef in Theorem \ref{thm--calabiyau}, the statement shows the semi-ampleness of anti-canonical divisor of lc weak Fano varieties under the assumptions on dimension of lc centers and existence of a Calabi--Yau pair whose lc centers coincide with lc centers of underlying variety.

\begin{thm}\label{thm--flat}
Assume the existence of good minimal models or Mori fiber spaces for all projective klt pairs of dimension $n$. 

Let $\pi\colon X\to Z$ be a projective surjective morphism of normal quasi-projective varieties such that ${\rm dim}X-{\rm dim}Z\leq n$ and all fibers of $\pi$ have the same dimension, and let $(X,\Delta)$ be an lc pair. 
Then $(X,\Delta)$ has a good minimal model or a Mori fiber space over $Z$. 
\end{thm}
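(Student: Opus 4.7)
The strategy is to reduce Theorem \ref{thm--flat} to Theorem \ref{thm--gen-type} via two ingredients: (i) the equidimensionality of $\pi$ supplies, for free, the lc-center dimension bound required by Theorem \ref{thm--gen-type}, and (ii) an ample perturbation of $\Delta$ combined with an MMP with scaling handles the ``$K_X+\Delta$ abundant or $\Delta$ big'' hypothesis.

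For (i), I would check that for any subvariety $S\subseteq X$ the general fiber of $\pi|_S\colon S\to \pi(S)$ sits inside a fiber of $\pi$; since all fibers of $\pi$ have the same dimension $d=\dim X-\dim Z$, this yields $\dim S-\dim\pi(S)\leq d\leq n$. Thus the lc-center dimension hypothesis of Theorem \ref{thm--gen-type} is automatic for any lc boundary on $X$, and moreover it persists under any birational modification over $Z$ produced by a subsequent MMP.

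For (ii), I would pick a general effective ample $\mathbb{Q}$-divisor $A$ on $X$ such that $(X,\Delta+A)$ is lc with lc centers unchanged from $(X,\Delta)$. Since $\Delta+A$ is big over $Z$, Theorem \ref{thm--gen-type} applied to $(X,\Delta+A)$ supplies either a good minimal model $(X',\Delta'+A')$ or a Mori fiber space over $Z$. The Mori fiber space case is immediate: $K_X+\Delta+A$ is not pseudo-effective over $Z$, so neither is $K_X+\Delta$ (because $A$ is big over $Z$), which is then abundant over $Z$ by the paper's convention, and a second application of Theorem \ref{thm--gen-type} gives a Mori fiber space for $(X,\Delta)$. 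In the good-minimal-model case, $K_{X'}+\Delta'+A'$ is semi-ample over $Z$, and I would run a $(K_X+\Delta)$-MMP over $Z$ with scaling of $A$. Each step contracts or flips a $(K_X+\Delta)$-negative extremal ray that is $(K_X+\Delta+tA)$-trivial for the current scaling $t$, and simultaneously is a step of the $(K_X+\Delta+tA)$-MMP. Using $(X',\Delta'+A')$ as a target and the lc-center dimension bound inherited at every stage from (i), the scaling parameters descend to zero and the MMP terminates with a good minimal model for $(X,\Delta)$ over $Z$.

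The main obstacle will be proving termination and the ``goodness'' (semi-ampleness) of this scaling MMP in the lc setting, which is routine for klt but delicate here: the semi-ample divisor $K+\Delta+A$ on the perturbed minimal model must be used to bound the scaling parameters, and at the final stage the output's nef $K+\Delta$ must be promoted to semi-ample using Theorem \ref{thmmain} (or a further application of Theorem \ref{thm--gen-type}) on the terminal model, invoking the preserved lc-center dimension bound to stay within the inductive hypothesis in dimension $n$.
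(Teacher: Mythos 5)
Your observation (i) is precisely the paper's entire proof: since $\pi$ is equidimensional, every lc center $S$ satisfies ${\rm dim}\,S-{\rm dim}\,\pi(S)\leq {\rm dim}\,X-{\rm dim}\,Z\leq n$, and the paper then applies Theorem \ref{thm--gentype} directly to $(X,\Delta)$ and stops. What you missed is that the \emph{first} hypothesis of Theorem \ref{thm--gentype} is also automatic here, so no perturbation is needed: a sufficiently general fiber $F$ of the Stein factorization of $\pi$ has dimension ${\rm dim}\,X-{\rm dim}\,Z\leq n$, so $(F,\Delta|_{F})$ is a projective lc pair of dimension at most $n$, which under the standing assumption has a good minimal model or a Mori fiber space by \cite[Theorem 1.5]{has-mmp} (this is exactly how that result is invoked in Step \ref{step4abund} of the proof of Theorem \ref{thm--abund-gentype}); hence $K_{F}+\Delta|_{F}$ is abundant, and $K_{X}+\Delta$ is abundant over $Z$ by Lemma \ref{lem--abundant-general}.

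Your part (ii) contains a genuine gap. Having applied Theorem \ref{thm--gentype} to $(X,\Delta+A)$, you propose to remove $A$ by a $(K_{X}+\Delta)$-MMP over $Z$ with scaling of $A$ and assert that the scaling coefficients tend to zero and the MMP terminates. For lc pairs this is not routine and is not available from the tools at hand: Theorem \ref{thmtermi} requires $(X,0)$ to be klt and, more seriously, already assumes the existence of a log minimal model of the pair being scaled --- the very object you are trying to produce; Theorem \ref{thm--mmp-scaling} likewise takes the existence of a log minimal model as a hypothesis and only produces \emph{one} terminating sequence. Carrying out such a scaling-and-special-termination argument in the lc setting is the hard content of Steps \ref{step4relative}--\ref{step9relative} of the proof of Theorem \ref{thmmain} and of Section \ref{sec5}, not a routine step. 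Moreover, even granting termination, your final step is circular: to promote the nef output to a semi-ample one you invoke Theorem \ref{thmmain} or Theorem \ref{thm--gentype} on the terminal model, but both require $K+\Delta$ to be (log) abundant over $Z$ or the boundary to be big over $Z$ --- exactly the hypothesis your perturbation was introduced to avoid. Once one notices that this abundance is automatic, as explained above, the detour collapses to the paper's one-line reduction.
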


Theorem \ref{thm--flat} shows a relation between the minimal model theory in the absolute setting and that in the relative setting.  
For proofs of those applications, see Section \ref{sec4}. \\
\\
{\textbf{Lc pairs with boundaries containing ample divisors.}}
Using ideas in the proof of Theorem \ref{thmmain}, in the latter half of this paper, we give  an alternative proof of results originally announced by the second author \cite{bh}. 

\begin{thm}\label{thm--lcample}
Let $\pi\colon X\to Z$ be a projective morphism of normal quasi-projective varieties, and let $(X,B)$ be an lc pair. 
Let $A$ be a $\pi$-ample $\mathbb{R}$-divisor on $X$ such that $(X,B+A)$ is lc. 

Then, $(X,B+A)$ has a good minimal model or a Mori fiber space over $Z$. 
\end{thm}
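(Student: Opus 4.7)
The plan is to run the proof of Theorem \ref{thmmain} for the pair $(X,B+A)$ with $n=\dim X$. The apparent obstruction---the hypothesis in Theorem \ref{thmmain} on existence of good minimal models for projective klt pairs of dimension $n$---disappears in our situation because the ampleness of $A$ forces every klt pair that arises in the course of the argument to have big boundary, whence BCHM supplies the needed assertion unconditionally. The whole argument proceeds by induction on $\dim X$.

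Concretely, I would first take a $\mathbb{Q}$-factorial dlt blow-up $f\colon Y\to X$ of $(X,B+A)$ and write $K_Y+\Gamma=f^{*}(K_X+B+A)$. Using that $f^{*}A$ is $(\pi\circ f)$-semi-ample and big, a standard general perturbation replaces $\Gamma$ by an $\mathbb{R}$-linearly equivalent dlt boundary of the form $\Gamma_0+A_Y$ with $A_Y$ genuinely $(\pi\circ f)$-ample; it suffices to produce a good minimal model or Mori fiber space for $(Y,\Gamma_0+A_Y)$ over $Z$.

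Next, for each lc center $S$ of $(Y,\Gamma)$ with normalization $S^{\nu}$, adjunction gives $(K_Y+\Gamma)|_{S^{\nu}}=K_{S^{\nu}}+\Gamma_{S^{\nu}}$ with $\Gamma_{S^{\nu}}$ an lc boundary containing the ample divisor $A_Y|_{S^{\nu}}$. Since $\dim S^{\nu}<\dim X$, the inductive hypothesis applied to $(S^{\nu},\Gamma_{S^{\nu}})$ yields a good minimal model over $Z$, whence $(K_Y+\Gamma)|_{S^{\nu}}$ is abundant over $Z$ and so $K_Y+\Gamma$ is $(\pi\circ f)$-log abundant with respect to $(Y,\Gamma)$. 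This places us in the setting of Theorem \ref{thmmain}, and the hypothesis on numerical dimensions of restrictions to lc centers is automatic from $\dim S^{\nu}<\dim X=n$.

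Finally, I would run a $(K_Y+\Gamma)$-MMP over $Z$ with scaling of an ample divisor, following the scheme of Theorem \ref{thmmain}. Away from the non-klt locus of $(Y,\Gamma)$ we are dealing with a klt pair whose boundary contains the ample $A_Y$, so termination and eventual semi-ampleness of the output are furnished by BCHM. Near the non-klt locus, termination is handled by a special-termination argument descending to the inductively produced good minimal models on each lc center. Gluing the semi-ample structures is the main technical step, and is precisely where the ampleness of $A_Y|_{S^{\nu}}$ is essential: it provides a strictly positive direction in every lc stratum that prevents pathologies at the interface between the klt and non-klt loci during the abundance-lifting argument. The result is a good minimal model of $(Y,\Gamma)$ over $Z$, which descends to the desired good minimal model or Mori fiber space of $(X,B+A)$ over $Z$.
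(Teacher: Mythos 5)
Your proposal is not the route the paper takes, and as written it has genuine gaps. Theorem \ref{thmmain} with $n=\dim X$ asks for two things you do not supply. First, it requires $K_X+B+A$ \emph{itself} to be abundant over $Z$, not only its restrictions to lc centers; since $(X,B+A)$ is merely lc, this does not follow from BCHM with big boundary, and in the paper it is a separate theorem (Theorem \ref{thm--gen-abund-hu}) proved by its own induction using the ACC for log canonical thresholds and for numerically trivial pairs. Second, it requires the klt MMP in dimension $n=\dim X$; your claim that every klt pair arising in the proof has big boundary is not justified --- the klt pairs that actually occur (in \cite[Proposition 3.3]{has-mmp}, in the Claim inside Lemma \ref{lem--abundant-birat-2}, etc.) live on bases of Iitaka and lc-trivial fibrations via the canonical bundle formula, and their boundaries have no reason to be big.

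More seriously, your induction does not close. In the special-termination part of the argument (Steps \ref{step6relative}--\ref{step9relative} of the proof of Theorem \ref{thmmain}) one must produce good minimal models of pairs $(T_{m},\Psi_{\infty}^{(m)})$ living on birational modifications $T_{m}$ of the lc centers $S_{m}$ of the varieties $X_{m}$ along the MMP; there the positivity inherited from $A$ is only the birational transform of $A|_{S}$ --- big and a pullback of a semi-ample divisor, but not ample --- and the boundary is a limit $\Psi_{\infty}^{(m)}={\rm lim}_{i\to\infty}\Psi_{i}^{(m)}$ which can erode the ample part. So ``lc pair plus ample divisor'' is not a statement you can feed back into the induction. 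This is exactly why the paper does not deduce Theorem \ref{thm--lcample} from Theorem \ref{thmmain}: it proves the differently shaped Theorem \ref{thmmain--hu} (the positivity is $\pi^{*}A_{Z}$ for $A_{Z}$ ample on a base $Z$ over which $K_{X}+B+C\sim_{\mathbb{R},Z}0$), which is stable under restriction to lc centers and passage to birational models via Lemma \ref{lem--ind-birat}, whose klt case is handled by the canonical bundle formula over $Z$ (Lemma \ref{lem--thmmainklt}), and which yields Theorem \ref{thm--lcample} through Theorem \ref{thmrelative}. To repair your approach you would have to strengthen the inductive hypothesis to something of that shape; the final ``gluing'' paragraph of your proposal is precisely where this unaddressed work lives.
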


The key ingredient to prove Theorem \ref{thm--lcample} is Theorem \ref{thmmain--hu}, which is equivalent to a combination of \cite[Theorem 1.1]{bh} and \cite[Theorem 3.1]{bh} by the second author (see also \cite[Theorem 1.1]{has-mmp}). 
Most part of Section \ref{sec5} is devoted to prove Theorem \ref{thmmain--hu}, which is equivalent to giving another detailed proof of \cite[Theorem 1.1 and Theorem 3.1]{bh}. 

With Theorem \ref{thm--lcample}, we prove a special kind of canonical bundle formula for lc pairs. 

\begin{thm}\label{thm--lcfano}
Let $f\colon X\to Y$ be a contraction of normal projective varieties. Suppose that either $X$ or $Y$ is $\mathbb{Q}$-factorial.
If there is an effective $\mathbb{R}$-divisor $B$ such that $(X,B)$ is lc and $-(K_{X}+B)$ is ample, then there is an effective $\mathbb{R}$-divisor $B_{Y}$ such that $(Y,B_{Y})$ is lc and $-(K_{Y}+B_{Y})$ is ample. 
\end{thm}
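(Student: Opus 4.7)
The plan is to pass from the anti-ample situation to an $f$-trivial lc-trivial fibration by a perturbation involving the pullback of an ample divisor on $Y$, apply Theorem \ref{thm--lcample} to produce a relative good minimal model, and then invoke the canonical bundle formula to transfer the negativity to $Y$.

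First I would reduce to the case that $X$ is $\mathbb{Q}$-factorial by taking a small $\mathbb{Q}$-factorial dlt modification of $(X,B)$; the pulled-back $-(K_X+B)$ is then semiample and big, and ampleness is restored after adding a small general ample divisor. Next, fix an ample $\mathbb{R}$-divisor $H_Y$ on $Y$ and a sufficiently small $\epsilon>0$. Since $-(K_X+B)-\epsilon f^{*}H_Y$ remains ample, Bertini gives a general effective $\mathbb{R}$-divisor
\[
A\sim_{\mathbb{R}}-(K_X+B)-\epsilon f^{*}H_Y
\]
with $(X,B+A)$ lc, and hence
\[
K_X+B+A\sim_{\mathbb{R}}-\epsilon f^{*}H_Y,
\]
so in particular $K_X+B+A\sim_{\mathbb{R},f}0$. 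Apply Theorem \ref{thm--lcample} with $A$ playing the role of the relative ample divisor: the pair $(X,B+A)$ has a good minimal model or Mori fiber space over $Y$, and relative triviality forces $(X,B+A)$ to be itself a relative good minimal model over $Y$.

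Next, apply the canonical bundle formula for lc-trivial fibrations (Ambro; Fujino--Gongyo) to $f\colon(X,B+A)\to Y$: there exist effective $\mathbb{R}$-divisors $B_Y$ and $M_Y$ on $Y$ such that
\[
K_X+B+A\sim_{\mathbb{R}}f^{*}(K_Y+B_Y+M_Y),
\]
with $(Y,B_Y)$ lc and $M_Y$ a b-nef b-divisor. Combining the two displayed relations yields
\[
K_Y+B_Y+M_Y\sim_{\mathbb{R}}-\epsilon H_Y,
\]
so the generalized pair $(Y,B_Y+M_Y)$ is a generalized Fano lc pair.

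The last, and I expect principal, obstacle is to absorb $M_Y$ into the ordinary boundary, i.e., to replace $M_Y$ by an effective $\mathbb{R}$-divisor $M_Y'\sim_{\mathbb{R}}M_Y$ such that $(Y,B_Y+M_Y')$ remains lc. For this I would show that $M_Y$ is semiample on $Y$ itself, using the good minimal model structure on $(X,B+A)$ furnished by Theorem \ref{thm--lcample}: on a sufficiently high birational model $\tilde Y\to Y$ the trace $M_{\tilde Y}$ is nef, its abundance being inherited from the relative good minimal model of the total space, after which one descends to $Y$ using the ampleness of $\epsilon H_Y$ together with the negativity lemma. Once $M_Y$ is semiample, take $M_Y'$ general effective in its $\mathbb{R}$-linear class; then $B_Y^{\mathrm{new}}:=B_Y+M_Y'$ is effective, $(Y,B_Y^{\mathrm{new}})$ is lc, and
\[
-(K_Y+B_Y^{\mathrm{new}})\sim_{\mathbb{R}}\epsilon H_Y
\]
is ample, as required.
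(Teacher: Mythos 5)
Your outline (perturb $-(K_X+B)$ by $\epsilon f^{*}H_Y$, apply the canonical bundle formula, then absorb the moduli part into the boundary) matches the paper's strategy, but the step you yourself flag as the principal obstacle — semiampleness of $M_Y$ — is exactly where the argument has a genuine gap, and your proposed resolution does not work. Since $K_X+B+A\sim_{\mathbb{R},f}0$, the relative good minimal model of $(X,B+A)$ over $Y$ is $(X,B+A)$ itself and carries no information beyond what the canonical bundle formula already gives, namely that the moduli b-divisor is b-nef on a sufficiently high model $\tilde Y$. Nef does not imply semiample, and "descending to $Y$ using the ampleness of $\epsilon H_Y$ and the negativity lemma" cannot upgrade nef to semiample; doing so in general is the effective b-semiampleness conjecture. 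The paper supplies the missing input in two genuinely different ways according to which variety is $\mathbb{Q}$-factorial. When $X$ is $\mathbb{Q}$-factorial, it invokes the Appendix of \cite{bh} (by J.~Park) to conclude that $X$ is a Mori dream space, hence $Y$ is one by \cite[Theorem 9.3]{okawa}; a small $\mathbb{Q}$-factorialization $Y'\to Y$ is again a Mori dream space, and the MDS structure produces a small modification $Y'\dashrightarrow Y''$ on which the transform of $M_Y$ (a limit of movable divisors) becomes semiample. When $Y$ is $\mathbb{Q}$-factorial, it instead pulls back $M:=f^{*}M_Y$, writes $\delta M\sim_{\mathbb{Q}}K_X+B+G$ for a suitable lc perturbation $(X,B+G)$, and applies Theorem \ref{thm--lcample} in the \emph{absolute} setting to conclude that $M$ — hence $M_Y$ — birationally admits a Nakayama--Zariski decomposition with semiample positive part. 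Neither of these inputs is present in your sketch, and without one of them the proof does not close.

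Two secondary issues. First, your opening reduction to the case that $X$ is $\mathbb{Q}$-factorial is not justified: after a small $\mathbb{Q}$-factorialization the pullback of $-(K_X+B)$ is only nef and big, and restoring ampleness requires adding to the boundary an effective divisor $\mathbb{R}$-linearly equivalent to (nef and big) minus (ample) while preserving log canonicity — this needs an argument about the augmented base locus and lc thresholds that you do not give, and the paper deliberately avoids it by treating the two $\mathbb{Q}$-factoriality hypotheses separately. Second, the assertion that the canonical bundle formula directly yields an lc pair $(Y,B_Y)$ with $M_Y$ effective is premature: without $\mathbb{Q}$-factoriality of $Y$, neither $K_Y+B_Y$ nor $M_Y$ is known to be $\mathbb{R}$-Cartier individually (only their sum is), and the moduli part is not effective in general; the paper's passage to a small $\mathbb{Q}$-factorialization of $Y$ in the first case is there precisely to address this.
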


For details of proof, see Section \ref{sec5}. 

In Section \ref{sec6}, we study termination of non-$\mathbb{Q}$-factorial log MMP with scaling of an ample divisor. 
We would like to note that termination of non-$\mathbb{Q}$-factorial log MMP is highly nontrivial and considerably important to our understandings of the minimal model theory, and the following theorem is essential to a thorough study of this task. 

\begin{thm}\label{thm--mmp-scaling}
Let $\pi\colon X\to Z$ be a projective morphism of normal quasi-projective varieties, and let $(X,B)$ be an lc pair. 
Suppose that $(X,B)$ has a log minimal model over $Z$ or $K_{X}+B$ is not pseudo-effective over $Z$. 
Let $A$ be an ample $\mathbb{R}$-divisor on $X$ such that $(X,B+A)$ is lc and $K_{X}+B+A$ is nef over $Z$.  
  
Then there is a sequence of birational contractions 
\begin{equation*}
(X,B)=(X_{1},B_{1})\dashrightarrow (X_{2},B_{2})\dashrightarrow \cdots \dashrightarrow (X_{l},B_{l})
\end{equation*}
of a non-$\mathbb{Q}$-factorial $(K_{X}+B)$-MMP over $Z$ with scaling of $A$ that terminates. 
The final lc pair $(X_{l},B_{l})$, where $X_{l}$ may not be $\mathbb{Q}$-factorial, satisfies one of the following two conditions:
\begin{enumerate}
\item
 ((non-$\mathbb{Q}$-factorial) log minimal model). $K_{X_{l}}+B_{l}$ is nef over $Z$. 
\item
 ((non-$\mathbb{Q}$-factorial) Mori fiber space). There is a contraction $X_{l}\to Y$ over $Z$ to a normal quasi-projective variety $Y$ such that ${\rm dim}Y<{\rm dim}X_{l}$, $-(K_{X_{l}}+B_{l})$ is ample over $Y$ and the relative Picard number $\rho(X_{l}/Y)$ is one. 
\end{enumerate}
\end{thm}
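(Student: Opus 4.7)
The plan is to run the non-$\mathbb{Q}$-factorial $(K_{X}+B)$-MMP with scaling of $A$ on $(X,B)$, constructing each step via Theorem \ref{thm--lcample} applied to a small perturbation of the scaling parameter, and then to force termination by combining the existence of good minimal models for $(X,B+\lambda A)$ (again from Theorem \ref{thm--lcample}) with a finiteness-of-models argument in the $\mathbb{R}$-linear span of $B$ and $A$.

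At the $i$-th step I would set
\[
\lambda_{i} := \inf\{t\geq 0 \mid K_{X_{i}}+B_{i}+tA_{i} \text{ is nef over }Z\},
\]
which lies in $[0,\lambda_{i-1}]$ with $\lambda_{0}:=1$ by the hypothesis that $K_{X}+B+A$ is nef. If $\lambda_{i}=0$, then $(X_{i},B_{i})$ is a log minimal model and the sequence stops. Otherwise, for small $\varepsilon\in(0,\lambda_{i})$, the pair $(X_{i},B_{i}+(\lambda_{i}-\varepsilon)A_{i})$ is lc and $\varepsilon A_{i}$ is ample, so Theorem \ref{thm--lcample} produces a good minimal model or a Mori fiber space for $(X_{i},B_{i}+\lambda_{i}A_{i})$ over $Z$. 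From this I would extract a $(K_{X_{i}}+B_{i})$-negative extremal ray $R_{i}\subset\overline{NE}(X_{i}/Z)$ on which $K_{X_{i}}+B_{i}+\lambda_{i}A_{i}$ vanishes, and carry out the corresponding divisorial contraction, flip, or Mori fiber contraction using the cone and contraction theorems and the existence of flips for lc pairs. If the contraction is Mori-fiber-type, the sequence ends with conclusion (2); otherwise the output $(X_{i+1},B_{i+1})$ has $\lambda_{i+1}\leq\lambda_{i}$ and the construction iterates.

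For termination, suppose the sequence is infinite and set $\lambda_{\infty}=\lim_{i}\lambda_{i}$. In the non-pseudo-effective case, $\lambda_{i}$ is bounded below by the pseudo-effective threshold of $A$, so $\lambda_{\infty}>0$; in the pseudo-effective case either $\lambda_{\infty}>0$ or $\lambda_{\infty}=0$. When $\lambda_{\infty}>0$, each step for $i\gg 0$ is $(K_{X_{i}}+B_{i}+\lambda_{\infty}A_{i})$-trivial and is also a step of the MMP for $(X_{i},B_{i}+(\lambda_{\infty}+\eta)A_{i})$ for small $\eta>0$; the existence of a good minimal model for $(X,B+(\lambda_{\infty}+\eta)A)$ from Theorem \ref{thm--lcample}, together with finiteness of minimal models under small perturbation of the boundary, forces only finitely many steps in the tail. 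When $\lambda_{\infty}=0$, the hypothesized log minimal model of $(X,B)$ combines with Theorem \ref{thm--lcample} applied across $\lambda\downarrow 0$ via a Shokurov-polytope argument to again bound the number of distinct birational models. The main obstacle, which I expect to require the most care, is carrying out this termination analysis in the non-$\mathbb{Q}$-factorial setting: classical termination-with-scaling arguments rest on $\mathbb{Q}$-factoriality and special termination on lc centers, so one must either descend a terminating $\mathbb{Q}$-factorial MMP on a small $\mathbb{Q}$-factorialization back to a compatible sequence on $X$ — carefully tracking the exceptional divisors extracted by the $\mathbb{Q}$-factorialization — or reprove the cone, contraction, flip, and finiteness inputs directly in the lc, non-$\mathbb{Q}$-factorial category, where the usual dlt-model reductions are not available.
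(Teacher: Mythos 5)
Your high-level scaffolding (run the MMP with scaling, track the nef thresholds $\lambda_i$, use Theorem \ref{thm--lcample} to supply minimal models of the perturbed pairs, and derive termination from the limit $\lambda_\infty$) matches the paper's, but the part you flag as ``the main obstacle'' is exactly where the paper's actual content lies, and the routes you sketch for it do not close the gap. First, your termination argument leans on ``finiteness of minimal models under small perturbation of the boundary'' and a ``Shokurov-polytope argument'' to bound the number of models. That machinery (geography of models, Corti--Lazi\'c type finiteness) is not available for lc, non-$\mathbb{Q}$-factorial pairs without substantial extra work, and the paper explicitly advertises that its proof \emph{avoids} it. What the paper uses instead is Birkar's termination criterion \cite[Theorem 4.1 (iii)]{birkar-flip}: if the limit $\lambda$ of the scaling thresholds is never attained and $(X,B+\lambda A)$ has a log minimal model (supplied by Theorem \ref{thm--lcample} when $\lambda>0$, or by hypothesis when $\lambda=0$), one gets a contradiction with non-termination. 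Your case split on $\lambda_\infty>0$ versus $\lambda_\infty=0$ is therefore not needed, but the criterion only applies if $\lambda\neq\lambda_i$ for all $i$.

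That is the second, and more serious, missing idea: in the non-$\mathbb{Q}$-factorial setting nothing in your construction rules out $\lambda_i=\lambda_{i+1}$ for infinitely many $i$. The paper resolves this in Proposition \ref{prop--mmplc}. One first uses the dlt blow-up diagrams of Remark \ref{remmmp} (3) to reduce, after finitely many replacements, to the situation where every step of every $(K_X+B)$-MMP over $Z$ is small and the relative Picard number $\rho(X_i/Z)=d$ is constant, so each step induces an isomorphism $N^1(X_i/Z)_{\mathbb{R}}\to N^1(X_{i+1}/Z)_{\mathbb{R}}$. One then writes $B=\sum_j r_jB^{(j)}$ with $(X,B^{(j)})$ lc and $\mathbb{Q}$-boundaries, and chooses the scaling divisor of the form $A=\sum_{k=1}^{d}\alpha_kA^{(k)}$ with the $A^{(k)}$ ample $\mathbb{Q}$-divisors forming a basis of $N^1(X/Z)_{\mathbb{R}}$ and the $\alpha_k$ linearly independent over $\mathbb{Q}(r_1,\dots,r_m)$. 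An intersection-number computation then shows that $\lambda_i=\lambda_{i+1}$ would force the two contracted extremal rays to coincide, a contradiction; hence the thresholds strictly decrease and Birkar's criterion applies. The full theorem is then assembled by iterating Proposition \ref{prop--mmplc} on the perturbed pairs $(X,B+\lambda'_iA)$ with $\lambda'_i<\lambda_i$ chosen via the length-of-extremal-rays bound so that each block is also a $(K_X+B)$-MMP with scaling of $A$. Without the arithmetic-independence device (or a genuine substitute), your proposal cannot establish termination, so as written it has a real gap rather than merely an unexecuted technicality.
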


We prove Theorem \ref{thm--mmp-scaling} by applying Theorem \ref{thm--lcample}. 

To explain differences between our proof and proofs in \cite{bh}, we compare arguments in sections \ref{sec5} and \ref{sec6} with those in \cite{bh}. 
As stated before, the goal of Section \ref{sec5}--\ref{sec6} is to give an alternative proof of results in \cite{bh}, so our arguments are completely independent from \cite{bh}, and moreover we prove further results (Theorem \ref{thm--lcfano} and Theorem \ref{thm--cont-fanofib}). 
First, we compare proof of Theorem \ref{thm--lcample} with that of \cite[Theorem 1.1]{bh}. 
Both proofs use the same reduction, that is, a reduction to the special termination by using a special kind of log MMP (\cite{birkar-flip}, \cite{has-trivial}, \cite{has-mmp}, \cite{has-class}, \cite[Section 2]{bh}). 
But proofs after the reduction are different. 
In the proof of \cite[Theorem 1.1]{bh} in \cite[Section 5]{bh}, careful analysis of non-nef loci and behavior of open neighborhoods near the generic point of each lc center of lc pairs play crucial roles. 
On the other hand, our proof does not use these techniques, but we apply ideas of the  proof of Theorem \ref{thmmain}. 
More specifically, we apply results of log abundant log canonical divisors of lc pairs and \cite[Proof of Theorem 1.2]{birkar-09}, which are more purely minimal model theoretic and more natural to extend to a more general setting. 
Next, we compare Theorem \ref{thm--mmp-scaling} with \cite[Theorem 1.4 and Corollary 1.5]{bh}. 
We emphasize that Theorem \ref{thm--mmp-scaling} is in the framework of non-$\mathbb{Q}$-factorial pairs (compare Theorem \ref{thm--mmp-scaling} with \cite[Theorem 1.4 and Corollary 1.5]{bh}).  
Furthermore, in proof of Theorem \ref{thm--mmp-scaling} we make good use of length of extremal rays (\cite[Section 18]{fujino-fund}) 
and a result of termination of log MMP with scaling (\cite[Theorem 4.1]{birkar-flip}) without using the geography of models (see \cite[Section 6]{shokurov}) or theory of finitely generated adjoint rings (for example, linearity of asymptotic vanishing orders) established in \cite{cortilazic}, which are heavily used in \cite[Section 6]{bh}. 
But, compared with \cite[Theorem 1.4 and Corollary 1.5]{bh}, we only prove the existence of a sequence of steps of log MMP that terminates. 

It is natural to expect that all log MMP with scaling of an ample divisor terminate in the situation of Theorem \ref{thm--mmp-scaling}, and our result would be a feasible starting point. \\
\\
{\textbf{Contents of the paper.}}
This paper is divided into two parts: Section \ref{sec2}--\ref{sec4} and Section \ref{sec5}--\ref{sec6}. 
In Section \ref{sec2}, we collect definitions, notations and results on log MMP. 
In Section \ref{sec3}, we prove Theorem \ref{thmmain}. 
In Section \ref{sec4}, we prove Theorem \ref{thm--gen-type}, Theorem \ref{thm--calabiyau}, Theorem \ref{thm--flat} and the minimal model theory for lc pairs with log big boundary divisors in a special case (Theorem \ref{thm--logbig}). 
In Section \ref{sec5}, we prove Theorem \ref{thm--lcample}, Theorem \ref{thm--lcfano} and an application (Theorem \ref{thm--cont-fanofib}). 
In Section \ref{sec6}, we prove Theorem \ref{thm--mmp-scaling}. 

\begin{ack}
The first author was partially supported by JSPS KAKENHI Grant Number JP16J05875 and JP19J00046. 
Part of the work was done while the first author was visiting University of Cambridge in October--November 2018. The first author thanks staffs of the university, Professor Caucher Birkar, Dr.~Roberto Svaldi, and Dr.~Yanning Xu for their hospitality. 
He thanks audience of Algebraic Geometry Seminar in the University of Tokyo, and he  thanks Dr.~Kenta Sato for discussion on the proof of Lemma \ref{lem--abundant-general}. 
Part of the work was done when the second author was visiting CMS, Zhejiang University. He would like to thank Professor Kefeng Liu and Professor Hongwei Xu. 
The authors are grateful to Professors Hiromu Tanaka and Shinnosuke Okawa for advice and answering questions about Theorem \ref{thm--lcfano}. 
They thank Professor Osamu Fujino for reading the manuscript and giving comments. 
They thank the referee for reading the draft carefully and a lot of suggestion. 
\end{ack}

\section{Preliminaries}\label{sec2}
In this section, we collect definitions and some important results. 

\subsection{Notations and definitions}
We collect some notations and definitions, and we show some basic lemmas. 

\begin{divisor}
Let $X$ be a normal projective variety. 
We use the standard definitions of nef $\mathbb{R}$-divisor, ample $\mathbb{R}$-divisor, semi-ample $\mathbb{R}$-divisor, etc. 
In this paper, we do not assume big $\mathbb{R}$-divisors to be $\mathbb{R}$-Cartier.
For any morphism $f\colon X\to Y$ and any $\mathbb{R}$-Cartier divisor $D$ on $Y$, we sometimes denote $f^{*}D$ by $D|_{X}$. 

For a variety $X$ and an effective $\mathbb{R}$-divisor $D$ on it, a {\em log resolution of} $(X,{\rm Supp}D)$ (or a {\em log resolution of} $(X,D)$ if there is no risk of confusion) denotes a projective birational morphism $f\colon Y\to X$ from a smooth variety $Y$ such that the exceptional locus ${\rm Ex}(f)$ is pure codimension one and ${\rm Ex}(f)\cup {\rm Supp}f_{*}^{-1}D$ is a simple normal crossing divisor. 

Let $f\colon X\to Y$ be a projective morphism of varieties. 
Then $f$ is called a {\em contraction} if it is surjective and has connected fibers. 
Let $D$ be a semi-ample $\mathbb{R}$-divisor on $X$. 
We say that $f$ is a {\em contraction induced by} $D$ if $f$ is a contraction and we have $D\sim_{\mathbb{R}}f^{*}A$ for an ample $\mathbb{R}$-divisor $A$ on $Y$. 
We note that a contraction induced by $D$ always exists for any semi-ample $\mathbb{R}$-divisor $D$. 

Let $D$ be a semi-ample $\mathbb{R}$-divisor on a normal projective variety $X$. 
We can write $D=\sum_{i}r_{i}D_{i}$, where $r_{i}$ are positive real numbers and $D_{i}$ are semi-ample $\mathbb{Q}$-Cartier divisors.
Then $D$ is {\em general} (resp.~{\em sufficiently general}) if there is a sufficiently large and divisible integer $k>0$ such that $kD_{i}$ are base point free Cartier divisors and $D_{i}=\frac{1}{k}D'_{i}$, where $D'_{i}$ are general (resp.~sufficiently general) elements of $|kD_{i}|$. 
In particular, when $D$ is general, $D$ is effective and all coefficients of $D$ are less than one. 

\begin{lem}\label{lem--genmember}
Let $f\colon X\dashrightarrow X'$ be a birational contraction of projective $\mathbb{Q}$-factorial varieties. 
Let $S$ and $S'$ be normal projective subvarieties of $X$ and $X'$ respectively such that the indeterminacy locus of $f$ does not contain $S$ and the restriction of $f$ to $S$ induces a birational map $f_{S}\colon S\dashrightarrow S'$. 
Fix a common resolution $\phi\colon T\to S$ and $\phi'\colon T\to S'$ of  $f_{S}$.  
Let $D$ be a semi-ample $\mathbb{R}$-divisor on $X$. 

Then, any general $A\sim_{\mathbb{R}}D$ satisfies $A_{S}\geq 0$, $A_{S'}\geq0$, and $\phi^{*}A_{S}\leq \phi_{*}'^{-1}A_{S'}$, where $A_{S}=A|_{S}$ and $A_{S'}=(f_*A)|_{S'}$. 
\end{lem}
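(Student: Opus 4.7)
The plan is to exploit generality of $A$, apply the negativity lemma on a common log resolution of $f$, and then refine the resulting pullback inequality into the desired strict-transform inequality by a coefficient-level analysis. Write $A=\sum_i r_iA_i$ with $A_i\in|kD_i|$ a general member; since each $|kD_i|$ is base-point-free on $X$ and the induced sub-linear system on $S$ is therefore base-point-free on $S$, I may simultaneously arrange by generality that $\Supp A_i$ misses (a) the subvariety $S$, (b) every $f$-contracted prime divisor of $X$, and (c) every $f_S$-exceptional prime divisor of $S$ (a finite set since $f_S$ is birational), and moreover that $A_i|_S$ avoids (d) each of the finitely many codimension $\geq 2$ centers in $S$ of primes of $T$ that are exceptional for both $\phi$ and $\phi'$. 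Condition (a) immediately yields $A_S\geq 0$, and combined with the birational-contraction hypothesis it also gives $A_{S'}\geq 0$.

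For the inequality, take a common log resolution $p\colon W\to X$, $q\colon W\to X'$ of $f$ that also resolves $(X,S)$, so that the strict transform $\tilde S\subset W$ of $S$ is smooth and not contained in any exceptional divisor. After enlarging $T$ to dominate $\tilde S$, which is harmless since the desired inequality descends by pushforward, we obtain a morphism $\mu\colon T\to \tilde S$ with $\phi=p|_{\tilde S}\circ \mu$ and $\phi'=q|_{\tilde S}\circ \mu$. By (b) and the birational-contraction hypothesis—every $p$-exceptional divisor on $W$ is also $q$-exceptional—the divisor $G:=p^*A-q^*(f_*A)$ is supported on $q$-exceptional divisors; since $p^*A$ is nef and $q^*(f_*A)$ is $q$-numerically trivial, $G$ is $q$-nef, and the negativity lemma yields $G\leq 0$. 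Restricting to $\tilde S$ (permitted by (a)) and pulling back via $\mu$ gives $\phi^*A_S\leq \phi'^*A_{S'}$ on $T$.

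To upgrade $\phi'^*A_{S'}$ to $\phi_*'^{-1}A_{S'}$, decompose $\phi'^*A_{S'}=\phi_*'^{-1}A_{S'}+E$ with $E\geq 0$ supported on $\phi'$-exceptional divisors; it suffices to show that $\phi^*A_S$ has zero coefficient along every $\phi'$-exceptional prime $C\subset T$. If such a $C$ is also $\phi$-exceptional, its center $\phi(C)\subset S$ has codimension $\geq 2$ and lies outside $\Supp A_S$ by (d), so $v_C(A_S)=0$. Otherwise $\phi(C)$ is a prime divisor on $S$; since the indeterminacy locus of $f_S$ has codimension $\geq 2$ in $S$, $f_S$ is defined at the generic point of $\phi(C)$ and sends it to $\phi'(C)$, which has codimension $\geq 2$ in $S'$, so $\phi(C)$ is $f_S$-exceptional and by (c) is not a component of $A_S$. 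Either way the coefficient vanishes, and a coefficient comparison then yields $\phi^*A_S\leq\phi_*'^{-1}A_{S'}$. The main obstacle will be the careful bookkeeping in the generality step—verifying finiteness of the various exceptional collections and simultaneously avoiding them with a single general $A$—which is essential for killing the $\phi'$-exceptional contributions at the end.
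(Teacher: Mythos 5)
Your proof is correct, but it takes a noticeably heavier route than the paper's. The paper never leaves $S$, $S'$ and $T$: it chooses $A$ general so that (i) ${\rm Supp}A$ contains no component of the non-isomorphism locus $Z$ of $f$ nor of $Z\cap S$, which already gives $A_{S}\geq 0$, $A_{S'}\geq 0$ and $f_{S*}A_{S}\leq A_{S'}$, and (ii) $\phi^{*}A_{S}=\phi_{*}^{-1}A_{S}$ with no $\phi'$-exceptional component in its support; the conclusion is then the one-line strict-transform comparison $\phi^{*}A_{S}=\phi_{*}^{-1}A_{S}\leq \phi_{*}'^{-1}(f_{S*}A_{S})\leq \phi_{*}'^{-1}A_{S'}$. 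You instead go up to a common resolution $W$ of $X\dashrightarrow X'$, apply the negativity lemma to $G=p^{*}A-q^{*}(f_{*}A)$ to get $p^{*}A\leq q^{*}(f_{*}A)$, restrict to the strict transform $\widetilde{S}$, and then strip the $\phi'$-exceptional part by the same kind of generality bookkeeping (your conditions (c) and (d) play exactly the role of the paper's (ii)). Both arguments ultimately rest on Bertini-type genericity; yours buys the global intermediate inequality $\phi^{*}A_{S}\leq \phi'^{*}A_{S'}$ at the cost of invoking negativity and of the extra check that $\widetilde{S}\not\subset{\rm Supp}\,G$ — note this needs $S'\not\subset{\rm Supp}f_{*}A$ in addition to your (a), which you do secure earlier when establishing $A_{S'}\geq 0$, so it should be cited there rather than attributed to (a) alone. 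Also, in (a) and (c) "misses" must be read as "does not contain" (an ample $A$ cannot be disjoint from $S$); with that reading the coefficient analysis at the end is sound.
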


\begin{proof}
Let $Z$ be the smallest closed subset of $X$ such that $f$ is an isomorphism on $X\setminus Z$. 
Then, for any general $A\sim_{\mathbb{R}}D$ such that ${\rm Supp}A$ does not contain any component of $Z$ and $Z\cap S$,  we have $A_{S}\geq 0$, $A_{S'}\geq0$ and $f_{S*}A_{S}\leq A_{S'}$. 
Therefore, if we pick a general $A\sim_{\mathbb{R}}D$ so that $A$ satisfies the above conditions, $\phi^{*}A_{S}=\phi_{*}^{-1}A_{S}$, and ${\rm Supp}\phi^{*}A_{S}$ does not contain any $\phi'$-exceptional prime divisors, then $A$ satisfies all the conditions in Lemma \ref{lem--genmember}. 
Note that $A_{S}$ and $A_{S'}$ are $\mathbb{R}$-Cartier since $X$ and $X'$ are $\mathbb{Q}$-factorial.   
\end{proof}

\begin{lem}\label{lem--genmember2}
Let $f\colon X\dashrightarrow X'$ be a birational map of normal projective varieties.
Let $\Delta'$ be a reduced divisor on $X'$, and let $E'$ be the reduced $f^{-1}$-exceptional divisor. 
We fix a log resolution $g'\colon W\to X'$ of $(X',{\rm Supp}(\Delta'+E'))$ such that the induced birational map $g\colon W\to X$ is a morphism. 
Let $D$ be a semi-ample $\mathbb{R}$-divisor on $X$. 

Then, for any general $A\sim_{\mathbb{R}}D$, the morphism $g'\colon W\to X'$ is a log resolution of $(X',{\rm Supp}(\Delta'+E'+f_{*}A))$. 
\end{lem}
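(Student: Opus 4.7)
The plan is to exploit the fact that $g\colon W\to X$ is a morphism and $D$ is semi-ample in order to pull a general member of $D$ back to $W$, then apply a Bertini-type argument to arrange its support in general position relative to the existing snc divisor on $W$. Because the $g'$-strict transform of $f_{*}A$ sits inside the support of $g^{*}A$, this will suffice to conclude the snc condition required of a log resolution.

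In more detail, I would first write $D=\sum_{i}r_{i}D_{i}$ with $r_{i}>0$ and $D_{i}$ semi-ample $\mathbb{Q}$-Cartier as in the definition of general, and fix $k$ divisible so that every $kD_{i}$ is base point free on $X$. Since $g\colon W\to X$ is a morphism, each $g^{*}(kD_{i})$ is base point free on $W$, so by Bertini a sufficiently general element $A'_{i}\in|kD_{i}|$ has the property that $g^{*}A'_{i}$ is a reduced smooth divisor whose support, together with the snc divisor $\mathrm{Ex}(g')\cup g'^{-1}_{*}(\Delta'+E')$ on $W$, forms a simple normal crossing divisor. Setting $A=\sum_{i}(r_{i}/k)A'_{i}$ gives a general $A\sim_{\mathbb{R}}D$ in the sense of the paper, and the union $\mathrm{Ex}(g')\cup g'^{-1}_{*}(\Delta'+E')\cup\mathrm{Supp}(g^{*}A)$ is snc on $W$.

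To finish, I would verify that $\mathrm{Supp}(g'^{-1}_{*}(f_{*}A))\subseteq\mathrm{Supp}(g^{*}A)$. For each prime component $A_{0}$ of $A$ that is not contracted by $f$, one has $f_{*}A_{0}=g'_{*}(g^{-1}_{*}A_{0})$ as a prime divisor on $X'$, and $g'^{-1}_{*}(f_{*}A_{0})=g^{-1}_{*}A_{0}\subseteq\mathrm{Supp}(g^{*}A)$; components contracted by $f$ contribute nothing to $f_{*}A$. Consequently, $\mathrm{Ex}(g')\cup g'^{-1}_{*}(\Delta'+E'+f_{*}A)$ is a subdivisor of the snc divisor above, hence is itself snc; combined with the smoothness of $W$ and the fact that $\mathrm{Ex}(g')$ is pure codimension one, this shows $g'$ is a log resolution of $(X',\mathrm{Supp}(\Delta'+E'+f_{*}A))$. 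The main, though mild, obstacle is ensuring that the genericity arranged at the level of the base point free systems $|kD_{i}|$ on $X$ transfers through pullback to genuine Bertini-genericity on $W$ against the prescribed snc divisor; once $g^{*}(kD_{i})$ is seen to be base point free this is a standard application of Bertini and presents no real difficulty.
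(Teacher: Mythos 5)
Your proposal is correct and follows essentially the same route as the paper: reduce to the snc condition on $W$, note that ${\rm Supp}\,g'^{-1}_{*}(f_{*}A)\subseteq {\rm Supp}\,g^{*}A$, and apply Bertini to the (base point free) pulled-back linear systems so that ${\rm Supp}\,g^{*}A$ meets the fixed snc divisor ${\rm Ex}(g')\cup g'^{-1}_{*}(\Delta'+E')$ transversally. The transfer of genericity you flag at the end is indeed harmless, since $g_{*}\mathcal{O}_{W}=\mathcal{O}_{X}$ identifies $|kD_{i}|$ with $|g^{*}(kD_{i})|$.
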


\begin{proof}
We note that $g'$ is projective, $W$ is smooth and ${\rm Ex}(g')$ is pure codimension one. 
So we only have to prove that ${\rm Supp}g'^{-1}_{*}(\Delta'+E'+f_{*}A)\cup {\rm Ex}(g')$ is a simple normal crossing divisor. 
By construction of $g'$ and $E'$, this condition is satisfied when ${\rm Supp}g^{*}A$ intersects ${\rm Supp}g'^{-1}_{*}(\Delta'+E')\cup {\rm Ex}(g')$ transversely.
By an application of Bertini type theorem, we see that any general $A\sim_{\mathbb{R}}D$ satisfies this condition. 
\end{proof}
\end{divisor}

\begin{sing}
A {\em pair} $(X,\Delta)$ consists of a normal variety $X$ and a boundary $\mathbb{R}$-divisor $\Delta$ on $X$ such that $K_{X}+\Delta$ is $\mathbb{R}$-Cartier. 

Let $(X,\Delta)$ be a pair, and let $P$ be a prime divisor over $X$. 
Then $a(P,X,\Delta)$ denotes the discrepancy of $P$ with respect to $(X,\Delta)$. 
We use standard definitions of Kawamata log terminal (klt, for short) pair, log canonical (lc, for short) pair and divisorially log terminal (dlt, for short) pair as in \cite{kollar-mori}. 
In \cite{kollar-mori}, pairs and classes of singularity are defined in the framework of $\mathbb{Q}$-divisors. 
But, we can similarly define those classes of singularity for pairs of a normal variety and a boundary $\mathbb{R}$-divisor. 
When $(X,\Delta)$ is an lc pair, an {\em lc center} of $(X,\Delta)$ is the image on $X$ of a prime divisor $P$ over $X$ whose discrepancy $a(P,X,\Delta)$ is equal to $-1$. 
\end{sing}

\begin{lem}\label{lem--discre}
Let $(X,\Delta)$ be a pair, and let $f\colon Y\to X$ be a log resolution of $(X,\Delta)$. 
We define $\Gamma\geq0$ and $E\geq0$ so that $K_{Y}+\Gamma=f^{*}(K_{X}+\Delta)+E$ and $\Gamma$ and $E$ have no common components. 
Then $a(P,Y,\Gamma)={\rm min}\{0, a(P,X,\Delta)\}$ for any prime divisor $P$ on $Y$.  
\end{lem}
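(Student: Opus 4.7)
The plan is to read off both discrepancies directly from the defining identity and exploit the disjointness of $\mathrm{Supp}\,\Gamma$ and $\mathrm{Supp}\,E$.

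First I would rewrite the given equation as
\begin{equation*}
K_{Y}-f^{*}(K_{X}+\Delta)=-\Gamma+E,
\end{equation*}
so that for every prime divisor $P$ on $Y$, if we set $\gamma_{P}=\mathrm{coeff}_{P}\Gamma$ and $e_{P}=\mathrm{coeff}_{P}E$, the definition of discrepancy gives $a(P,X,\Delta)=-\gamma_{P}+e_{P}$. The assumption that $\Gamma$ and $E$ share no components means that at least one of $\gamma_{P},e_{P}$ vanishes. On the other hand, viewing $P$ as a divisor over $Y$ via the identity, we have $a(P,Y,\Gamma)=-\gamma_{P}$ from the trivial expression $K_{Y}=\mathrm{id}^{*}(K_{Y}+\Gamma)-\Gamma$.

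Next I would split into two cases. If $\gamma_{P}=0$, then $a(P,Y,\Gamma)=0$ and $a(P,X,\Delta)=e_{P}\geq 0$, so $\min\{0,a(P,X,\Delta)\}=0=a(P,Y,\Gamma)$. If instead $e_{P}=0$, then $a(P,X,\Delta)=-\gamma_{P}\leq 0$, hence $\min\{0,a(P,X,\Delta)\}=-\gamma_{P}=a(P,Y,\Gamma)$. Since exactly one of these cases occurs for each prime divisor $P$ on $Y$, the identity $a(P,Y,\Gamma)=\min\{0,a(P,X,\Delta)\}$ follows.

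There is no real obstacle here; the statement is essentially a bookkeeping exercise, and the only subtle point is making sure to use the no-common-component hypothesis to know that the positive part $E$ and the negative part $-\Gamma$ of the discrepancy divisor really are the pointwise positive and negative parts. Because the discrepancies for prime divisors lying on $Y$ depend only on these coefficients (no blow-up is needed for such $P$), the claim holds without any additional hypothesis on the singularities of $(X,\Delta)$.
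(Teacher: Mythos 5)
Your argument is correct and is simply the written-out version of what the paper dismisses with ``It is clear from definition of discrepancy'': both compute $a(P,X,\Delta)=-\gamma_P+e_P$ and $a(P,Y,\Gamma)=-\gamma_P$ and use the no-common-component hypothesis to see that one of $\gamma_P,e_P$ vanishes. (The only tiny slip is ``exactly one of these cases occurs''---both can occur when $\gamma_P=e_P=0$---but the two cases agree there, so nothing breaks.)
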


\begin{proof}
It is clear from definition of discrepancy. 
\end{proof}

\begin{lem}\label{lem--adjunction}
Let $(X,\Delta)$ be a dlt pair and $S$ a component of $\llcorner \Delta \lrcorner$. 
Fix a log resolution $f\colon Y\to X$ of $(X,\Delta)$ and we write $K_{Y}+T+\Gamma=f^{*}(K_{X}+\Delta)+E$, where $T$ is the birational transform of $S$ on $Y$ and $\Gamma\geq 0$ and $E\geq 0$   have no common components. 
Then $\Gamma|_{T}$ and $E|_{T}$ have no common components and $E|_{T}$ is exceptional over $S$.  
\end{lem}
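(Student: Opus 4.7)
The plan is to derive both assertions by restricting the defining identity to $T$ and invoking (a) the simple normal crossing structure coming from the log resolution, and (b) adjunction for the dlt pair $(X,\Delta)$ along $S$.

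First, I would prove the no-common-components assertion. Writing $\Gamma - E = f^{-1}_{*}(\Delta-S) + \sum_{P} (-a(P,X,\Delta))\,P$ with $P$ ranging over $f$-exceptional primes, one sees that $T$ has coefficient $0$ in the right-hand side; combined with $\Gamma, E\geq 0$ having no common components, this forces $T$ to appear in neither $\Gamma$ nor $E$. All components of $\Gamma$, of $E$, and $T$ itself therefore belong to the snc divisor $f^{-1}(\Supp\Delta)\cup\Exc(f)$. For any primes $\Gamma_i\subset\Gamma$ and $E_j\subset E$, the three divisors $\Gamma_i, E_j, T$ are distinct snc components, so their common intersection has codimension at least three in $Y$, i.e., codimension at least two in $T$. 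A common divisorial component of $\Gamma|_T$ and $E|_T$ would be a codimension-one subset of $T$ contained in such a triple intersection, which is impossible.

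Next, I would deduce the exceptionality of $E|_T$ via adjunction. By standard dlt adjunction, $S$ is normal and there is an effective $\mathbb{R}$-divisor $\Delta_S$ on $S$ with $(K_X+\Delta)|_S = K_S+\Delta_S$ and $(S,\Delta_S)$ dlt. Setting $g := f|_T\colon T\to S$, a projective birational morphism of normal varieties, and restricting the defining identity to $T$ using $(K_Y+T)|_T = K_T$ and adjunction on $X$, one obtains
\[
\Gamma|_T - E|_T \;=\; g^{*}(K_S+\Delta_S) - K_T \;=\; g^{-1}_{*}\Delta_S + \sum_{Q} \bigl(-a(Q,S,\Delta_S)\bigr)\,Q,
\]
where $Q$ runs over $g$-exceptional prime divisors on $T$ and the last equality is the standard discrepancy formula. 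Since $(S,\Delta_S)$ is lc, the coefficients of $g^{-1}_{*}\Delta_S$ are nonnegative, so the only possible negative contributions on the right-hand side come from $g$-exceptional $Q$ with $a(Q,S,\Delta_S)>0$. By the first assertion, $\Gamma|_T$ and $E|_T$ realize the unique decomposition of the right-hand side into positive and negative parts with no common components, so $E|_T$ equals the sum of those $g$-exceptional primes with positive coefficient. In particular, $E|_T$ is supported on $g$-exceptional divisors and is therefore exceptional over $S$.

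The main obstacle, and really the only nontrivial input, is the dlt adjunction statement that $S$ is normal and $(S,\Delta_S)$ is dlt in the $\mathbb{R}$-divisor setting used in the paper; once this is available, the rest is a short sign analysis paired with the codimension count from snc.
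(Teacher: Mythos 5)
Your proof is correct and follows essentially the same route as the paper: the first assertion is the identical codimension-three count for triple intersections of snc components, and the second is the same sign analysis obtained by restricting the defining identity to $T$ and comparing with adjunction $K_S+\Delta_S=(K_X+\Delta)|_S$. The only cosmetic difference is that you read off the conclusion from the discrepancy formula for $f|_T\colon T\to S$ on $T$, whereas the paper pushes $\Gamma|_T-E|_T$ forward to $S$ and uses $\Delta_S\geq 0$ together with the no-common-components property to force $f|_{T*}(E|_T)=0$; note also that only $\Delta_S\geq 0$ is needed, not dlt-ness of $(S,\Delta_S)$.
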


\begin{proof}
Suppose that there is a common component $D$ of $\Gamma|_{T}$ and $E|_{T}$. 
Then there are components $\Gamma'$ and $E'$ of $\Gamma$ and $E$ respectively such that $D\subset T\cap \Gamma'\cap E'$. 
But the left hand side has codimension $2$ in $Y$ and the right hand side is codimension at least $3$ in $Y$ since $Y\to X$ is a log resolution of $(X,\Delta)$, so we get a contradiction. 

We define $\Delta_{S}$ by adjunction $K_{S}+\Delta_{S}=(K_{X}+\Delta)|_{S}$. 
Then $\Delta_{S}\geq 0$ (see, for example, \cite[Section 14]{fujino-fund}) and $\Delta_{S}=f|_{T*}(\Gamma|_{T})-f|_{T*}(E|_{T})$. 
Since the morphism $f|_{T}\colon T\to S$ is birational, $f|_{T*}(\Gamma|_{T})$ and $f|_{T*}(E|_{T})$ have no common components. 
Thus $f|_{T*}(E|_{T})=0$, and hence $E|_{T}$ is exceptional over $S$.  
\end{proof}

\begin{model}\label{deflogbir}
In this paper, we freely use definition of models as in \cite[Definition 2.2]{has-trivial}. 
We write down the precise definition here. 

Let $\pi\colon X \to Z$ be a projective morphism from a normal variety to a variety, and let $(X,\Delta)$ be an lc pair.  
Let $\pi '\colon X' \to Z$ be a projective morphism from a normal variety to $Z$ and  $\phi\colon X \dashrightarrow X'$ a birational map over $Z$. 
Let $E$ be the reduced $\phi^{-1}$-exceptional divisor on $X'$, that is, $E=\sum E_{j}$ where $E_{j}$ are $\phi^{-1}$-exceptional prime divisors on $X'$. 
Then the pair $(X', \Delta'=\phi_{*}\Delta+E)$ is called a {\it log birational model} of $(X,\Delta)$ over $Z$. 
A log birational model $(X', \Delta')$ of $(X,\Delta)$ over $Z$ is a {\it weak log canonical model} ({\it weak lc model}, for short) if 
\begin{itemize}
\item
$K_{X'}+\Delta'$ is nef over $Z$, and 
\item
for any prime divisor $D$ on $X$ which is exceptional over $X'$, we have
\begin{equation*}
a(D, X, \Delta) \leq a(D, X', \Delta').
\end{equation*}
\end{itemize}
A weak lc model $(X',\Delta')$ of $(X,\Delta)$ over $Z$ is a {\it log minimal model} if 
\begin{itemize}
\item
$X'$ is $\mathbb{Q}$-factorial, and 
\item
the above inequality on discrepancies is strict. 

\end{itemize}
A log minimal model $(X',\Delta')$ of $(X, \Delta)$ over $Z$ is called a {\it good minimal model} if $K_{X'}+\Delta'$ is semi-ample over $Z$. 

A log birational model $(X',\Delta')$ of $(X, \Delta)$ over $Z$ is called a {\it Mori fiber space} if $X'$ is $\mathbb{Q}$-factorial and there is a contraction $X' \to W$ over $Z$ with ${\rm dim}W<{\rm dim}X'$ such that 
\begin{itemize}
\item
the relative Picard number $\rho(X'/W)$ is one and $-(K_{X'}+\Delta')$ is ample over $W$, and 
\item
for any prime divisor $D$ over $X$, we have
\begin{equation*}
a(D,X,\Delta)\leq a(D,X',\Delta')
\end{equation*}
and strict inequality holds if $D$ is a divisor on $X$ and exceptional over $X'$.
\end{itemize} 
In particular, we do not assume that log minimal models and Mori fiber spaces are dlt. 

For any lc pair $(X,\Delta)$ on a normal quasi-projective variety $X$, we freely construct a dlt blow-up $(Y,\Gamma)\to (X,\Delta)$ as in \cite[Theorem 10.4]{fujino-fund} or \cite[Theorem 3.1]{kollarkovacs}. 
In this paper, we call $(Y,\Gamma)$ a {\em dlt model}. 
\end{model}

\begin{rem}\label{remmodels}
Let $(X,\Delta)$ be an lc pair and $(X',\Delta')$ be a log minimal model of $(X,\Delta)$. 
Let $(X'',\Delta'')$ be a $\mathbb{Q}$-factorial lc pair such that $K_{X''}+\Delta''$ is nef, $X''$ and $X'$ are isomorphic in codimension one, and $\Delta''$ is the birational transform of $\Delta'$ on $X''$. 
Then $(X'',\Delta'')$ is also a log minimal model of $(X,\Delta)$. 
Moreover, if $(X',\Delta')$ is a good minimal model of $(X,\Delta)$, then $(X'',\Delta'')$ is also a good minimal model of $(X,\Delta)$. 
\end{rem}

\subsection{Invariant Iitaka dimension and numerical dimension}\label{subsec2.2}

\begin{defn}[Invariant Iitaka dimension]\label{defn--inv-iitaka-dim}
Let $X$ be a normal projective variety, and let $D$ be an $\mathbb{R}$-Cartier $\mathbb{R}$-divisor on $X$. 
We define the {\em invariant  Iitaka dimension} of $D$, denoted by $\kappa_{\iota}(X,D)$, as follows (see also \cite[Definition 2.5.5]{fujino-book}):  
If there is an $\mathbb{R}$-divisor $E\geq 0$ such that $D\sim_{\mathbb{R}}E$, set $\kappa_{\iota}(X,D)=\kappa(X,E)$. 
Here, the right hand side is the usual Iitaka dimension of $E$. 
Otherwise, we set $\kappa_{\iota}(X,D)=-\infty$. 
We can check that $\kappa_{\iota}(X,D)$ is well-defined, i.e., when there is $E\geq 0$ such that $D\sim_{\mathbb{R}}E$, the invariant Iitaka dimension $\kappa_{\iota}(X,D)$ does not depend on the choice of $E$. 
By definition, we have $\kappa_{\iota}(X,D)\geq0$ if and only if $D$ is $\mathbb{R}$-linearly equivalent to an effective $\mathbb{R}$-divisor. 

Let $X\to Z$ be a projective morphism from a normal variety to a variety, and let $D$ be an $\mathbb{R}$-Cartier $\mathbb{R}$-divisor on $X$. 
Then the {\em relative invariant  Iitaka dimension} of $D$, denoted by $\kappa_{\iota}(X/Z,D)$, is similarly defined: If there is an $\mathbb{R}$-divisor $E\geq 0$ such that $D\sim_{\mathbb{R},Z}E$, set $\kappa_{\iota}(X/Z,D)=\kappa_{\iota}(F,D|_{F})$, where $F$ is a sufficiently general fiber of the Stein factorization of $X\to Z$, and otherwise we set $\kappa_{\iota}(X/Z,D)=-\infty$. 
When there is $E\geq0$ such that $D\sim_{\mathbb{R},Z}E$, by semi-continuity of dimension of cohomology of flat coherent sheaves, we can check that $\kappa_{\iota}(X/Z,D)$ does not depend on the choice of $E$ and $F$. 
By definition, we have $\kappa_{\iota}(X/Z,D)\geq0$ if and only if $D\sim_{\mathbb{R},Z}E$ for some $E\geq0$. 
\end{defn}

\begin{defn}[Numerical dimension]\label{defn--num-dim}
Let $X$ be a normal projective variety, and let $D$ be an $\mathbb{R}$-Cartier $\mathbb{R}$-divisor on $X$. 
We define the {\em numerical dimension} of $D$, denoted by $\kappa_{\sigma}(X,D)$, as follows (see also \cite[V, 2.5 Definition]{nakayama}): 
For any Cartier divisor $A$ on $X$, we set
\begin{equation*}
\sigma(D;A)={\rm max}\!\Set{\!k\in \mathbb{Z}_{\geq0} | \underset{m\to \infty}{\rm lim\,sup}\frac{{\rm dim}H^{0}(X,\mathcal{O}_{X}(\llcorner mD \lrcorner+A))}{m^{k}}>0\!}
\end{equation*}
if ${\rm dim}H^{0}(X,\mathcal{O}_{X}(\llcorner mD \lrcorner+A))>0$ for infinitely many $m\in \mathbb{Z}_{>0}$, and otherwise we set $\sigma(D;A):=-\infty$. 
Then, we define 
\begin{equation*}
\kappa_{\sigma}(X,D):={\rm max}\!\set{\sigma(D;A) | \text{$A$ is a Cartier divisor on $X$}\!}.
\end{equation*}

Let $X\to Z$ be a projective morphism from a normal variety to a variety, and let $D$ be an $\mathbb{R}$-Cartier $\mathbb{R}$-divisor on $X$. 
Then, the {\em relative numerical dimension} of $D$ over $Z$ is defined by $\kappa_{\sigma}(F,D|_{F})$, where $F$ is a sufficiently general fiber of the Stein factorization of $X\to Z$.  
We note that the value $\kappa_{\sigma}(F,D|_{F})$ does not depend on the choice of $F$, so the relative numerical dimension is well-defined. 
In this paper, we denote the relative numerical dimension by $\kappa_{\sigma}(X/Z,D)$. 
\end{defn}

\begin{rem}\label{remdiv}
We write down basic properties of the invariant Iitaka dimension and the numerical dimension. 
\begin{enumerate}
\item
Let $D_{1}$ and $D_{2}$ be $\mathbb{R}$-Cartier $\mathbb{R}$-divisors on a normal projective variety $X$. 
\begin{itemize}
\item
Suppose that $D_{1}\sim_{\mathbb{R}}D_{2}$ holds. 
Then we have $\kappa_{\iota}(X,D_{1})=\kappa_{\iota}(X,D_{2})$ and $\kappa_{\sigma}(X,D_{1})=\kappa_{\sigma}(X, D_{2})$. 
\item
Suppose that $D_{1}\sim_{\mathbb{R}}N_{1}$ and $D_{2}\sim_{\mathbb{R}}N_{2}$ for some $\mathbb{R}$-divisors $N_{1}\geq0$ and $N_{2}\geq0$ such that ${\rm Supp}N_{1}={\rm Supp}N_{2}$. 
Then we have $\kappa_{\iota}(X,D_{1})=\kappa_{\iota}(X,D_{2})$ and $\kappa_{\sigma}(X,D_{1})=\kappa_{\sigma}(X, D_{2})$. 
\end{itemize}
\item
Let $f\colon Y \to X$ be a surjective morphism of normal projective varieties, and let $D$ be an $\mathbb{R}$-Cartier $\mathbb{R}$-divisor on $X$. 
\begin{itemize}
\item
We have $\kappa_{\iota}(X,D)=\kappa_{\iota}(Y,f^{*}D)$ and $\kappa_{\sigma}(X,D)=\kappa_{\sigma}(Y,f^{*}D)$.
\item
Suppose that $f$ is birational. 
Let $D'$ be an $\mathbb{R}$-Cartier $\mathbb{R}$-divisor on $Y$ such that  $D'=f^{*}D+E$ for some effective $f$-exceptional divisor $E$. 
Then we have $\kappa_{\iota}(X,D)=\kappa_{\iota}(Y,D')$ and $\kappa_{\sigma}(X,D)=\kappa_{\sigma}(Y, D')$.
\end{itemize}
\end{enumerate}
\end{rem}

\begin{defn}[Relatively abundant divisor and relatively log abundant divisor]\label{defn--abund}
Let $\pi\colon X\to Z$ be a projective morphism from a normal variety to a variety, and let $D$ be an $\mathbb{R}$-Cartier $\mathbb{R}$-divisor on $X$. 
We say that $D$ is $\pi$-{\em abundant} (or {\em abundant over} $Z$) if the equality $\kappa_{\iota}(X/Z,D)=\kappa_{\sigma}(X/Z,D)$ holds. 
When $Z$ is a point, we simply say that $D$ is {\em abundant}. 

Let $\pi\colon X\to Z$ and $D$ be as above, and let $(X,\Delta)$ be an lc pair. 
We say that $D$ is $\pi$-{\em log abundant} (or {\em log abundant over} $Z$) with respect to $(X,\Delta)$ if $D$ is abundant over $Z$ and for any lc center $S$ of $(X,\Delta)$ with the normalization $S^{\nu}\to S$, the pullback $D|_{S^{\nu}}$ is abundant over $Z$. 
\end{defn}

The following lemma is well-known in the case of $\mathbb{Q}$-divisors, but, to the best of our knowledge, we cannot find the $\mathbb{R}$-divisors case in the literature. 
We write a detailed proof for the reader's convenience. 

\begin{lem}\label{lem--abundant-general}
Let $\pi\colon X\to Z$ be a projective morphism from a normal variety to a quasi-projective variety, and let $D$ be an $\mathbb{R}$-Cartier divisor on $X$. 
If $\kappa_{\iota}(F_{0},D|_{F_{0}})\geq0$ for a sufficiently general fiber $F_{0}$ of the Stein factorization of $\pi$, we have $\kappa_{\iota}(X/Z,D)\geq0$. 
In particular, $D$ is abundant over $Z$ if and only if $D|_{F}$ is abundant for any sufficiently general fiber $F$ of the Stein factorization of $\pi$. 
\end{lem}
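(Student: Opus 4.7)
The plan is to reduce to the $\mathbb{Q}$-Cartier case, which the paper already identifies as classical, via an $\mathbb{R}$-to-$\mathbb{Q}$ decomposition using a $\mathbb{Q}$-basis of $\mathbb{R}$.

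I first dispose of the ``in particular'' assuming the first assertion. Granted the main implication, $\kappa_{\iota}(F_{0},D|_{F_{0}})\geq 0$ is equivalent to $\kappa_{\iota}(X/Z,D)\geq 0$, since the reverse direction is immediate: if $D\sim_{\mathbb{R},Z}E\geq 0$ on $X$, then $D|_{F}\sim_{\mathbb{R}}E|_{F}\geq 0$ on a sufficiently general fiber $F$ (the pullback term $\pi^{\ast}M$ in the equivalence is supported on $\pi^{-1}(\mathrm{Supp}\,M)$, which misses $F$ for $F$ general). Whether both $\kappa_{\iota}$'s are $\geq 0$ or both are $-\infty$, Definitions \ref{defn--inv-iitaka-dim} and \ref{defn--num-dim} give $\kappa_{\iota}(X/Z,D)=\kappa_{\iota}(F,D|_{F})$ and $\kappa_{\sigma}(X/Z,D)=\kappa_{\sigma}(F,D|_{F})$ for sufficiently general $F$, so abundance over $Z$ is equivalent to abundance of $D|_{F}$.

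For the main assertion, after passing to the Stein factorization I assume $\pi$ is a contraction. Choose $E_{0}\geq 0$ with $D|_{F_{0}}\sim_{\mathbb{R}}E_{0}$, and replace $X$ by a log resolution of $(X,\mathrm{Supp}(D)+\overline{\mathrm{Supp}(E_{0})})$; Remark \ref{remdiv}(2) ensures this does not affect the relevant dimensions. Now $X$ is smooth, so the closure $\tilde{E}$ of $E_{0}$ in $X$ is Cartier, whence $\Theta:=D-\tilde{E}$ is $\mathbb{R}$-Cartier with $\Theta|_{F_{0}}\sim_{\mathbb{R}}0$. It suffices to prove $\Theta\sim_{\mathbb{R},Z}0$, since that gives $D\sim_{\mathbb{R},Z}\tilde{E}\geq 0$. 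To do this, fix a $\mathbb{Q}$-basis $\{r_{\alpha}\}$ of $\mathbb{R}$ over $\mathbb{Q}$ with $r_{1}=1$. The definition of $\mathbb{R}$-Cartier lets me write $\Theta=\sum_{i}c_{i}C_{i}$ as a finite $\mathbb{R}$-combination of Cartier divisors; expanding $c_{i}=\sum_{\alpha}q_{i\alpha}r_{\alpha}$ with $q_{i\alpha}\in\mathbb{Q}$ (finitely many nonzero) gives a finite decomposition $\Theta=\sum_{\alpha}r_{\alpha}\Theta_{\alpha}$ with $\Theta_{\alpha}=\sum_{i}q_{i\alpha}C_{i}$ a $\mathbb{Q}$-Cartier divisor. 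Similarly decompose $t_{j}=\sum_{\alpha}s_{j\alpha}r_{\alpha}$ with $s_{j\alpha}\in\mathbb{Q}$ in the relation $\Theta|_{F_{0}}=\sum_{j}t_{j}(\phi_{j})$ for some $\phi_{j}\in\mathbb{C}(F_{0})^{\ast}$, $t_{j}\in\mathbb{R}$. Comparing the two expansions of $\Theta|_{F_{0}}$ gives the identity
\begin{equation*}
\sum_{\alpha}r_{\alpha}\Bigl(\Theta_{\alpha}|_{F_{0}}-\sum_{j}s_{j\alpha}(\phi_{j})\Bigr)=0
\end{equation*}
of $\mathbb{Q}$-divisors on $F_{0}$, and $\mathbb{Q}$-linear independence of the $r_{\alpha}$ then forces $\Theta_{\alpha}|_{F_{0}}=\sum_{j}s_{j\alpha}(\phi_{j})\sim_{\mathbb{Q}}0$ for every $\alpha$. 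Applying the classical $\mathbb{Q}$-Cartier case of the lemma to each $\Theta_{\alpha}$ gives $\Theta_{\alpha}\sim_{\mathbb{Q},Z}0$, and summing over $\alpha$ produces $\Theta\sim_{\mathbb{R},Z}0$.

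The main obstacle is the $\mathbb{R}$-to-$\mathbb{Q}$ decomposition above: the rational functions $\phi_{j}$ live on $F_{0}$ and do not canonically lift to $X$, so the reduction must be carried out purely at the level of the real coefficients $t_{j}$, exploiting that each $\Theta_{\alpha}|_{F_{0}}-\sum_{j}s_{j\alpha}(\phi_{j})$ is already a $\mathbb{Q}$-divisor and that $\{r_{\alpha}\}$ is a $\mathbb{Q}$-basis of $\mathbb{R}$. Once this is granted, the $\mathbb{Q}$-Cartier input is a routine consequence of cohomology and base change applied to $\pi_{\ast}\mathcal{O}_{X}(m\Theta_{\alpha})$.
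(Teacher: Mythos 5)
Your reduction has a genuine gap at its very first geometric step, and it is precisely the step that the lemma is really about. You choose $E_{0}\geq 0$ on the fiber $F_{0}$ with $D|_{F_{0}}\sim_{\mathbb{R}}E_{0}$ and then form ``the closure $\tilde{E}$ of $E_{0}$ in $X$''. But $F_{0}$ is a \emph{closed} fiber, so $\mathrm{Supp}(E_{0})$ is already closed in $X$ and has codimension $1+\dim Z$ there; its closure is not a divisor on $X$, and no divisor $\tilde{E}$ on $X$ with $\tilde{E}|_{F_{0}}=E_{0}$ need exist. You are implicitly treating the sufficiently general closed fiber as if it were the generic fiber, where divisors and rational functions do spread out to $X$. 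Passing from effectivity (or linear equivalence) data on a very general \emph{closed} fiber to relative data on $X$ over $Z$ is exactly the content of the lemma; the paper's proof bridges it by fixing a representation $D=\sum_{i}r_{i}D_{i}$ in advance, forming for every rational point $\boldsymbol{p}$ and every $m$ the open locus $S_{\boldsymbol{p},m}$ where $h^{0}(X_{z},mk_{\boldsymbol{p}}D_{\boldsymbol{p}}|_{X_{z}})=0$, and taking the fiber over a point of the countable intersection $W$, so that for \emph{every} $\mathbb{Q}$-Cartier combination $D_{\boldsymbol{p}'}$ effectivity on $F_{0}$ is equivalent to relative effectivity. Your argument has no analogue of this semicontinuity step, and moreover the auxiliary divisors $\Theta_{\alpha}$ you would feed into the ``classical $\mathbb{Q}$-Cartier case'' are only defined \emph{after} $F_{0}$ and $E_{0}$ are chosen, so you cannot retroactively require $F_{0}$ to be sufficiently general for them; the paper's countable family is deliberately indexed by all rational points to avoid this circularity.

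Two further remarks. First, your coefficient bookkeeping via a $\mathbb{Q}$-basis of $\mathbb{R}$ is legitimate as linear algebra, and it plays the same role as the paper's convex-geometry step; but the paper's version (writing $\boldsymbol{v}=\sum_{l}\alpha_{l}\boldsymbol{v}_{l}$ with $\alpha_{l}>0$ and $\boldsymbol{v}_{l}$ rational points of the solution polytope with $a_{j}'\geq 0$) keeps the coefficients positive and the pieces effective, which yours does not. Second, even granting everything else, the conclusion $\Theta_{\alpha}\sim_{\mathbb{Q},Z}0$ from $\Theta_{\alpha}|_{F_{0}}\sim_{\mathbb{Q}}0$ is too strong: a vertical divisor restricts to $0$ on a general fiber without being relatively trivial. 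One only gets $\mathbb{Q}$-linear equivalence to a vertical divisor, which must then be made effective by adding the pullback of an ample divisor from $Z$, as in the second paragraph of the paper's proof; since your $r_{\alpha}$ and $\Theta_{\alpha}$ carry no signs, assembling these error terms into an effective divisor requires extra care.
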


\begin{proof}
By Definition \ref{defn--inv-iitaka-dim}, we see that the first assertion shows $\kappa_{\iota}(X/Z,D)=\kappa_{\iota}(F,D|_{F})$ for any sufficiently general fiber $F$ of the Stein factorization of $\pi$. 
By Definition \ref{defn--num-dim} the equality $\kappa_{\sigma}(X/Z,D)=\kappa_{\sigma}(F,D|_{F})$ holds, so the second assertion follows from the first assertion. 
Therefore, it is sufficient to prove the first assertion. 
Assume $\kappa_{\iota}(F_{0},D|_{F_{0}})\geq0$ for a sufficiently general fiber $F_{0}$ of the Stein factorization of $\pi$. 

Let $\pi'\colon X\to Z'$ be the Stein factorization of $\pi$, and let $U\subset Z'$ be an open subset over which $\pi'$ is flat. 
Since the morphism $Z'\to Z$ is finite, if there is $E'\geq0$ such that $D|_{\pi'^{-1}(U)}\sim_{\mathbb{R},U}E'$ then we can find $E'_{+}\geq0$ and $E'_{-}\geq0$ on $X$ such that $D\sim_{\mathbb{R}}E'_{+}-E'_{-}$ and codimension of $\pi({\rm Supp}E'_{-})$ in $Z$ is at least one. 
Since $Z$ is quasi-projective, there is a Cartier divisor $A\geq 0$ on $Z$ such that ${\rm Supp}\pi^{*}A\supset {\rm Supp}E'_{-}$. 
Rescaling $A$ if necessary, we may assume $\pi^{*}A-E'_{-}\geq0$. 
Then $D\sim_{\mathbb{R},Z}E'_{+}+(\pi^{*}A-E'_{-})\geq0$, and so we have $\kappa_{\iota}(X/Z,D)\geq0$. 
We may assume $\pi'(F_{0})\in U$ since $F_{0}$ is sufficiently general. 
So we may replace $X\to Z$ by $\pi'^{-1}(U)\to U$, and we may assume that $\pi$ is a contraction and flat. 

Fix a representation $D=\sum_{i=1}^{n}r_{i}D_{i}$ of $D$ as an $\mathbb{R}$-linear combination of Cartier divisors. 
Since $F_{0}$ is sufficiently general, we may assume that $D_{i}|_{F_{0}}$ are well-defined as Weil divisors. 
For any closed point $z\in Z$, we denote by $X_{z}$ the fiber over $z$ of $\pi$. 
For any rational point $\boldsymbol{p}=(p_{1},\cdots,p_{n})\in \mathbb{R}^{n}$, we set $D_{\boldsymbol{p}}=\sum_{i=1}^{n}p_{i}D_{i}$ and we fix a positive integer $k_{\boldsymbol{p}}$ such that $k_{\boldsymbol{p}}D_{\boldsymbol{p}}$ is Cartier. 
In addition, for any $\boldsymbol{p}$ and $m\in \mathbb{Z}_{>0}$, we define
\begin{equation*}
S_{\boldsymbol{p},m}=\set{\! z\in Z|{\rm dim}H^{0}(X_{z},\mathcal{O}_{X_{z}}(mk_{\boldsymbol{p}}D_{\boldsymbol{p}}|_{X_{z}}))=0\!},
\end{equation*}
which is empty or open by upper semi-continuity of dimension of cohomology of flat coherent sheaves. 
We set $J=\set{\!(\boldsymbol{p},m)|\boldsymbol{p}\in\mathbb{Q}^{n},m\in \mathbb{Z}_{>0}, S_{\boldsymbol{p},m}\neq \emptyset}$. 
Then the set
\begin{equation*}
W=\bigcap_{(\boldsymbol{p},m)\in J}S_{\boldsymbol{p},m}
\end{equation*}
is an intersection of countably many open subsets in $Z$, and we may assume $F_{0}=X_{z_{0}}$ for some $z_{0}\in W$. 
Then, for any $\mathbb{Q}$-Cartier divisor $D_{\boldsymbol{p}'}$ associated to a rational point $\boldsymbol{p}'=(p'_{1},\cdots,p'_{n})\in \mathbb{R}^{n}$, an inequality $\kappa(F_{0},D_{\boldsymbol{p}'}|_{F_{0}})\geq0$ holds if and only if $D_{\boldsymbol{p}'}\sim_{\mathbb{Q},Z}E_{\mathbb{\boldsymbol{p}'}}$ for some $E_{\mathbb{\boldsymbol{p}'}}\geq0$. 
Indeed, $\kappa(F_{0},D_{\boldsymbol{p}'}|_{F_{0}})\geq0$ if and only if $z_{0}\not\in S_{\boldsymbol{p}',m}$ for some $m$. 
But, since $z_{0}\in W$ and by construction of $W$ and $J$, the condition $z_{0}\not\in S_{\boldsymbol{p}',m}$ is equivalent to $S_{\boldsymbol{p}',m}=\emptyset$. 
Since $D_{\boldsymbol{p}'}$ is a $\mathbb{Q}$-divisor, by construction of $S_{\boldsymbol{p}',m}$, it is easy to check that $S_{\boldsymbol{p}',m}=\emptyset$ for some $m$ if and only if $D_{\boldsymbol{p}'}\sim_{\mathbb{Q},Z}E_{\mathbb{\boldsymbol{p}'}}$ for some $E_{\mathbb{\boldsymbol{p}'}}\geq0$. 
We note that the assumption $\kappa_{\iota}(F_{0},D|_{F_{0}})\geq 0$ is not used in this paragraph and the previous paragraph. 

From our assumption that $\kappa_{\iota}(F_{0},D|_{F_{0}})\geq 0$, there are positive real numbers $a_{1},\cdots,a_{s}$, effective Weil divisors $E_{1},\cdots, E_{s}$ on $F_{0}$, real numbers $b_{1},\cdots, b_{t}$, and rational functions $\phi_{1},\cdots, \phi_{t}$ on $F_{0}$ such that $D|_{F_{0}}=\sum_{i=1}^{n}r_{i}D_{i}|_{F_{0}}=\sum_{j=1}^{s}a_{j}E_{j}+\sum_{k=1}^{t}b_{k}\cdot{\rm div}(\phi_{k})$ as $\mathbb{R}$-divisors. 
We consider the set 
\begin{equation*}
\Biggl\{\boldsymbol{v}'=\bigl((r'_{i})_{i},(a'_{j})_{j},(b'_{k})_{k}\bigr)\in \mathbb{R}^{n}\times (\mathbb{R}_{\geq0})^{s}\times \mathbb{R}^{t}\Bigg{|}\sum_{i=1}^{n}r'_{i}D_{i}|_{F_{0}}=\sum_{j=1}^{s}a'_{j}E_{j}+\sum_{k=1}^{t}b'_{k}\cdot{\rm div}(\phi_{k})\Biggr\},
\end{equation*}
which contains $\boldsymbol{v}:=\bigl((r_{i})_{i},(a_{j})_{j},(b_{k})_{k}\bigr)$. 
Since all $D_{i}|_{F_{0}}$ are well-defined as Weil divisors, by an argument of convex geometry, we can find positive real numbers $\alpha_{1},\cdots, \alpha_{l_{0}}$ and rational points $\boldsymbol{v}_{1},\cdots, \boldsymbol{v}_{l_{0}}$ in the above set such that 
$\sum_{l=1}^{l_{0}}\alpha_{l}=1$ and $\sum_{l=1}^{l_{0}}\alpha_{l}\boldsymbol{v}_{l}=\boldsymbol{v}$. 
This shows there are $\mathbb{Q}$-Cartier divisors $D^{(1)},\cdots ,D^{(l_{0})}$ on $X$ such that $\sum_{l=1}^{l_{0}}\alpha_{l}D^{(l)}=D$ and $\kappa(F_{0},D^{(l)}|_{F_{0}})\geq0$ for any $1\leq l\leq l_{0}$. 
By the argument in the previous paragraph,  for any $1\leq l\leq l_{0}$, there is a $\mathbb{Q}$-divisor $E^{(l)}\geq0$ on $X$ such that $D^{(l)}\sim_{\mathbb{Q},Z}E^{(l)}$. 
Putting $E$ as $E=\sum_{l=1}^{l_{0}}\alpha_{l}E^{(l)}$, we have $D\sim_{\mathbb{R},Z}E\geq0$. 

Thus we obtain $\kappa_{\iota}(X/Z,D)\geq0$ assuming $\kappa_{\iota}(F_{0},D|_{F_{0}})\geq0$, so Lemma \ref{lem--abundant-general} holds. 
\end{proof}

\begin{lem}\label{lem--iitakafib} Let $(X,\Delta)$ be a projective lc pair with an $\mathbb{R}$-divisor $\Delta$. 
Suppose that $K_{X}+\Delta$ is abundant and there is an effective $\mathbb{R}$-divisor $D\sim_{\mathbb{R}}K_{X}+\Delta$. 
Let $X\dashrightarrow V$ be the Iitaka fibration associated to $D$. 
Pick a log resolution $f\colon Y\to X$ of $(X,\Delta)$ such that the induced map $Y\dashrightarrow V$ is a morphism, and let $(Y,\Gamma)$ be a projective lc pair such that we can write $K_{Y}+\Gamma=f^{*}(K_{X}+\Delta)+E$ for an effective $f$-exceptional divisor $E$. 

Then, we have $\kappa_{\sigma}(Y/V,K_{Y}+\Gamma)=0$. 
\end{lem}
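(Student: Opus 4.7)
My plan is to reduce the computation of $\kappa_{\sigma}(Y/V,K_{Y}+\Gamma)$ to a computation on a general fiber of the Iitaka fibration of $D$, and then exploit the abundance hypothesis.

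First, let $g\colon Y\to V$ denote the induced morphism, let $F_{Y}$ be a sufficiently general fiber of $g$, and set $F_{X}=f(F_{Y})$, which is the corresponding general fiber of $X\dashrightarrow V$. Since $F_{Y}$ is general and $f$ is birational, the restriction $f|_{F_{Y}}\colon F_{Y}\to F_{X}$ is birational and $E|_{F_{Y}}$ is an effective $(f|_{F_{Y}})$-exceptional divisor. From
\begin{equation*}
(K_{Y}+\Gamma)|_{F_{Y}} = (f|_{F_{Y}})^{*}(D|_{F_{X}}) + E|_{F_{Y}}
\end{equation*}
and Remark \ref{remdiv}(2), it follows that
\begin{equation*}
\kappa_{\sigma}(Y/V,K_{Y}+\Gamma) = \kappa_{\sigma}(F_{Y},(K_{Y}+\Gamma)|_{F_{Y}}) = \kappa_{\sigma}(F_{X},D|_{F_{X}}),
\end{equation*}
so it suffices to prove $\kappa_{\sigma}(F_{X},D|_{F_{X}})=0$.

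The lower bound $\kappa_{\sigma}(F_{X},D|_{F_{X}})\geq 0$ follows from the defining property of the Iitaka fibration: $\kappa_{\iota}(F_{X},D|_{F_{X}})=0$, so in particular $D|_{F_{X}}$ is $\mathbb{R}$-linearly equivalent to an effective divisor. For the reverse inequality $\kappa_{\sigma}(F_{X},D|_{F_{X}})\leq 0$, I would invoke the abundance hypothesis. Since $K_{X}+\Delta$ is abundant and $D\sim_{\mathbb{R}}K_{X}+\Delta$, we have $\kappa_{\sigma}(X,D)=\kappa_{\iota}(X,D)=\dim V$. I would first reduce to the case of a $\mathbb{Q}$-divisor by replacing $D$ with a $\mathbb{Q}$-divisor having the same support (this preserves both $\kappa_{\iota}$ and $\kappa_{\sigma}$ by Remark \ref{remdiv}(1)), and then appeal to Nakayama's theory of the $\sigma$-decomposition in \cite{nakayama}, from which it follows that for an abundant $\mathbb{Q}$-divisor the $\sigma$-fibration coincides with the Iitaka fibration, and so the numerical dimension of the restriction to a general fiber vanishes.

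I expect the upper bound to be the main obstacle: a priori $\kappa_{\iota}=0$ on a fiber does not imply $\kappa_{\sigma}=0$, and it is precisely the abundance equality $\kappa_{\sigma}(X,D)=\kappa_{\iota}(X,D)$ that forces the $\sigma$-fibration to agree with the Iitaka fibration and thereby forces the numerical dimension to vanish on general fibers of $X\dashrightarrow V$. The $\mathbb{R}$-divisor aspect adds a further technicality that I would handle by the rational-approximation step via Remark \ref{remdiv}(1) indicated above.
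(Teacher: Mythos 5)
Your first reduction (restricting to a sufficiently general fibre of $Y\to V$) is harmless but also buys almost nothing, since $\kappa_{\sigma}(Y/V,\cdot)$ is \emph{defined} in this paper by restriction to such a fibre; the entire burden sits on the step you yourself flag as the main obstacle, and there the proposal has a genuine gap. The statement you want to quote --- that for an abundant divisor the ``$\sigma$-fibration'' coincides with the Iitaka fibration, so the numerical dimension of the restriction to a general Iitaka fibre vanishes --- is not an available black box: it \emph{is} the content of the lemma (it is Nakayama's \cite[V, 4.2 Corollary]{nakayama}), and Remark \ref{rem--iitakafib} of the paper explains exactly why it cannot be invoked off the shelf. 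Lesieutre's example shows $\kappa_{\sigma}\neq\kappa_{\nu}$ in general, so the results in the literature identifying the Iitaka fibration with a numerically defined reduction map (e.g.\ \cite[Theorem 6.1]{leh}) do not apply, and \cite[V, 4.2 Corollary]{nakayama} is only recovered via Fujino's corrigendum; within this paper the proof of Lemma \ref{lem--iitakafib} is precisely how that corollary is justified, so your appeal to it is circular. There is also a secondary gap in your rational-approximation step: Remark \ref{remdiv}(1) guarantees that replacing $D$ by a $\mathbb{Q}$-divisor with the same support preserves $\kappa_{\iota}$ and $\kappa_{\sigma}$, but it says nothing about the Iitaka fibration itself, and you need the new divisor to have the \emph{same} $V$ for the conclusion $\kappa_{\sigma}(Y/V,K_{Y}+\Gamma)=0$ to transfer back.

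The missing idea is the concrete mechanism the paper uses for the upper bound, which avoids fibrewise arguments entirely. Since $X\dashrightarrow V$ is the Iitaka fibration of $D$, one writes $f^{*}D=\psi^{*}A+E'$ with $A$ ample on $V$ and $E'\geq0$, so that for every $k>0$ the divisor $K_{Y}+\Gamma+k\psi^{*}A\sim_{\mathbb{R}}(1+k)\psi^{*}A+E'+E$ is represented by an effective divisor with the same support as one representing $K_{Y}+\Gamma$; by Remark \ref{remdiv}(1) and abundance this gives $\kappa_{\sigma}(Y,K_{Y}+\Gamma+k\psi^{*}A)=\kappa_{\sigma}(Y,K_{Y}+\Gamma)={\rm dim}\,V$. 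Passing to a resolution $g\colon\bar{V}\to V$ with $K_{\bar{V}}+kg^{*}A$ big and applying the subadditivity inequality \cite[(3.3)]{fujino-subadd-correct} then yields
\begin{equation*}
{\rm dim}\,V=\kappa_{\sigma}(Y,K_{Y}+\Gamma+k\bar{\psi}^{*}g^{*}A)\geq\kappa_{\sigma}(Y/V,K_{Y}+\Gamma)+{\rm dim}\,V,
\end{equation*}
forcing $\kappa_{\sigma}(Y/V,K_{Y}+\Gamma)\leq0$. Without this inequality (or an equivalent substitute playing the role of subadditivity of $\kappa_{\sigma}$ over $V$), the proposal does not close.
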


\begin{proof}
We denote the morphism $Y\to V$ by $\psi$. 
It is clear that $\kappa_{\sigma}(Y/V,K_{Y}+\Gamma)\geq0$. 
We prove the inverse inequality $\kappa_{\sigma}(Y/V,K_{Y}+\Gamma)\leq0$. 
Since the map $X\dashrightarrow V$ is the Iitaka fibration associated to $D$, there is an ample $\mathbb{Q}$-divisor $A$ on $V$ and an effective $\mathbb{R}$-divisor $E'$ on $Y$ such that $f^{*}D=\psi^{*}A+E'$. 
Then we have $K_{Y}+\Gamma+k\psi^{*}A\sim_{\mathbb{R}}(1+k)\psi^{*}A+E'+E$ for any $k>0$. 
By Remark \ref{remdiv}, we have 
\begin{equation*}
\kappa_{\sigma}(Y,K_{Y}+\Gamma+k\psi^{*}A)=\kappa_{\sigma}(Y,K_{Y}+\Gamma)={\rm dim}V
\end{equation*}
for any $k>0$. 
Let $g\colon \bar{V}\to V$ be a resolution of $V$. 
Replacing $(Y,\Gamma)$ by a higher model, we may assume that the induced map $\bar{\psi}\colon Y\dashrightarrow \bar{V}$ is a morphism. 
Fix an integer $k>0$ such that $K_{\bar{V}}+kg^{*}A$ is big. 
Applying the inequality in \cite[(3.3)]{fujino-subadd-correct} to $\bar{\psi}\colon Y\to \bar{V}$ (Put $D=K_{Y}-\bar{\psi}^{*}K_{\bar{V}}+\Gamma$ and  $Q=K_{\bar{V}}+kg^{*}A$ in the notations of \cite[(3.3)]{fujino-subadd-correct}), we obtain
\begin{equation*}
\begin{split}
\kappa_{\sigma}(Y,K_{Y}+\Gamma+k\bar{\psi}^{*}g^{*}A)&\geq \kappa_{\sigma}(Y/\bar{V},K_{Y}-\bar{\psi}^{*}K_{\bar{V}}+\Gamma)+\kappa(\bar{V},K_{\bar{V}}+kg^{*}A)\\
&=\kappa_{\sigma}(Y/\bar{V},K_{Y}+\Gamma)+{\rm dim}\bar{V}
=\kappa_{\sigma}(Y/V,K_{Y}+\Gamma)+{\rm dim}V. 
\end{split}
\end{equation*}
We note that we can apply the inequality in \cite[(3.3)]{fujino-subadd-correct} even if $\Gamma$ is an $\mathbb{R}$-divisor. 
Since we have $\bar{\psi}^{*}g^{*}A=\psi^{*}A$ and $\kappa_{\sigma}(Y,K_{Y}+\Gamma+k\psi^{*}A)=\kappa_{\sigma}(Y,K_{Y}+\Gamma)={\rm dim}V$, we obtain $\kappa_{\sigma}(Y/V,K_{Y}+\Gamma)\leq0$. 
So $\kappa_{\sigma}(Y/V,K_{Y}+\Gamma)=0$, and Lemma \ref{lem--iitakafib} holds. 
\end{proof}

\begin{rem}\label{rem--iitakafib}
Quite recently, Lesieutre \cite{lesieutre} gave an example of $\mathbb{R}$-divisor $D$ such that $\kappa_{\sigma}(D)\neq \kappa_{\nu}(D)$, where $\kappa_{\nu}(\;\cdot\;)$ is an other notion of numerical dimension defined by numerical domination. 
So Lemma \ref{lem--iitakafib} does not directly follow from \cite[Theorem 6.1]{leh}. 
Fortunately, argument by Nakayama \cite[Section V]{nakayama} works in the proof of Lemma \ref{lem--iitakafib}, and therefore \cite[V, 4.2 Corollary]{nakayama} holds true (see \cite{fujino-subadd-correct} for details). 
By \cite[(3.3)]{fujino-subadd-correct}, results of the minimal model theory proved with Lemma \ref{lem--iitakafib} (in particular, \cite[Theorem 4.3]{gongyolehmann} and results in \cite{has-mmp}) can be recovered without any troubles.
\end{rem}

The following statement is the $\mathbb{R}$-divisor version of \cite[Theorem 4.3]{gongyolehmann}. 

\begin{lem}\label{lem--abundantmmp}
Let $\pi\colon X\to Z$ be a projective morphism of normal quasi-projective varieties, and let $(X,\Delta)$ be a klt pair. 
If $K_{X}+\Delta$ is abundant over $Z$, then $(X,\Delta)$ has a good minimal model or a Mori fiber space over $Z$. 
\end{lem}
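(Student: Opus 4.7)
The statement is the $\mathbb{R}$-boundary version of \cite[Theorem 4.3]{gongyolehmann}. The plan is to reduce to the known $\mathbb{Q}$-boundary case via an Iitaka-fibration reduction followed by rational perturbation of the boundary. After replacing $\pi$ by its Stein factorization I may assume $\pi$ is a contraction. If $K_X+\Delta$ is not pseudo-effective over $Z$, then the $\mathbb{R}$-divisor version of \cite{bchm} supplies a $(K_X+\Delta)$-MMP over $Z$ with scaling of an ample divisor that terminates with a Mori fiber space, so in what follows I assume $K_X+\Delta$ is pseudo-effective over $Z$.

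Since $K_X+\Delta$ is abundant, $\kappa_\iota(X/Z,K_X+\Delta)=\kappa_\sigma(X/Z,K_X+\Delta)\geq 0$, and an effective $\mathbb{R}$-divisor $D\sim_{\mathbb{R},Z}K_X+\Delta$ exists. Let $X\dashrightarrow V$ be the relative Iitaka fibration of $D$ over $Z$, and take a log resolution $f\colon Y\to X$ of $(X,\Delta)$ such that the induced map $\psi\colon Y\to V$ is a morphism. Writing $K_Y+\Gamma=f^*(K_X+\Delta)+E$ with $\Gamma,E\geq 0$ having no common components makes $(Y,\Gamma)$ klt by Lemma \ref{lem--discre}, and applying Lemma \ref{lem--iitakafib} on a general fiber of $\pi$ yields $\kappa_\sigma(Y/V,K_Y+\Gamma)=0$. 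It therefore suffices to produce a good minimal model of $(Y,\Gamma)$ over $V$, since this descends to a good minimal model over $Z$ and hence of $(X,\Delta)$ over $Z$.

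To handle the $\mathbb{R}$-boundary I would write $\Gamma=\sum_{i=1}^{N}r_i\Gamma^{(i)}$ as a convex $\mathbb{R}$-combination with $r_i>0$, $\sum_i r_i=1$, and each $\Gamma^{(i)}$ a $\mathbb{Q}$-boundary close enough to $\Gamma$ that $(Y,\Gamma^{(i)})$ is klt and $K_Y+\Gamma^{(i)}$ is $\mathbb{Q}$-linearly equivalent over $V$ to an effective divisor with relative numerical dimension zero. Then \cite[Theorem 4.3]{gongyolehmann} applied to each $(Y,\Gamma^{(i)})$ over $V$ produces good minimal models, and a simultaneous $(K_Y+\Gamma)$-MMP with scaling on a common $\mathbb{Q}$-factorial model combines these into a good minimal model of $(Y,\Gamma)$ over $V$. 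The main obstacle I anticipate is the perturbation step: one must choose the polytope of rational perturbations small enough that the effective relative representatives of $K_Y+\Gamma^{(i)}$ over $V$ are controlled by that of $K_Y+\Gamma$, so that both relative abundance and $\kappa_\sigma(\,\cdot\,/V)=0$ are inherited by each $\Gamma^{(i)}$; once that is secured, combining the $\mathbb{Q}$-divisor good minimal models on a common $\mathbb{Q}$-factorial model is a routine application of MMP with scaling.
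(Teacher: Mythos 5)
Your opening reduction (Stein factorization, disposing of the non-pseudo-effective case, passing to the relative Iitaka fibration $X\dashrightarrow V$ and using Lemma \ref{lem--iitakafib} to get $\kappa_{\sigma}(Y/V,K_{Y}+\Gamma)=0$) is exactly the paper's reduction. The divergence, and the problem, is in how you finish. The paper concludes in one stroke by citing \cite[Proposition 3.3]{has-mmp}, which is an $\mathbb{R}$-divisor statement taking as input the two conditions $\kappa_{\sigma}(Y/V,K_{Y}+\Gamma)=0$ and $\kappa_{\sigma}(Y/Z,K_{Y}+\Gamma)={\rm dim}V-{\rm dim}Z$ and producing a good minimal model \emph{over $Z$}. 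You replace this citation with two steps, each of which has a genuine gap.

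First, you assert that a good minimal model of $(Y,\Gamma)$ over $V$ ``descends to a good minimal model over $Z$.'' It does not: nefness and semi-ampleness over $V$ say nothing over $Z$. What is true is that on a good minimal model over $V$ the log canonical divisor becomes $\mathbb{R}$-linearly equivalent to a pullback from a birational modification of $V$; one must then invoke the canonical bundle formula (\cite[Corollary 3.2]{fg-bundle}) to get a klt pair on the base, observe that the resulting divisor is big over $Z$ because ${\rm dim}V-{\rm dim}Z=\kappa_{\sigma}(X/Z,K_{X}+\Delta)$, and apply \cite{bchm} there. This descent is precisely the content of \cite[Proposition 3.3]{has-mmp} (compare the proof of Lemma \ref{lem--thmmainklt}), not a formality. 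Second, the recombination of the $\mathbb{Q}$-divisor good minimal models of the $(Y,\Gamma^{(i)})$ into one for $(Y,\Gamma)$ is not ``a routine application of MMP with scaling'': the good minimal models supplied by \cite[Theorem 4.3]{gongyolehmann} live on different birational models, and to terminate a $(K_{Y}+\Gamma)$-MMP with scaling you would need Theorem \ref{thmtermi}, whose hypothesis is that $(Y,\Gamma)$ already has a log minimal model over the relevant base --- which is what you are trying to prove. The standard way around this (finiteness of weak lc models on a rational polytope) is available when the boundary is big, which is not assumed here. As written the argument does not close; the honest fix is to use an $\mathbb{R}$-divisor statement directly, as the paper does.
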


\begin{proof}
We may assume that $K_{X}+\Delta$ is pseudo-effective over $Z$ and $\pi$ is a contraction. 
Then the statement follows by Lemma \ref{lem--iitakafib}, \cite[Proposition 3.3]{has-mmp} and taking the relative Iitaka fibration over $Z$ associated to an effective $\mathbb{R}$-divisor $D\sim_{\mathbb{R},Z}K_{X}+\Delta$.  
\end{proof}

\subsection{Results related to log MMP}
In this subsection, we collect known results on log MMP or existence of good minimal models, and we prove two lemmas. 

\begin{thm}[cf.~{\cite[Theorem 4.1]{birkar-flip}}]\label{thmtermi}
Let $(X,\Delta)$ be a $\mathbb{Q}$-factorial lc pair such that $(X,0)$ is klt, and let $\pi\colon X \to Z$ be a projective morphism of normal quasi-projective varieties. 
If there exists a log minimal model of $(X,\Delta)$ over $Z$, then any $(K_{X}+\Delta)$-MMP over $Z$ with scaling of an ample divisor terminates. 
\end{thm}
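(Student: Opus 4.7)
The plan is to combine the existence of the log minimal model $(Y,\Delta_{Y})$ with the klt hypothesis on $(X,0)$ to reduce termination of the $(K_{X}+\Delta)$-MMP over $Z$ with scaling of an ample divisor $A$ to termination of an MMP for a klt pair with big boundary, for which \cite{bchm} applies. Denote the MMP by $(X_{1},\Delta_{1})\dashrightarrow (X_{2},\Delta_{2})\dashrightarrow\cdots$, with scaling numbers $\lambda_{1}\geq\lambda_{2}\geq\cdots$ and $\lambda:=\lim_{i}\lambda_{i}$.

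First I would reduce to the situation where every step is a flip. By the negativity lemma applied to the birational maps $X_{i}\dashrightarrow Y$ together with the discrepancy inequalities built into the definition of log minimal model, any prime divisor $D$ contracted at some step $X_{i}\dashrightarrow X_{i+1}$ satisfies $a(D,X,\Delta)<a(D,Y,\Delta_{Y})$, and hence must be exceptional over $Y$. Since only finitely many prime divisors on $X$ are exceptional over $Y$, only finitely many divisorial contractions can occur, and we may assume every subsequent step of the MMP is a flip.

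Next I would use the klt assumption to perturb $\Delta$ into a klt boundary. For any $\delta\in(0,1)$, the pair $(X,(1-\delta)\Delta)$ is klt, since the convexity identity
\[
a(P,X,(1-\delta)\Delta)=(1-\delta)\,a(P,X,\Delta)+\delta\,a(P,X,0)>-1
\]
holds for every prime divisor $P$ over $X$, using $a(P,X,0)>-1$. Fix such a $\delta$, and choose a general $A'\sim_{\mathbb{R}}\delta A$ so that $(X,(1-\delta)\Delta+A')$ is klt with $A'$ ample, hence with big boundary. By the boundedness of lengths of extremal rays (\cite[Section 18]{fujino-fund}), the scaling numbers $\lambda_{i}$ are uniformly bounded after a harmless rescaling of $A$; together with the identity
\[
K_{X}+(1-\delta)\Delta+A'\sim_{\mathbb{R}}(1-\delta)(K_{X}+\Delta)+\delta(K_{X}+A),
\]
this lets one verify that each flipping ray $R_{i}$ of the original sequence remains negative against $K_{X_{i}}+(1-\delta)\Delta_{i}+A'_{i}$ for $\delta$ sufficiently small. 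Thus the original sequence is also a $(K_{X}+(1-\delta)\Delta+A')$-MMP with scaling of a suitably rescaled ample divisor, and termination follows from \cite[Corollary 1.4.2]{bchm}.

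The main obstacle lies in making the perturbation step uniform: the pair $(\delta,A')$ must be chosen independently of the (possibly infinite) step index $i$, and the interpretation of the original MMP as an MMP for the klt perturbed pair must persist through every flip, which requires controlling how $\lambda_{i}$ and the coefficient of $A$ in the scaling interact with the length of the contracted extremal rays on each $X_{i}$, while propagating the klt and bigness conditions across successive flips. Once this uniform comparison is in place, the klt termination of \cite{bchm} delivers the conclusion.
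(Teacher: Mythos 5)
The paper gives no proof of Theorem \ref{thmtermi}; it is quoted from \cite[Theorem 4.1]{birkar-flip}, so your argument has to stand on its own. It does not: the gap is exactly the point you flag as ``the main obstacle,'' and it is not a technical loose end but the place where the whole strategy fails. Write $\lambda_i$ for the scaling numbers and $\lambda=\lim_i\lambda_i$. For the $i$-th extremal ray $R_i$ one has $(K_{X_i}+\Delta_i)\cdot R_i=-\lambda_i(A_i\cdot R_i)$ with $A_i\cdot R_i>0$, hence
\[
\bigl(K_{X_i}+(1-\delta)\Delta_i+A'_i\bigr)\cdot R_i=(\delta-\lambda_i)(A_i\cdot R_i)-\delta\,(\Delta_i\cdot R_i),
\]
which is positive as soon as $\lambda_i<\delta$ and $\Delta_i\cdot R_i\le 0$. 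So for any fixed $\delta>0$, once $\lambda_i$ drops below $\delta$ the original sequence ceases to be a $(K_X+(1-\delta)\Delta+A')$-MMP, and \cite{bchm} says nothing about it. The length bounds of \cite[Section 18]{fujino-fund} do not help: they give an upper bound $2\dim X$ on $-(K_{X_i}+\Delta_i)\cdot C_i$ for a suitable generator $C_i$ of $R_i$, not a lower bound away from $0$, while $A_i\cdot C_i\le 2\dim X/\lambda_i$ is unbounded as $\lambda_i\to0$. Your argument does prove termination when $\lambda>0$ (replace $\Delta$ by $\Delta+\tfrac{\lambda}{2}A$, perturb to a klt pair with big boundary, and apply \cite{bchm}), but notice that in that case the hypothesis that $(X,\Delta)$ has a log minimal model over $Z$ is never used.

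That unused hypothesis is the tell: the remaining case $\lambda=0$ is precisely where the existence of a log minimal model must enter, and it cannot be reached by a perturbation to the klt big-boundary setting --- otherwise one would get termination of lc MMP with scaling of an ample divisor with no minimal-model assumption at all, which is open. The case $\lambda=0$ is the content of \cite[Theorem 4.1(iii)]{birkar-flip}, whose proof is of a different nature: since $K_{X_i}+\Delta_i$ is then a limit of movable divisors over $Z$, one compares each $(X_i,\Delta_i)$ directly with the given log minimal model $(Y,\Delta_Y)$ via the negativity lemma on common resolutions, showing that $(Y,\Delta_Y)$ stays a log minimal model of every $(X_i,\Delta_i)$ and that for $i\gg0$ the maps $X_i\dashrightarrow Y$ are isomorphisms in codimension one with matching discrepancies, from which the sequence is forced to stop. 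To close your proof you would need to import an argument of this type for the $\lambda=0$ case (or simply quote \cite[Theorem 4.1]{birkar-flip}, as the paper does). As a minor additional point, your reduction to flips is fine but needs no minimal model: in the $\mathbb{Q}$-factorial setting each divisorial contraction drops $\rho(X_i/Z)$ by one, so only finitely many can occur.
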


\begin{lem}[{\cite[Lemma 2.15]{has-mmp}}]\label{lembirequiv}
Let $\pi\colon X \to Z$ be a projective morphism of normal quasi-projective varieties, and let $(X,\Delta)$ be an lc pair. 
Let $(Y,\Gamma)$ be an lc pair with a projective birational morphism $f\colon Y\to X$ such that $K_{Y}+\Gamma=f^{*}(K_{X}+\Delta)+E$ for an effective $f$-exceptional divisor $E$. 

Then, $(X,\Delta)$ has a weak lc model (resp.~a log minimal model, a good minimal model) over $Z$ if and only if  $(Y,\Gamma)$ has a weak lc model (resp.~a log minimal model, a good minimal model) over $Z$. 
\end{lem}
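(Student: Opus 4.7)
The plan is to transport models between $(X,\Delta)$ and $(Y,\Gamma)$ using the birational morphism $f$, exploiting the discrepancy relation obtained from $K_{Y}+\Gamma = f^{*}(K_{X}+\Delta)+E$: for any prime divisor $D$ over $Y$, extracting $D$ on a log resolution $h$ gives $a(D,Y,\Gamma)=a(D,X,\Delta)-\mathrm{mult}_{D}(h^{*}E)$, so discrepancies on $(Y,\Gamma)$ are at most those on $(X,\Delta)$, with equality whenever the valuation is disjoint from $\Supp E\subset \Exc(f)$. I will carry out the proof for the weak lc model case; the log minimal and good minimal variants follow by additionally tracking $\mathbb{Q}$-factoriality (restored if necessary by a small $\mathbb{Q}$-factorial modification), strict inequality of discrepancies, and semi-ampleness of the log canonical divisor.

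For direction $(\Rightarrow)$: given a weak lc model $(X',\Delta')$ of $(X,\Delta)$ over $Z$ with birational contraction $\phi\colon X\dashrightarrow X'$, set $\psi:=\phi\circ f\colon Y\dashrightarrow X'$. Since $\phi$ has no $\phi^{-1}$-exceptional divisors and $f$ is a morphism, $\psi$ is a birational contraction. Writing $\Gamma = f^{-1}_{*}\Delta + \Gamma_{\mathrm{exc}}$ with $\Gamma_{\mathrm{exc}}$ supported on the $f$-exceptional divisors, we get $\psi_{*}\Gamma = \phi_{*}(f_{*}\Gamma)=\phi_{*}\Delta = \Delta'$, verifying the log birational model condition; nefness of $K_{X'}+\Delta'$ over $Z$ is inherited. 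The discrepancy inequality at a prime divisor $D$ on $Y$ exceptional over $X'$ is checked by splitting into the cases $D$ is $f$-exceptional or not, and applying the discrepancy relation together with the standard extension (via the negativity lemma) of the weak lc model inequality $a(D,X,\Delta)\leq a(D,X',\Delta')$ to all valuations over $X$.

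For direction $(\Leftarrow)$: given a weak lc model $(Y',\Gamma')$ of $(Y,\Gamma)$ over $Z$ with birational contraction $\phi'\colon Y\dashrightarrow Y'$, the composition $\psi':=\phi'\circ f^{-1}\colon X\dashrightarrow Y'$ may extract divisors, namely the strict transforms on $Y'$ of those $f$-exceptional divisors on $Y$ that are not contracted by $\phi'$. Let $E'$ be the reduced $\psi'^{-1}$-exceptional divisor on $Y'$ and put $\Delta^{*}:=\psi'_{*}\Delta+E'$. A direct computation shows $\Delta^{*}-\Gamma'$ is an effective divisor supported in $E'$ with coefficients $1-\mathrm{mult}_{Q}(\Gamma_{\mathrm{exc}})\in[0,1]$, and $(Y',\Delta^{*})$ is an lc log birational model of $(X,\Delta)$ with $K_{Y'}+\Delta^{*}\geq K_{Y'}+\Gamma'$; however $K_{Y'}+\Delta^{*}$ need not be nef. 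I then run a $(K_{Y'}+\Delta^{*})$-MMP over $Z$ with scaling of an ample divisor: any $(K_{Y'}+\Delta^{*})$-negative extremal ray must have negative intersection with $\Delta^{*}-\Gamma'$ (since $K_{Y'}+\Gamma'$ is nef), hence is supported in $E'$, so the MMP contracts only divisors exceptional over $X$. Termination then follows from standard termination results for MMPs that contract only $X$-exceptional divisors, and the resulting pair is a weak lc model of $(X,\Delta)$ over $Z$.

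The main obstacle is the $(\Leftarrow)$ direction, specifically controlling the MMP required to dispose of the effective divisor $\Delta^{*}-\Gamma'$; the essential observation is that this excess is entirely supported on divisors exceptional over $X$, which allows one to invoke termination for MMPs restricted to exceptional contractions. For the log minimal model version, one separately checks that after a small $\mathbb{Q}$-factorial modification the output is $\mathbb{Q}$-factorial and that strict discrepancy inequalities for divisors on $Y$ exceptional over the model are preserved; for the good minimal model version, semi-ampleness of the log canonical divisor is inherited since it is a birational invariant of the pair under such modifications.
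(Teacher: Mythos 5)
The paper does not actually prove Lemma \ref{lembirequiv}: it is imported verbatim from \cite[Lemma 2.15]{has-mmp}, so there is no in-paper argument to compare yours against, and your proposal has to stand on its own. It does not, because of gaps concentrated in the direction you correctly identify as the hard one. In the $(\Leftarrow)$ direction, $(Y',\Delta^{*})$ is not known to be a pair, let alone lc: a weak lc model $Y'$ need not be $\mathbb{Q}$-factorial, so $\Delta^{*}-\Gamma'$ need not be $\mathbb{R}$-Cartier, and even after a small $\mathbb{Q}$-factorialization, raising the coefficient of an $X$-exceptional divisor $Q'$ from $\mathrm{mult}_{Q'}\Gamma'=c<1$ to $1$ drops the discrepancy of any valuation $P$ with center inside $Q'$ by $(1-c)\,\mathrm{mult}_{P}(q^{*}Q')>0$; if such a $P$ computes an lc center of $(Y',\Gamma')$ contained in $Q'$, the new pair is not lc (this is harmless only in the snc setting, and you give no argument excluding it here). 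More seriously, the termination you invoke is not a standard result: the very-exceptional-divisor machinery of \cite[Theorem 3.5]{birkar-flip} (used in this paper in the proof of Lemma \ref{lem--abundant-birat-2}) terminates an MMP run \emph{over $X$} for a birational \emph{morphism} to $X$, whereas your MMP is run over $Z$ on a variety $Y'$ that does not map to $X$; knowing each extremal ray is $(\Delta^{*}-\Gamma')$-negative confines the steps to a divisor but gives no termination without special termination, i.e., without the lower-dimensional MMP. Even granting termination, you never verify the output is a weak lc model of $(X,\Delta)$: for a divisor $D$ on $X$ contracted by $X\dashrightarrow Y'$ one has $a(D,X,\Delta)\leq a(D,Y',\Gamma')$, but $\Delta^{*}\geq\Gamma'$ pushes discrepancies the wrong way, so $a(D,X,\Delta)\leq a(D,Y',\Delta^{*})$ is not automatic. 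Finally, in the $(\Rightarrow)$ direction the log minimal model variant also has a gap: for an $f$-exceptional $D$ with $\mathrm{mult}_{D}E=0$ contracted by $Y\dashrightarrow X'$ you only get $a(D,Y,\Gamma)=a(D,X,\Delta)\leq a(D,X',\Delta')$, and strictness can fail because the log minimal model condition for $(X,\Delta)$ imposes it only at divisors lying on $X$.

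The standard arguments avoid the problematic MMP altogether. One route is to first run a $(K_{Y}+\Gamma)$-MMP \emph{over $X$}: since $K_{Y}+\Gamma\equiv_{X}E$ with $E\geq0$ exceptional, \cite[Theorem 3.5]{birkar-flip} contracts $E$ and reduces everything to the crepant case $K_{Y}+\Gamma=f^{*}(K_{X}+\Delta)$, where the comparison of models is discrepancy bookkeeping (the residual strictness issue is handled by passing back and forth between weak lc models and log minimal models). The other route, which makes all three equivalences nearly formal, is the Nakayama--Zariski criterion of Theorem \ref{thmzariski} (see \cite{bhzariski}): on a common resolution $p\colon W\to Y$ with $W\to X$, the pullbacks of $K_{X}+\Delta$ and $K_{Y}+\Gamma$ differ by $p^{*}E$, which is effective and exceptional over $X$ and hence absorbed into the negative part of the Nakayama--Zariski decomposition by the results of \cite[Chapter III]{nakayama}; the two divisors therefore share the same positive part, and nefness (resp.\ semi-ampleness) of that common positive part on a suitable resolution is precisely the existence of a log minimal (resp.\ good minimal) model for either pair. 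I recommend rebuilding the proof along one of these lines.
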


\begin{lem}\label{lem--extraction}
Let $X$ be a normal quasi-projective variety, and let $\mathcal{T}$ be an empty set or a finite set of exceptional prime divisors over $X$.  
Suppose that there is an $\mathbb{R}$-divisor $\Delta$ on $X$ such that $(X,\Delta)$ is dlt, and suppose in addition that if $\mathcal{T}\neq \emptyset$ then we have $-1< a(P_{i},X,\Delta)< 0$ for all $P_{i}\in \mathcal{T}$. 

Then, there is a crepant model $f\colon (X_{\mathcal{T}},\Delta_{\mathcal{T}})\to (X,\Delta)$ such that the pair $(X_{\mathcal{T}},\Delta_{\mathcal{T}})$ is $\mathbb{Q}$-factorial dlt  and $f$-exceptional prime divisors are exactly elements of $\mathcal{T}$. 
\end{lem}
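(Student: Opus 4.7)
The plan is to realize $X_{\mathcal{T}}$ as the output of a relative MMP from a log resolution on which $\mathcal{T}$ already appears.

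Fix a log resolution $g\colon Y\to X$ of $(X,\Delta)$ such that each $P_{i}\in\mathcal{T}$ appears as a $g$-exceptional prime divisor on $Y$, and let $\{F_{j}\}$ be the remaining $g$-exceptional prime divisors. On $Y$, put
\[
\Gamma_{Y}\;=\;g_{*}^{-1}\Delta+\sum_{j}F_{j}+\sum_{P_{i}\in\mathcal{T}}\bigl(-a(P_{i},X,\Delta)\bigr)P_{i}.
\]
Lc-ness of $(X,\Delta)$ and the hypothesis on $\mathcal{T}$ put every coefficient of $\Gamma_{Y}$ in $[0,1]$, with the $P_{i}$-coefficients in $(0,1)$; since $\Supp\Gamma_{Y}$ is a simple normal crossing divisor, $(Y,\Gamma_{Y})$ is log smooth, in particular $\mathbb{Q}$-factorial dlt. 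Expanding the pullback in terms of discrepancies yields
\[
K_{Y}+\Gamma_{Y}=g^{*}(K_{X}+\Delta)+E,\qquad E=\sum_{j}\bigl(1+a(F_{j},X,\Delta)\bigr)F_{j},
\]
so $E\geq 0$ is $g$-exceptional with $\Supp(E)=\{F_{j}\}$, disjoint from $\mathcal{T}$.

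Next, I would run a $(K_{Y}+\Gamma_{Y})$-MMP over $X$ with scaling of a general ample divisor. To feed Theorem \ref{thmtermi} one needs a log minimal model of $(Y,\Gamma_{Y})$ over $X$; this can be produced by the standard perturbation trick, replacing the coefficient-one components of $\Gamma_{Y}$ by $1-\varepsilon$ and adding a small $g$-ample $\mathbb{Q}$-divisor to obtain a klt pair with big boundary over $X$ whose log canonical class differs from $K_{Y}+\Gamma_{Y}$ by an $\mathbb{R}$-linearly trivial (over $X$) effective adjustment, so that BCHM applies and yields a minimal model that, by comparison, supplies one for $(Y,\Gamma_{Y})$. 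Since $(Y,0)$ is klt ($Y$ is smooth), Theorem \ref{thmtermi} then forces the MMP to terminate with $f\colon X_{\mathcal{T}}\to X$ on which $K_{X_{\mathcal{T}}}+\Delta_{\mathcal{T}}$ is nef over $X$, where $\Delta_{\mathcal{T}}$ is the birational transform of $\Gamma_{Y}$.

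Finally, $K_{Y}+\Gamma_{Y}\sim_{\mathbb{R},X}E$ implies no divisor outside $\Supp(E)$ is ever contracted, while the divisor $K_{X_{\mathcal{T}}}+\Delta_{\mathcal{T}}-f^{*}(K_{X}+\Delta)$ is $f$-exceptional, effective (as the birational transform of $E$), and $f$-nef, hence zero by the negativity lemma; thus every $F_{j}$ is contracted and $f$ is crepant. It follows that the $f$-exceptional prime divisors are exactly $\mathcal{T}$, and since $\mathbb{Q}$-factoriality and dlt-ness are preserved through MMP steps, $(X_{\mathcal{T}},\Delta_{\mathcal{T}})$ is the required crepant model. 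The main obstacle is termination of the relative MMP in the second step; the perturbation-to-klt-with-big-boundary trick lets BCHM produce a log minimal model, after which Theorem \ref{thmtermi} closes the argument.
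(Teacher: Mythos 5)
Your construction has a genuine gap at the final step, where you conclude that ``every $F_{j}$ is contracted.'' The problem is the $g$-exceptional divisors $F_{j}$ with $a(F_{j},X,\Delta)=-1$: these receive coefficient $1$ in $\Gamma_{Y}$ and coefficient $1+a(F_{j},X,\Delta)=0$ in $E$, so they are \emph{not} components of $E$. Your own (correct) observation that $K_{Y}+\Gamma_{Y}\sim_{\mathbb{R},X}E$ forces the MMP to contract only components of $E$ then guarantees that such $F_{j}$ are \emph{never} contracted, and the concluding negativity-lemma argument only shows that the birational transform of $E$ vanishes on $X_{\mathcal{T}}$ --- which is automatically consistent with an $F_{j}$ of discrepancy $-1$ surviving, since its coefficient in $E$ was already zero. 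A log resolution of a dlt pair that extracts the $P_{i}$ will in general have exceptional divisors of discrepancy exactly $-1$ (e.g.\ any divisor obtained by blowing up a stratum of $\llcorner\Delta\lrcorner$ inside the snc locus), so the model you build extracts $\mathcal{T}\cup\{F_{j}: a(F_{j},X,\Delta)=-1\}$, which is strictly larger than $\mathcal{T}$. By contrast, the termination issue you flag as the ``main obstacle'' is not really one: since $E\geq 0$ is exceptional over $X$, the MMP over $X$ with scaling reaches a model contracting $\Supp E$ directly by \cite[Theorem 3.5]{birkar-flip}, exactly as this paper does in Lemma \ref{lem--abundant-birat-2} and elsewhere.

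The paper itself proves the lemma by quoting \cite[Corollaries 1.37 and 1.38]{kollar-mmp}, whose proofs are designed around precisely this subtlety: for a dlt pair one must work with a resolution that is an isomorphism over the log smooth locus (Szab\'o's theorem), using that every divisor of discrepancy $-1$ over a dlt pair has center meeting that locus, or alternatively reduce to the klt statement by replacing $\Delta$ with $\Delta-\epsilon\llcorner\Delta\lrcorner$ (which keeps $a(P_{i},\cdot)<0$ for small $\epsilon$ since $a(P_{i},X,\Delta)<0$ strictly) and then check dlt-ness of the output separately. To repair your argument you would need to choose $g$ so that no $g$-exceptional divisor outside $\mathcal{T}$ has discrepancy $-1$, and justify that such a resolution extracting the $P_{i}$ exists; as written, the claim ``the $f$-exceptional prime divisors are exactly $\mathcal{T}$'' does not follow.
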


\begin{proof} It immediately follows from \cite[Corollary 1.37]{kollar-mmp} and \cite[Corollary 1.38]{kollar-mmp}. \end{proof}

\begin{lem}\label{lem--abundant-birat-2}
Assume the existence of good minimal models or Mori fiber spaces for all projective klt pairs of dimension $n$. 

Let $\pi\colon X\to Z$ be a projective morphism of normal quasi-projective varieties, and let $(X,\Delta)$ be an lc pair such that $K_{X}+\Delta$ is abundant over $Z$ and $\kappa_{\sigma}(X/Z,K_{X}+\Delta)\leq n$. 
Let $Y\to Z$ be a projective morphism from a normal quasi-projective variety $Y$, and let $X\dashrightarrow Y$ be a birational map over $Z$. 
Let $(Y,\Gamma)$ be an lc pair such that inequality $a(P,X,\Delta)\leq a(P,Y,\Gamma)$ holds for any prime divisor $P$ on $X$. 

Then, $K_{Y}+\Gamma$ is abundant over $Z$ and $\kappa_{\sigma}(Y/Z,K_{Y}+\Gamma)\leq n$. 
\end{lem}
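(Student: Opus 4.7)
The first step in my plan is to reduce to the case where $Z$ is a point. By Lemma~\ref{lem--abundant-general} together with the definition of relative numerical dimension, both conclusions (abundance over $Z$ and the bound $\kappa_{\sigma}\leq n$) can be verified on a sufficiently general fiber of the Stein factorization of $Y\to Z$. The birational map $X\dashrightarrow Y$ over $Z$ restricts to a birational map of sufficiently general fibers, and the discrepancy inequality on prime divisors of $X$ restricts to the analogous inequality on the fiber, so I may assume $Z=\operatorname{Spec}\mathbb{C}$.

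Next I take a common log resolution: a smooth projective variety $W$ with projective birational morphisms $p\colon W\to X$ and $q\colon W\to Y$ resolving the indeterminacy of $X\dashrightarrow Y$. For each prime divisor $P$ on $W$, the coefficient of $P$ in $p^{*}(K_{X}+\Delta)-q^{*}(K_{Y}+\Gamma)$ equals $a(P,Y,\Gamma)-a(P,X,\Delta)$, and by the hypothesis this is non-negative whenever $P$ is not $p$-exceptional. Writing $p^{*}(K_{X}+\Delta)-q^{*}(K_{Y}+\Gamma)=A^{+}-A^{-}$ with $A^{\pm}\geq 0$ of disjoint support, the divisor $A^{-}$ is supported on $p$-exceptional divisors. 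Setting $H:=p^{*}(K_{X}+\Delta)+A^{-}=q^{*}(K_{Y}+\Gamma)+A^{+}$ produces a single divisor on $W$ written in two ways.

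The bound on numerical dimension then follows quickly. Since $A^{-}\geq 0$ is $p$-exceptional, Remark~\ref{remdiv}(2) gives $\kappa_{\sigma}(W,H)=\kappa_{\sigma}(X,K_{X}+\Delta)\leq n$; and because $A^{+}\geq 0$ and $\kappa_{\sigma}$ does not decrease when an effective divisor is added (using the standard inclusion of global sections of Weil divisors), one obtains
$$
\kappa_{\sigma}(Y,K_{Y}+\Gamma)=\kappa_{\sigma}(W,q^{*}(K_{Y}+\Gamma))\leq \kappa_{\sigma}(W,H)\leq n.
$$
For the abundance of $K_{Y}+\Gamma$: if $K_{Y}+\Gamma$ is not pseudo-effective it is abundant by convention, so I may assume $\kappa_{\sigma}(Y,K_{Y}+\Gamma)\geq 0$. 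Then $K_{X}+\Delta$ is pseudo-effective, and by the abundance hypothesis $K_{X}+\Delta\sim_{\mathbb{R}} D_{X}$ for some $D_{X}\geq 0$. Thus $H\sim_{\mathbb{R}} p^{*}D_{X}+A^{-}\geq 0$, and Remark~\ref{remdiv}(2) yields $\kappa_{\iota}(W,H)=\kappa(X,D_{X})=\kappa_{\sigma}(X,K_{X}+\Delta)=\kappa_{\sigma}(W,H)$, so that $H$ itself is abundant on $W$.

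The step I expect to be the main obstacle is the final descent from abundance of $H$ to abundance of $q^{*}(K_{Y}+\Gamma)=H-A^{+}$, since subtracting an effective divisor does not in general preserve abundance. The plan is to use the Iitaka fibration $W\to V$ associated to $D_{X}$: on the log resolution dlt pair $(W,\Delta_{W})$ of $(X,\Delta)$, Lemma~\ref{lem--iitakafib} gives $\kappa_{\sigma}(W/V,K_{W}+\Delta_{W})=0$, and since $H$ differs from $K_{W}+\Delta_{W}$ by a $p$-exceptional divisor, one also has $\kappa_{\sigma}(W/V,H)=0$, i.e.\ $H$ is numerically trivial on a sufficiently general fiber of $W\to V$. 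Combining this fibrewise triviality with the constrained structure of $A^{+}$---whose non-$q$-exceptional components, by the discrepancy comparison, push forward to prime divisors on $Y$ whose coefficients still make $\Gamma+q_{*}A^{+}$ a boundary---should let one conclude $\kappa_{\iota}(W,q^{*}(K_{Y}+\Gamma))=\kappa_{\sigma}(W,q^{*}(K_{Y}+\Gamma))$, giving the required abundance of $K_{Y}+\Gamma$.
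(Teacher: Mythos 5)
Your reduction to the case where $Z$ is a point, your computation of the numerical dimension bound via the sandwich $q^{*}(K_{Y}+\Gamma)\leq H:=p^{*}(K_{X}+\Delta)+A^{-}=q^{*}(K_{Y}+\Gamma)+A^{+}$, and your proof that $H$ itself is abundant are all correct, and up to that point you are running a legitimate variant of the paper's argument (the paper instead contracts the exceptional effective part by running an MMP over $X$ and applying the negativity lemma for very exceptional divisors, so as to reduce to $X=Y$ with $\Gamma\leq\Delta$). The genuine gap is the final step, which you yourself flag: passing from abundance of $H$ to abundance of $H-A^{+}=q^{*}(K_{Y}+\Gamma)$. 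The mechanism you propose --- $\kappa_{\sigma}(W/V,H)=0$ over the Iitaka fibration together with ``the constrained structure of $A^{+}$'' --- does not close this. What is actually needed is the implication: a pseudo-effective lc log canonical divisor with relative numerical dimension $0$ over a base of dimension at most $n$ is $\mathbb{R}$-linearly equivalent to an effective divisor, i.e.\ abundance in relative numerical dimension zero. That is not a formal statement about adding or subtracting effective divisors; it is exactly where the standing hypothesis of the lemma --- existence of good minimal models or Mori fiber spaces for projective klt pairs of dimension $n$ --- must be used, and your proposal never invokes that hypothesis anywhere, which is a reliable sign the argument cannot be completed by the means you describe.

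The paper handles this by first arranging $X=Y$ with $\Gamma\leq\Delta$, so that on the resolution $\bar X\to V$ over the Iitaka fibration one has an honest lc pair $(\bar X,\bar\Gamma)$ with $K_{\bar X}+\bar\Gamma$ pseudo-effective, $\bar\Gamma\leq\bar\Delta$, $\kappa_{\sigma}(\bar X/V,K_{\bar X}+\bar\Gamma)=0$ and $\dim V\leq n$; it then invokes the Claim that such a pair has a good minimal model, which rests on \cite[Lemma 3.1]{has-mmp} and \cite[Theorem 1.5]{has-mmp} and is precisely where the dimension-$n$ assumption enters. To repair your write-up you would either have to reproduce this reduction (note that on $W$ the divisor $q^{*}(K_{Y}+\Gamma)$ differs from a log smooth log canonical divisor $K_{W}+\Gamma_{W}$ only by an effective $q$-exceptional divisor, so by Remark \ref{remdiv} you may work with $(W,\Gamma_{W})$ instead) or supply an independent proof of abundance in numerical dimension zero for lc pairs; as written, the final paragraph is a plan, not a proof.
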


\begin{proof}
Let $g\colon Y_{0}\to Y$ be a log resolution of $(Y,\Gamma)$ such that the induced birational map $f\colon Y_{0}\dashrightarrow X$ is a morphism. 
We may write $K_{Y_{0}}+\Gamma_{0}=g^{*}(K_{Y}+\Gamma)+E_{0}$ with $\Gamma_{0}\geq0$ and $E_{0}\geq0$ which have no common components. 
Since any prime divisor $P$ on $X$ is also a prime divisor on $Y_{0}$, we see that $a(P,Y_{0},\Gamma_{0})={\rm min}\{0, a(P,Y,\Gamma)\}$ by Lemma \ref{lem--discre}. 
Since $a(P,X,\Delta)\leq0$, using the hypothesis of Lemma \ref{lem--abundant-birat-2} we have $a(P,X,\Delta)\leq a(P,Y_{0},\Gamma_{0})$ for any prime divisor $P$ on $X$. 
Therefore, $(Y_{0},\Gamma_{0})$ satisfies the condition of Lemma \ref{lem--abundant-birat-2}. 
It is easy to check that we may replace $(Y,\Gamma)$ with $(Y_{0},\Gamma_{0})$. 
Replacing $(Y,\Gamma)$, we may assume that $(Y,\Gamma)$ is $\mathbb{Q}$-factorial dlt and the map $f\colon Y\dashrightarrow X$ is a morphism. 

If $K_{Y}+\Gamma$ is not pseudo-effective over $Z$, then there is nothing to prove.  
So we may assume that $K_{Y}+\Gamma$ is pseudo-effective over $Z$. 
Put $M=K_{Y}+\Gamma-f^{*}(K_{X}+\Delta)$. 
By the condition on discrepancies of prime divisors on $X$, we see that the effective part of $M$ is $f$-exceptional. 
We run a $(K_{Y}+\Gamma)$-MMP over $X$ with scaling of an ample divisor. 
By the negativity lemma of very exceptional divisors \cite[Theorem 3.5]{birkar-flip}, we reach a model $(Y',\Gamma')$ over $X$ such that $M'\leq0$, where $M'$ is the birational transform of $M$ on $Y'$. 
By construction, $K_{Y'}+\Gamma'-M'$ is the pullback of $K_{X}+\Delta$, so the pair $(Y', \Gamma'-M')$ is lc. 
In this way, replacing $(X,\Delta)$ and $(Y,\Gamma)$ by $(Y', \Gamma'-M')$ and $(Y',\Gamma')$ respectively, we may assume $X=Y$. 
Then $\Delta \geq \Gamma$ and $K_{X}+\Delta$ is pseudo-effective over $Z$. 

Let $F$ be a sufficiently general fiber of the Stein factorization of $\pi$. 
Since $K_{X}+\Delta$ is abundant over $Z$ and $\kappa_{\sigma}(X/Z,K_{X}+\Delta)\leq n$, the restriction $K_{F}+\Delta|_{F}=(K_{X}+\Delta)|_{F}$ is abundant and $\kappa_{\sigma}(F,K_{F}+\Delta|_{F})\leq n$ by Lemma \ref{lem--abundant-general}. 
Applying Lemma \ref{lem--abundant-general} again, it is sufficient to prove that $K_{F}+\Gamma|_{F}=(K_{X}+\Gamma)|_{F}$ is abundant and $\kappa_{\sigma}(F,K_{F}+\Gamma|_{F})\leq n$. 
Therefore, by restricting $(X,\Delta)$ and $(X,\Gamma)$ to $F$, we may assume that $Z$ is a point. 

We have $\kappa_{\iota}(X,K_{X}+\Delta)=\kappa_{\sigma}(X,K_{X}+\Delta)\geq 0$ by our assumptions. 
So there is an effective $\mathbb{R}$-divisor $D\sim_{\mathbb{R}}K_{X}+\Delta$. 
We take the Iitaka fibration $X\dashrightarrow V$ of $D$, and let $\bar{f}\colon \bar{X} \to X$ be a log resolution of $(X,\Delta)$ such that the induced map $\bar{X}\dashrightarrow V$ is a morphism. 
We may write 
\begin{equation*}
K_{\bar{X}}+\bar{\Delta}=\bar{f}^{*}(K_{X}+\Delta)+E_{1} \quad{\rm and} \quad K_{\bar{X}}+\bar{\Gamma}=\bar{f}^{*}(K_{X}+\Gamma)+E_{2},
\end{equation*}
where $\bar{\Delta}\geq 0$ and $E_{1}\geq0$ have no common components, and $\bar{\Gamma}\geq0$ and $E_{2}\geq0$ have no common components. 
By Lemma \ref{lem--iitakafib}, we see that $\kappa_{\sigma}(\bar{X}/V,K_{\bar{X}}+\bar{\Delta})=0$. 
We also see that ${\rm dim}V=\kappa_{\sigma}(\bar{X},K_{\bar{X}}+\bar{\Delta})\leq n$ and $\bar{\Gamma}\leq \bar{\Delta}$ by construction, and $K_{\bar{X}}+\bar{\Gamma}$ is pseudo-effective. 
So we have $\kappa_{\sigma}(\bar{X}/V,K_{\bar{X}}+\bar{\Gamma})=0$. 
From this construction, to prove Lemma \ref{lem--abundant-birat-2}, it is sufficient to prove the following claim:
\begin{claim*}
Assume the existence of good minimal models or Mori fiber spaces for all projective klt pairs of dimension $n$. 
Let $(\bar{X},\bar{\Gamma})$ be a projective lc pair with a contraction $\bar{X}\to V$ such that ${\rm dim}V\leq n$ and $\kappa_{\sigma}(\bar{X}/V, K_{\bar{X}}+\bar{\Gamma})=0$. 
If $K_{\bar{X}}+\bar{\Gamma}$ is pseudo-effective, then $(\bar{X},\bar{\Gamma})$ has a good minimal model. 
\end{claim*}
Thanks to \cite[Lemma 3.1]{has-mmp}, we only have to prove the claim in a special case when $K_{\bar{X}}+\bar{\Gamma}\sim_{\mathbb{R},V}0$. 
But the case of the claim follows from \cite[Theorem 1.5]{has-mmp}. 
\end{proof}

\begin{lem}[cf.~{\cite[Proposition 3.2 (5)]{birkar-existII}}]\label{lem--polytope-1}
Let $\pi\colon X\to Z$ be a morphism of normal projective varieties, and let $(X,\Delta_{0})$ be a $\mathbb{Q}$-factorial lc pair such that $K_{X}+\Delta_{0}$ is nef over $Z$. 
Let $\mathcal{W}\ni \Delta_{0}$ be a finite-dimensional $\mathbb{R}$-vector space in ${\rm WDiv}_{\mathbb{R}}(X)$ spanned by prime divisors. 
Let $\mathcal{L}\ni \Delta_{0}$ be a rational polytope in $\mathcal{W}$ such that for any $\Delta \in \mathcal{L}$ the pair $(X,\Delta)$ is lc. 
Fix a norm $||\cdot||$ in $\mathcal{W}$. 

Then, there is a real number $\gamma>0$ such that $\gamma$ satisfies the following property:

\begin{itemize}
\item
Pick any $\Delta \in \mathcal{L}$ such that $||\Delta-\Delta_{0}||<\gamma$. 
Let $(X,\Delta)\dashrightarrow (Y,\Gamma)$ be a sequence of steps of a $(K_{X}+\Delta)$-MMP over $Z$ to a good minimal model, and let $Y\to V$ be the contraction over $Z$ induced by $K_{Y}+\Gamma$. 
Then $K_{Y}+\Gamma_{0}$ is nef over $Z$ and $K_{Y}+\Gamma_{0}\sim_{\mathbb{R},V}0$, where $\Gamma_{0}$ is the birational transform of $\Delta_{0}$ on $Y$. 
\end{itemize}
\end{lem}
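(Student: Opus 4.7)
The plan is to show first that for $\gamma$ sufficiently small every step of any $(K_X+\Delta)$-MMP over $Z$ is $(K_X+\Delta_0)$-trivial, and then to deduce both conclusions on $K_Y+\Gamma_0$ by tracking the rational polytope $\mathcal L$ along the MMP.

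The first step would rest on a Shokurov-type rational polytope argument. Using the cone and contraction theorems together with length of extremal rays for lc pairs (see \cite[Theorem 4.5.2 and Section 18]{fujino-fund}), the locus $\mathcal{N}\subset\mathcal L$ of boundaries $\Delta'$ with $K_X+\Delta'$ nef over $Z$ is a rational polytope containing $\Delta_0$, and near $\Delta_0$ this polytope is cut out by finitely many linear inequalities $(K_X+\,\cdot\,)\cdot R_j\ge 0$, where $R_1,\dots,R_N$ is a finite collection of extremal rays of $\overline{NE}(X/Z)$ each of which is $(K_X+\Delta_0)$-trivial. Choosing $\gamma$ small enough, every $(K_X+\Delta)$-negative extremal ray over $Z$ for $\Delta\in\mathcal L$ with $\|\Delta-\Delta_0\|<\gamma$ lies among the $R_j$ and is therefore $(K_X+\Delta_0)$-trivial.

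I would then propagate this by induction on the MMP steps. After one step $X\dashrightarrow X_1$ contracting an extremal ray $R$, the previous paragraph gives that $R$ is $(K_X+\Delta_0)$-trivial, so the birational transform $(\Delta_0)_1$ of $\Delta_0$ on $X_1$ satisfies that $K_{X_1}+(\Delta_0)_1$ is nef over $Z$ (for a divisorial contraction, by pushforward of a divisor trivial on the contracted ray; for a flip, because the $\mathbb R$-Cartier transform preserves nefness of divisors trivial on the flipping ray). The polytope $\mathcal L$ transports to a rational polytope $\mathcal L_1$ on $X_1$ around $(\Delta_0)_1$, and the Shokurov argument gives the same $\gamma$ up to a uniform choice. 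Iterating and using termination at a good minimal model, we obtain that $K_Y+\Gamma_0$ is nef over $Z$.

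For $K_Y+\Gamma_0\sim_{\mathbb R,V}0$: every contracted extremal ray was both $(K_X+\Delta)$-negative and $(K_X+\Delta_0)$-trivial, so the same sequence of birational maps is a $(K_X+\Delta_s)$-MMP for every $s\in(0,1]$ with $\Delta_s=\Delta_0+s(\Delta-\Delta_0)$, producing the same $Y$ with pushforward $\Gamma_s=\Gamma_0+s(\Gamma-\Gamma_0)$, and $K_Y+\Gamma_s$ is nef over $Z$ for every $s\in[0,1]$. Since $K_Y+\Gamma=K_Y+\Gamma_1$ is semi-ample over $Z$ inducing $Y\to V$, and both $K_Y+\Gamma_0$ and $K_Y+\Gamma$ lie in the (transported) rational polytope on $Y$, a rationality-and-abundance argument along the lines of \cite[Proposition 3.2]{birkar-existII} yields $K_Y+\Gamma_s\sim_{\mathbb R,V}0$ for all $s\in[0,1]$, in particular $K_Y+\Gamma_0\sim_{\mathbb R,V}0$. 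The main obstacle I expect is the rigorous setup and uniform propagation of the Shokurov polytope lemma through all MMP steps (especially across flips), together with the descent from $s=1$ to $s=0$ in the last part, upgrading mere numerical triviality over $V$ to $\mathbb R$-linear equivalence over $V$.
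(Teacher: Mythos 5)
Your first conclusion ($K_{Y}+\Gamma_{0}$ nef over $Z$) is handled essentially as in the paper: both arguments choose $\gamma$ via length of extremal rays so that every contracted ray of the MMP is $(K_{X}+\Delta_{0})$-trivial, and then propagate nefness of the transform of $K_{X}+\Delta_{0}$ step by step. That part is fine.

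The gap is in the second conclusion. Establishing $K_{Y}+\Gamma_{0}\sim_{\mathbb{R},V}0$ is \emph{not} a formal consequence of the triviality of the contracted rays: the curves contracted by $Y\to V$ are not curves contracted in the MMP, so nothing you have said prevents $(K_{Y}+\Gamma_{0})\cdot \xi>0$ for some curve $\xi$ in a fiber of $Y\to V$. Note also that your intermediate claim ``$K_{Y}+\Gamma_{s}\sim_{\mathbb{R},V}0$ for all $s\in[0,1]$'' is literally equivalent to the desired statement (since $K_{Y}+\Gamma_{s}\sim_{\mathbb{R},V}(1-s)(K_{Y}+\Gamma_{0})$), so invoking ``a rationality-and-abundance argument'' here is circular rather than a reduction. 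The paper closes this gap with a quantitative argument that your choice of $\gamma$ does not support: write $\Delta_{0}=\sum\alpha_{i}\Delta^{(i)}$ with $K_{X}+\Delta^{(i)}$ nef over $Z$ and $k(K_{X}+\Delta^{(i)})$ Cartier, so that after the MMP any curve $\xi$ over $Z$ with $(K_{Y}+\Gamma_{0})\cdot\xi>0$ satisfies $(K_{Y}+\Gamma_{0})\cdot\xi\geq \alpha/k$ with $\alpha=\min_{i}\alpha_{i}$; then shrink $\gamma$ further so that $\Delta':=\Delta_{0}+(1+\tfrac{3dk}{\alpha})(\Delta-\Delta_{0})$ still lies in $\mathcal{L}$, observe $K_{Y}+\Gamma'\sim_{\mathbb{R},V}-\tfrac{3dk}{\alpha}(K_{Y}+\Gamma_{0})$, and use the length bound $2d$ for extremal rays of $(Y,\Gamma')$ over $V$ to force $K_{Y}+\Gamma'$ to be nef over $V$; being both nef and anti-nef over $V$, $K_{Y}+\Gamma_{0}$ is numerically trivial over $V$. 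Finally, the upgrade from numerical triviality to $\sim_{\mathbb{R},V}0$ (which you correctly flag as nontrivial) is supplied by an abundance result for relatively numerically trivial lc pairs (\cite[Lemma 3.4]{has-class}). Without the lower bound $\alpha/k$ and the auxiliary boundary $\Delta'$ (or some substitute), your argument does not reach the second conclusion.
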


\begin{proof}
By the standard argument of length of extremal rays (\cite[Proposition 3.2]{birkar-existII}, see also \cite[Proof of Lemma 2.12]{has-mmp} and \cite[Remark 2.13]{has-mmp}), we can find $\gamma_{0}>0$ such that for any $\Delta \in \mathcal{L}$ satisfying $||\Delta-\Delta_{0}||<\gamma_{0}$ and for all steps of any $(K_{X}+\Delta)$-MMP over $Z$, the birational transform of $K_{X}+\Delta_{0}$ is trivial over each extremal contraction.

By the argument of Shokurov polytopes, we can find $\mathbb{Q}$-divisors $\Delta^{(1)},\cdots,\Delta^{(m)}$ and positive real numbers $\alpha_{1},\cdots, \alpha_{m}$ such that $\sum_{i=1}^{m}\alpha_{i}=1$, $\sum_{i=1}^{m}\alpha_{i}\Delta^{(i)}=\Delta_{0}$, $(X,\Delta^{(i)})$ are lc and $K_{X}+\Delta^{(i)}$ are nef over $Z$ for all $1\leq i\leq m$.  
Put $d={\rm dim}X$ and $\alpha={\rm min} \{\alpha_{i}\}_{i}$, and let $k$ be a positive integer such that $k(K_{X}+\Delta^{(i)})$ are all Cartier.  
We pick $\gamma\in (0,\gamma_{0}]$ such that for any $\Delta \in \mathcal{L}$ satisfying $||\Delta-\Delta_{0}||<\gamma$, we have $\Delta_{0}+(1+\frac{3dk}{\alpha})(\Delta-\Delta_{0})\in \mathcal{L}$. 
We show this $\gamma$ satisfies the condition of the lemma. 

Pick any $\Delta\in \mathcal{L}$ such that $||\Delta-\Delta_{0}||<\gamma$, and let $(X,\Delta)\dashrightarrow (Y,\Gamma)$ be a sequence of steps of a $(K_{X}+\Delta)$-MMP over $Z$ to a good minimal model. 
Since $\gamma\leq \gamma_{0}$, the birational transform of $K_{X}+\Delta_{0}$ is trivial over each extremal contraction of the $(K_{X}+\Delta)$-MMP over $Z$. 
We put $\Delta'=\Delta_{0}+(1+\frac{3dk}{\alpha})(\Delta-\Delta_{0})$. 
Let $\Gamma_{0}$, $\Gamma'$ and $\Gamma^{(i)}$ be the birational transforms of $\Delta_{0}$, $\Delta'$ and $\Delta^{(i)}$ on $Y$, respectively. 
The argument of length of extremal rays (\cite[Proposition 3.2]{birkar-existII}, \cite[Proof of Lemma 2.12]{has-mmp} and \cite[Remark 2.13]{has-mmp}) also shows that all $(Y,\Gamma^{(i)})$ are lc and all $k(K_{Y}+\Gamma^{(i)})$ are nef over $Z$ and Cartier. 
Therefore $K_{Y}+\Gamma_{0}=\sum_{i=1}^{m}\alpha_{i}(K_{Y}+\Gamma^{(i)})$ is nef over $Z$. 
Moreover, for any curve $\xi$ on $Y$ contracted by the morphism $Y\to Z$, if an inequality $(K_{Y}+\Gamma_{0})\cdot \xi>0$ holds then $(K_{Y}+\Gamma_{0})\cdot \xi\geq \frac{\alpha}{k}$. 

Let $Y\to V$ be the contraction over $Z$ induced by $K_{Y}+\Gamma$. 
Since $K_{Y}+\Gamma\sim_{\mathbb{R},V}0$, we have $K_{Y}+\Gamma'\sim_{\mathbb{R},V}-\frac{3dk}{\alpha}(K_{Y}+\Gamma_{0})$. 
We prove $K_{Y}+\Gamma'$ is nef over $V$. 
Since the birational transform of $K_{X}+\Delta_{0}$ is trivial over each extremal contraction of the $(K_{X}+\Delta)$-MMP over $Z$, the birational map $X\dashrightarrow Y$ is a sequence of steps of the $(K_{X}+\Delta')$-MMP over $Z$. 
So $(Y,\Gamma')$ is lc. 
If $K_{Y}+\Gamma'$ is not nef over $V$, by \cite[Theorem 4.5.2]{fujino-book}, there is a curve $\xi_{0}$ contracted by the morphism $Y\to V$ such that $0<-(K_{Y}+\Gamma')\cdot \xi_{0} \leq 2d$. 
On the other hand, the inequality $(K_{Y}+\Gamma')\cdot \xi_{0}<0$ implies $(K_{Y}+\Gamma_{0})\cdot \xi_{0}>0$, and therefore $-(K_{Y}+\Gamma')\cdot \xi_{0}=\frac{3dk}{\alpha}(K_{Y}+\Gamma_{0})\cdot \xi_{0}\geq3d$ because $(K_{Y}+\Gamma_{0})\cdot \xi_{0}>0$ implies $(K_{Y}+\Gamma_{0})\cdot \xi_{0}\geq \frac{\alpha}{k}$. 
Then $3d\leq-(K_{Y}+\Gamma')\cdot \xi_{0} \leq 2d$, a contradiction. 
In this way, we see that $K_{Y}+\Gamma'$ is nef over $V$. 
Since $\frac{3dk}{\alpha}(K_{Y}+\Gamma_{0})\sim_{\mathbb{R},V}-(K_{Y}+\Gamma')$ and since $K_{Y}+\Gamma_{0}$ and $K_{Y}+\Gamma'$ are both nef over $V$, the divisor $K_{Y}+\Gamma_{0}$ is numerically trivial over $V$. 

By \cite[Lemma 3.4]{has-class}, $K_{Y}+\Gamma_{0}$ is semi-ample over $V$. 
So we have $K_{Y}+\Gamma_{0}\sim_{\mathbb{R},V}0$. 
\end{proof}

\section{Proof of Theorem \ref{thmmain}}\label{sec3}
 
\begin{proof}[Proof of Theorem \ref{thmmain}]
We prove Theorem \ref{thmmain} by induction on the dimension of $X$. 
We may assume that $K_{X}+\Delta$ is pseudo-effective over $Z$. 
By taking the Stein factorization of $\pi$, we may assume that $\pi$ is a contraction. 
We prove Theorem \ref{thmmain} in several steps. 
From Step \ref{step1relative} to Step \ref{step9relative} we prove that $(X,\Delta)$ has a log minimal model over $Z$, and in Step \ref{step10relative} we prove that $(X,\Delta)$ has a good minimal model over $Z$. 

\begin{step1}\label{step1relative}
In this step, we reduce Theorem \ref{thmmain} to the case where $X$ and $Z$ are projective. 

Let $Z\hookrightarrow Z^{c}$ be an open immersion to a normal projective variety $Z^{c}$. 
Thanks to \cite[Corollary 1.3]{has-mmp}, there is an lc closure $(X^{c},\Delta^{c})$ of $(X,\Delta)$, that is, a projective lc pair $(X^{c},\Delta^{c})$ such that $X$ is an open subset of $X^{c}$ and $(X^{c}|_{X},\Delta^{c}|_{X})=(X,\Delta)$, and there is a projective morphism $\pi^{c}\colon X^{c}\to Z^{c}$. 
By construction of lc closures, we have $\pi^{c}|_{X}=\pi$ and ${\pi^{c}}^{-1}(Z)=X$. 
Furthermore, we can construct $(X^{c},\Delta^{c})$ so that any lc center $S^{c}$ of $(X^{c},\Delta^{c})$ intersects $X$ (see \cite[Corollary 1.3]{has-mmp}). 
Then the morphism $(X^{c},\Delta^{c})\to Z^{c}$ satisfies hypothesis of Theorem \ref{thmmain} because the relative invariant Iitaka dimension and the relative numerical dimension of $\mathbb{R}$-Cartier divisors are determined by its restriction to a sufficiently general fiber of the Stein factorization. 
If $(X^{c},\Delta^{c})$ has a good minimal model over $Z^{c}$, then its restriction over $Z$ is a good minimal model over $Z$. 

In this way, we may replace $(X,\Delta)\to Z$ by $(X^{c},\Delta^{c})\to Z^{c}$, and therefore we may assume that $X$ and $Z$ are projective. 
\end{step1}

\begin{step1}\label{step2relative}
In this step and the next step, we replace $(X,\Delta)$ with a crepant model so that $K_{X}+\Delta$ is $\mathbb{R}$-linearly equivalent to an effective $\mathbb{R}$-divisor which has good properties. 
The idea is very similar to \cite[Step 2--4 in the proof of Theorem 1.2]{has-mmp}. 

In this step, we construct a rational map associated to $K_{X}+\Delta$ and study properties of an lc pair on a resolution of the rational map. 
Pick an effective $\mathbb{R}$-divisor $D$ such that $K_{X}+\Delta\sim_{\mathbb{R},Z}D$, and take the relative Iitaka fibration $X\dashrightarrow V$ over $Z$ associated to $D$. 
We have ${\rm dim}V-{\rm dim}Z=\kappa_{\sigma}(X/Z, K_{X}+\Delta)$. 
Let $\bar{f}\colon \bar{X}\to X$ be a log resolution of $(X,\Delta)$ such that the induced map  $\bar{X}\dashrightarrow V$ is a morphism. 
We can construct a log smooth lc pair $(\bar{X},\bar{\Delta})$ such that 
\begin{itemize}
\item[(i)]
$K_{\bar{X}}+\bar{\Delta}=\bar{f}^{*}(K_{X}+\Delta)+\bar{E}$ for an effective $\bar{f}$-exceptional divisor $\bar{E}$, and 
\item[(ii)]
$\kappa_{\sigma}(\bar{X}/Z,K_{\bar{X}}+\bar{\Delta})={\rm dim}V-{\rm dim}Z$ and $\kappa_{\sigma}(\bar{X}/V,K_{\bar{X}}+\bar{\Delta})=0$ (Lemma \ref{lem--iitakafib}).
\end{itemize}
By construction of $X\dashrightarrow V$ and $(\bar{X},\bar{\Delta})$, the divisor $K_{\bar{X}}+\bar{\Delta}$ is $\mathbb{R}$-linearly equivalent to the sum of an effective $\mathbb{R}$-divisor and the pullback of a relatively ample divisor on $V$. 
So we can find an effective $\mathbb{R}$-divisor $\bar{D}\sim_{\mathbb{R},Z}K_{\bar{X}}+\bar{\Delta}$ such that ${\rm Supp}\bar{D}$ contains all lc centers of $(\bar{X},\bar{\Delta})$ which are vertical over $V$. 
By taking a log resolution of $(\bar{X},{\rm Supp}(\bar{\Delta}+\bar{D}))$ and by replacing $(\bar{X},\bar{\Delta})$ and $\bar{D}$, we may assume that $(\bar{X},{\rm Supp}(\bar{\Delta}+\bar{D}))$ is log smooth. 
By applying the argument as in \cite[Proof of Lemma 2.10]{has-trivial} to the morphism $(\bar{X},\bar{\Delta})\to V$ and by replacing $(\bar{X},\bar{\Delta})$ and $\bar{D}$ again, we may assume that
\begin{itemize}
\item[(iii)]
$\bar{\Delta}=\bar{\Delta}'+\bar{\Delta}''$, where $\bar{\Delta}'$ is effective and $\bar{\Delta}''$ is a reduced divisor, such that $\bar{\Delta}''$ is vertical over $V$ and all lc centers of $(\bar{X},\bar{\Delta}-\bar{\Delta}'')$ dominate $V$. 
\end{itemize}
Note that we may have $\bar{\Delta}''=0$. 
We have ${\rm Supp}\bar{D}\supset {\rm Supp}\bar{\Delta}''$ since ${\rm Supp}\bar{D}$ contains all lc centers of $(\bar{X},\bar{\Delta})$ which are vertical over $V$. 
By decomposing $\bar{D}$ appropriately, we obtain effective $\mathbb{R}$-divisors $\bar{G}$ and $\bar{H}$ such that 
\begin{itemize}
\item[(iv)]
$K_{\bar{X}}+\bar{\Delta}\sim_{\mathbb{R},Z}\bar{G}+\bar{H}$, 
\item[(v)]
${\rm Supp}\bar{\Delta}''\subset {\rm Supp}\bar{G}\subset {\rm Supp}\llcorner \bar{\Delta} \lrcorner$, and 
\item[(vi)]
no component of $\bar{H}$ is a component of $\llcorner \bar{\Delta} \lrcorner$.   
\end{itemize}
We fix a real number $t_{0}\in(0,1)$ such that $\bar{\Delta}-t_{0}\bar{G}\geq 0$. 
For any $t\in(0,t_{0}]$, we consider the pair $(\bar{X},\bar{\Delta}-t\bar{G})$. 
By conditions (iii) and (v), we see that any lc center of $(\bar{X},\bar{\Delta}-t\bar{G})$ dominates $V$. 
Since we have $K_{\bar{X}}+\bar{\Delta}\sim_{\mathbb{R},Z}\bar{G}+\bar{H}$ and $K_{\bar{X}}+\bar{\Delta}-t\bar{G}\sim_{\mathbb{R},Z}(1-t)\bar{G}+\bar{H}$, by applying  Remark \ref{remdiv} (1), we obtain 
$\kappa_{\sigma}(\bar{X}/Z,K_{\bar{X}}+\bar{\Delta}-t\bar{G})=\kappa_{\sigma}(\bar{X}/Z,K_{\bar{X}}+\bar{\Delta}).$ 
Combining this with condition (ii), we obtain $\kappa_{\sigma}(\bar{X}/Z,K_{\bar{X}}+\bar{\Delta}-t\bar{G})={\rm dim}V-{\rm dim}Z$. 
Similarly, we obtain $\kappa_{\sigma}(\bar{X}/V,K_{\bar{X}}+\bar{\Delta}-t\bar{G})=0$. 
Thus the morphisms $(\bar{X},\bar{\Delta}-t\bar{G})\to V$ and $V\to Z$ satisfy all conditions of \cite[Proposition 3.3]{has-mmp}. 
By \cite[Proposition 3.3]{has-mmp}, the pair $(\bar{X},\bar{\Delta}-t\bar{G})$ has a good minimal model over $Z$ for any $t\in(0,t_{0}]$. 
\end{step1}

\begin{step1}\label{step3relative}
We use the notations and conditions (i)--(vi) in Step \ref{step2relative}. 

We run a $(K_{\bar{X}}+\bar{\Delta})$-MMP over $X$, and we get a morphism $\widetilde{f}\colon(\widetilde{X},\widetilde{\Delta})\to (X,\Delta)$ such that $K_{\widetilde{X}}+\widetilde{\Delta}=\widetilde{f}^{*}(K_{X}+\Delta)$. 
Let $\widetilde{G}$ and $\widetilde{H}$ be the birational transforms of $\bar{G}$ and $\bar{H}$ on $\widetilde{X}$, respectively. 
By condition (iv), we have $K_{\widetilde{X}}+\widetilde{\Delta}\sim_{\mathbb{R},Z}\widetilde{G}+\widetilde{H}$. 
By condition (vi) and because $(\bar{X},{\rm Supp}(\bar{\Delta}+\bar{D}))$ is log smooth, by replacing  $t_{0}$ with a smaller one, we may assume that $(\bar{X},\bar{\Delta}+t_{0}\bar{H})$ is dlt. 
By construction of the map $\bar{X}\dashrightarrow \widetilde{X}$, by replacing $t_{0}$ again, we may assume that $\bar{X}\dashrightarrow \widetilde{X}$ is a sequence of steps of a $(K_{\bar{X}}+\bar{\Delta}+t\bar{H})$-MMP and a sequence of steps of a $(K_{\bar{X}}+\bar{\Delta}-t\bar{G})$-MMP for any $t\in(0,t_{0}]$. 
Then $(\widetilde{X},\widetilde{\Delta}+t_{0}\widetilde{H})$ is dlt, and the pairs $(\widetilde{X},\widetilde{\Delta}-t\widetilde{G})$ and $(\bar{X},\bar{\Delta}-t\bar{G})$ have the same good minimal model over $Z$ for all $t\in(0,t_{0}]$. 
By running a $(K_{\widetilde{X}}+\widetilde{\Delta}-t\widetilde{G})$-MMP over $Z$, we obtain a good minimal model $(\widetilde{X}_{t},\widetilde{\Delta}_{t}-t\widetilde{G}_{t})$ of $(\widetilde{X},\widetilde{\Delta}-t\widetilde{G})$ and $(\bar{X},\bar{\Delta}-t\bar{G})$. 
Let $\widetilde{X}_{t}\to \widetilde{V}_{t}$ be the contraction over $Z$ induced by $K_{\widetilde{X}_{t}}+\widetilde{\Delta}_{t}-t\widetilde{G}_{t}$. 
We have the following diagrams:
\begin{equation*}
\xymatrix@R=12pt
{
(\bar{X},\bar{\Delta})\ar[d]\ar[dr]_(0.4){\bar{f}}\ar@{-->}[r]&(\widetilde{X},\widetilde{\Delta})\ar[d]^{\widetilde{f}}&(\bar{X},\bar{\Delta}-t\bar{G})\ar[d]\ar@{-->}[r]& (\widetilde{X},\widetilde{\Delta}-t\widetilde{G})\ar[dd]\ar@{-->}[r]&(\widetilde{X}_{t},\widetilde{\Delta}_{t}-t\widetilde{G}_{t})\ar[d]\\
V\ar[dr]&(X,\Delta)\ar[d]\ar@{-->}[l]&V\ar[dr]&&\widetilde{V}_{t}\ar[dl]\\
&Z&&Z
}
\end{equation*}
We recall that the morphisms $(\bar{X},\bar{\Delta}-t\bar{G})\to V$ and $V\to Z$ satisfies all conditions of \cite[Proposition 3.3]{has-mmp}, so the morphism $(\widetilde{X}_{t},\widetilde{\Delta}_{t}-t\widetilde{G}_{t})\to\widetilde{V}_{t}$ has a property stated in \cite[Proposition 3.3]{has-mmp}, that is, any lc center of $(\widetilde{X}_{t},\widetilde{\Delta}_{t}-t\widetilde{G}_{t})$ dominates $\widetilde{V}_{t}$. 

It is easy to check that we can replace $(X,\Delta)$ with $(\widetilde{X},\widetilde{\Delta})$. 
In this way, by replacing $(X,\Delta)$ we may assume that there is an effective $\mathbb{R}$-divisors $G$ and $H$ such that 
\begin{itemize}
\item[(I)]
$K_{X}+\Delta\sim_{\mathbb{R},Z}G+H$, 
\item[(II)]
${\rm Supp}G\subset {\rm Supp}\llcorner \Delta \lrcorner$, and
\item[(III)]
there is a real number $t_{0}>0$ such that for any $t\in(0,t_{0}]$, followings hold true: 
\begin{itemize}
\item[(III-a)]
The pair $(X,\Delta+tH)$ is $\mathbb{Q}$-factorial dlt, and 
\item[(III-b)]
there is a sequence of steps of a $(K_{X}+\Delta-tG)$-MMP over $Z$ terminating with a good minimal model
$(\widetilde{X}_{t},\widetilde{\Delta}_{t}-t\widetilde{G}_{t})$, and the contraction $\widetilde{X}_{t}\to \widetilde{V}_{t}$ over $Z$ induced by $K_{\widetilde{X}_{t}}+\widetilde{\Delta}_{t}-t\widetilde{G}_{t}$ satisfies the property that any lc center of $(\widetilde{X}_{t},\widetilde{\Delta}_{t}-t\widetilde{G}_{t})$ dominates $\widetilde{V}_{t}$. 
\end{itemize}
\end{itemize}
In the rest of the proof, we only use notations and properties in (I), (II), (III), (III-a) and (III-b). 
We may ignore the other notations and conditions in Step \ref{step2relative} and this step. 
\end{step1}

\begin{step1}\label{step4relative}
From this step to Step \ref{step6relative}, we reduce Theorem \ref{thmmain} to the special termination of a log MMP. 
We mimic arguments as in \cite[Step 3--5 in the proof of Theorem 3.5]{has-class}.  

In this step, we construct a strictly decreasing sequence $\{e_{i}\}_{i\geq1}$ of real numbers and a sequence of birational maps
\begin{equation*}
X \dashrightarrow X_{1}\dashrightarrow X_{2}\dashrightarrow \cdots \dashrightarrow X_{i}\dashrightarrow \cdots
\end{equation*}
over $Z$ such that
\begin{enumerate}
\item
$0<e_{i}<t_{0}$ for any $i$ and ${\rm lim}_{i\to \infty}e_{i}=0$, 
\item
the map $X \dashrightarrow X_{1}$ is a sequence of steps of a $(K_{X}+\Delta+e_{1}H)$-MMP over $Z$ to a good minimal model over $Z$, 
\item
for each $i\geq 1$, the pair $(X_{i},\Delta_{i}+e_{i}H_{i})$ is a good minimal model over $Z$ of both $(X,\Delta+e_{i}H)$ and $(X_{1},\Delta_{1}+e_{i}H_{1})$, and
\item
for each $i\geq 1$, the map $X_{i}\dashrightarrow X_{i+1}$ is an isomorphism or a sequence of steps of a $(K_{X_{i}}+\Delta_{i}+e_{i+1}H_{i})$-MMP over $Z$ with scaling of $(e_{i}-e_{i+1})H_{i}$. 
\end{enumerate}
Here, $\Delta_{i}$ and $H_{i}$ are the birational transforms of $\Delta$ and $H$ on $X_{i}$, respectively. 
By (4), the sequence of birational maps $X_{1}\dashrightarrow \cdots$ is a sequence of steps of a $(K_{X_{1}}+\Delta_{1})$-MMP over $Z$ with scaling of $e_{1}H_{1}$. 

By (I) in Step \ref{step3relative}, for any real number $t$ 
we have 
\begin{equation*}\tag{$\spadesuit$}
K_{X}+\Delta+tH\sim_{\mathbb{R},Z}(1+t)\Bigl(K_{X}+\Delta-\frac{t}{1+t}G\Bigr). 
\end{equation*}
Pick a strictly decreasing infinite sequence $\{e_{i}\}_{i\geq1}$ of positive real numbers such that $e_{i}<t_{0}$ for any $i\geq1$ and ${\rm lim}_{i\to \infty}e_{i}=0$. 
Then $(X,\Delta+e_{i}H)$ are $\mathbb{Q}$-factorial dlt. 
For each pair $(X,\Delta-\frac{e_{i}}{1+e_{i}}G)$, there is a sequence of steps of a $(K_{X}+\Delta-\frac{e_{i}}{1+e_{i}}G)$-MMP over $Z$ to a good minimal model as in (III-b) in Step \ref{step3relative}. 
For simplicity of notation, we denote the good minimal model over $Z$ by $(\widetilde{X}_{i},\widetilde{\Delta}_{i}-\frac{e_{i}}{1+e_{i}}\widetilde{G}_{i})$, where $\widetilde{\Delta}_{i}$ and $\widetilde{G}_{i}$ are the birational transforms of $\Delta$ and $G$ on $\widetilde{X}_{i}$, respectively. 
Let $\widetilde{H}_{i}$ be the birational transform of $H$ on $\widetilde{X}_{i}$. 
By ($\spadesuit$), the map
\begin{equation*}
(X,\Delta+e_{i}H)\dashrightarrow(\widetilde{X}_{i},\widetilde{\Delta}_{i}+e_{i}\widetilde{H}_{i})
\end{equation*}
is a sequence of steps of a $(K_{X}+\Delta+e_{i}H)$-MMP over $Z$ and $(\widetilde{X}_{i},\widetilde{\Delta}_{i}+e_{i}\widetilde{H}_{i})$ is a good minimal model of $(X,\Delta+e_{i}H)$ over $Z$. 
By (I) in Step \ref{step3relative}, prime divisors contracted by the log MMP are components of $G+H$. 
By replacing $\{e_{i}\}_{i\geq1}$ with a subsequence, we may assume that all the maps $X \dashrightarrow \widetilde{X}_{i}$ contract the same divisors. 
Then all $\widetilde{X}_{i}$ are isomorphic in codimension one. 

We put $X_{1}=\widetilde{X}_{1}$, $\Delta_{1}=\widetilde{\Delta}_{1}$, $G_{1}=\widetilde{G}_{1}$, and $H_{1}=\widetilde{H}_{1}$. 
By the above argument, it is clear that the sequence $\{e_{i}\}_{i\geq1}$ and the variety $X_{1}$ satisfy conditions (1) and (2) stated at the start of this step. 

We show that $(X_{1},\Delta_{1}+tH_{1})$ has a good minimal model over $Z$ for any $t\in(0,e_{1})$. 
We put $t'=\frac{t}{1+t}$. 
By using ($\spadesuit$), we see that it is sufficient to prove the existence of a good minimal model over $Z$ of the pair $(X_{1},\Delta_{1}-t'G_{1})$. 
Now the pair $(X_{1},\Delta_{1}-\frac{e_{1}}{1+e_{1}}G_{1})$ is a good minimal model of $(X,\Delta-\frac{e_{1}}{1+e_{1}}G)$ over $Z$ as in (III-b) in Step \ref{step3relative}. 
Let $X_{1}\to V_{1}$ be the contraction over $Z$ induced by $K_{X_{1}}+\Delta_{1}-\frac{e_{1}}{1+e_{1}}G_{1}$. 
By (III-b) in Step \ref{step3relative}, all lc centers of $(X_{1},\Delta_{1}-\frac{e_{1}}{1+e_{1}}G_{1})$ dominate $V_{1}$. 
Since $t'>0$ and $(X_{1},\Delta_{1})$ is lc, any lc center of $(X_{1},\Delta_{1}-t'G_{1})$ is an lc center of $(X_{1},\Delta_{1}-\frac{e_{1}}{1+e_{1}}G_{1})$, hence any lc center of $(X_{1},\Delta_{1}-t'G_{1})$ dominates $V_{1}$. 
Moreover, by $(\spadesuit)$ again, we obtain $K_{X_{1}}+\Delta_{1}-\frac{e_{1}}{1+e_{1}}G_{1}\sim_{\mathbb{R},Z}\frac{1}{1+e_{1}}G_{1}+H_{1}$ and $K_{X_{1}}+\Delta_{1}-t'G_{1}\sim_{\mathbb{R},Z}(1-t')G_{1}+H_{1}$. 
By Remark \ref{remdiv} (1), we obtain 
\begin{equation*}
\kappa_{\sigma}(X_{1}/Z,K_{X_{1}}+\Delta_{1}-t'G_{1})=\kappa_{\sigma}(X_{1}/Z,K_{X_{1}}+\Delta_{1}-\tfrac{e_{1}}{1+e_{1}}G_{1})={\rm dim}V_{1}-{\rm dim}Z.
\end{equation*}
By a similar calculation, we obtain $\kappa_{\sigma}(X_{1}/V_{1},K_{X_{1}}+\Delta_{1}-t'G_{1})=0$. 
Then we can apply \cite[Proposition 3.3]{has-mmp} to $(X_{1},\Delta_{1}-t'G_{1})\to V_{1}\to Z$, and we see that $(X_{1},\Delta_{1}-t'G_{1})$ has a good minimal model over $Z$. 
From this fact, we see that $(X_{1},\Delta_{1}+tH_{1})$ has a good minimal model over $Z$ for any $t\in(0,e_{1})$. 

By \cite[Lemma 2.14]{has-mmp}, we can construct a sequence of steps of a $(K_{X_{1}}+\Delta_{1})$-MMP over $Z$ with scaling of $e_{1}H_{1}$
\begin{equation*}
(X_{1},\Delta_{1})=:(X'_{1},\Delta'_{1})\dashrightarrow (X'_{2},\Delta'_{2})\dashrightarrow\cdots \dashrightarrow (X'_{j},\Delta'_{j})\dashrightarrow \cdots
\end{equation*}
such that if we set $\lambda_{j}={\rm inf}\!\set{\!\mu\in\mathbb{R}_{\geq0} | \text{$K_{X'_{j}}+\Delta'_{j}+\mu H'_{j}$ is nef over $Z$}\!}$, where $H'_{j}$ is the birational transform of $H_{1}$ on $X'_{j}$, then the $(K_{X'_{1}}+\Delta'_{1})$-MMP over $Z$ terminates after finitely many steps or we have ${\rm lim}_{j\to \infty}\lambda_{j}=0$ when it does not terminate. 

For each $i\geq1$, pick the minimum $k_{i}$ such that $K_{X'_{k_{i}}}+\Delta'_{k_{i}}+e_{i}H'_{k_{i}}$ is nef over $Z$. 
Such $k_{i}$ exists since ${\rm lim}_{j\to \infty}\lambda_{j}=0$, and $k_{1}=1$. 
We put 
\begin{equation*}
X_{i}=X'_{k_{i}}, \quad \Delta_{i}=\Delta'_{k_{i}} \quad \text{and}\quad H_{i}=H'_{k_{i}}.
\end{equation*} 
Then the pair $(X_{i},\Delta_{i}+e_{i}H_{i})$ is a good minimal model of $(X_{1}, \Delta_{1}+e_{i}H_{1})$ over $Z$ by construction. 
We show that $(X_{i},\Delta_{i}+e_{i}H_{i})$ is also a good minimal model of $(X, \Delta+e_{i}H)$ over $Z$ for any $i$. 
Recall that the map $(X,\Delta+e_{i}H)\dashrightarrow(\widetilde{X}_{i},\widetilde{\Delta}_{i}+e_{i}\widetilde{H}_{i})$ is in particular a birational contraction to a good minimal model over $Z$, which was constructed at the third paragraph of this step. 
Recall also that all $\widetilde{X}_{i}$ are isomorphic in codimension one. 
Since $X_{1}=\widetilde{X}_{1}$, we see that $X_{1}$ and $\widetilde{X}_{i}$ are isomorphic in codimension one. 
Furthermore, since ${\rm lim}_{i\to \infty}e_{i}=0$, the divisor $K_{X_{1}}+\Delta_{1}$ is the limit of movable divisors over $Z$. 
Then the $(K_{X_{1}}+\Delta_{1})$-MMP contains only flips, which shows that $X_{i}$ and $\widetilde{X}_{i}$ are isomorphic in codimension one. 
By Remark \ref{remmodels}, the pair $(X_{i},\Delta_{i}+e_{i}H_{i})$ is a good minimal model of $(X,\Delta+e_{i}H)$ over $Z$ for any $i$. 
In this way, we see that the sequence of birational maps
$(X_{1},\Delta_{1})\dashrightarrow\cdots \dashrightarrow (X_{j},\Delta_{j})\dashrightarrow \cdots$ over $Z$
satisfies conditions (3) and (4) stated at the start of this step. 

We have constructed a sequence of positive real numbers $\{e_{i}\}_{i\geq1}$ and a sequence of birational maps 
\begin{equation*}
X \dashrightarrow X_{1}\dashrightarrow X_{2}\dashrightarrow \cdots \dashrightarrow X_{i}\dashrightarrow \cdots
\end{equation*}
over $Z$.
It is clear that they satisfy conditions (1)--(4) stated at the start of this step. 
\end{step1}

\begin{step1}\label{step5relative}
Suppose that the $(K_{X_{1}}+\Delta_{1})$-MMP over $Z$ with scaling of $e_{1}H_{1}$ terminates. 
Then $X_{l}\simeq X_{l+1}\simeq \cdots$ for some $l$.
By (3) in Step \ref{step4relative}, the pair $(X_{l},\Delta_{l}+e_{i}H_{l})$ is a good minimal model of $(X,\Delta+e_{i}H)$ for any $i\geq l$. 
Then $a(P,X,\Delta+e_{i}H)\leq a(P,X_{l},\Delta_{l}+e_{i}H_{l})$ for any prime divisor $P$ over $X$. 
By considering the limit $i\to \infty$, we have an inequality
$a(P,X,\Delta)\leq a(P,X_{l},\Delta_{l})$ 
 for any prime divisor $P$ over $X$. 
Therefore $(X_{l},\Delta_{l})$ is a weak lc model of $(X,\Delta)$ over $Z$, which implies that $(X,\Delta)$ has a log minimal model over $Z$. 

In this way, to prove the existence of log minimal model of $(X,\Delta)$, we only have to prove the termination of the $(K_{X_{1}}+\Delta_{1})$-MMP over $Z$. 
\end{step1}

\begin{step1}\label{step6relative}
Since we have $K_{X_{1}}+\Delta_{1}+e_{1}H_{1}\sim_{\mathbb{R},Z}(1+e_{1})(K_{X_{1}}+\Delta_{1}-\frac{e_{1}}{1+e_{1}}G_{1})$ by ($\spadesuit$) in Step \ref{step4relative} and since we have ${\rm Supp}G_{1}\subset {\rm Supp}\llcorner \Delta_{1} \lrcorner$ (cf.~(II) in Step \ref{step3relative}), the $(K_{X_{1}}+\Delta_{1})$-MMP over $Z$ occurs only in ${\rm Supp}\llcorner \Delta_{1}\lrcorner$ (cf.~\cite[Step 2 in the proof of Proposition 5.4]{has-trivial}). 

Note that $(X_{1},\Delta_{1}+e_{1}H_{1})$ is $\mathbb{Q}$-factorial dlt and any lc center of the pair is an lc center of $(X_{1},\Delta_{1})$. 
So, for any $i$, the pair $(X_{i},\Delta_{i})$ is $\mathbb{Q}$-factorial dlt and any lc center of the pair  is normal. 
There is $m>0$ such that for any lc center $S_{m}$ of $(X_{m},\Delta_{m})$ and  any $i \geq m$, the indeterminacy locus of the birational map $X_{m}\dashrightarrow X_{i}$ does not contain $S_{m}$ and the restriction of the map to $S_{m}$ induces a birational map $S_{m}\dashrightarrow S_{i}$ to an lc center $S_{i}$ of $(X_{i},\Delta_{i})$. 
We define an $\mathbb{R}$-divisor $\Delta_{S_{i}}$ on $S_{i}$ by adjunction $K_{S_{i}}+\Delta_{S_{i}}=(K_{X_{i}}+\Delta_{i})|_{S_{i}}$. 
From this step to Step \ref{step9relative}, we prove that for any lc center $S_{m}$ of $(X_{m},\Delta_{m})$, there is $i_{0}\geq m$ such that the induced birational map $(S_{i}, \Delta_{S_{i}})\dashrightarrow (S_{i+1}, \Delta_{S_{i+1}})$ is an isomorphism for any $i\geq i_{0}$. 
More strongly, we prove the following statement:

\begin{claim*}
For any lc center $S_{m}$ of $(X_{m},\Delta_{m})$, there is $i_{0}\geq m$ such that $K_{S_{i_{0}}}+\Delta_{S_{i_{0}}}$ is abundant over $Z$, the inequality $\kappa_{\sigma}(S_{i_{0}}/Z, K_{S_{i_{0}}}+\Delta_{S_{i_{0}}})\leq n$ holds, and the induced birational map $(S_{i}, \Delta_{S_{i}})\dashrightarrow (S_{i+1}, \Delta_{S_{i+1}})$ is an isomorphism for any $i\geq i_{0}$. 
\end{claim*}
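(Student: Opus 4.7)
\textbf{The approach} is an induction on $\dim X$ via the hypothesis of Theorem \ref{thmmain}, combined with a special termination argument through Theorem \ref{thmtermi}.

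First, I will show that the adjunction pair $(S_m, \Delta_{S_m})$ satisfies all hypotheses of Theorem \ref{thmmain} in dimension $\dim S_m < \dim X$. After passing to a dlt model of $(X, \Delta)$ on which $S_m$ is realized as the birational image of an lc center, and using that the relevant birational maps $X\dashrightarrow X_m$ do not decrease discrepancies of divisors on $X$, Lemma \ref{lem--abundant-birat-2} transfers the log abundance of $K_X + \Delta$ with respect to $(X, \Delta)$ and the numerical dimension bound to $(X_m, \Delta_m)$ on every lc center contained in $S_m$. Adjunction then yields log abundance of $K_{S_m} + \Delta_{S_m}$ with respect to $(S_m, \Delta_{S_m})$ together with $\kappa_{\sigma}\leq n$ on each of its lc centers. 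In particular, the first two assertions of the claim, $K_{S_{i_0}} + \Delta_{S_{i_0}}$ abundant over $Z$ with $\kappa_{\sigma}(S_{i_0}/Z, K_{S_{i_0}} + \Delta_{S_{i_0}}) \leq n$, hold already for $i_0 = m$, hence for any $i_0 \geq m$, since $S_i \dashrightarrow S_{i+1}$ is birational and these properties are birational invariants under small maps.

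Second, by the induction hypothesis of Theorem \ref{thmmain} applied to $(S_m, \Delta_{S_m}) \to Z$ in dimension less than $\dim X$, the pair $(S_m, \Delta_{S_m})$ has a good minimal model over $Z$.

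Third, I deduce the isomorphism assertion from Theorem \ref{thmtermi}. By Step \ref{step6relative} each birational map $(X_i, \Delta_i) \dashrightarrow (X_{i+1}, \Delta_{i+1})$ is a step of a $(K_{X_i} + \Delta_i)$-MMP over $Z$ with scaling of $e_1 H_1$, occurring in $\Supp \llcorner \Delta_i \lrcorner$. After replacing $(S_m, \Delta_{S_m})$ by a $\mathbb{Q}$-factorialization if needed, standard adjunction and length-of-extremal-rays arguments show that restriction to $S_i$ turns each such step either into an isomorphism on $S_i$ or into a step of a $(K_{S_i} + \Delta_{S_i})$-MMP over $Z$ with scaling of the appropriate restriction of $H_i$. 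Since $(S_m, \Delta_{S_m})$ admits a good minimal model over $Z$, Theorem \ref{thmtermi} forces this MMP with scaling to terminate after finitely many steps, producing the required $i_0$ beyond which $(S_i, \Delta_{S_i}) \dashrightarrow (S_{i+1}, \Delta_{S_{i+1}})$ is an isomorphism for all $i \geq i_0$.

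\textbf{The main obstacle} is twofold: (i) carefully identifying each lc center of $(X_m, \Delta_m)$ inside $S_m$ with the birational image of an lc center on a suitable dlt model of $(X, \Delta)$ so that Lemma \ref{lem--abundant-birat-2} applies cleanly to the adjunction pair, and (ii) verifying that each flip on $X_i$ in the $(K_{X_i} + \Delta_i)$-MMP with scaling of $e_1 H_1$ restricts to a genuine MMP step (possibly trivial) with scaling on $(S_i, \Delta_{S_i})$, which is what permits the direct application of Theorem \ref{thmtermi}.
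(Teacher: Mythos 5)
Your proposal has two genuine gaps, and they occur exactly at the points where the paper is forced into a much more delicate construction.

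First, the assertion that abundance of $K_{S_{m}}+\Delta_{S_{m}}$ over $Z$ and the bound $\kappa_{\sigma}\leq n$ hold ``already for $i_{0}=m$'' is not justified. To apply Lemma \ref{lem--abundant-birat-2} to $(S,\Delta_{S})\dashrightarrow(S_{m},\Delta_{S_{m}})$ you need $a(Q,S,\Delta_{S})\leq a(Q,S_{m},\Delta_{S_{m}})$ for every prime divisor $Q$ on $S$. What is available before stabilization is only $a(Q,S,\Delta_{S}+e_{i}H_{S})\leq a(Q,S_{i},\Delta_{S_{i}}+e_{i}H_{S_{i}})$ for each $i$, where the target varies with $i$; you cannot let $e_{i}\to 0$ while keeping $S_{m}$ fixed, and the inequality at $i=m$ does not chain, since $a(Q,S,\Delta_{S}+e_{m}H_{S})\leq a(Q,S,\Delta_{S})$ goes the wrong way. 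In the paper this discrepancy inequality is obtained for the distinguished index $i_{0}$ only at the very end of Step \ref{step9relative}, \emph{after} the maps $(S_{i},\Delta_{S_{i}})\dashrightarrow(S_{i+1},\Delta_{S_{i+1}})$ are known to be isomorphisms for $i\geq i_{0}$, so that the limit $i\to\infty$ can be taken against a fixed target. Your first step therefore uses the conclusion to prove itself, and the induction hypothesis of Theorem \ref{thmmain} cannot yet be applied to $(S_{m},\Delta_{S_{m}})$ itself.

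Second, the reduction of the isomorphism assertion to Theorem \ref{thmtermi} does not work: the restriction of a flip of $(X_{i},\Delta_{i})$ to the lc center $S_{i}$ is not a step of a $(K_{S_{i}}+\Delta_{S_{i}})$-MMP, with or without scaling. All one knows (\cite[Lemma 4.2.10]{fujino-sp-ter}) is that discrepancies do not decrease; the induced map $S_{i}\dashrightarrow S_{i+1}$ need not be $(K_{S_{i}}+\Delta_{S_{i}})$-negative, and the restricted divisor $H_{S_{i}}$ is not ample, so Theorem \ref{thmtermi} cannot be invoked on the induced sequence. This is precisely why the paper, following \cite[Proof of Theorem 1.2]{birkar-09}, introduces the auxiliary crepant models $\psi_{i}\colon (T_{i},\Psi_{i})\to(S_{i},\Delta_{S_{i}}+e_{i}H_{S_{i}})$ extracting the divisors in $\mathcal{C}_{i}$, obtains a monotone sequence $\Psi_{i}^{(m)}$ on the fixed variety $T_{m}$ with limit $\Psi_{\infty}^{(m)}$, proves via a case-by-case discrepancy comparison that $(T_{m},\Psi_{\infty}^{(m)})$ is log abundant with the required numerical dimension bounds (this is where the induction hypothesis of Theorem \ref{thmmain} actually enters), runs a fresh MMP with scaling of an ample divisor on $T_{m}$ using Theorem \ref{thmtermi}, and finally uses Lemma \ref{lem--polytope-1} to transfer nefness from the limiting divisor back to $K_{S_{i_{0}}}+\Delta_{S_{i_{0}}}$ for some finite $i_{0}$. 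None of this machinery is dispensable in the route you sketch.
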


Assuming this, then we see that the $(K_{X_{1}}+\Delta_{1})$-MMP over $Z$ terminates by the same argument as in \cite{fujino-sp-ter}, and we get a contradiction. 

We therefore prove the claim, and we prove it by induction on the dimension of $S_{m}$. 
Let $\Upsilon_{m}$ be an lc center of $(X_{m},\Delta_{m})$ such that $\Upsilon_{m}\subset S_{m}$. 
By arguments as in \cite{fujino-sp-ter}, by the induction hypothesis of the claim and replacing $m$, we may assume that for any $i\geq m$, if $\Upsilon_{m}\subsetneq S_{m}$ then $K_{\Upsilon_{m}}+\Delta_{\Upsilon_{m}}$ is abundant over $Z$, $\kappa_{\sigma}(\Upsilon_{m}/Z, K_{\Upsilon_{m}}+\Delta_{\Upsilon_{m}})\leq n$, and the map $(\Upsilon_{m}, \Delta_{\Upsilon_{m}})\dashrightarrow (\Upsilon_{i}, \Delta_{\Upsilon_{i}})$ is an isomorphism.  
By arguments as in \cite{fujino-sp-ter} and replacing $m$ again, we may assume that if $\Upsilon_{m}= S_{m}$ then the birational map $\Upsilon_{m}\dashrightarrow \Upsilon_{i}$ is small and the birational transform of $\Delta_{\Upsilon_{m}}$ on $\Upsilon_{i}$ is equal to $\Delta_{\Upsilon_{i}}$. 
\end{step1}

\begin{step1}\label{step7relative}
The basic strategy is similar to \cite[Proof of Theorem 1.2]{birkar-09}. 
In the rest of the proof, unless otherwise stated all $i$ are assumed to be $i\geq m$. 
In this step, we define some varieties and divisors used in the rest of the proof. 
At the start of the next step, we state all notations and facts we will use. 

By construction of $(X,\Delta+e_{i}H)\dashrightarrow(X_{i},\Delta_{i}+e_{i}H_{i})$ (see (2) and (4) in Step \ref{step4relative}), there is an lc center $S$ of $(X,\Delta)$ such that there is an induced birational map $S\dashrightarrow S_{i}$. 
We set $H_{S}=H|_{S}$ and $H_{S_{i}}=H_{i}|_{S_{i}}$, and we define $\Delta_{S}$ by adjunction $K_{S}+\Delta_{S}=(K_{X}+\Delta)|_{S}$. 
Then we have $H_{S}\geq0$ and $H_{S_{i}}\geq0$, and $H_{S_{i}}$ is equal to the birational transform of $H_{S_{m}}$ on $S_{i}$. 
By (2) and (4) in Step \ref{step4relative}, there is a common resolution $\overline{X}\to X$ and $\overline{X}\to {X_{i}}$ and a subvariety $\overline{S}\subset \overline{X}$ birational to $S$ and $S_{i}$ such that the induced morphisms $\overline{S}\to S$ and $\overline{S}\to S_{i}$ form a common resolution of the map $S\dashrightarrow S_{i}$. 
By (3) in Step \ref{step4relative}, comparing coefficients of divisors $(K_{X}+\Delta+e_{i}H)|_{\overline{S}}$ and $(K_{X_{i}}+\Delta_{i}+e_{i}H_{i})|_{\overline{S}}$, we obtain 
\begin{equation*}
a(Q,S,\Delta_{S}+e_{i}H_{S})\leq a(Q,S_{i},\Delta_{S_{i}}+e_{i}H_{S_{i}})
\end{equation*}
 for any prime divisor $Q$ over $S$. 

For each $i$, we set
 \begin{equation*}
 \mathcal{C}_{i}=\Set{ Q'
 | \begin{array}{l}\text{$Q'$ is a component of $H_{S}$ which is exceptional over $S_{i}$}\\ \text{such that $-1<a(Q',S_{i}, \Delta_{S_{i}}+e_{i}H_{S_{i}})<0$}
 \end{array}\!}.
 \end{equation*}
By Lemma \ref{lem--extraction}, there is a crepant model $\psi_{i}\colon (T_{i}, \Psi_{i})\to (S_{i},\Delta_{S_{i}}+e_{i}H_{S_{i}})$ such that the pair $(T_{i}, \Psi_{i})$ is $\mathbb{Q}$-factorial dlt and $\psi_{i}$-exceptional prime divisors are exactly elements of $\mathcal{C}_{i}$. 
By (4) in Step \ref{step4relative} and \cite[Lemma 4.2.10]{fujino-sp-ter}, we have 
\begin{equation*}
\begin{split}
a(Q, S_{i},\Delta_{S_{i}}+e_{i}H_{S_{i}})&\leq a(Q,S_{i},\Delta_{S_{i}}+e_{i+1}H_{S_{i}})\leq a(Q,S_{i+1},\Delta_{S_{i+1}}+e_{i+1}H_{S_{i+1}})
\end{split}
\end{equation*}
for any prime divisor $Q$ over $S_{i}$. 
Therefore, the induced birational map $T_{i}\dashrightarrow T_{i+1}$ is a birational contraction. 
By replacing $m$, we may assume that $T_{m}\dashrightarrow T_{i}$ is isomorphic in codimension one for any $i$. 
Let $\Psi_{i}^{(m)}$ be the birational transform of $\Psi_{i}$ on $T_{m}$. 
By the above relation and since $K_{T_{i}}+\Psi_{i}=\psi_{i}^{*}(K_{S_{i}}+\Delta_{S_{i}}+e_{i}H_{S_{i}})$, we have 
\begin{equation*}
\Psi_{m}\geq \Psi_{m+1}^{(m)}\geq \cdots \geq \Psi_{i}^{(m)} \geq \cdots \geq0.
\end{equation*}
Thus the limit $\Psi_{\infty}^{(m)}:={\rm lim}_{i \to \infty}\Psi_{i}^{(m)}$ exists as an $\mathbb{R}$-divisor. 
Then the pair $(T_{m},\Psi_{\infty}^{(m)})$ is $\mathbb{Q}$-factorial dlt because $(T_{m},\Psi_{m})$ is $\mathbb{Q}$-factorial dlt and $\Psi_{\infty}^{(m)}\leq \Psi_{m}$. 

By (4) in Step \ref{step4relative}, the map $(X_{m},\Delta_{m}+e_{i}H_{m})\dashrightarrow (X_{i},\Delta_{i}+e_{i}H_{i})$ is a sequence of steps of a $(K_{X_{m}}+\Delta_{m}+e_{i}H_{m})$-MMP over $Z$ with scaling of $(e_{m}-e_{i})H_{m}$ to a good minimal model over $Z$. 
By \cite[Lemma 4.2.10]{fujino-sp-ter}, we have 
\begin{equation*}
\begin{split}
a(Q, S_{m},\Delta_{S_{m}}+e_{i}H_{S_{m}})\leq a(Q,S_{i},\Delta_{S_{i}}+e_{i}H_{S_{i}})=a(Q,T_{i},\Psi_{i})
\end{split}
\end{equation*}
for any prime divisor $Q$ over $S_{m}$. 
Since $T_{m}\dashrightarrow T_{i}$ is isomorphic in codimension one, we obtain $\psi_{m}^{*}(K_{S_{m}}+\Delta_{S_{m}}+e_{i}H_{S_{m}})\geq K_{T_{m}}+\Psi_{i}^{(m)}$ for any $i$. 
By considering the limit $i\to \infty$, we have 
$a(Q,S_{m},\Delta_{S_{m}})\leq  a(Q,T_{m}, \Psi_{\infty}^{(m)})$ for any prime divisor $Q$ over $S_{m}$. 
\end{step1}

\begin{step1}\label{step8relative}
We have constructed the diagram
\begin{equation*}
\xymatrix
{
&&T_{m} \ar[d]_{\psi_{m}} &
\cdots  & T_{i} \ar[d]_{\psi_{i}}& T_{i+1} \ar[d]_{\psi_{i+1}}&\cdots \\
S\ar@{-->}[rr] &&S_{m}  \ar@{-->}[r]&
\cdots  \ar@{-->}[r]& S_{i} \ar@{-->}[r]& S_{i+1} \ar@{-->}[r]&\cdots 
}
\end{equation*}
over $Z$ and $\mathbb{Q}$-factorial dlt pairs $(T_{i},\Psi_{i})$ and $(T_{m},\Psi_{\infty}^{(m)})$ such that
\begin{enumerate}
\item[(a)]
for each $i$, the birational map $\psi_{i}^{-1}$ exactly extracts components $Q'$ of $H_{S}$ which are exceptional over $S_{i}$ such that $-1<a(Q',S_{i},\Delta_{S_{i}}+e_{i}H_{S_{i}} )<0$,
\item[(b)]
for any $i$, the birational map $T_{m}\dashrightarrow T_{i}$ is isomorphic in codimension one,  
\item[(c)]
 for any $i$, the divisor $K_{T_{i}}+\Psi_{i}=\psi_{i}^{*}(K_{S_{i}}+\Delta_{S_{i}}+e_{i}H_{S_{i}})$ is semi-ample over $Z$, 
\item[(d)]
we have $a(Q,S,\Delta_{S}+e_{i}H_{S})\leq a(Q,S_{i},\Delta_{S_{i}}+e_{i}H_{S_{i}})=a(Q,T_{i},\Psi_{i})$
for any $i$ and any prime divisor $Q$ over $S$, 
\item[(e)]
$\Psi_{i}^{(m)}$ is the birational transform of $\Psi_{i}$ on $T_{m}$, and the inequality $\Psi_{i}^{(m)}\geq \Psi_{i+1}^{(m)}\geq 0$ holds for any $i$,   
\item[(f)]
$\Psi_{\infty}^{(m)}:={\rm lim}_{i \to \infty}\Psi_{i}^{(m)}$, and 
\item[(g)]
we have $a(Q,S_{m},\Delta_{S_{m}})\leq  a(Q,T_{m}, \Psi_{\infty}^{(m)})$ for any prime divisor $Q$ over $S_{m}$. 
\end{enumerate}
In this step, we prove that $(T_{m}, \Psi_{\infty}^{(m)})$ has a good minimal model over $Z$. 
Note that the divisor $K_{T_{m}}+\Psi_{\infty}^{(m)}$ is the limit of movable divisors $K_{T_{m}}+\Psi_{i}^{(m)}$ over $Z$ (see (b), (c), (e) and (f)), so it is pseudo-effective over $Z$. 
By the induction hypothesis of Theorem \ref{thmmain}, it is sufficient to prove that 
$K_{T_{m}}+\Psi_{\infty}^{(m)}$ is log abundant over $Z$ and for any lc center $\Pi_{m}$ of $(T_{m}, \Psi_{\infty}^{(m)})$ the inequality $\kappa_{\sigma}(\Pi_{m}/Z, (K_{T_{m}}+\Psi_{\infty}^{(m)})|_{\Pi_{m}})\leq n$ holds. 

First, we prove that $K_{T_{m}}+\Psi_{\infty}^{(m)}$ is abundant over $Z$. 
If $a(Q',S,\Delta_{S})\leq a(Q',T_{m}, \Psi_{\infty}^{(m)})$ holds for any prime divisor $Q'$ on $S$, by applying Lemma \ref{lem--abundant-birat-2} to 
the birational map $(S,\Delta_{S})\dashrightarrow(T_{m}, \Psi_{\infty}^{(m)})$ over $Z$, we see that the divisor $K_{T_{m}}+\Psi_{\infty}^{(m)}$ is abundant over $Z$. 
Pick any prime divisor $Q'$ on $S$. 
If $Q'$ is not a component of $H_{S}$, then we have $a(Q',S,\Delta_{S})=a(Q',S,\Delta_{S}+e_{m}H_{S})\leq a(Q',S_{m},\Delta_{S_{m}}+e_{m}H_{S_{m}})$ by (d), and therefore  
\begin{equation*}
\begin{split}
a(Q',S,\Delta_{S})\leq a(Q',S_{m},\Delta_{S_{m}}+e_{m}H_{S_{m}})\leq a(Q',S_{m},\Delta_{S_{m}})
&\leq  a(Q',T_{m}, \Psi_{\infty}^{(m)}) 
\end{split}
\end{equation*}
by (g). 
So we may assume that $Q'$ is a component of $H_{S}$. 
Suppose that $Q'$ is exceptional over $T_{m}$. 
We note that we have
$-1<a(Q',S,\Delta_{S}+e_{m}H_{S})\leq a(Q',S_{m},\Delta_{S_{m}}+e_{m}H_{S_{m}})$ 
by (d). 
Since $Q'$ is exceptional over $S_{m}$ and it is not extracted by $\psi_{m}^{-1}$,  by (a), we obtain $a(Q',S_{m},\Delta_{S_{m}}+e_{m}H_{S_{m}})\geq0$. 
From these relations and (g), we have 
\begin{equation*}
\begin{split}
a(Q',S,\Delta_{S})\leq 0\leq a(Q',S_{m},\Delta_{S_{m}}+e_{m}H_{S_{m}})\leq a(Q',S_{m},\Delta_{S_{m}})\leq  a(Q',T_{m}, \Psi_{\infty}^{(m)}).
\end{split}
\end{equation*}
Finally, suppose that $Q'$ is a divisor on $T_{m}$. 
Since $T_{m}\dashrightarrow T_{i}$ is small for any $i$, we have 
\begin{equation*}
\begin{split}
a(Q',S,\Delta_{S}+e_{i}H_{S})\leq a(Q',T_{i},\Psi_{i})=a(Q',T_{m}, \Psi_{i}^{(m)}).
\end{split}
\end{equation*}
Here, the first inequality follows from (d).   
By considering the limit $i\to \infty$, we obtain $a(Q',S,\Delta_{S})\leq a(Q',T_{m}, \Psi_{\infty}^{(m)})$. 
In any case, we have $a(Q',S,\Delta_{S})\leq a(Q', T_{m}, \Psi_{\infty}^{(m)})$ for any divisor $Q'$ on $S$. 
By Lemma \ref{lem--abundant-birat-2}, the divisor $K_{T_{m}}+\Psi_{\infty}^{(m)}$ is abundant over $Z$. 

To complete this step, we need to prove that  $(K_{T_{m}}+\Psi_{\infty}^{(m)})|_{\Pi_{m}}$ is abundant over $Z$ and the inequality $\kappa_{\sigma}(\Pi_{m}/Z, (K_{T_{m}}+\Psi_{\infty}^{(m)})|_{\Pi_{m}})\leq n$ holds for any lc center $\Pi_{m}$ of $(T_{m}, \Psi_{\infty}^{(m)})$. 
Fix any lc center $\Pi_{m}$ of $(T_{m}, \Psi_{\infty}^{(m)})$. 
By (g), we have $\psi_{m}^{*}(K_{S_{m}}+\Delta_{S_{m}})\geq K_{T_{m}}+\Psi_{\infty}^{(m)}$. 
So the image of $\Pi_{m}$ on $S_{m}$, which we denote $\Upsilon'_{m}$, is an lc center of $(S_{m},\Delta_{S_{m}})$, and $\Upsilon'_{m}$ is also an lc center of $(X_{m},\Delta_{m})$ satisfying $\Upsilon'_{m} \subsetneq S_{m}$. 
By the induction hypothesis of the claim in Step \ref{step6relative} in this proof, the divisor $K_{\Upsilon'_{m}}+\Delta_{\Upsilon'_{m}}=(K_{S_{m}}+\Delta_{S_{m}})|_{\Upsilon'_{m}}$ is abundant over $Z$ and the inequality $\kappa_{\sigma}(\Upsilon'_{m}/Z, K_{\Upsilon'_{m}}+\Delta_{\Upsilon'_{m}})\leq n$ holds. 
We put $\psi_{\Pi_{m}}=\psi_{m}|_{\Pi_{m}}\colon \Pi_{m}\to \Upsilon'_{m}$ and define $\Psi_{\Pi_{m}}$ by adjunction $K_{\Pi_{m}}+\Psi_{\Pi_{m}}=(K_{T_{m}}+\Psi_{\infty}^{(m)})|_{\Pi_{m}}$. 
Since $\Pi_{m}$ is an lc center of $(T_{m}, \Psi_{\infty}^{(m)})$, with the relation $\psi_{m}^{*}(K_{S_{m}}+\Delta_{S_{m}})\geq K_{T_{m}}+\Psi_{\infty}^{(m)}$ and adjunction we can find $\Delta_{\Pi_{m}}\geq \Psi_{\Pi_{m}}$ such that $(\Pi_{m},\Delta_{\Pi_{m}})$ is lc  and $K_{\Pi_{m}}+\Delta_{\Pi_{m}}\sim_{\mathbb{R}}\psi_{\Pi_{m}}^{*}(K_{\Upsilon'_{m}}+\Delta_{\Upsilon'_{m}})$. 
The relation $K_{\Pi_{m}}+\Delta_{\Pi_{m}}\sim_{\mathbb{R}}\psi_{\Pi_{m}}^{*}(K_{\Upsilon'_{m}}+\Delta_{\Upsilon'_{m}})$ shows that $K_{\Pi_{m}}+\Delta_{\Pi_{m}}$ is abundant over $Z$ and $\kappa_{\sigma}(\Pi_{m}/Z, K_{\Pi_{m}}+\Delta_{\Pi_{m}})\leq n$. 
It is clear that $a(Q'', \Pi_{m},\Delta_{\Pi_{m}})\leq a(Q'', \Pi_{m}, \Psi_{\Pi_{m}})$ for any prime divisor $Q''$ on $\Pi_{m}$. 
Applying Lemma \ref{lem--abundant-birat-2} to the map $(\Pi_{m},\Delta_{\Pi_{m}})\dashrightarrow (\Pi_{m}, \Psi_{\Pi_{m}})$ over $Z$, we see that the divisor $(K_{T_{m}}+\Psi_{\infty}^{(m)})|_{\Pi_{m}}=K_{\Pi_{m}}+\Psi_{\Pi_{m}}$ is abundant over $Z$ and $\kappa_{\sigma}(\Pi_{m}/Z, (K_{T_{m}}+\Psi_{\infty}^{(m)})|_{\Pi_{m}})\leq n$ for any lc center $\Pi_{m}$ of $(T_{m}, \Psi_{\infty}^{(m)})$. 

In this way, we see that $K_{T_{m}}+\Psi_{\infty}^{(m)}$ is log abundant over $Z$ and for any lc center $\Pi_{m}$ of $(T_{m}, \Psi_{\infty}^{(m)})$ the inequality $\kappa_{\sigma}(\Pi_{m}/Z, (K_{T_{m}}+\Psi_{\infty}^{(m)})|_{\Pi_{m}})\leq n$ holds. 
By the induction hypothesis of Theorem \ref{thmmain}, the pair $(T_{m}, \Psi_{\infty}^{(m)})$ has a good minimal model over $Z$. 
Thus we complete this step. 
\end{step1}

\begin{step1}\label{step9relative}
With this step we complete the proof of the existence of a log minimal model of $(X,\Delta)$ over $Z$. 
In other words, we prove the claim stated in Step \ref{step6relative} in this proof. 

By Step \ref{step8relative} and Theorem \ref{thmtermi}, by running a $(K_{T_{m}}+ 
\Psi_{\infty}^{(m)})$-MMP over $Z$, we get a good minimal model over $Z$
\begin{equation*}
(T_{m}, \Psi_{\infty}^{(m)})\dashrightarrow (T',\Psi'_{\infty}).
\end{equation*} 
This log MMP contains only flips because $K_{T_{m}}+\Psi_{\infty}^{(m)}$ is the limit of movable divisors $K_{T_{m}}+\Psi_{i}^{(m)}$ over $Z$ ((b), (c), (e) and (f) in Step \ref{step8relative}). 
Therefore, $T'$ and $T_{m}$ are isomorphic in codimension one. 
Let $\Psi'_{i}$ be the birational transform of $\Psi_{i}$ on $T'$. 
Then $\Psi'_{i}\geq \Psi'_{i+1}\geq 0$ for any $i$ and $\Psi'_{\infty}={\rm lim}_{i \to \infty}\Psi'_{i}$ ((e) and (f) in Step \ref{step8relative}). 
Since $T_{m}\dashrightarrow T'$ is a sequence of steps of a $(K_{T_{m}}+ \Psi_{i}^{(m)})$-MMP for any $i\gg m$, the pair $(T',\Psi'_{i})$ is lc for any $i\gg m$. 
We recall that $K_{T_{i}}+\Psi_{i}$ is semi-ample over $Z$ (see (c) in Step \ref{step8relative}). 
Because $K_{T'}+\Psi'_{i}$ is the birational transform of $K_{T_{i}}+\Psi_{i}$ and since $T_{i}$ and $T'$ are isomorphic in codimension one, $(T_{i}, \Psi_{i})$ is a weak lc model of $(T',\Psi'_{i})$ over $Z$ with relatively semi-ample log canonical divisor.  
So $(T',\Psi'_{i})$ has a good minimal model over $Z$ for any $i\gg m$. 
By Theorem \ref{thmtermi}, there is a sequence of steps of a $(K_{T'}+\Psi'_{i})$-MMP over $Z$ to a good minimal model. 
Fix an $i_{0}\gg m$ such that 
$\Psi'_{i_{0}}-\Psi'_{\infty}$ is sufficiently small so that $\Psi'_{i_{0}}$ satisfies the property of Lemma \ref{lem--polytope-1}, that is, for any sequence of steps of any $(K_{T'}+\Psi'_{i_{0}})$-MMP over $Z$ to a good minimal model 
\begin{equation*}
(T',\Psi'_{i_{0}})\dashrightarrow(T'',\Psi''_{i_{0}})
\end{equation*}
and the contraction $T''\to W$ over $Z$ induced by $K_{T''}+\Psi''_{i_{0}}$, the divisor $K_{T''}+\Psi''_{\infty}$ is nef over $Z$ and $K_{T''}+\Psi''_{\infty}\sim_{\mathbb{R},W}0$, where $\Psi''_{\infty}$ is the birational transform of $\Psi'_{\infty}$ on $T''$. 
Note that this log MMP over $Z$ also contains only flips since $K_{T'}+\Psi'_{i_{0}}$ is movable over $Z$. 
Therefore, $T''$ and $T'$ are isomorphic in codimension one, hence $T''$ and $T_{i_{0}}$ are isomorphic in codimension one. 
We focus on the following diagram over $Z$.
\begin{equation*}
\xymatrix
{
(T'',\Psi''_{i_{0}})\ar[d]&(T_{i_{0}}, \Psi_{i_{0}})\ar[d]^{\psi_{i_{0}}} \ar@{-->}[l]\\
W&(S_{i_{0}},\Delta_{S_{i_{0}}}+e_{i_{0}}H_{S_{i_{0}}})\ar@{-->}[l]
}
\end{equation*}
Here, $S_{i_{0}}\dashrightarrow W$ is the induced map. 
Recall that $K_{T_{i_{0}}}+\Psi_{i_{0}}=\psi_{i_{0}}^{*}(K_{S_{i_{0}}}+\Delta_{S_{i_{0}}}+e_{i_{0}}H_{S_{i_{0}}})$ is semi-ample over $Z$ ((c) in Step \ref{step8relative}), and $T''\to W$ is the contraction over $Z$ induced by $K_{T''}+\Psi''_{i_{0}}$. 
Since $T''$ and $T_{i_{0}}$ are isomorphic in codimension one, by taking a common resolution of $T''\dashrightarrow T_{i_{0}}$ and by the negativity lemma, we see that the map $S_{i_{0}}\dashrightarrow W$ is a morphism. 
Furthermore, by definition of $\Psi_{\infty}^{(m)}$ and since ${\rm lim}_{i \to \infty}e_{i}=0$, the birational transform of $K_{T''}+\Psi''_{\infty}$ on $S_{i_{0}}$ is $K_{S_{i_{0}}}+\Delta_{S_{i_{0}}}$. 
Since we have $K_{T''}+\Psi''_{\infty}\sim_{\mathbb{R},W}0$, the divisor $K_{T''}+\Psi''_{\infty}$ is $\mathbb{R}$-linearly equivalent to the pullback of a relatively nef divisor on $W$. 
From these facts, $K_{S_{i_{0}}}+\Delta_{S_{i_{0}}}$ is $\mathbb{R}$-linearly equivalent to the pullback of a relatively nef divisor on $W$, hence it is nef over $Z$. 

There is $i_{0}\geq m$ such that $K_{S_{i_{0}}}+\Delta_{S_{i_{0}}}$ is nef over $Z$. 
Arguments as in \cite{fujino-sp-ter} show the birational map $(S_{i}, \Delta_{S_{i}})\dashrightarrow (S_{i+1}, \Delta_{S_{i+1}})$ is an isomorphism for any $i\geq i_{0}$. 
This is the final condition of the claim in Step \ref{step6relative} in this proof.  
Moreover, by (d) in Step \ref{step8relative}, we have 
\begin{equation*}
a(Q,S,\Delta_{S}+e_{i}H_{S})\leq a(Q,S_{i},\Delta_{S_{i}}+e_{i}H_{S_{i}})=a(Q,S_{i_{0}},\Delta_{S_{i_{0}}}+e_{i}H_{S_{i_{0}}})
\end{equation*}
for any prime divisor $Q$ over $S_{i_{0}}$ and any $i\geq i_{0}$. 
Since we have ${\rm lim}_{i\to \infty}e_{i}=0$, we obtain $a(Q,S,\Delta_{S})\leq a(Q,S_{i_{0}},\Delta_{S_{i_{0}}})$. 
Now we can apply Lemma \ref{lem--abundant-birat-2} to $(S,\Delta_{S})\dashrightarrow(S_{i_{0}},\Delta_{S_{i_{0}}})$ over $Z$ because $K_{S}+\Delta_{S}$ is abundant over $Z$ and $\kappa_{\sigma}(S/Z, K_{S}+\Delta_{S})\leq n$ by hypothesis of Theorem \ref{thmmain}. 
By Lemma \ref{lem--abundant-birat-2}, we see that $K_{S_{i_{0}}}+\Delta_{S_{i_{0}}}$ is abundant over $Z$ and we obtain $\kappa_{\sigma}(S_{i_{0}}/Z, K_{S_{i_{0}}}+\Delta_{S_{i_{0}}})\leq n$. 
Therefore, the claim in Step \ref{step6relative} in this proof holds for any lc center $S_{m}$ of $(X_{m},\Delta_{m})$, which implies that $(X,\Delta)$ has a log minimal model over $Z$ by special termination (see Step \ref{step6relative} in this proof). 
\end{step1}

\begin{step1}\label{step10relative}
Finally, we prove that $(X,\Delta)$ has a good minimal model over $Z$. 

By running a $(K_{X}+\Delta)$-MMP over $Z$ with scaling, we obtain a log minimal model $(X,\Delta)\dashrightarrow(Y,\Gamma)$ over $Z$. 
We need to show that $K_{Y}+\Gamma$ is log abundant over $Z$ because we do not know the log abundance of $K_{Y}+\Gamma$ over $Z$ directly from construction of log MMP.
By construction, $K_{Y}+\Gamma$ is abundant over $Z$. 
Pick any lc center $S_{Y}$ of $(Y,\Gamma)$. 
Then there is an induced birational map $S\dashrightarrow S_{Y}$ from an lc center $S$ of $(X,\Delta)$.  
We define $\Delta_{S}$ and $\Gamma_{S_{Y}}$ by adjunctions $K_{S}+\Delta_{S}=(K_{X}+\Delta)|_{S}$ and $K_{S_{Y}}+\Gamma_{S_{Y}}=(K_{Y}+\Gamma)|_{S_{Y}}$, respectively. 
By \cite[Lemma 4.2.10]{fujino-sp-ter}, we have $a(Q,S,\Delta_{S})\leq a(Q,S_{Y},\Gamma_{S_{Y}})$ for any prime divisor $Q$ on $S$. 
By applying Lemma \ref{lem--abundant-birat-2} to $(S,\Delta_{S})\dashrightarrow(S_{Y},\Gamma_{S_{Y}})$ over $Z$, we see that $K_{S_{Y}}+\Gamma_{S_{Y}}$ is abundant over $Z$, which implies that $K_{Y}+\Gamma$ is log abundant over $Z$. 
By \cite[Lemma 3.4]{has-class}, we see that $K_{Y}+\Gamma$ is semi-ample over $Z$. 
\end{step1}

We complete the proof. 
\end{proof}

\section{Abundant lc pairs and lc pairs with big boundary divisors}\label{sec4}

In this section, we prove results on the minimal model theory for lc pairs such that its log canonical divisor is abundant or boundary divisor is big.  

\begin{thm}\label{thm--abund-gentype}
Assume the existence of good minimal models or Mori fiber spaces for all projective klt pairs of dimension $n$. 
Let $(X,\Delta)$ be a projective lc pair such that
\begin{itemize}
\item
$\Delta$ is big, and 
\item
any lc center of $(X,\Delta)$ is at most $n$-dimensional. 
\end{itemize}
Then $K_{X}+\Delta$ is abundant. 
\end{thm}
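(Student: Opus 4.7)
The strategy is to reduce to the klt case by a perturbation that exploits the bigness of $\Delta$, and then invoke the minimal model theory for klt pairs with big boundary.

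First, since non-pseudo-effective divisors are abundant by convention, I may assume $K_X+\Delta$ is pseudo-effective. The existence of log minimal models for lc pairs with big boundary (a consequence of \cite{bchm} together with standard extensions to the lc setting, see for instance \cite[Theorem~1.6]{has-mmp}) produces a log minimal model $(X',\Delta')$ of $(X,\Delta)$ with $K_{X'}+\Delta'$ nef, $\Delta'$ still big, and lc centers of $(X',\Delta')$ of dimension at most $n$. By Lemma \ref{lem--abundant-birat-2}, abundance of $K_X+\Delta$ is equivalent to abundance of $K_{X'}+\Delta'$.

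Next, I pass to a $\mathbb{Q}$-factorial dlt model $f\colon(Y,\Gamma)\to(X',\Delta')$ via Lemma \ref{lembirequiv} applied to a dlt blow-up, so that $K_Y+\Gamma=f^{*}(K_{X'}+\Delta')$ is nef and $\Gamma$ is big on $Y$. Using bigness of $\Gamma$, I construct a klt perturbation $\Gamma^{\epsilon}\sim_{\mathbb{R}}\Gamma$: choose a small generic ample $\mathbb{R}$-divisor $A$ and an effective $E\sim_{\mathbb{R}}\Gamma-A$ with $\mathrm{mult}_{D_{i}}(E)<1$ for every component $D_{i}$ of $\lfloor\Gamma\rfloor$, and put $\Gamma^{\epsilon}:=(1-\epsilon)\Gamma+\epsilon(A+E)$ for $0<\epsilon\ll 1$. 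Then the coefficient at each $D_{i}$ becomes $(1-\epsilon)+\epsilon\cdot\mathrm{mult}_{D_{i}}(E)<1$, and for $A$ and $E$ generic all higher-codimensional discrepancies strictly improve; hence $(Y,\Gamma^{\epsilon})$ is klt.

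Finally, $(Y,\Gamma^{\epsilon})$ is klt with $\Gamma^{\epsilon}$ big and $K_Y+\Gamma^{\epsilon}\sim_{\mathbb{R}}K_Y+\Gamma$ nef. By the minimal model theory for klt pairs with big boundary (either directly via \cite{bchm}, or by combining the assumption on klt pairs in dimension $n$ with Lemma \ref{lem--abundantmmp} applied to the Iitaka fibration), $(Y,\Gamma^{\epsilon})$ has a good minimal model; since $K_Y+\Gamma^{\epsilon}$ is already nef, this forces $K_Y+\Gamma^{\epsilon}$ to be semi-ample, hence abundant. Consequently $K_Y+\Gamma$ is abundant, and therefore so is $K_X+\Delta$. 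The main obstacle is the perturbation step: verifying that the effective $E\sim_{\mathbb{R}}\Gamma-A$ can be chosen with multiplicity strictly less than one along every component of $\lfloor\Gamma\rfloor$. This requires that, after running the log MMP, $\lfloor\Delta'\rfloor$ lies in the mobile part of the $\mathbb{R}$-linear class of $\Delta'$, which is where the termination of the log MMP (contracting rigid extremal divisors) and the dimension bound on lc centers play their role.
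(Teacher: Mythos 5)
There is a genuine gap, and it sits exactly where you flag it: the perturbation step. Your argument needs an effective $E\sim_{\mathbb{R}}\Gamma-A$ with multiplicity $<1$ along every component of $\llcorner \Gamma\lrcorner$, equivalently that $K_{Y}+\Gamma-\epsilon\llcorner \Gamma\lrcorner$ stays pseudo-effective for small $\epsilon>0$. This can fail: a component $S_{0}$ of $\llcorner \Gamma\lrcorner$ may be ``rigid'' in the sense that $K_{Y}+\Gamma-\epsilon S_{0}$ is not pseudo-effective for any $\epsilon>0$, and no amount of running the MMP removes this (such a component is not contracted, precisely because removing it from the boundary destroys pseudo-effectivity rather than making the divisor more negative on a contractible locus). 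If the perturbation always worked, the theorem would follow unconditionally from \cite{bchm} and would need neither the assumption of the MMP in dimension $n$ nor the bound on the dimension of lc centers; the presence of those hypotheses is the signal that the rigid case is the real content. The paper's proof is a dichotomy: when $K_{X_{0}}+\Delta_{0}-\epsilon\llcorner \Delta_{0}\lrcorner$ is pseudo-effective it argues essentially as you do (Step \ref{step1abund}, directly with $\kappa_{\iota}$ and $\kappa_{\sigma}$, no minimal model needed); in the rigid case it uses \cite[Lemma 3.1]{gongyo-nonvanishing} to build a fibration $X'\to Z'$ with $K_{X'}+\Delta'\sim_{\mathbb{R},Z'}0$ on which the rigid component dominates the base, restricts to that component, and applies \cite[Theorem 1.5]{has-mmp} together with Lemma \ref{lem--abundant-birat-2} --- this is where the $n$-dimensionality of lc centers and the klt MMP assumption in dimension $n$ enter. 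Your proposal has no mechanism for this branch.

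A second, independent problem is your opening move. The existence of a log minimal model for an lc pair with big boundary is not a ``standard extension'' of \cite{bchm}: for klt pairs it is \cite{bchm}, but in the lc case it is essentially the content of Theorem \ref{thm--gentype} and Theorem \ref{thm--lcample} of this paper (and of \cite{bh}), all of which are proved \emph{after} and \emph{using} Theorem \ref{thm--abund-gentype}. Invoking it here is circular within the paper's logic, and unconditionally it is open. Fortunately this step is also unnecessary for the easy branch of the dichotomy (one can compare $\kappa_{\iota}$ and $\kappa_{\sigma}$ of $K_{X_{0}}+\Delta_{0}$ and of the klt perturbation directly via Remark \ref{remdiv}, as the paper does), so the fix is to drop the appeal to a minimal model and to supply a genuine argument for the rigid case.
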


\begin{proof}
We take a dlt blow-up $\pi\colon(X_{0},\Delta_{0})\to (X,\Delta)$. 
Then $\Delta_{0}$ is big. 
We prove that $K_{X_{0}}+\Delta_{0}$ is abundant in several steps. 
We may assume that $K_{X_{0}}+\Delta_{0}$ is pseudo-effective.

\begin{step2}\label{step1abund}
In this step, we prove Theorem \ref{thm--abund-gentype} when $K_{X_{0}}+\Delta_{0}-\epsilon \llcorner \Delta_{0}\lrcorner$ is pseudo-effective for some $\epsilon >0$. 

Renaming $\epsilon$ by $\frac{\epsilon}{2}$, we may assume that $K_{X_{0}}+\Delta_{0}-2\epsilon \llcorner \Delta_{0}\lrcorner$ is pseudo-effective. 
We may also assume that $\Delta_{0}-2\epsilon \llcorner \Delta_{0}\lrcorner$ is big. 
By \cite{bchm}, we have
\begin{equation*}
\begin{split}
\kappa_{\iota}(X_{0}, K_{X_{0}}+\Delta_{0}-2\epsilon \llcorner \Delta_{0}\lrcorner)=\kappa_{\sigma}(X_{0}, K_{X_{0}}+\Delta_{0}-2\epsilon \llcorner \Delta_{0}\lrcorner)\geq0
\end{split}
\end{equation*}
and similarly we see that $K_{X_{0}}+\Delta_{0}-\epsilon\llcorner \Delta_{0}\lrcorner$ is abundant. 
Then $K_{X_{0}}+\Delta_{0}-2\epsilon \llcorner \Delta_{0}\lrcorner$ is $\mathbb{R}$-linearly equivalent to an $\mathbb{R}$-divisor $G\geq0$, hence we have $K_{X_{0}}+\Delta_{0}\sim_{\mathbb{R}}G+2\epsilon\llcorner \Delta_{0}\lrcorner$ and $K_{X_{0}}+\Delta_{0}-\epsilon\llcorner \Delta_{0}\lrcorner \sim_{\mathbb{R}}G+\epsilon\llcorner \Delta_{0}\lrcorner$.
By Remark \ref{remdiv} (1), we obtain
\begin{equation*}
\begin{split}
\kappa_{\sigma}(X_{0},K_{X_{0}}+\Delta_{0})&=\kappa_{\sigma}(X_{0},K_{X_{0}}+\Delta_{0}-\epsilon\llcorner \Delta_{0}\lrcorner )=\kappa_{\iota}(X_{0},K_{X_{0}}+\Delta_{0}-\epsilon\llcorner \Delta_{0}\lrcorner )\\
&=\kappa_{\iota}(X_{0},K_{X_{0}}+\Delta_{0}), 
\end{split}
\end{equation*}
and therefore $K_{X_{0}}+\Delta_{0}$ is abundant. 

So we may assume that $K_{X_{0}}+\Delta_{0}-\epsilon \llcorner \Delta_{0}\lrcorner$ is not pseudo-effective for any $\epsilon >0$. 
\end{step2}

\begin{step2}\label{step2abund}
By Step \ref{step1abund}, we can find a component $S_{0}$ of $\llcorner \Delta_{0}\lrcorner$ such that $K_{X_{0}}+\Delta_{0}-\epsilon S_{0}$ is not pseudo-effective for any $\epsilon>0$. 
By \cite[Lemma 3.1]{gongyo-nonvanishing}, we can construct a birational contraction $(X_{0},\Delta_{0})\dashrightarrow (X',\Delta')$, which is not necessarily $(K_{X_{0}}+\Delta_{0})$-non-positive, and a contraction $X'\to Z'$ such that $(X',\Delta')$ is lc, $K_{X'}+\Delta'\sim_{\mathbb{R},Z'}0$ and the birational transform of $S_{0}$ on $X'$ is a prime divisor which dominates $Z'$. 
We take a log resolution $\phi\colon \bar{X} \to X_{0}$ of $(X_{0},\Delta_{0})$ such that the induced map $\psi\colon \bar{X}\dashrightarrow X'$ is a morphism. 
Then we can write 
\begin{equation*}\tag{$*$}
K_{\bar{X}}+\bar{\Delta}=\phi^{*}(K_{X_{0}}+\Delta_{0})+\bar{E}
\end{equation*}
with $\bar{\Delta}\geq0$ and $\bar{E}\geq0$ such that $\bar{\Delta}$ and $\bar{E}$ have no common components. 
We can also write $K_{\bar{X}}+\bar{\Delta}=\psi^{*}(K_{X'}+\Delta')+\bar{E}_{+}-\bar{E}_{-}$
with $\psi$-exceptional $\mathbb{R}$-divisors $\bar{E}_{+}\geq0$ and $\bar{E}_{-}\geq0$ which have no common components. 
We run a $(K_{\bar{X}}+\bar{\Delta})$-MMP over $X'$ with scaling of an ample divisor. 
By \cite[Theorem 3.5]{birkar-flip}, we obtain a model $\psi'\colon (\bar{X}',\bar{\Delta}')\to X'$ such that $K_{\bar{X}'}+\bar{\Delta}'+\bar{E}'_{-}=\psi'^{*}(K_{X'}+\Delta')$, where $\bar{\Delta}'$ (resp.~$\bar{E}'_{-}$) is the birational transform of  $\bar{\Delta}$ (resp.~$\bar{E}_{-}$) on $\bar{X}'$. 
Then the pair $(\bar{X}',\bar{\Delta}'+\bar{E}'_{-})$ is lc and 
$K_{\bar{X}'}+\bar{\Delta}'+\bar{E}'_{-}\sim_{\mathbb{R},Z'}0$. 
By \cite[Theorem 1.1]{has-mmp}, we can run a $(K_{\bar{X}'}+\bar{\Delta}')$-MMP over $Z'$ and obtain a good minimal model $(\bar{X}'',\bar{\Delta}'')\to Z'$. 
Let $\bar{X}''\to Z''$ be the contraction over $Z'$ induced by $K_{\bar{X}''}+\bar{\Delta}''$. 
Then $K_{\bar{X}''}+\bar{\Delta}''\sim_{\mathbb{R},Z''}0$, and $Z''\to Z'$ is birational. 
\end{step2}

\begin{step2}\label{step3abund}
We have the following diagram.
\begin{equation*}
\xymatrix@R=12pt{
\bar{X}\ar[d]_{\phi}\ar@{-->}[r]&\bar{X}''\ar[d]\\
X_{0}\ar[d]_{\pi}&Z''\\
X
}
\end{equation*}

The birational map $(\bar{X},\bar{\Delta})\dashrightarrow (\bar{X}'',\bar{\Delta}'')$ is a sequence of  steps of a $(K_{\bar{X}}+\bar{\Delta})$-MMP. 
So it is sufficient to prove that $K_{\bar{X}''}+\bar{\Delta}''$ is abundant. 
We recall that $S_{0}$ is a component of $\llcorner \Delta_{0}\lrcorner$ such that $K_{X_{0}}+\Delta_{0}-\epsilon S_{0}$ is not pseudo-effective for any $\epsilon>0$. 
Let $\bar{S}$ be the birational transform of $S_{0}$ on $\bar{X}$. 
Then, the divisor $K_{\bar{X}}+\bar{\Delta}-\epsilon \bar{S}$ is not pseudo-effective for any $\epsilon>0$, hence $\bar{S}$ is not contracted by the map $\bar{X}\dashrightarrow \bar{X}''$. 
Moreover, its birational transform $\bar{S}''$ on $\bar{X}''$ dominates $Z''$. 
It is because the morphism $Z''\to Z'$ is birational and the birational transform of $S_{0}$ on $X'$ is a prime divisor which dominates $Z'$ (for the second fact, see the second sentence of Step \ref{step2abund} in this proof). 
Let $\Delta_{\bar{S}''}$ be an $\mathbb{R}$-divisor on $\bar{S}''$ defined by adjunction $K_{\bar{S}''}+\Delta_{\bar{S}''}=(K_{\bar{X}''}+\bar{\Delta}'')|_{\bar{S}''}$. 
Then the pair $(\bar{S}'', \Delta_{\bar{S}''})$ is dlt because $(\bar{X}'',\bar{\Delta}'')$ is $\mathbb{Q}$-factorial dlt. 
Since we have $K_{\bar{X}''}+\bar{\Delta}''\sim_{\mathbb{R},Z''}0$ and $\bar{S}''\to Z''$ is surjective, by Remark \ref{remdiv}, it is sufficient to prove that $K_{\bar{S}''}+\Delta_{\bar{S}''}$ is abundant.  
\end{step2}

\begin{step2}\label{step4abund}
We set $\phi_{\bar{S}}=\phi|_{\bar{S}}$. 
Now we have the following diagram.
\begin{equation*}
\xymatrix@R=12pt{
\bar{S}\ar[d]_{\phi_{\bar{S}}}\ar@{-->}[rr]&&\bar{S}''\ar[d]\\
S_{0}&&Z'' 
}
\end{equation*}
We define an $\mathbb{R}$-divisor $\Delta_{\bar{S}}$ on $\bar{S}$ by adjunction $K_{\bar{S}}+\Delta_{\bar{S}}=(K_{\bar{X}}+\bar{\Delta})|_{\bar{S}}$. 
We also define an $\mathbb{R}$-divisor $\Delta_{S_{0}}$ on $S_{0}$ by $K_{S_{0}}+\Delta_{S_{0}}=(K_{X_{0}}+\Delta_{0})|_{S_{0}}$. 
Then $K_{S_{0}}+\Delta_{S_{0}}$ is abundant and $\kappa_{\sigma}(S_{0},K_{S_{0}}+\Delta_{S_{0}})\leq n$ because of \cite[Theorem 1.5]{has-mmp} and the hypothesis that all lc centers of $(X,\Delta)$ are at most $n$-dimensional. 
As in the argument in Step \ref{step8relative} in the proof of Theorem \ref{thmmain}, by Lemma \ref{lem--abundant-birat-2}, to prove that $K_{\bar{S}''}+\Delta_{\bar{S}''}$ is abundant it is sufficient to prove $a(Q,S_{0},\Delta_{S_{0}})\leq a(Q,\bar{S}'',\Delta_{\bar{S}''})$ for any prime divisor $Q$ on $S_{0}$. 
We have 
\begin{equation*}
K_{\bar{S}}+\Delta_{\bar{S}}=\phi_{\bar{S}}^{*}(K_{S_{0}}+\Delta_{S_{0}})+\bar{E}|_{\bar{S}}
\end{equation*}
by ($*$) in Step \ref{step2abund}. 
Since $\phi\colon\bar{X}\to X_{0}$ is a log resolution of $(X_{0},\Delta_{0})$, by Lemma \ref{lem--adjunction}, the divisors $\Delta_{\bar{S}}$ and $\bar{E}|_{\bar{S}}$ have no common components and $\bar{E}|_{\bar{S}}$ is $\phi_{\bar{S}}$-exceptional, and hence we have $a(Q,S_{0}, \Delta_{S_{0}})=a(Q, \bar{S}, \Delta_{\bar{S}})$ for any prime divisor $Q$ on $S_{0}$. 
Moreover, since the birational map $(\bar{X},\bar{\Delta})\dashrightarrow (\bar{X}'',\bar{\Delta}'')$ is a sequence of steps of a $(K_{\bar{X}}+\bar{\Delta})$-MMP, we have $a(Q,\bar{S}, \Delta_{\bar{S}})\leq a(Q, \bar{S}'', \Delta_{\bar{S}''})$ by \cite[Lemma 4.2.10]{fujino-sp-ter}. 
So $a(Q,S_{0}, \Delta_{S_{0}})\leq a(Q, \bar{S}'', \Delta_{\bar{S}''})$ for any prime divisor $Q$ on $S_{0}$. 
By Lemma \ref{lem--abundant-birat-2}, we see that $K_{\bar{S}''}+\Delta_{\bar{S}''}$ is abundant, and  $K_{X_{0}}+\Delta_{0}$ is also abundant.
\end{step2}

So we complete the proof. 
\end{proof}

\begin{thm}\label{thm--gentype}
Assume the existence of good minimal models or Mori fiber spaces for all projective klt pairs of dimension $n$. 
Let $\pi\colon X\to Z$ be a projective morphism of normal quasi-projective varieties, and let $(X,\Delta)$ be an lc pair such that
\begin{itemize}
\item
$K_{X}+\Delta$ is abundant over $Z $ or $\Delta$ is big over $Z$, and 
\item
for any lc center $S$ of $(X,\Delta)$, we have ${\rm dim}S-{\rm dim}\,\pi(S)\leq n$. 
\end{itemize}
Then, $(X,\Delta)$ has a good minimal model or a Mori fiber space over $Z$. 
\end{thm}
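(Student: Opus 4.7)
The strategy is to reduce Theorem \ref{thm--gen-type} to Theorem \ref{thmmain} by verifying its two hypotheses for $(X,\Delta)\to Z$: the $\pi$-log abundance of $K_X+\Delta$ and the bound $\kappa_\sigma(S^\nu/Z,(K_X+\Delta)|_{S^\nu})\leq n$ for every lc center $S$. The latter bound is immediate from hypothesis (b): a sufficiently general fiber of the Stein factorization of $S^\nu\to Z$ has dimension $\dim S-\dim\pi(S)\leq n$, which bounds the relative numerical dimension. The main task is verifying log abundance, for which I first reduce to $X,Z$ projective via lc closures (as in Step 1 of the proof of Theorem \ref{thmmain}), then replace $(X,\Delta)$ by a $\mathbb{Q}$-factorial dlt model via Lemma \ref{lembirequiv}, so that each lc center is a normal stratum of $\llcorner\Delta\lrcorner$.

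When $\Delta$ is big over $Z$, I pass to a sufficiently general fiber $F$ of the Stein factorization of $\pi$. Then $(F,\Delta|_F)$ is projective lc with big boundary, and every lc center of $(F,\Delta|_F)$, arising as a component of $S\cap F$ for an lc center $S$ of $(X,\Delta)$ dominating the Stein factor, has dimension at most $\dim S-\dim\pi(S)\leq n$. Theorem \ref{thm--abund-gentype} yields abundance of $K_F+\Delta|_F$, and Lemma \ref{lem--abundant-general} upgrades this to abundance of $K_X+\Delta$ over $Z$. For abundance on each lc center $S$, a slight perturbation of $\Delta$ by a general member of a relatively ample $\mathbb{R}$-linear system (leaving the lc centers unchanged) renders the adjunction boundary on $S$ big over $Z$; applying Theorem \ref{thm--abund-gentype} to a sufficiently general fiber of $S\to Z$ then yields abundance of $(K_X+\Delta)|_{S^\nu}$ over $Z$.

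In the alternative case, where $K_X+\Delta$ is abundant over $Z$ but $\Delta$ is not assumed big, I would invoke the abundance theorem for projective abundant lc pairs (\cite{haconxu}, \cite{fujino-gongyo}): on a sufficiently general fiber $F$ of $\pi$, the divisor $K_F+\Delta|_F$ is abundant and hence semi-ample, so its restriction to any subvariety is abundant. For lc centers $S$ dominating the Stein factor, a sufficiently general fiber of $S^\nu\to Z$ is contained in such a general $F$, giving abundance of $(K_X+\Delta)|_{S^\nu}$ over $Z$. For non-dominating lc centers, an induction on $\dim X$ combined with adjunction reduces the question to a lower-dimensional instance of Theorem \ref{thm--gen-type} applied to $(S,\Delta_S)\to Z$. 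Once log abundance is verified, Theorem \ref{thmmain} yields the desired good minimal model or Mori fiber space. The main obstacle lies in this last subcase: for non-dominating lc centers in the abundant-only setting, the restriction argument from a general fiber of $\pi$ does not directly apply, and a careful combination of adjunction, induction, and the Hacon--Xu abundance theorem is required to conclude log abundance uniformly across all lc centers.
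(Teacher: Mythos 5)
Your overall frame (reduce to Theorem \ref{thmmain}) is the right one, and two pieces are sound: the bound $\kappa_{\sigma}(S^{\nu}/Z,(K_{X}+\Delta)|_{S^{\nu}})\leq n$ does follow from the fiber-dimension hypothesis, and the reduction of the big-boundary case to the abundant case via Theorem \ref{thm--abund-gentype} and Lemma \ref{lem--abundant-general} is exactly what the paper does. But the heart of the matter --- abundance of $(K_{X}+\Delta)|_{S^{\nu}}$ over $Z$ for every lc center $S$ --- is not established by your argument in either subcase. In the abundant case you invoke the Hacon--Xu/Fujino--Gongyo abundance theorem to claim that $K_{F}+\Delta|_{F}$ is semi-ample on a general fiber; that theorem requires the log canonical divisor to be nef and \emph{log abundant}, which is precisely the property you are trying to prove, so the appeal is circular (for a merely abundant lc pair, semi-ampleness is open --- this is why Theorem \ref{thmmain} is nontrivial). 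The proposed induction for non-dominating lc centers has the same problem: to apply Theorem \ref{thm--gentype} to $(S,\Delta_{S})\to Z$ you would need $K_{S}+\Delta_{S}$ abundant over $Z$ or $\Delta_{S}$ big over $Z$ as a hypothesis, and neither is available. In the big case, perturbing $\Delta$ changes the divisor whose abundance is sought, and abundance is not stable under such perturbations without a matching-support argument as in Remark \ref{remdiv}; moreover the adjunction boundary on an lc center contained in the non-ample locus of $\Delta$ need not be big.

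The device you are missing is the one the paper uses, and it handles all lc centers uniformly in both cases with no abundance theorem at all. Pass to a dlt blow-up $(Y,\Gamma)\to(X,\Delta)$; every lc center $T$ of $(Y,\Gamma)$ maps onto an lc center $S$ of $(X,\Delta)$, and by crepancy $K_{T}+\Gamma_{T}=(K_{Y}+\Gamma)|_{T}\sim_{\mathbb{R},S}0$. Taking Stein factorizations $T\to S'\to S''$ of $T\to S\to\pi(S)$ and restricting over a general point $z\in S''$ produces a projective lc pair $(T_{z},\Gamma_{T_{z}})$ with a morphism to $S'_{z}$ such that $K_{T_{z}}+\Gamma_{T_{z}}\sim_{\mathbb{R},S'_{z}}0$ and $\dim S'_{z}=\dim S-\dim\pi(S)\leq n$. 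The relatively trivial case \cite[Theorem 1.5]{has-mmp} (which needs only the MMP assumption in dimension $n$) then gives a good minimal model of $(T_{z},\Gamma_{T_{z}})$, hence abundance and $\kappa_{\sigma}\leq n$; Lemma \ref{lem--abundant-general} lifts this to $T$ over $Z$, verifying the hypotheses of Theorem \ref{thmmain} for $(Y,\Gamma)$. The relative triviality of the adjunction over the image lc center is what replaces the missing abundance input, and it is why no separate treatment of dominating versus non-dominating centers, and no perturbation of $\Delta$, is needed.
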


\begin{proof}
When $\Delta$ is big over $Z$, by restricting $(X,\Delta)$ to a sufficiently general fiber of the Stein factorization of $\pi$ and applying Theorem \ref{thm--abund-gentype} and Lemma \ref{lem--abundant-general}, we see that $K_{X}+\Delta$ is abundant over $Z$. 
Therefore, it is sufficient to prove Theorem \ref{thm--gentype} in the case when $K_{X}+\Delta$ is abundant over $Z$. 
We take a dlt blow-up $(Y,\Gamma)$ of $(X,\Delta)$. 
For any lc center $S$ of $(X,\Delta)$, pick an lc center $T$ of $(Y,\Gamma)$ mapped to $S$ surjectively. 
We define $\Gamma_{T}$ by adjunction $K_{T}+\Gamma_{T}=(K_{Y}+\Gamma)|_{T}$. 
We have a morphism $(T,\Gamma_{T})\to S\to \pi(S)$ such that $K_{T}+\Gamma_{T}\sim_{\mathbb{R},S}0$. 
Take the Stein factorization $T\to S'$ of $T\to S$. 
After that, take the Stein factorization $S'\to S''$ of $S'\to \pi(S)$. 
By construction $T\to S''$ is the Stein factorization of $T\to \pi(S)$. 
Restricting the morphism $T\to S'\to S''$ over a sufficiently general point $z\in S''$, we get a morphism $T_{z}\to S'_{z}$ of projective varieties and an lc pair $(T_{z}, \Gamma_{T_{z}})$ such that ${\rm dim}S'_{z}\leq n$ and $K_{T_{z}}+\Gamma_{T_{z}}\sim_{\mathbb{R}, S'_{z}}0$. 
By \cite[Theorem 1.5]{has-mmp}, $(T_{z}, \Gamma_{T_{z}})$ has a good minimal model or a Mori fiber space. 
In particular, the divisor $K_{T_{z}}+\Gamma_{T_{z}}$ is abundant, and we have $\kappa_{\sigma}(T_{z}, K_{T_{z}}+\Gamma_{T_{z}})\leq n$ because $K_{T_{z}}+\Gamma_{T_{z}}\sim_{\mathbb{R}, S'_{z}}0$ and ${\rm dim}S'_{z}\leq n$. 
By Lemma \ref{lem--abundant-general}, we see that $K_{T}+\Gamma_{T}$ is abundant over $Z$ and $\kappa_{\sigma}(T/Z, K_{T}+\Gamma_{T})\leq n$. 
In this way, we see that $(Y,\Gamma)$ satisfies all the conditions of Theorem \ref{thmmain}. 
By Theorem \ref{thmmain}, $(Y,\Gamma)$ has a good minimal model or a Mori fiber space over $Z$, and so does $(X,\Delta)$.  
\end{proof}

\begin{proof}[Proof of Theorem \ref{thm--calabiyau}]
Note that $K_{X}+B\sim_{\mathbb{R}}0$. 
For any effective $\mathbb{Q}$-Cartier divisor $D$ satisfying the hypothesis, there is a rational number $t>0$ such that $(X,B+tD)$ is lc and all lc centers of the pair are at most $3$-dimensional. 
By Theorem \ref{thm--gentype}, $(X,B+tD)$ has a good minimal model. 
Since $tD\sim_{\mathbb{R}} K_{X}+B+tD$, the assertion holds true. 
\end{proof}

\begin{proof}[Proof of Theorem \ref{thm--flat}] Let $S$ be an lc center of $(X,\Delta)$. Since all fibers of $\pi$ have the same dimension and ${\rm dim}X-{\rm dim}Z\leq n$, we have ${\rm dim}S-{\rm dim}\,\pi(S)\leq n$. 
So the morphism $(X,\Delta)\to Z$ satisfies the conditions of Theorem \ref{thm--gentype}. By Theorem \ref{thm--gentype}, $(X,\Delta)$ has a good minimal model or a Mori fiber space over $Z$. \end{proof}

Finally, we prove a result on lc pairs whose boundary divisors contain log big divisors. 

\begin{thm}\label{thm--logbig}
Assume the existence of good minimal models or Mori fiber spaces for all projective klt pairs of dimension $n$. 

Let $(X,\Delta)$ be a projective lc pair, and let $A\geq0$ be a log big $\mathbb{R}$-Cartier divisor with respect to $(X,\Delta)$, i.e., $A$ is big and $A|_{S^{\nu}}$ is big for any lc center $S$ of $(X,\Delta)$ with the normalization $S^{\nu}$. 
Suppose that 
\begin{itemize}
\item
an inequality $\kappa_{\sigma}(S^{\nu},(K_{X}+\Delta+A)|_{S^{\nu}})\leq n$ holds for any lc center $S$ of $(X,\Delta)$ with the normalization $S^{\nu}$, and
\item
the pair $(X,\Delta+(1+t)A)$ is lc for some $t>0$. 
\end{itemize}
Then, $(X,\Delta+A)$ has a good minimal model or a Mori fiber space. 
\end{thm}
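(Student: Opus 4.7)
The plan is to apply Theorem \ref{thmmain} to $(X,\Delta+A)$ (viewed as a pair over $Z=\mathrm{Spec}\,\mathbb{C}$); it then suffices to check log abundance of $K_X+\Delta+A$ with respect to $(X,\Delta+A)$ and the bound $\kappa_{\sigma}(T^\nu,(K_X+\Delta+A)|_{T^\nu})\leq n$ for every lc center $T$ of $(X,\Delta+A)$. The first observation is that for any lc place $E$ of $(X,\Delta+A)$, combining $a(E,X,\Delta+A)=-1$ with $a(E,X,\Delta+(1+t)A)\geq -1$ forces $\mathrm{ord}_E(A)=0$. Consequently every lc center of $(X,\Delta+A)$ is an lc center of $(X,\Delta)$ not contained in $\mathrm{Supp}\,A$, and the $\kappa_\sigma$ condition of Theorem \ref{thm--logbig} is precisely the one required by Theorem \ref{thmmain}. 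It remains to establish log abundance.

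Log abundance will be proved by induction on $\dim X$. For an lc center $S$ of $(X,\Delta+A)$, adjunction gives an lc pair $(S^\nu,(\Delta+A)_{S^\nu})$ whose boundary contains the big divisor $A|_{S^\nu}$. Lc centers of $(S^\nu,\Delta_{S^\nu})$ correspond to lc centers of $(X,\Delta)$ strictly contained in $S$, so the log bigness of $A$ transfers to log bigness of $A|_{S^\nu}$ with respect to $(S^\nu,\Delta_{S^\nu})$. Adjunction likewise yields that $(S^\nu,\Delta_{S^\nu}+(1+t)A|_{S^\nu})$ is lc, and the $\kappa_\sigma$ bound on lc centers of $(S^\nu,(\Delta+A)_{S^\nu})$ descends from the corresponding bound on lc centers of $(X,\Delta+A)$ contained in $S$. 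Hence the inductive hypothesis of Theorem \ref{thm--logbig} applies to $(S^\nu,(\Delta+A)_{S^\nu})$ and produces a good minimal model, yielding in particular abundance of $K_{S^\nu}+(\Delta+A)_{S^\nu}=(K_X+\Delta+A)|_{S^\nu}$.

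For absolute abundance of $K_X+\Delta+A$, I would mimic the proof of Theorem \ref{thm--abund-gentype}. Take a dlt blow-up $(X_0,\Delta_0+A_0)\to (X,\Delta+A)$. The analysis of lc places above shows $A_0$ shares no components with $\lfloor\Delta_0\rfloor$, hence $\lfloor\Delta_0+A_0\rfloor=\lfloor\Delta_0\rfloor$. If $K_{X_0}+\Delta_0+A_0-\epsilon\lfloor\Delta_0\rfloor$ is pseudo-effective for some small $\epsilon>0$, the perturbed pair is klt with big boundary (bigness coming from $A_0$), and \cite{bchm} together with Remark \ref{remdiv}(1) yields abundance of $K_{X_0}+\Delta_0+A_0$. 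Otherwise there is a component $S_0$ of $\lfloor\Delta_0\rfloor$ such that $K_{X_0}+\Delta_0+A_0-\epsilon S_0$ is non-pseudo-effective for every $\epsilon>0$; applying \cite[Lemma 3.1]{gongyo-nonvanishing} and running an appropriate log MMP (as in the proof of Theorem \ref{thm--abund-gentype}) produces a higher birational model equipped with a contraction along which the log canonical divisor is relatively $\mathbb{R}$-trivial and along which the birational transform of $S_0$ dominates the base, and a transfer argument via Lemma \ref{lem--abundant-birat-2} reduces absolute abundance of $K_{X_0}+\Delta_0+A_0$ to abundance on a section birational to $S_0$, which is supplied by the previous paragraph. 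The main obstacle will be verifying that the birational modifications in the non-pseudo-effective case preserve the log bigness and $(1+t)$-slack of $A$ sufficiently well to feed into the inductive abundance on lc centers, and that no issues arise from tracking discrepancies through the log MMP and the common resolution used in Lemma \ref{lem--abundant-birat-2}.
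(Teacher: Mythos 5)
Your overall strategy --- reduce to Theorem \ref{thmmain}, note via the $(1+t)$-condition that lc places of $(X,\Delta+A)$ avoid $\mathrm{Supp}\,A$, and prove log abundance by induction on dimension using the pseudo-effective/non-pseudo-effective dichotomy of Theorem \ref{thm--abund-gentype} --- is exactly the paper's. But there is a genuine gap in your inductive framework: the statement of Theorem \ref{thm--logbig} is not stable under the restriction operation your proof needs, so the induction does not close up. Concretely, to restrict to an lc center $S$ you must pass to a dlt model $f\colon (X_0,\Delta_0+A_0)\to (X,\Delta+A)$ and do divisorial adjunction to a stratum $W$ of $\llcorner \Delta_0\lrcorner$ dominating $S$ (for a non-divisorial lc center of an lc pair there is in general no effective adjoint boundary $(\Delta+A)_{S^\nu}$ on $S^\nu$ itself with $(S^\nu,(\Delta+A)_{S^\nu})$ lc --- that is the effective adjunction/b-semi-ampleness problem). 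The map $f|_W\colon W\to S$ can have positive-dimensional fibers, and then $(f^*A)|_W=(f|_W)^*\bigl(A|_{S^\nu}\bigr)$ is \emph{not} big on $W$: it is only the pullback of a big divisor from the lower-dimensional base $S^\nu$. So the hypothesis ``$A$ is big'' is destroyed exactly where you need to invoke the inductive hypothesis, both for the log-abundance-on-lc-centers step and for the section $S_0$ appearing in the non-pseudo-effective branch. This is why the paper proves the stronger relative statement $(*)$ --- $\pi\colon X\to Z$ surjective, $A$ big on $Z$, $K_X+\Delta\sim_{\mathbb{R},Z}0$, with bigness required only of $A|_{\pi(S)^{\nu}}$ --- which \emph{is} preserved when one restricts to a stratum $W\to \pi(W)$, and then deduces Theorem \ref{thm--logbig} by applying $(*)$ to a dlt model with $Z=X$.

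A second, consequent problem occurs in your pseudo-effective case once the induction is set up correctly: in the relative situation the klt boundary $\Delta-\epsilon\llcorner\Delta\lrcorner+\pi^*A$ is no longer big when $\dim Z<\dim X$, so \cite{bchm} cannot be applied directly as you propose; the paper instead takes a good minimal model of the klt pair over $Z$ (Lemma \ref{lem--abundantmmp}), uses the canonical bundle formula \cite[Corollary 3.2]{fg-bundle} to descend to klt pairs on $Z$, and only there --- where $A$ genuinely is big --- invokes \cite{bchm}. Your shortcut is valid only at the top level of the induction, where $\pi$ is birational. The remaining ingredients you list (the discrepancy bookkeeping through the MMP, Lemma \ref{lem--abundant-birat-2}, and Remark \ref{remdiv}) are used by the paper in the same way, so repairing the proof amounts to replacing your inductive statement by the relative one.
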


\begin{proof}
We prove the following statement, the relative setting of the theorem.
\begin{itemize}
\item[($*$)]
Assume the existence of good minimal models or Mori fiber spaces for all projective klt pairs of dimension $n$. 

Let $(X,\Delta)$ be a projective $\mathbb{Q}$-factorial dlt pair and $\pi\colon X\to Z$ a surjective morphism of normal projective varieties. 
Let $A\geq0$ be a big $\mathbb{R}$-Cartier divisor on $Z$. 
Suppose that 
\begin{itemize}
\item[$\bullet$]
$K_{X}+\Delta\sim_{\mathbb{R},Z}0$, 
\item[$\bullet$]
$\kappa_{\sigma}(S,(K_{X}+\Delta+\pi^{*}A)|_{S})\leq n$ and $A|_{\pi(S)^{\nu}}$ is big for any lc center $S$ of $(X,\Delta)$, where $\pi(S)^{\nu}$ is the normalization of $\pi(S)$, and 
\item[$\bullet$]
the pair $(X,\Delta+(1+t)\pi^{*}A)$ is lc for some $t>0$. 
\end{itemize}
Then $K_{X}+\Delta+\pi^{*}A$ is log abundant.
\end{itemize}
Assuming this, then we see that $K_{X}+\Delta+A$ in Theorem \ref{thm--logbig} is log abundant by applying $(*)$ to a dlt model of $(X,\Delta+A)$, hence $(X,\Delta+A)$ has a good minimal model or a Mori fiber space by Theorem \ref{thmmain}. 
We therefore prove $(*)$, and we prove it by induction on the dimension of $X$. 
By the third condition, any lc center of $(X,\Delta+\pi^{*}A)$ is an lc center of $(X,\Delta)$. 
By restricting $K_{X}+\Delta+\pi^{*}A$ to each lc center of $(X,\Delta+\pi^{*}A)$ and applying the induction hypothesis, we see that it is sufficient to prove that $K_{X}+\Delta+\pi^{*}A$ is abundant. 
We may assume that $\pi$ is a contraction and $K_{X}+\Delta+\pi^{*}A$ is pseudo-effective. 

Suppose that $K_{X}+\Delta-\epsilon\llcorner \Delta \lrcorner+\pi^{*}A$ is pseudo-effective for some $\epsilon>0$. 
As in Step \ref{step1abund} in the proof of Theorem \ref{thm--abund-gentype}, after replacing $\epsilon$ by $\frac{\epsilon}{2}$, we see that it is sufficient to prove that $K_{X}+\Delta-\epsilon\llcorner \Delta \lrcorner+\pi^{*}A$ is abundant. 
Set $B=\Delta-\epsilon\llcorner \Delta \lrcorner$, then $(X,B+\pi^{*}A)$ is klt because of the third condition of $(*)$. 
In this paragraph, we focus on $(X,B+\pi^{*}A)$ so we ignore the second and the third condition of $(*)$. 
By Lemma \ref{lem--abundantmmp}, there is a good minimal model $(X',B')$ of $(X,B)$ over $Z$. 
Let $X'\to Z'$ be the contraction over $Z$ induced by $K_{X'}+B'$, and let $h\colon Z'\to Z$ be the induced morphism, which is birational since the restriction of $K_{X'}+B'$ to a general fiber of $X'\to Z$ is numerically trivial. 
Replacing $(X,B)\to Z$ and $A$ with $(X',B')\to Z'$ and $h^{*}A$ respectively, we may assume $K_{X}+B\sim_{\mathbb{R},Z}0$.  
By \cite[Corollary 3.2]{fg-bundle}, there are $\mathbb{R}$-divisors $B_{Z}$ and $\Theta_{Z}$ such that $(Z,B_{Z})$ and $(Z,\Theta_{Z})$ are klt and relations
$K_{X}+B\sim_{\mathbb{R}}\pi^{*}(K_{Z}+B_{Z})$ and $K_{X}+B+\pi^{*}A\sim_{\mathbb{R}}\pi^{*}(K_{Z}+\Theta_{Z})$ hold. 
Fix $s>0$ such that the pair $(Z, s(B_{Z}+A)+(1-s)\Theta_{Z})$ is klt. 
Then 
\begin{equation*}
\begin{split}
K_{X}+B+\pi^{*}A&\sim_{\mathbb{R}}s\pi^{*}(K_{Z}+B_{Z})+s\pi^{*}A+(1-s)\pi^{*}(K_{Z}+\Theta_{Z})\\
&=
\pi^{*}(K_{Z}+s(B_{Z}+A)+(1-s)\Theta_{Z})
\end{split}
\end{equation*}
and the divisor $K_{Z}+s(B_{Z}+A)+(1-s)\Theta_{Z}$ is abundant because $A$ is big (\cite{bchm}). 
Therefore, $K_{X}+B+\pi^{*}A$ is abundant, and so is $K_{X}+\Delta+\pi^{*}A$. 

We may assume that $K_{X}+\Delta-\epsilon\llcorner \Delta \lrcorner+\pi^{*}A$ is not pseudo-effective for any $\epsilon>0$. 
Put $\Psi=\Delta+\pi^{*}A$.
As in steps \ref{step2abund} and \ref{step3abund} in the proof of Theorem \ref{thm--abund-gentype}, we construct 
\begin{itemize}
\item[(1)]
a log resolution $\phi \colon \bar{X} \to X$ of $(X,\Psi)$ and a log smooth lc pair $(\bar{X},\bar{\Psi})$ such that we may write $K_{\bar{X}}+\bar{\Psi}=\phi^{*}(K_{X}+\Psi)+\bar{E}$ with $\bar{E}\geq0$ such that $\bar{E}$ and $\bar{\Psi}$ have no common components, 
\item[(2)]
a sequence of steps of log MMP $(\bar{X},\bar{\Psi})\dashrightarrow (\bar{X}'',\bar{\Psi}'')$ and a contraction $\bar{X}''\to Z''$ such that $K_{\bar{X}''}+\bar{\Psi}''\sim_{\mathbb{R},Z''}0$ and $K_{X}+\Psi$ is abundant if $K_{\bar{X}''}+\bar{\Psi}''$ is abundant, and 
\item[(3)]
a component $S$ of $\llcorner \Delta \lrcorner$ with the birational transform $\bar{S}''$ on $\bar{X}''$ such that the induced morphism $\bar{S}''\to Z''$ is surjective. 
\end{itemize}
Let $\bar{S}$ be the birational transform of $S$ on $\bar{X}$. 
We define $\Psi_{S}$, $\Psi_{\bar{S}}$, and $\Psi_{\bar{S}''}$ by adjunctions $K_{S}+\Psi_{S}=(K_{X}+\Psi)|_{S}$, $K_{\bar{S}}+\Psi_{\bar{S}}=(K_{\bar{X}}+\bar{\Psi})|_{\bar{S}}$, and  $K_{\bar{S}''}+\Psi_{\bar{S}''}=(K_{\bar{X}''}+\bar{\Psi}'')|_{\bar{S}''}$, respectively. 
Since $K_{\bar{X}''}+\bar{\Psi}''\sim_{\mathbb{R},Z''}0$ and $\bar{S}''\to Z''$ is surjective, $K_{\bar{X}''}+\bar{\Psi}''$ is abundant if $K_{\bar{S}''}+\Psi_{\bar{S}''}$ is abundant, hence $K_{X}+\Psi$ is abundant if $K_{\bar{S}''}+\Psi_{\bar{S}''}$ is abundant by $(2)$. 
By (1), construction of $\bar{E}$ and Lemma \ref{lem--adjunction}, we may write 
$K_{\bar{S}}+\Psi_{\bar{S}}=\phi|_{\bar{S}}^{*}(K_{S}+\Psi_{S})+\bar{E}|_{\bar{S}}$,
where $\bar{E}|_{\bar{S}}\geq0$ is a $\phi|_{\bar{S}}$-exceptional divisor. 
The calculation as in Step \ref{step4abund} in the proof of Theorem \ref{thm--abund-gentype} shows $a(Q,S,\Psi_{S})=a(Q,\bar{S},\Psi_{\bar{S}})\leq a(Q,\bar{S}'',\Psi_{\bar{S}''})$ for any prime divisor $Q$ on $S$. 
Finally, by the induction hypothesis and hypothesis of the statement $(*)$, we see that $K_{S}+\Psi_{S}$ is abundant and $\kappa_{\sigma}(S,K_{S}+\Psi_{S})\leq n$. 
Therefore, applying Lemma \ref{lem--abundant-birat-2} to $(S,\Psi_{S})\dashrightarrow(\bar{S}'',\Psi_{\bar{S}''})$ over ${\rm Spec}\mathbb{C}$, we see that $K_{\bar{S}''}+\Psi_{\bar{S}''}$ is abundant, from which the divisor $K_{X}+\Delta+\pi^{*}A=K_{X}+\Psi$ is abundant.  

In this way, we see that the assertion $(*)$ holds. 
So we are done. 
\end{proof}

\section{Proof of Theorem \ref{thm--lcample} and Theorem \ref{thm--lcfano}}\label{sec5}

In this section, we prove Theorem \ref{thm--lcample} by the same idea as in Theorem \ref{thmmain}, and we prove Theorem \ref{thm--lcfano}. 
The key ingredient to prove Theorem \ref{thm--lcample} is the following theorem. 

\begin{thm}\label{thmmain--hu}
Let $\pi\colon X\to Z$ be a morphism of normal projective varieties, and let $(X,B)$ be an lc pair. 
Suppose that there is an $\mathbb{R}$-Cartier divisor $C\geq 0$ on $X$ such that 
\begin{itemize}
\item
the pair $(X,B+C)$ is lc, and  
\item
$K_{X}+B+C\sim_{\mathbb{R},Z}0$.
\end{itemize}
Let $A_{Z}$ be an ample $\mathbb{R}$-divisor on $Z$, and pick $A\sim_{\mathbb{R}}\pi^{*}A_{Z}$ such that $(X,B+A)$ is lc. 

Then $(X,B+A)$ has a good minimal model or a Mori fiber space. 
\end{thm}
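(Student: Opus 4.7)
The plan is to follow the strategy of the proof of Theorem \ref{thmmain}: reduce the problem to a special termination statement by a dlt blow-up combined with an MMP-with-scaling trick, and then verify that special termination inductively on dimension, using that the hypotheses of Theorem \ref{thmmain--hu} are preserved under restriction to lc centers.

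After perturbing $A$ in its $\mathbb{R}$-linear equivalence class and taking a dlt blow-up of $(X,B+A+C)$, we may assume $X$ is $\mathbb{Q}$-factorial and all of $(X,B+A)$, $(X,B+C)$, $(X,B+A+C)$ are dlt; by Stein factorization we may assume $\pi$ is a contraction. Picking $L$ on $Z$ with $K_{X}+B+C\sim_{\mathbb{R}}\pi^{*}L$ yields
\begin{equation*}
K_{X}+B+A+C\sim_{\mathbb{R}}\pi^{*}(L+A_{Z}),\qquad K_{X}+B+A\sim_{\mathbb{R},Z}-C,
\end{equation*}
so $(K_X+B+A)|_F\sim_{\mathbb{R}}-C|_F$ for a sufficiently general fiber $F$. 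Mimicking Steps \ref{step1relative}--\ref{step5relative} of the proof of Theorem \ref{thmmain}, with the relatively Calabi--Yau divisor $C$ playing the role of the auxiliary divisor $G$ there, one builds a decreasing sequence $e_{i}\searrow 0$ and a compatible sequence of MMPs on $(X,B+A+e_{i}C)$. Termination of the MMP for $(X,B+A)$ is thereby reduced to a special-termination statement on lc centers.

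For the special termination, fix an lc center $S$ of $(X,B+A)$. By choosing a common dlt blow-up of $(X,B+A)$ and $(X,B+C)$ and invoking Lemma \ref{lem--adjunction}, we arrange that $S$ (or a stratum above it) is also an lc center of $(X,B+C)$; adjunction then yields $K_{S}+B_{S}+C_{S}\sim_{\mathbb{R},\pi(S)^{\nu}}0$, $(S,B_{S}+C_{S})$ lc, and $A_{S}\sim_{\mathbb{R}}\pi_{S}^{*}(A_{Z}|_{\pi(S)^{\nu}})$, with $A_{Z}|_{\pi(S)^{\nu}}$ ample on $\pi(S)^{\nu}$. Thus the restricted data satisfy the hypotheses of Theorem \ref{thmmain--hu} on a lower-dimensional variety, and by induction on $\dim X$ the pair $(S,B_{S}+A_{S})$ has a good minimal model. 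Combining this with Lemma \ref{lem--polytope-1} and mimicking Steps \ref{step6relative}--\ref{step9relative}, the induced birational maps on lc centers stabilize, producing a log minimal model. The promotion to a good minimal model (the analogue of Step \ref{step10relative}) follows from the log abundance of $K+B+A$ on this model: on each lc center the restriction is $\mathbb{R}$-linearly equivalent to the difference of $\pi^{*}(L+A_Z)$ pulled back and an effective divisor, and \cite[Lemma 3.4]{has-class} concludes.

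The main obstacle is the identification step: a minimal lc center of $(X,B+A)$ is not in general an lc center of $(X,B+C)$, so one must choose a common dlt blow-up with care and track discrepancies to guarantee that adjunction of $K_{X}+B+C$ to $S$ produces an effective $C_{S}$ giving the relative Calabi--Yau structure on $S$. Once this is settled, the remainder is essentially a translation of the proof of Theorem \ref{thmmain} to the present setting, with the simplification that no dimension hypothesis on klt MMP is required because the ampleness of $A_Z$ alone suffices to handle the intermediate pairs $(X,B+A+e_{i}C)$ via BCHM-type results.
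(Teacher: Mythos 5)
Your high-level outline (perturb, run an MMP with scaling, reduce to special termination on lc centers, conclude by induction on dimension) is indeed the paper's strategy, but the specific perturbation you propose does not work, and this is a genuine gap rather than an omitted detail. You scale by the relative Calabi--Yau divisor $C$. Since $C\sim_{\mathbb{R},Z}-(K_{X}+B)$, one has $K_{X}+B+A+eC\sim_{\mathbb{R},Z}(1-e)(K_{X}+B)+A$, so up to positive rescaling and $\mathbb{R}$-linear equivalence over $Z$ the perturbed pair is an object of exactly the same type as the one you started with, with the same lc centers; nothing becomes klt and no BCHM-accessible structure appears, so the engine that produces the compatible sequence of good minimal models $(X_{i},B_{i}+A_{i}+e_{i}H_{i})$ never starts. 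The paper instead first disposes of the case where $K_{X}+B-\epsilon\llcorner B\lrcorner+A$ is never pseudo-effective (Lemma \ref{lem--reduction}, which needs the Gongyo non-vanishing construction, Lemma \ref{lemzariski} and the Nakayama--Zariski criterion), and in the remaining case produces an effective $D\sim_{\mathbb{R}}K_{X}+B+A$ with ${\rm Supp}\,D\supset{\rm Supp}\llcorner B\lrcorner$, splits it as $D=G+H$ with ${\rm Supp}\,G={\rm Supp}\llcorner B\lrcorner$, and uses $K_{X}+B+A+tH\sim_{\mathbb{R}}(1+t)\bigl(K_{X}+B+A-\tfrac{t}{1+t}G\bigr)$: the right-hand pair is \emph{klt}, so Lemma \ref{lem--thmmainklt} (canonical bundle formula plus \cite{bchm}) applies. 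This dichotomy and the choice of $G$ supported exactly on $\llcorner B\lrcorner$ are the essential ideas your proposal is missing.

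Two further gaps. First, your final paragraph promotes a log minimal model to a good one by asserting that the restriction of $K+B+A$ to each lc center is the difference of a pullback of $L+A_{Z}$ and an effective divisor; that does not imply abundance, and establishing log abundance here is a substantial part of the paper (Theorems \ref{thm--gen-abund-hu} and \ref{thm--abund-hu}, which themselves require an induction through Lemma \ref{lem--ind-birat} and a careful discrepancy comparison on a common resolution of the lc centers). Second, the "identification step" you flag as the main obstacle is resolved in the paper not by a common dlt blow-up of $(X,B+A)$ and $(X,B+C)$ but simply by choosing $A$ general in its $\mathbb{R}$-linear system so that the lc centers of $(X,B+A)$ coincide with those of $(X,B)$; every lc center of $(X,B)$ is automatically an lc center of $(X,B+C)$ because $C\geq0$ and $(X,B+C)$ is lc, and adjunction then supplies the Calabi--Yau datum $C_{S}$ on $S$. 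You leave this unresolved, and without it the inductive hypothesis cannot be invoked on $S$.
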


The following lemma is a variant of Theorem \ref{thmmain--hu}. 
We use it to prove Theorem \ref{thmmain--hu}. 

\begin{lem}\label{lem--ind-birat}
Assume Theorem \ref{thmmain--hu} for all projective lc pairs of dimension $\leq n-1$. 
Let $\pi\colon X\to Z$, $(X,B)$, $C$, $A_{Z}$ and $A$ be as in Theorem \ref{thmmain--hu} such that ${\rm dim}X\leq n- 1$. 
Let $Y$ be a normal projective variety with a birational morphism $f\colon Y\to X$, and let $(Y,\Gamma)$ be an lc pair such that $(Y,\Gamma+f^{*}A)$ is lc. 
Suppose that the effective part of the divisor $K_{Y}+\Gamma+f^{*}A-f^{*}(K_{X}+B+A)$ is $f$-exceptional. 

Then $(Y,\Gamma+f^{*}A)$ has a good minimal model or a Mori fiber space.  
\end{lem}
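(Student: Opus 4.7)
The plan is to reduce the assertion to Theorem \ref{thmmain--hu} applied to a pair on the base $X$ (which has dimension $\leq n-1$, so the inductive hypothesis is available) by constructing a birational model on which the crepant relation is achieved, and then transferring the conclusion back to $(Y,\Gamma+f^*A)$ along a log MMP. I set
\begin{equation*}
M := K_Y+\Gamma-f^*(K_X+B) = (K_Y+\Gamma+f^*A)-f^*(K_X+B+A),
\end{equation*}
and decompose $M=M^+-M^-$ with $M^+,M^-\geq 0$ sharing no common components. By hypothesis, $M^+$ is $f$-exceptional, hence very exceptional over $X$ since $f$ is birational.

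After replacing $(Y,\Gamma+f^*A)$ by a $\mathbb{Q}$-factorial dlt modification (admissible by Lemma \ref{lembirequiv}), I would run a $(K_Y+\Gamma+f^*A)$-MMP over $X$ with scaling of an ample divisor. Over $X$ this is essentially an $M$-MMP, and the negativity lemma for very exceptional divisors \cite[Theorem 3.5]{birkar-flip}, used in exactly the pattern employed in Step 2 of the proof of Lemma \ref{lem--abundant-birat-2}, guarantees that the MMP terminates at a model $g\colon Y\dashrightarrow Y'$ with induced morphism $f'\colon Y'\to X$ on which $M^+$ is entirely contracted. Writing $\Gamma'$ and $M^-_{Y'}$ for the strict transforms of $\Gamma$ and $M^-$, this yields the crepant identities
\begin{equation*}
K_{Y'}+\Gamma'+M^-_{Y'}+f'^*A = f'^*(K_X+B+A)\quad\text{and}\quad K_{Y'}+\Gamma'+M^-_{Y'}+f'^*C = f'^*(K_X+B+C).
\end{equation*}

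Set $C':=M^-_{Y'}+f'^*C$; then $K_{Y'}+\Gamma'+C'=f'^*(K_X+B+C)\sim_{\mathbb{R},Z}0$. The pair $(Y',\Gamma'+C')$ is lc because it is a crepant pullback of the lc pair $(X,B+C)$: the discrepancy of every prime divisor over $Y'$ coincides with its discrepancy with respect to $(X,B+C)$ and is therefore at least $-1$, which in particular bounds the coefficients of $\Gamma'+C'$ at prime divisors of $Y'$ by $1$; effectivity of $\Gamma'+C'$ follows because each of its summands is nonnegative. Similarly $(Y',\Gamma'+f'^*A)$ is lc (preserved by the lc MMP), and $f'^*A\sim_{\mathbb{R}}(\pi\circ f')^*A_Z$ by construction. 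Hence $(Y',\Gamma')$ together with the morphism $\pi\circ f'\colon Y'\to Z$, the divisor $C'$, the ample $A_Z$, and the representative $f'^*A$ satisfies every hypothesis of Theorem \ref{thmmain--hu}. Since $\dim Y'=\dim X\leq n-1$, the assumed induction hypothesis yields a good minimal model or a Mori fiber space for $(Y',\Gamma'+f'^*A)$. Because $(Y,\Gamma+f^*A)\dashrightarrow(Y',\Gamma'+f'^*A)$ is a sequence of steps of a log MMP, the same conclusion is inherited by $(Y,\Gamma+f^*A)$.

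The main obstacle I expect is the careful execution of the MMP step: ensuring via \cite[Theorem 3.5]{birkar-flip} and the preliminary $\mathbb{Q}$-factorial dlt reduction of Lemma \ref{lembirequiv} that the MMP with scaling exists, terminates, and contracts $M^+$ exactly, and then verifying through the discrepancy calculus across the crepant pullback that $(Y',\Gamma'+C')$ is genuinely lc with an effective boundary. These technical moves mirror those already used in the proof of Lemma \ref{lem--abundant-birat-2} and in the proof of Theorem \ref{thm--abund-gentype}, so the required tools are all in place.
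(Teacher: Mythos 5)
Your proposal is correct and follows essentially the same route as the paper's proof: decompose the discrepancy divisor into effective and anti-effective parts, pass to a dlt model, contract the ($f$-exceptional) effective part by an MMP over $X$ via \cite[Theorem 3.5]{birkar-flip}, set the new auxiliary divisor $C'=M^-_{Y'}+f'^{*}C$ (the paper's $N'+f'^{*}C$), and apply Theorem \ref{thmmain--hu} on $Y'$ before transferring the conclusion back along the MMP. The only cosmetic difference is your explicit crepant-pullback justification that $(Y',\Gamma'+C')$ is lc, which the paper states more briefly.
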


\begin{proof}
We put $A_{Y}=f^{*}A$. 
By replacing $(Y,\Gamma+A_{Y})$ with a dlt model, we may assume that $(Y,\Gamma+A_{Y})$ is $\mathbb{Q}$-factorial dlt. 
We may write
\begin{equation*}
K_{Y}+\Gamma+A_{Y}=f^{*}(K_{X}+B+A)+M-N
\end{equation*}
with $M\geq 0$ and $N \geq 0$ such that $M$ and $N$ have no common components and $M$ is $f$-exceptional. 
We run a $(K_{Y}+\Gamma+A_{Y})$-MMP over $X$ with scaling of an ample divisor. 
By argument of very exceptional divisors, we reach a model $f'\colon (Y', \Gamma'+A_{Y'})\to X$ such that $M$ is contracted by the birational map $Y\dashrightarrow Y'$ (\cite[Theorem 3.5]{birkar-flip}). 
Note that $A_{Y'}=f'^{*}A$. 
Let $N'$ be the birational transform of $N$ on $Y'$. 
Since $N'\geq0$ and 
\begin{equation*}
K_{Y'}+\Gamma'+(N'+f'^{*}C)=f'^{*}(K_{X}+B+C)\sim_{\mathbb{R},Z}0,
\end{equation*}
we see that $f'^{*}C+N'\geq0$, $K_{Y'}+\Gamma'+(N'+f'^{*}C)\sim_{\mathbb{R},Z}0$, and the pair $(Y', \Gamma'+(f'^{*}C+N'))$ is lc. 
Now we have $A_{Y'}=f'^{*}A\sim_{\mathbb{R}}(\pi\circ f')^{*}A_{Z}$, and hence
 we can apply Theorem \ref{thmmain--hu} to $\pi\circ f'\colon(Y',\Gamma')\to Z$, $f'^{*}C+N'$, $A_{Z}$ and $A_{Y'}$. 
We see that $(Y',\Gamma'+A_{Y'})$ has a good minimal model or a Mori fiber space, and thus $(Y,\Gamma+A_{Y})$ has a good minimal model or a Mori fiber space.  
\end{proof}

\subsection{Proof of Theorem \ref{thmmain--hu}: Generalized abundance and abundance}

The goal of this subsection is to prove the generalized abundance and  the abundance theorem in the setting of Theorem \ref{thmmain--hu} (Theorem \ref{thm--gen-abund-hu} and Theorem \ref{thm--abund-hu}). 

\begin{lem}[see also {\cite[Corollary 4.3]{bh}}]\label{lem--thmmainklt}
Let $\pi\colon X\to Z$ be a morphism of normal projective varieties, and let $(X,B)$ be a klt pair. 
Assume $\kappa_{\sigma}(X/Z, K_{X}+B)=0$. 
Let $A_{Z}$ be an ample $\mathbb{R}$-divisor on $Z$. 

Then $K_{X}+B+\pi^{*}A_{Z}$ is abundant. 
In particular, if the pair $(X,B+\pi^{*}A_{Z})$ is klt then it has a good minimal model or a Mori fiber space. 
\end{lem}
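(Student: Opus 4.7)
The plan is to reduce the lemma, via relative MMP and the canonical bundle formula, to an instance of BCHM on the base $Z$. First, I would verify that $K_{X}+B$ is abundant over $Z$: on a sufficiently general fiber $F$ of the Stein factorization of $\pi$, the klt pair $(F,B|_{F})$ satisfies $\kappa_{\sigma}(F,(K_{X}+B)|_{F})=0$, and the known non-vanishing for klt pairs of numerical dimension zero (cf.~\cite{gongyo-nonvanishing}) together with Lemma \ref{lem--abundant-general} gives $\kappa_{\iota}(X/Z,K_{X}+B)=0$. Hence Lemma \ref{lem--abundantmmp} supplies a relative good minimal model $(X,B)\dashrightarrow(X',B')$ over $Z$, and the relative canonical morphism $\pi'\colon X'\to Z'$ factors through a birational morphism $h\colon Z'\to Z$, because the relative numerical dimension vanishes.

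After this reduction we have $K_{X'}+B'\sim_{\mathbb{R},Z'}0$ with $(X',B')$ klt. I would then invoke the canonical bundle formula \cite[Corollary 3.2]{fg-bundle} to produce a klt pair $(Z',B_{Z'})$ with $K_{X'}+B'\sim_{\mathbb{R}}\pi'^{*}(K_{Z'}+B_{Z'})$, so that
\begin{equation*}
K_{X'}+B'+\pi'^{*}h^{*}A_{Z}\sim_{\mathbb{R}}\pi'^{*}\bigl(K_{Z'}+B_{Z'}+h^{*}A_{Z}\bigr).
\end{equation*}
Since $h^{*}A_{Z}$ is big and nef on $Z'$, a standard perturbation produces a klt pair $(Z',B_{Z'}+D)$ with $D\sim_{\mathbb{R}}h^{*}A_{Z}$ big, and then BCHM \cite{bchm} yields semi-ampleness, hence abundance, of $K_{Z'}+B_{Z'}+h^{*}A_{Z}$. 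By Remark \ref{remdiv}, the divisor $K_{X'}+B'+\pi'^{*}h^{*}A_{Z}$ is abundant.

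Finally, I would transfer abundance back to $X$. Taking a common resolution $p\colon W\to X$ and $q\colon W\to X'$ of $X\dashrightarrow X'$, the negativity lemma for the relative $(K_{X}+B)$-MMP gives $p^{*}(K_{X}+B)=q^{*}(K_{X'}+B')+E$ with $E\geq 0$ and $p$-exceptional; since $\pi\circ p=h\circ\pi'\circ q$, the pullbacks $p^{*}\pi^{*}A_{Z}$ and $q^{*}\pi'^{*}h^{*}A_{Z}$ coincide, so the same equality holds after adding the ample pullback. Remark \ref{remdiv}(2) then transfers abundance of $K_{X'}+B'+\pi'^{*}h^{*}A_{Z}$ to abundance of $K_{X}+B+\pi^{*}A_{Z}$, and the ``in particular'' statement follows by applying Lemma \ref{lem--abundantmmp} to the klt pair $(X,B+\pi^{*}A_{Z})$. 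The main obstacle is the very first step: one needs relative non-vanishing for klt pairs with $\kappa_{\sigma}=0$, so that Lemma \ref{lem--abundantmmp} actually applies; everything after the reduction to $K_{X'}+B'\sim_{\mathbb{R},Z'}0$ is a routine canonical-bundle-formula reduction to BCHM.
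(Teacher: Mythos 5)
Your proposal is correct and takes essentially the same route as the paper's proof: a relative good minimal model via Lemma \ref{lem--abundantmmp}, the canonical bundle formula \cite[Corollary 3.2]{fg-bundle} to descend to a klt pair on $Z'$, BCHM for the nef and big perturbation $h^{*}A_{Z}$, and transfer of abundance back via Remark \ref{remdiv}. The only addition is that you spell out the non-vanishing needed to see that $K_{X}+B$ is abundant over $Z$ before invoking Lemma \ref{lem--abundantmmp}, a step the paper leaves implicit.
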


\begin{proof}
By Lemma \ref{lem--abundantmmp}, we can find a good minimal model $(X',B')$ of $(X,B)$ over $Z$. 
Let $\pi'\colon X'\to Z'$ be the contraction over $Z$ induced by $K_{X'}+B'$, and let $h\colon Z'\to Z$ be the induced morphism. 
By \cite[Corollary 3.2]{fg-bundle} there is an $\mathbb{R}$-divisor $B_{Z'}$ such that $(Z',B_{Z'})$ is klt and 
$K_{X'}+B'\sim_{\mathbb{R}}\pi'^{*}(K_{Z'}+B_{Z'})$. 
Then the divisor $K_{Z'}+B_{Z'}+h^{*}A_{Z}$ is abundant (\cite{bchm}) because $h^{*}A_{Z}$ is nef and big. 
So the divisor $K_{X'}+B'+\pi'^{*}h^{*}A_{Z}$ is abundant, and hence we see that $K_{X}+B+\pi^{*}A_{Z}$ is abundant. 
The second assertion follows from the first assertion and  Lemma \ref{lem--abundantmmp}.  
\end{proof}

\begin{thm}\label{thm--gen-abund-hu}
Assume Theorem \ref{thmmain--hu} for all projective lc pairs of dimension $\leq n-1$. 
Let $\pi\colon X\to Z$, $(X,B)$, $C$, $A_{Z}$ and $A$ be as in Theorem \ref{thmmain--hu} such that ${\rm dim}X= n$. 

Then $K_{X}+B+A$ is abundant. 
\end{thm}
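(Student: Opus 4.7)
The proof will follow the template of the proof of Theorem \ref{thm--abund-gentype}, adapted to the relative setting over $Z$. First I would make standard reductions: assume $K_X+B+A$ is pseudo-effective (else abundant by convention), and reduce to $\pi$ being a contraction via Stein factorization, noting that $A \sim_{\mathbb{R}} \pi^*A_Z$ persists. The key initial observation is that for a general fiber $F$ of $\pi$ we have $A|_F \sim_{\mathbb{R}} 0$ and $(K_X+B+C)|_F \sim_{\mathbb{R}} 0$, hence $(K_X+B+A)|_F \sim_{\mathbb{R}} -C|_F$. Since $C|_F$ is effective and $(K_X+B+A)|_F$ is pseudo-effective, $C|_F = 0$, so $C$ is vertical over $Z$ and $\kappa_{\sigma}(X/Z, K_X+B) = 0$.

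The proof then splits along a dichotomy paralleling Theorem \ref{thm--abund-gentype}. In Case 1, some $\epsilon > 0$ makes $K_X+B+A - \epsilon \llcorner B \lrcorner$ pseudo-effective. The same fiber-restriction forces $\llcorner B \lrcorner$ to be vertical. Setting $B' = B - \epsilon \llcorner B \lrcorner$ and $C' = C + \epsilon \llcorner B \lrcorner$, the klt pair $(X,B')$ still satisfies $K_X+B'+C' \sim_{\mathbb{R}, Z} 0$ and $\kappa_{\sigma}(X/Z, K_X+B') = 0$, so Lemma \ref{lem--thmmainklt} applies and yields abundance of $K_X+B'+A \sim_{\mathbb{R}} K_X+B'+\pi^*A_Z$. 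To transfer abundance to $K_X+B+A$, I would choose an ample perturbation $A_Z' = A_Z + \delta \Omega_Z$ with $\delta > 0$ small and $\Omega_Z \geq 0$ on $Z$ arranged so that ${\rm Supp}(\pi^*\Omega_Z) \supseteq {\rm Supp}(\llcorner B \lrcorner)$ (possible since $\llcorner B \lrcorner$ is vertical); applying Lemma \ref{lem--thmmainklt} with $A_Z'$ produces an effective representative of $K_X+B'+\pi^*A_Z'$ whose support contains $\llcorner B \lrcorner$, and adding $\epsilon \llcorner B \lrcorner$ gives an effective representative of $K_X+B+A$ with identical support, so Remark \ref{remdiv}(1) transfers the equality $\kappa_\iota = \kappa_\sigma$ to $K_X+B+A$.

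In Case 2, $K_X+B+A-\epsilon\llcorner B \lrcorner$ fails to be pseudo-effective for every $\epsilon > 0$. I would pick a component $S_0$ of $\llcorner B \lrcorner$ with $K_X+B+A-\epsilon S_0$ never pseudo-effective, and follow Steps 2--4 of the proof of Theorem \ref{thm--abund-gentype} relatively over $Z$: use a relative version of \cite[Lemma 3.1]{gongyo-nonvanishing} to build a contraction $X'\to Z'$ over $Z$ with $K_{X'}+\Delta' \sim_{\mathbb{R},Z'} 0$ and the birational transform of $S_0$ dominating $Z'$, then take a log resolution and run further MMPs over $Z$ to reach $(\bar X'', \bar\Delta'')$ with contraction $\bar X'' \to Z''$ over $Z$ satisfying $K_{\bar X''}+\bar \Delta'' \sim_{\mathbb{R}, Z''} 0$ and the birational transform $\bar S''$ of $S_0$ dominating $Z''$. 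By Remark \ref{remdiv}, abundance of $K_X+B+A$ reduces to abundance of $K_{\bar S''}+\Delta_{\bar S''}$ (defined by adjunction). Tracking $\bar A'' = $ transform of $\bar f^*A$ and the $C$-side data through the MMP, the restricted pair fits the setting of Theorem \ref{thmmain--hu} in dimension $\leq n-1$ (invoking Lemma \ref{lem--ind-birat} to handle the birational modification of the hypotheses); the induction hypothesis then gives abundance of $K_{\bar S''}+\Delta_{\bar S''}$, and Lemma \ref{lem--abundant-birat-2} propagates it back to $(X, B+A)$ as in Step 4 of the proof of Theorem \ref{thm--abund-gentype}.

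The main obstacle will be Case 2: verifying that the data on $\bar S''$, especially the restricted ample-pullback divisor $\bar A''|_{\bar S''}$, genuinely fits the hypotheses of the lower-dimensional version of Theorem \ref{thmmain--hu}, since the map $\bar S'' \to Z''$ may no longer exhibit $\bar A''|_{\bar S''}$ literally as the pullback of an ample divisor on $Z''$. Reconciling the maps $\bar S'' \to Z''$ and $\bar S'' \to Z$ so that the ample structure of $A_Z$ survives under restriction, and carefully tracking the $C$-side data through the MMP and adjunction so that $K_{\bar S''}+\Delta_{\bar S''}+(\text{restricted }C\text{-side})\sim_{\mathbb{R},Z''}0$, is the delicate part. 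Case 1 hides a subtlety in the support-matching step, but the ampleness of $A_Z$ supplies sufficient flexibility for it to go through.
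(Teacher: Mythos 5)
Your overall architecture matches the paper's (dichotomy on pseudo-effectivity of $K_{X}+B-\epsilon\llcorner B\lrcorner+A$, passage to a Mori fiber space, restriction to a horizontal component of $\llcorner B\lrcorner$, induction via the lower-dimensional hypothesis), but both halves contain a genuine gap. In Case 1, your transfer step does not close: adding $\epsilon\llcorner B\lrcorner$ to an effective representative of $K_{X}+B'+\pi^{*}A_{Z}'$ produces a representative of $K_{X}+B+A+\delta\pi^{*}\Omega_{Z}$, not of $K_{X}+B+A$, and abundance of $D+\delta\pi^{*}\Omega_{Z}$ does not descend to abundance of $D$. Even ignoring this, Remark \ref{remdiv} (1) requires \emph{equality} of supports, whereas ${\rm Supp}\,\pi^{*}\Omega_{Z}$ is a union of whole fibers and will in general strictly contain $\llcorner B\lrcorner$ together with extra vertical components not controlled by your representative. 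The correct device (used in Step \ref{step1abund-hu} of the paper, mirroring Theorem \ref{thm--abund-gentype}) needs no verticality and no base perturbation: apply Lemma \ref{lem--thmmainklt} to both $B-\epsilon\llcorner B\lrcorner$ and $B-2\epsilon\llcorner B\lrcorner$, write $K_{X}+B-2\epsilon\llcorner B\lrcorner+A\sim_{\mathbb{R}}G\geq 0$, and compare $G+2\epsilon\llcorner B\lrcorner$ with $G+\epsilon\llcorner B\lrcorner$, which have identical support.

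In Case 2 you correctly identify the crux --- that $\bar{A}''|_{\bar{S}''}$ is no longer visibly an ample pullback via $\bar{S}''\to Z''$ --- but you leave it unresolved, and the tool you name to propagate abundance, Lemma \ref{lem--abundant-birat-2}, is not available here: its hypothesis is the existence of good minimal models for projective klt pairs of dimension $n$ (the induction hypothesis of Theorem \ref{thmmain}), which is \emph{not} among the hypotheses of Theorem \ref{thm--gen-abund-hu}. The paper's resolution is to abandon $\bar{S}''\to Z''$ as the structure morphism: take a common log resolution $\tau\colon T\to\bar{S}$, $\tau''\colon T\to\bar{S}''$, set $A_{T}=\tau^{*}\phi_{\bar{S}}^{*}A_{S}$ (the pullback of $A_{S}=A|_{S}$, which genuinely satisfies $A_{S}\sim_{\mathbb{R}}(\pi|_{S})^{*}A_{Z}$, while $K_{S}+B_{S}+C|_{S}\sim_{\mathbb{R},Z}0$ by adjunction, so $(S,B_{S})$ fits Theorem \ref{thmmain--hu} in dimension $\leq n-1$), choose $A$ general so that $A_{T}\leq\tau''^{-1}_{*}A_{\bar{S}''}$ (Lemmas \ref{lem--genmember} and \ref{lem--genmember2}), and verify by the discrepancy chain $a(Q,S,B_{S}+A_{S})=a(Q,\bar{S},B_{\bar{S}}+A_{\bar{S}})\leq a(Q,\bar{S}'',B_{\bar{S}''}+A_{\bar{S}''})$ that the effective part of $K_{T}+\Psi+A_{T}-(\phi_{\bar{S}}\circ\tau)^{*}(K_{S}+B_{S}+A_{S})$ is exceptional over $S$. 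Only then does Lemma \ref{lem--ind-birat} --- whose induction hypothesis is exactly the one you are given --- yield a good minimal model of $(T,\Psi+A_{T})$, hence abundance, which passes to $\bar{S}''$ and back up the tower by Remark \ref{remdiv} (2). Without this mechanism your Case 2 is an outline of the difficulty rather than a proof.
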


\begin{proof}
The idea is very similar to the proof of Theorem \ref{thm--abund-gentype}. 
We may assume that $K_{X}+B+A$ is pseudo-effective and $\pi$ is a contraction. 
By replacing $A$ with a general one, we may also assume that $B$ and $A$ have no common components and all lc centers of $(X,B+A)$ are lc centers of $(X,B)$. 
By taking a dlt model of $(X,B+A)$, we may assume that $(X,B+A)$ is $\mathbb{Q}$-factorial dlt. 

\begin{step3}\label{step1abund-hu}
In this step, we prove Theorem \ref{thm--gen-abund-hu} in the case when $K_{X}+B-\epsilon \llcorner B\lrcorner+A$ is pseudo-effective for some $\epsilon >0$. 

Renaming $\epsilon$ by $\frac{\epsilon}{2}$, we may assume that $K_{X}+B-2\epsilon \llcorner B\lrcorner+A$ is pseudo-effective. 
Then 
\begin{equation*}
0\leq \kappa_{\sigma}(X/Z, K_{X}+B-2\epsilon \llcorner B\lrcorner)\leq \kappa_{\sigma}(X/Z, K_{X}+B)\leq0,
\end{equation*}
where the final inequality follows from $K_{X}+B+C\sim_{\mathbb{R},Z}0$.  
By Lemma \ref{lem--thmmainklt}, we have
\begin{equation*}
\begin{split}
\kappa_{\iota}(X, K_{X}+B-2\epsilon \llcorner B\lrcorner+A)=\kappa_{\sigma}(X, K_{X}+B-2\epsilon \llcorner B\lrcorner+A)\geq0, 
\end{split}
\end{equation*}
and similarly we see that $K_{X}+B-\epsilon\llcorner B\lrcorner+A$ is abundant. 
Then $K_{X}+B-2\epsilon \llcorner B\lrcorner+A$ is $\mathbb{R}$-linearly equivalent to an effective $\mathbb{R}$-divisor $G$. 
Then
\begin{equation*}
\begin{split}
K_{X}+B+A\sim_{\mathbb{R}}G+2\epsilon\llcorner B\lrcorner \quad {\rm and}\quad K_{X}+B-\epsilon\llcorner B\lrcorner+A\sim_{\mathbb{R}}G+\epsilon\llcorner B\lrcorner.
\end{split}
\end{equation*}
By Remark \ref{remdiv} (1), we have
\begin{equation*}
\begin{split}
\kappa_{\sigma}(X,K_{X}+B+A)&=\kappa_{\sigma}(X,K_{X}+B-\epsilon\llcorner B\lrcorner+A)=\kappa_{\iota}(X,K_{X}+B-\epsilon\llcorner B\lrcorner+A)\\&= \kappa_{\iota}(X,K_{X}+B+A),
\end{split}
\end{equation*}
hence $K_{X}+B+A$ is abundant. 

So we may assume that $K_{X}+B-\epsilon \llcorner B\lrcorner+A$ is not pseudo-effective for any $\epsilon >0$. 
\end{step3}

\begin{step3}\label{step2abund-hu}
By Step \ref{step1abund-hu}, there is a component $S$ of $\llcorner B\lrcorner$ such that $K_{X}+B-\epsilon S+A$ is not pseudo-effective for any $\epsilon>0$. 
For any $\epsilon'>0$, we run a $(K_{X}+B-\epsilon' S+A)$-MMP and get a birational contraction $X\dashrightarrow X'$ to a Mori fiber space $X'\to Z'$. 
Let $B'$, $S'$ and $A'$ be the birational transforms of $B$, $S$ and $A$ on $X'$, respectively. 
Then $(X',B'-\epsilon'S'+A')$ is lc. 
Let $I\subset \mathbb{R}_{\geq 0}$ be the set of coefficients of $B+A$, and consider the set 
\begin{equation*}
\set{\!{\rm lct}(V,\Delta;D)| \text{$(V,\Delta)$ is lc, ${\rm dim}V=n$, coefficients of $\Delta$ belong to $I$,  $D$ is reduced}\!}\!,
\end{equation*}
where ${\rm lct}(V,\Delta;D)$ is the log canonical threshold of $D$ with respect to a pair $(V,\Delta)$.  
By applying the ACC for log canonical thresholds (\cite[Theorem 1.1]{hmx-acc}) to the above set, there is $\epsilon_{0}>0$ such that if $\epsilon'<\epsilon_{0}$ then ${\rm lct}(X',B'-S'+A';S')=1$, which shows that $(X',B'+A')$ is lc. 
With the ACC for numerically trivial pairs (\cite[Theorem 1.5]{hmx-acc})  and by choosing $\epsilon'\in (0,\epsilon_{0})$ sufficiently small, we may assume that $(X',B'+A')$ is lc and $K_{X'}+B'+A'\sim_{\mathbb{R},Z'}0$.
For details, see \cite[Lemma 3.1]{gongyo-nonvanishing}.  
\end{step3}

\begin{step3}\label{step3abund-hu}
Let $\phi\colon \bar{X} \to X$ and $\psi\colon \bar{X}\to X'$ be a common log resolution of the birational map $(X,B)\dashrightarrow (X',B')$. 
Since $A$ is general, we may replace $A$ by an other member of its $\mathbb{R}$-linear system whose coefficients belong to $I$. 
The ACC for log canonical thresholds guarantees that the equality ${\rm lct}(X',B'-S'+A';S')=1$ still holds after we replace $A$, which shows that we can replace $A$ by a general one keeping conditions that $(X',B'+A')$ is lc and $K_{X'}+B'+A'\sim_{\mathbb{R},Z'}0$. 
By replacing $A$, we may assume that $\phi$ is a log resolution of $(X,B+A)$ and that ${\rm Supp}\phi^{*}A$ and ${\rm Supp}\phi_{*}^{-1}B\cup {\rm Ex}(\phi)\cup {\rm Ex}(\psi)$ have no common divisorial components. 
Then $\phi^{*}A=\phi_{*}^{-1}A$. 
Putting $\bar{A}=\phi^{*}A$, we can write 
\begin{equation*}\tag{i}
K_{\bar{X}}+\bar{B}+\bar{A}=\phi^{*}(K_{X}+B+A)+\bar{E}
\end{equation*}
with $\bar{B}\geq0$ and $\bar{E}\geq0$ which have no common components. 
Then $\bar{B}+\bar{A}$ and $\bar{E}$ have no common components and $(\bar{X},\bar{B}+\bar{A})$ is lc. 
We can also write 
\begin{equation*}
K_{\bar{X}}+\bar{B}+\bar{A}=\psi^{*}(K_{X'}+B'+A')+\bar{E}_{+}-\bar{E}_{-}
\end{equation*}
with $\psi$-exceptional $\mathbb{R}$-divisors $\bar{E}_{+}\geq0$ and $\bar{E}_{-}\geq0$ such that $\bar{E}_{+}$ and $\bar{E}_{-}$ have no common components. 
We run a $(K_{\bar{X}}+\bar{B}+\bar{A})$-MMP over $X'$ with scaling of an ample divisor. 
By \cite[Theorem 3.5]{birkar-flip}, we obtain a model $\psi'\colon (\bar{X}',\bar{B}'+\bar{A}')\to X'$ such that $K_{\bar{X}'}+\bar{B}'+\bar{A}'+\bar{E}'_{-}=\psi'^{*}(K_{X'}+B'+A')$, where $\bar{B}'$, $\bar{A}'$, and $\bar{E}'_{-}$ are the birational transforms of  $\bar{B}$, $\bar{A}$, and $\bar{E}_{-}$ on $\bar{X}'$ respectively.  
Then the pair $(\bar{X}',\bar{B}'+\bar{A}'+\bar{E}'_{-})$ is lc and we have 
\begin{equation*}
K_{\bar{X}'}+\bar{B}'+\bar{A}'+\bar{E}'_{-}\sim_{\mathbb{R},Z'}0.
\end{equation*}
By \cite[Theorem 1.1]{has-mmp} and running a $(K_{\bar{X}'}+\bar{B}'+\bar{A}')$-MMP over $Z'$, we obtain a good minimal model $(\bar{X}'',\bar{B}''+\bar{A}'')\to Z'$. 
Let $\bar{X}''\to Z''$ be the contraction over $Z'$ induced by $K_{\bar{X}''}+\bar{B}''+\bar{A}''$. 
Then $K_{\bar{X}''}+\bar{B}''+\bar{A}''\sim_{\mathbb{R},Z''}0$, and $Z''\to Z'$ is birational. 
\end{step3}

\begin{step3}\label{step4abund-hu}
We have the following diagram.
\begin{equation*}
\xymatrix@R=12pt{
\bar{X}\ar[d]_{\phi}\ar@{-->}[r]&\bar{X}''\ar[d]\\
X\ar[d]_{\pi}&Z''\\
Z
}
\end{equation*}

By construction, the birational map $(\bar{X},\bar{B}+\bar{A})\dashrightarrow (\bar{X}'',\bar{B}''+\bar{A}'')$ is a sequence of  steps of the $(K_{\bar{X}}+\bar{B}+\bar{A})$-MMP. 
So it is sufficient to prove that $K_{\bar{X}''}+\bar{B}''+\bar{A}''$ is abundant. 
We recall that $S$ is a component of $\llcorner B\lrcorner$ such that $K_{X}+B-\epsilon S+A$ is not pseudo-effective for any $\epsilon>0$. 
Let $\bar{S}$ be the birational transform of $S$ on $\bar{X}$. 
Then, the divisor $K_{\bar{X}}+\bar{B}-\epsilon \bar{S}+\bar{A}$ is not pseudo-effective for any $\epsilon>0$, hence $\bar{S}$ is not contracted by the map $\bar{X}\dashrightarrow \bar{X}''$. 
Moreover, its birational transform $\bar{S}''$ on $\bar{X}''$ dominates $Z''$. 
It is because $S'$ dominates $Z'$ by construction in Step \ref{step2abund-hu} (\cite[Lemma 3.1]{gongyo-nonvanishing}) and $Z''\to Z'$ is birational.
We define an $\mathbb{R}$-divisor $B_{\bar{S}''}$ by adjunction $K_{\bar{S}''}+B_{\bar{S}''}=(K_{\bar{X}''}+\bar{B}'')|_{\bar{S}''}$. 
We put $A_{\bar{S}''}=\bar{A}''|_{\bar{S}''}$. 
Since $K_{\bar{X}''}+\bar{B}''+\bar{A}''\sim_{\mathbb{R},Z''}0$ and since $\bar{S}''\to Z''$ is surjective, by Remark \ref{remdiv}, we only have to prove that $K_{\bar{S}''}+B_{\bar{S}''}+A_{\bar{S}''}$ is abundant.  

In the rest of the proof, we do not use the pair $(X',B'+A')$ and the morphism $X'\to Z'$, so we forget them.  
Take a common log resolution $\tau''\colon T\to \bar{S}''$ and $\tau\colon T\to \bar{S}$  of the map $(\bar{S},B_{\bar{S}})\dashrightarrow (\bar{S}'', B_{\bar{S}''})$. 
We can replace $A$ by a general member of its $\mathbb{R}$-linear system. 
Indeed, recall that $\phi\colon\bar{X}\to X$ is a log resolution of $(X,B)$ and the birational map $(\bar{X},\bar{B}+\bar{A})\dashrightarrow (\bar{X}'',\bar{B}''+\bar{A}'')$ is a sequence of  steps of the $(K_{\bar{X}}+\bar{B}+\bar{A})$-MMP. 
Recall also the relations $\bar{A}=\phi^{*}A$, $A_{\bar{S}''}=\bar{A}''|_{\bar{S}''}$, and that $\bar{A}''$ is the birational transform of $\bar{A}$ on $\bar{X}''$. 
Since the property of being abundant of $K_{\bar{S}''}+B_{\bar{S}''}+A_{\bar{S}''}$ 
does not depend on $\mathbb{R}$-linear equivalence class of $A_{\bar{S}''}$, we may replace $A$. 
By Lemma \ref{lem--genmember} and Lemma \ref{lem--genmember2} and replacing $A$, we may assume $A_{\bar{S}}\geq0$, $A_{\bar{S}''}\geq0$, $\tau^{*}A_{\bar{S}}\leq \tau_{*}''^{-1}A_{\bar{S}''}$, and $\tau''\colon T\to \bar{S}''$ is a log resolution of $(\bar{S}'', B_{\bar{S}''}+A_{\bar{S}''})$, where $A_{\bar{S}}=\bar{A}|_{\bar{S}}$. 
\end{step3}

\begin{step3}\label{step5abund-hu}
Let $\pi_{S}\colon S\to Z$ be the restriction of $\pi\colon X\to Z$ to $S$, and let $\phi_{\bar{S}}\colon\bar{S}\to S$ be the birational morphism induced by $\phi\colon\bar{X}\to X$. 
Now we have the following diagram.
\begin{equation*}
\xymatrix@R=12pt{
&
T\ar[dl]_{\tau}\ar[dr]^{\tau''}&\\
\bar{S}\ar[d]_{\phi_{\bar{S}}}\ar@{-->}[rr]&&\bar{S}''\ar[d]\\
S\ar[d]_{\pi_{S}}&&Z'' \\
Z&&
}
\end{equation*}
We define an $\mathbb{R}$-divisor $B_{\bar{S}}$ on $\bar{S}$ by adjunction $K_{\bar{S}}+B_{\bar{S}}=(K_{\bar{X}}+\bar{B})|_{\bar{S}}$. 
We also define $A_{S}$ and $B_{S}$ by $A_{S}=A|_{S}$ and $K_{S}+B_{S}=(K_{X}+B)|_{S}$, respectively. 
We have 
\begin{equation*}\tag{ii}
K_{\bar{S}}+B_{\bar{S}}+A_{\bar{S}}=\phi_{\bar{S}}^{*}(K_{S}+B_{S}+A_{S})+\bar{E}|_{\bar{S}}
\end{equation*}
by (i) in Step \ref{step3abund-hu}. 
Since $\phi\colon\bar{X}\to X$ is a log resolution of $(X,B+A)$ and by generality of $A$, we have $B_{\bar{S}}\geq0$ and $\bar{E}|_{\bar{S}}\geq0$, and furthermore $B_{\bar{S}}+A_{\bar{S}}$ and $\bar{E}|_{\bar{S}}$ have no common components and $\bar{E}|_{\bar{S}}$ is $\phi_{\bar{S}}$-exceptional (Lemma \ref{lem--adjunction}). 

Put $A_{T}=\tau^{*}A_{\bar{S}}$, which is equal to $\tau^{*}\phi_{\bar{S}}^{*}A_{S}$.  
Then $A_{T}\leq \tau_{*}''^{-1}A_{\bar{S}''}$. 
We can write 
\begin{equation*}\tag{iii}
K_{T}+\Psi+A_{T}=\tau''^{*}(K_{\bar{S}''}+B_{\bar{S}''}+A_{\bar{S}''})+E_{T}
\end{equation*}
with $\Psi\geq0$ and $E_{T}\geq0$ such that $(T, \Psi+A_{T})$ is a $\mathbb{Q}$-factorial dlt pair and $\Psi+A_{T}$ and $E_{T}$ have no common components. 
Then it is sufficient to prove that $K_{T}+\Psi+A_{T}$ is abundant. 
\end{step3}

\begin{step3}\label{step6abund-hu}
We may write
\begin{equation*}
K_{T}+\Psi+A_{T}=\tau^{*}\phi_{\bar{S}}^{*}(K_{S}+B_{S}+A_{S})+M-N,
\end{equation*}
with $M\geq0$ and $N\geq0$ which have no common components. 
If $M$ is $(\phi_{\bar{S}}\circ \tau)$-exceptional, by Lemma \ref{lem--ind-birat}, we see that $K_{T}+\Psi+A_{T}$ is abundant.  
So, to complete the proof, we only need to prove that $M$ is $(\phi_{\bar{S}}\circ \tau)$-exceptional. 

Suppose by contradiction that there is a component $Q$ of $M$ such that $Q$ is not contracted by $\phi_{\bar{S}}\circ \tau$. 
Then $a(Q,S, B_{S}+A_{S})$ satisfies
\begin{equation*}
\begin{split}
a(Q, T, \Psi+A_{T}) <a(Q,S, B_{S}+A_{S})\leq0. 
\end{split}
\end{equation*}
Since $B_{\bar{S}}+A_{\bar{S}}$ and $\bar{E}|_{\bar{S}}$ in the equation (ii) in Step \ref{step5abund-hu} have no common components and $\bar{E}|_{\bar{S}}$ is $\phi_{\bar{S}}$-exceptional, we have $a(Q,S, B_{S}+A_{S})=a(Q, \bar{S}, B_{\bar{S}}+A_{\bar{S}})$.
Moreover, since the map $(\bar{X},\bar{B}+\bar{A})\dashrightarrow (\bar{X}'',\bar{B}''+\bar{A}'')$ is a sequence of steps of the $(K_{\bar{X}}+\bar{B}+\bar{A})$-MMP, we have $a(Q,\bar{S}, B_{\bar{S}}+A_{\bar{S}})\leq a(Q, \bar{S}'', B_{\bar{S}''}+A_{\bar{S}''})$ (see \cite[Lemma 4.2.10]{fujino-sp-ter}). 
We also have $a(Q, T, \Psi+A_{T}) ={\rm min}\{0, a(Q, \bar{S}'', B_{\bar{S}''}+A_{\bar{S}''}) \}$ by (iii) in Step \ref{step5abund-hu} and Lemma \ref{lem--discre}. 
But we get a contradiction because
\begin{equation*} 
\begin{split}
a(Q,S, B_{S}+A_{S})&={\rm min}\{0, a(Q,S, B_{S}+A_{S}) \}={\rm min}\{0, a(Q, \bar{S}, B_{\bar{S}}+A_{\bar{S}}) \}\\&\leq {\rm min}\{0, a(Q, \bar{S}'', B_{\bar{S}''}+A_{\bar{S}''}) \}\\
&=a(Q, T, \Psi+A_{T})\\
&<a(Q,S, B_{S}+A_{S}).
\end{split} 
\end{equation*}
Thus, we see that $M$ is $(\phi_{\bar{S}}\circ \tau)$-exceptional. 
By Lemma \ref{lem--ind-birat}, $K_{T}+\Psi+A_{T}$ is abundant. 
\end{step3}
So $K_{\bar{X}''}+\bar{B}''+\bar{A}''$ is abundant, and so is $K_{X}+B+A$. 
We complete the proof. 
\end{proof}

\begin{thm}\label{thm--abund-hu}
Assume Theorem \ref{thmmain--hu} for all projective lc pairs of dimension $\leq n-1$. 
Let $\pi\colon X\to Z$, $(X,B)$, $C$, $A_{Z}$ and $A$ be as in Theorem \ref{thmmain--hu} such that ${\rm dim}X= n$. 

If $(X,B+A)$ has a log minimal model, then it has a good minimal model. 
\end{thm}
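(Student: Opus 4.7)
The plan is to upgrade the given log minimal model $(X',B'+A')$ of $(X,B+A)$ to a good minimal model, i.e., to establish semi-ampleness of $K_{X'}+B'+A'$. By Theorem \ref{thm--gen-abund-hu}, $K_X+B+A$ is already abundant, and hence $K_{X'}+B'+A'$ is nef and abundant. After passing to dlt blow-ups via Lemma \ref{lembirequiv} (and similarly for $(X,B+A)$ so that adjunction to lc centers is well-behaved), I may assume $(X',B'+A')$ is $\mathbb{Q}$-factorial dlt. By \cite[Lemma 3.4]{has-class}, it then suffices to establish log abundance: that $(K_{X'}+B'+A')|_{S'}$ is abundant for every lc center $S'$ of $(X',B'+A')$.

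Fix such an $S'$. Replacing $A$ and $C$ by sufficiently general members of their $\mathbb{R}$-linear systems (Lemma \ref{lem--genmember}, Lemma \ref{lem--genmember2}) and using the ACC for log canonical thresholds as in the proof of Theorem \ref{thm--gen-abund-hu}, I arrange that $(X,B)$, $(X,B+C)$ and $(X,B+A)$ share the same set of lc centers. Then $S'$ corresponds to an lc center $S$ of $(X,B)$ via the log MMP $X\dashrightarrow X'$. Define by adjunction $K_S+B_S+C_S=(K_X+B+C)|_S$, $A_S=A|_S$, and $K_{S'}+B'_{S'}+A'_{S'}=(K_{X'}+B'+A')|_{S'}$. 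Taking the Stein factorization $S\to S^{*}\to \pi(S)$ of $\pi|_S$, the relation $K_X+B+C\sim_{\mathbb{R},Z}0$ yields $K_S+B_S+C_S\sim_{\mathbb{R},S^{*}}0$, and $A_S$ is $\mathbb{R}$-linearly equivalent to the pullback via $S\to S^{*}$ of an ample $\mathbb{R}$-divisor on $S^{*}$ obtained from $A_Z|_{\pi(S)}$ (using that pullback of ample by a finite surjection is ample). Thus the morphism $S\to S^{*}$ together with $B_S$, $C_S$, $A_S$ fits the hypotheses of Theorem \ref{thmmain--hu}, and since $\dim S\leq n-1$ the induction hypothesis produces a good minimal model of $(S,B_S+A_S)$.

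Next, by \cite[Lemma 4.2.10]{fujino-sp-ter} applied to the $(K_X+B+A)$-log MMP realizing $(X,B+A)\dashrightarrow (X',B'+A')$, the discrepancy inequality $a(P,S,B_S+A_S)\leq a(P,S',B'_{S'}+A'_{S'})$ holds for every prime divisor $P$ on $S$ exceptional over $S'$. Combined with the nefness of $K_{S'}+B'_{S'}+A'_{S'}$ (restriction of the nef divisor $K_{X'}+B'+A'$ to an lc center of a dlt pair), this identifies $(S',B'_{S'}+A'_{S'})$ as a weak lc model of $(S,B_S+A_S)$. Since $(S,B_S+A_S)$ admits a good minimal model, and since any two weak lc models of the same lc pair agree, on a common resolution, on the pullback of the log canonical divisor, semi-ampleness passes from the good minimal model of $(S,B_S+A_S)$ to $(S',B'_{S'}+A'_{S'})$ by descent through the associated birational morphism. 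Hence $K_{S'}+B'_{S'}+A'_{S'}$ is semi-ample, a fortiori abundant. Performing this on every lc center gives log abundance, and \cite[Lemma 3.4]{has-class} then yields the required semi-ampleness of $K_{X'}+B'+A'$.

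The hard part will be the transfer of semi-ampleness from $(S,B_S+A_S)$ to $(S',B'_{S'}+A'_{S'})$. I must first verify that $(S',B'_{S'}+A'_{S'})$ genuinely satisfies the weak lc model condition as defined in Section \ref{sec2}, which is exactly what the generality arrangement of $A$ and $C$ in the second paragraph secures, by forcing the boundary read off via adjunction on $(X',B'+A')$ to match the log birational model prescription on $S'$; and then invoke the principle, implicit throughout the minimal model theory exploited in this paper, that all weak lc models of a fixed lc pair share the same log canonical class up to pullback on a common resolution, so that semi-ampleness propagates by descent through a birational morphism. Once these two technical points are in place, log abundance on each lc center follows cleanly from the induction hypothesis, and \cite[Lemma 3.4]{has-class} closes the argument.
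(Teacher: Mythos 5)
Your overall skeleton --- reduce to log abundance of $K_{X'}+B'+A'$ via \cite[Lemma 3.4]{has-class}, get abundance of $K_{X'}+B'+A'$ itself from Theorem \ref{thm--gen-abund-hu}, and handle each lc center by the dimension $\leq n-1$ case of Theorem \ref{thmmain--hu} --- is the paper's. The gap is exactly the step you flag as ``the hard part'': $(S',B'_{S'}+A'_{S'})$ is in general \emph{not} a weak lc model of $(S,B_S+A_S)$ in the sense of Section \ref{sec2}, and no choice of general $A$ can make it one. The induced map $S\dashrightarrow S'$ need not be a birational contraction (the ambient MMP can extract divisors on the lc center), and a divisor of $S'$ exceptional over $S$ would have to appear with coefficient one in the boundary of a log birational model, whereas adjunction from $(X',B'+A')$ gives it an arbitrary coefficient; moreover the inequality $a(P,S,B_S+A_S)\leq a(P,S',B'_{S'}+A'_{S'})$ from \cite[Lemma 4.2.10]{fujino-sp-ter} can be strict for divisors common to $S$ and $S'$, so even the prescription $\Delta'=\phi_*\Delta+E$ on the non-exceptional part fails. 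Generality of $A$ controls only the $A$-part of the boundary; it does nothing to the $B$-part, whose coefficients genuinely drop along the MMP. Without the weak lc model structure, the principle that all weak lc models are crepant on a common resolution is unavailable, and the one-sided information you actually have --- on a common resolution, $q^*(K_{S'}+B'_{S'}+A'_{S'})$ is nef and bounded above by $r^*(K_S+B_S+A_S)$, whose positive part is semi-ample --- does not yield semi-ampleness, nor even the abundance of $(K_{X'}+B'+A')|_{S'}$ that \cite[Lemma 3.4]{has-class} requires: a nef divisor dominated by a semi-ample one need not be semi-ample or abundant.

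The paper avoids any transfer between the two pairs. It takes a common log resolution $\tau\colon T\to \bar S$, $\tau'\colon T\to S'$, forms the $\mathbb{Q}$-factorial dlt pair $(T,\Psi+A_T)$ with $K_T+\Psi+A_T=\tau'^*(K_{S'}+B_{S'}+A_{S'})+E_T$, $E_T\geq0$ exceptional, and verifies that the effective part of $K_T+\Psi+A_T-\tau^*(K_S+B_S+A_S)$ is $\tau$-exceptional; the key point is the truncation $a(Q,T,\Psi+A_T)={\rm min}\{0,a(Q,S',B_{S'}+A_{S'})\}$ of Lemma \ref{lem--discre}, which combined with $a(Q,S,B_S+A_S)\leq{\rm min}\{0,a(Q,S',B_{S'}+A_{S'})\}$ for every divisor $Q$ on $S$ gives the hypothesis of Lemma \ref{lem--ind-birat}. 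The induction hypothesis is thus applied to a crepant model of $(S',B_{S'}+A_{S'})$ itself, so its conclusion is abundance of $(K_{X'}+B'+A')|_{S'}$ directly. If you want to salvage your argument, you must route it through such a crepant model rather than through $(S,B_S+A_S)$. A secondary point: you cannot ``replace $C$ by a general member of its $\mathbb{R}$-linear system'' --- $C$ is an arbitrary effective divisor with $K_X+B+C\sim_{\mathbb{R},Z}0$ and may be the only effective representative of its class; fortunately this replacement is unnecessary, since $(X,B+C)$ being lc already forces ${\rm Supp}\,C$ to contain no lc center of $(X,B)$, so $C|_S\geq0$ and the hypotheses of Theorem \ref{thmmain--hu} restrict correctly.
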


\begin{proof}
By taking a dlt blow-up of $(X,B)$ and by 
replacing $(X,B)$ and $A$, we can assume that $(X,B+A)$ is $\mathbb{Q}$-factorial dlt. 
We run a $(K_{X}+B+A)$-MMP and get a log minimal model $(X,B+A)\dashrightarrow(X',B'+A')$. 
We prove that $K_{X'}+B'+A'$ is semi-ample. 
By \cite[Lemma 3.4]{has-class}, it is sufficient to prove that $K_{X'}+B'+A'$ is nef and log abundant with respect to $(X',B'+A')$. 
By replacing $A$ if necessary, we can assume all lc centers of $(X,B+A)$ are lc centers of $(X,B)$. 

By Theorem \ref{thm--gen-abund-hu}, $K_{X'}+B'+A'$ is abundant. 
Pick any lc center $S'$ of $(X',B'+A')$. 
Then there is an lc center $S$ of $(X,B+A)$ such that the birational map $X\dashrightarrow X'$ induces a birational map $S\dashrightarrow S'$. 
We put $A_{S}=A|_{S}$ and $A_{S'}=A'|_{S'}$. 
Then $A_{S}\geq 0$ and $A_{S'}\geq0$. 
We define $B_{S}$ by adjunction $K_{S}+B_{S}=(K_{X}+B)|_{S}$. 
Similarly, we define $B_{S'}$ by $K_{S'}+B_{S'}=(K_{X'}+B')|_{S'}$. 
Let $\tau'\colon T\to S'$ and $\tau\colon T\to S$ be a common log resolution of the map $(S,B_{S})\dashrightarrow(S',B_{S'})$. 
By Lemma \ref{lem--genmember} and Lemma \ref{lem--genmember2} and by replacing $A$ with a general one, we can assume that $\tau'$ is a log resolution of $(S',B_{S'}+A_{S'})$ and $\tau^{*}A_{S}\leq \tau_{*}'^{-1}A_{S'}$. 
Put $A_{T}=\tau^{*}A_{S}$. 
Then we can write
\begin{equation*}
K_{T}+\Psi+A_{T}=\tau'^{*}(K_{S'}+B_{S'}+A_{S'})+E_{T}
\end{equation*}
with $\Psi\geq0$ and $E_{T}\geq0$ such that $(T,\Psi+A_{T})$ is $\mathbb{Q}$-factorial dlt and $\Psi+A_{T}$ and $E_{T}$ have no common components. 
We can also write 
\begin{equation*}
K_{T}+\Psi+A_{T}=\tau^{*}(K_{S}+B_{S}+A_{S})+M_{+}-M_{-},
\end{equation*}
where $M_{+}\geq0$ and $M_{-}\geq0$ have no common components. 
Then $M_{+}$ is $\tau$-exceptional. 
Indeed, any component $Q$ of $M_{+}$ satisfies $a(Q,S,B_{S}+A_{S})>a(Q,T,\Psi+A_{T})$. 
On the other hand, we have $a(Q,S,B_{S}+A_{S})\leq a(Q,S',B_{S'}+A_{S'})$ (\cite[Lemma 4.2.10]{fujino-sp-ter}). 
We have $a(Q,T,\Psi+A_{T})={\rm min}\{0, a(Q,S',B_{S'}+A_{S'})\}$ by construction and Lemma \ref{lem--discre}. 
Then
\begin{equation*}
{\rm min}\{0, a(Q,S',B_{S'}+A_{S'})\}< a(Q,S,B_{S}+A_{S})\leq a(Q,S',B_{S'}+A_{S'})\end{equation*}
and hence we have $a(Q,S,B_{S}+A_{S})>0$. 
So $Q$ is $\tau$-exceptional, and hence we see that $M_{+}$ is $\tau$-exceptional.  

By Lemma \ref{lem--ind-birat}, we see that $K_{T}+\Psi+A_{T}$ is abundant, then 
\begin{equation*}
K_{S'}+B_{S'}+A_{S'}=(K_{X'}+B'+A')|_{S'}
\end{equation*}
is abundant. 
So $K_{X'}+B'+A'$ is nef and log abundant, and \cite[Lemma 3.4]{has-class} shows that $K_{X'}+B'+A'$ is semi-ample. 
So we are done. 
\end{proof}

\subsection{Proof of Theorem \ref{thmmain--hu}: A special case}

The goal of this subsection is to prove the following. 

\begin{lem}\label{lem--reduction}
Assume Theorem \ref{thmmain--hu} for all projective lc pairs of dimension $\leq n-1$. 
Let $\pi\colon X\to Z$, $(X,B)$, $C$, $A_{Z}$ and $A$ be as in Theorem \ref{thmmain--hu} such that ${\rm dim}X=n$ and $(X,B)$ is $\mathbb{Q}$-factorial dlt. 
If $K_{X}+B-\epsilon \llcorner B \lrcorner+A$ is not pseudo-effective for any real number $\epsilon>0$, then $(X,B+A)$ has a good minimal model or a Mori fiber space. 
\end{lem}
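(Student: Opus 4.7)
The plan is to mimic Steps \ref{step2abund-hu}--\ref{step3abund-hu} in the proof of Theorem \ref{thm--gen-abund-hu}, producing a relative good minimal model of a birational model of $(X,B+A)$ over a lower-dimensional base $Z'$, then reducing to the inductive hypothesis (Theorem \ref{thmmain--hu} in dimension $\leq n-1$) via a dominant lc center. By hypothesis I choose a prime component $S$ of $\llcorner B \lrcorner$ for which $K_X+B-\epsilon S+A$ is non-pseudo-effective for every $\epsilon>0$. For a sufficiently small rational $\epsilon'>0$ I run a $(K_X+B-\epsilon' S+A)$-MMP with scaling of an ample divisor, which terminates at a Mori fiber space $X'\to Z'$. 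Using the ACC for log canonical thresholds (\cite[Theorem 1.1]{hmx-acc}) and for numerically trivial pairs (\cite[Theorem 1.5]{hmx-acc}), as in \cite[Lemma 3.1]{gongyo-nonvanishing}, after further shrinking $\epsilon'$ one arranges simultaneously that $(X',B'+A')$ is lc, $K_{X'}+B'+A'\sim_{\mathbb{R},Z'}0$, and that the birational transform $S'$ of $S$ dominates $Z'$; in particular $\dim Z'<n$.

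Next I take a common log resolution $\phi\colon \bar X\to X$ and $\psi\colon \bar X\to X'$ of $(X,B)\dashrightarrow(X',B')$, after replacing $A$ by a general $\mathbb{R}$-linearly equivalent divisor so that $\phi$ also resolves $(X,B+A)$; the preceding conclusions are preserved by the same ACC argument. Writing $K_{\bar X}+\bar B+\bar A=\phi^*(K_X+B+A)+\bar E$ with $\bar B,\bar E\geq0$ having no common components, I follow Step \ref{step3abund-hu} of the proof of Theorem \ref{thm--gen-abund-hu} verbatim: first run a $(K_{\bar X}+\bar B+\bar A)$-MMP over $X'$ to reach $\psi'\colon \bar X'\to X'$ with $K_{\bar X'}+\bar B'+\bar A'+\bar E'_{-}=\psi'^*(K_{X'}+B'+A')$, then a $(K_{\bar X'}+\bar B'+\bar A')$-MMP over $Z'$. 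Since the log canonical is $\mathbb{R}$-trivial over $Z'$, \cite[Theorem 1.1]{has-mmp} yields a good minimal model $(\bar X'',\bar B''+\bar A'')$ of $(\bar X,\bar B+\bar A)$ over $Z'$, with induced contraction $\bar X''\to Z''$ satisfying $K_{\bar X''}+\bar B''+\bar A''\sim_{\mathbb{R},Z''}0$ and $Z''\to Z'$ birational.

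By Lemma \ref{lembirequiv} combined with Theorem \ref{thm--abund-hu}, producing a log minimal model of $(\bar X,\bar B+\bar A)$ will deliver a good minimal model of $(X,B+A)$. The birational transform $\bar S''$ of $S$ on $\bar X''$ dominates $Z''$, so by $K_{\bar X''}+\bar B''+\bar A''\sim_{\mathbb{R},Z''}0$ and Remark \ref{remdiv} the nefness of $K_{\bar X''}+\bar B''+\bar A''$ reduces to the nefness of its restriction $K_{\bar S''}+B_{\bar S''}+A_{\bar S''}$. An adjunction and discrepancy comparison argument paralleling Steps \ref{step5abund-hu}--\ref{step6abund-hu} of the proof of Theorem \ref{thm--gen-abund-hu} shows that the effective error relating this restriction to the corresponding object on the original lc center $S$ is exceptional, whereupon Lemma \ref{lem--ind-birat} applies and furnishes a good minimal model on the $(n-1)$-dimensional side. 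Descending through $\bar X''\to Z''$ then yields nefness on $\bar X''$ and hence a log minimal model of $(\bar X,\bar B+\bar A)$. When $K_X+B+A$ is non-pseudo-effective, the same construction produces a Mori fiber space in place of the good minimal model.

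The main obstacle will be the final descent from the $(n-1)$-dimensional lc center $\bar S''$ back to $\bar X''$: I must verify that the restriction $A_{\bar S''}$ has, up to $\mathbb{R}$-linear equivalence, the pullback-of-ample structure demanded by Theorem \ref{thmmain--hu} in dimension $\leq n-1$ (so that Lemma \ref{lem--ind-birat} genuinely applies with the correct triple analogous to $(\pi,C,A_Z)$), and then transfer nefness on $\bar S''$ to nefness on $\bar X''$ using the relative triviality $K_{\bar X''}+\bar B''+\bar A''\sim_{\mathbb{R},Z''}0$ together with the dominance of $\bar S''$ over $Z''$.
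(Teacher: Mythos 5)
Your reduction matches the paper's: assume $K_{X}+B+A$ pseudo-effective, invoke Theorem \ref{thm--abund-hu} so that only a log minimal model is needed, run the construction of Steps \ref{step2abund-hu}--\ref{step4abund-hu} to get $(\bar{X}'',\bar{B}''+\bar{A}'')\to Z''$ with $K_{\bar{X}''}+\bar{B}''+\bar{A}''\sim_{\mathbb{R},Z''}0$ and a dominant divisor $\bar{S}''$, and then use the adjunction/discrepancy computation plus Lemma \ref{lem--ind-birat} to produce a log minimal model on the $(n-1)$-dimensional side. The gap is in the step you yourself flag as ``the main obstacle'': you assert that nefness of $K_{\bar{X}''}+\bar{B}''+\bar{A}''$ ``reduces to the nefness of its restriction'' to $\bar{S}''$ and that ``descending through $\bar{X}''\to Z''$ then yields nefness on $\bar{X}''$.'' But what Lemma \ref{lem--ind-birat} gives you is a log minimal model of (a crepant model of) $(\bar{S}'',B_{\bar{S}''}+A_{\bar{S}''})$, which lives on a \emph{different} birational model; it does not make $K_{\bar{S}''}+B_{\bar{S}''}+A_{\bar{S}''}$ nef on $\bar{S}''$ itself, so there is no nefness to descend. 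Relative triviality plus dominance of $\bar{S}''$ would let you descend nefness of a divisor that is already a pullback and already nef on $\bar{S}''$, but that hypothesis is exactly what you do not have.

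The missing ingredient is the paper's Lemma \ref{lemzariski}: if $K_{\bar{X}''}+\bar{B}''+\bar{A}''\sim_{\mathbb{R}}\pi''^{*}D''$ and a component of the boundary dominates $Z''$, then existence of a log minimal model for the adjoint pair on that component implies existence of one for $(\bar{X}'',\bar{B}''+\bar{A}'')$. Its proof is not a nefness descent but goes through Theorem \ref{thmzariski} (the Birkar--Hu criterion: a log minimal model exists iff some resolution has nef Nakayama--Zariski positive part $P_{\sigma}$) together with \cite[III, 5.17--5.18 Corollary]{nakayama}, which transport the nefness of $P_{\sigma}$ of the pullback of $D''$ from a resolution of $\bar{S}''$ down to a suitable birational model of $Z''$ and then back up to a resolution of $\bar{X}''$. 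Without this $P_{\sigma}$-machinery (or an equivalent substitute) the passage from the lc center back to $\bar{X}''$ does not go through, so as written the proposal does not close. Everything else in your argument is the paper's proof.
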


Before the proof, we show the following lemma.

\begin{lem}[cf.~{\cite[Theorem 4.2]{bh}}]\label{lemzariski}
Let $\pi\colon X\to Z$ be a surjective morphism of normal projective varieties, and let $(X,\Delta)$ be a $\mathbb{Q}$-factorial dlt pair such that $K_{X}+\Delta$ is pseudo-effective. 
Suppose that $K_{X}+\Delta\sim_{\mathbb{R}}\pi^{*}D$ for an $\mathbb{R}$-Cartier divisor  $D$ on $Z$, and there is a component $S$ of $\llcorner \Delta \lrcorner$ dominating $Z$. 

Then $(X,\Delta)$ has a log minimal model if $(S,{\rm Diff}(\Delta-S))$ has a log minimal model.  
\end{lem}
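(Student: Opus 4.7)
The plan is to run a $(K_{X}+\Delta)$-MMP with scaling of a general ample divisor $H$ chosen so that $(X,\Delta+H)$ is lc and $K_{X}+\Delta+H$ is nef, and to establish termination by restricting to $S$. Denote by $(X_{i},\Delta_{i})$ the intermediate pairs, and let $S_{i},H_{i},\pi_{i}\colon X_{i}\dashrightarrow Z$ be the birational transforms of $S,H$ and the induced rational map. Since $K_{X}+\Delta\sim_{\mathbb{R}}\pi^{*}D$, any $(K_{X_{i}}+\Delta_{i})$-negative extremal ray $R_{i}$ must satisfy $\pi_{i}^{*}D\cdot R_{i}<0$, so $R_{i}$ is horizontal over $Z$; in particular $\pi_{i}$ remains a morphism (after Stein factorization if necessary), and the relation $K_{X_{i}}+\Delta_{i}\sim_{\mathbb{R}}\pi_{i}^{*}D$ is preserved through the MMP.

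First I would verify that $S$ is never contracted. If $S_{i}$ were the divisor contracted at step $i$, then $S_{i}$ would be covered by curves $C_{i}$ in the class of $R_{i}$, and the restriction $(K_{X_{i}}+\Delta_{i})|_{S_{i}}=K_{S_{i}}+{\rm Diff}(\Delta_{i}-S_{i})$ would be negative on the movable class $[C_{i}]$, contradicting the pseudo-effectivity of $K_{S}+{\rm Diff}(\Delta-S)$ guaranteed by the existence of its log minimal model. Next, I would show that each step of the MMP on $X$ induces a (possibly trivial) step of a log MMP on $(S_{i},{\rm Diff}(\Delta_{i}-S_{i}))$ with scaling of $H_{i}|_{S_{i}}$, using adjunction and the horizontality of $R_{i}$ relative to the surjection $S_{i}\to Z$. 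Then, since $(S,{\rm Diff}(\Delta-S))$ has a log minimal model by hypothesis, Theorem \ref{thmtermi} ensures that such an MMP on $S$ terminates, so after finitely many steps $N$, the divisor $K_{S_{N}}+{\rm Diff}(\Delta_{N}-S_{N})$ is nef.

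Finally, I would deduce that $K_{X_{N}}+\Delta_{N}$ itself is nef. Since $K_{X_{N}}+\Delta_{N}\sim_{\mathbb{R}}\pi_{N}^{*}D$ and $S_{N}\to Z$ is surjective, the nefness of $(K_{X_{N}}+\Delta_{N})|_{S_{N}}=(\pi_{N}^{*}D)|_{S_{N}}$ forces $D$ to be nef on $Z$: any curve $C\subset Z$ lifts to a curve $C'\subset S_{N}$ with $(\pi_{N}^{*}D)\cdot C'=\deg(C'/C)\cdot D\cdot C\geq 0$. Hence $\pi_{N}^{*}D$, and therefore $K_{X_{N}}+\Delta_{N}$, is nef, so $(X_{N},\Delta_{N})$ is the desired log minimal model of $(X,\Delta)$. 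The main obstacle will be the second step, namely rigorously showing that each $(K_{X_{i}}+\Delta_{i})$-negative extremal contraction restricts to a step of a log MMP on $(S_{i},{\rm Diff}(\Delta_{i}-S_{i}))$ with compatible scaling: this requires checking that the extremal ray meets $S_{i}$ properly and that the restricted contraction remains extremal. When the restriction behaves badly, one may need a special termination/slicing argument in the spirit of Steps \ref{step4relative}--\ref{step9relative} in the proof of Theorem \ref{thmmain}, together with $\mathbb{Q}$-factorial dlt blow-ups via Lemma \ref{lem--extraction} to control the behavior of $S_{i}$ along the way.
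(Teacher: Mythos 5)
Your plan runs into a structural obstruction at the very first step of the MMP. Since $K_{X}+\Delta\sim_{\mathbb{R}}\pi^{*}D$, every $(K_{X_i}+\Delta_i)$-negative extremal ray $R_i$ satisfies $\pi_i^{*}D\cdot R_i<0$, so the contracted curves are \emph{not} contracted by $\pi_i$; consequently $\pi_i$ does \emph{not} factor through the extremal contraction, and the induced map $X_{i+1}\dashrightarrow Z$ is in general only a rational map (contract the exceptional curve of the blow-up of $\mathbb{P}^2$ at a point, which is a section of the induced ruling over $\mathbb{P}^{1}$: the resulting map $\mathbb{P}^{2}\dashrightarrow\mathbb{P}^{1}$ is not a morphism). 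Stein factorization does not repair this. This breaks the proposal in two places: the relation $K_{X_i}+\Delta_i\sim_{\mathbb{R}}\pi_i^{*}D$ with $\pi_i$ a morphism, used throughout, is lost after one step; and the final deduction --- that nefness of $(K_{X_N}+\Delta_N)|_{S_N}$ forces $D$, hence $\pi_N^{*}D$, to be nef --- requires $\pi_N$ to be a morphism. Moreover, even granting all of that, termination of the induced MMP on $S$ does not imply termination of the ambient MMP: special termination only shows that the flipping loci eventually avoid $\llcorner \Delta\lrcorner$, and there is no reason the remaining ($\pi^{*}D$-negative, hence horizontal) rays must meet $S_i$, so the MMP could continue indefinitely away from $S$. (A smaller issue: pseudo-effectivity of $K_S+{\rm Diff}(\Delta-S)$ does not directly give pseudo-effectivity of $(K_{X_i}+\Delta_i)|_{S_i}$, since by the negativity lemma the latter is bounded \emph{above} by the pullback of the former on a common resolution.)

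The paper avoids running an MMP entirely. By Theorem \ref{thmzariski}, existence of a log minimal model of $(X,\Delta)$ is equivalent to nefness of $P_{\sigma}(f^{*}(K_X+\Delta))$ on some resolution $f\colon \widetilde X\to X$. Applying the same criterion to $(S,{\rm Diff}(\Delta-S))$, whose log canonical divisor is $\sim_{\mathbb{R}}\pi_S^{*}D$ with $\pi_S\colon S\to Z$ surjective, one obtains a resolution $T$ of $S$ on which $P_{\sigma}$ of the pullback of $D$ is nef; Nakayama's results (\cite[III, 5.17 and 5.18 Corollaries]{nakayama}) then transfer this nefness first down to a birational modification $Z''$ of $Z$ and then up to any resolution of $X$ mapping to $Z''$, where the pullback of the transform of $D$ is $\sim_{\mathbb{R}}f^{*}(K_{X}+\Delta)$. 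If you want to salvage an MMP-based argument you would essentially have to reprove these descent statements for $P_{\sigma}$ along the surjection $S\to Z$, which is the actual content of the lemma.
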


To prove Lemma \ref{lemzariski}, recall \cite[Theorem 1.1]{bhzariski}.

\begin{thm}[cf.~{\cite[Theorem 1.1]{bhzariski}}]\label{thmzariski}
Let $(X,\Delta)$ be a projective lc pair. 
Then $(X,\Delta)$ has a log minimal model if and only if $K_{X}+\Delta$ is pseudo-effective and there is a resolution $f\colon Y\to X$ of $X$ such that the Nakayama--Zariski decomposition of $f^{*}(K_{X}+\Delta)$ has nef positive part. 
\end{thm}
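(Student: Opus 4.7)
The theorem asserts an equivalence of two conditions, so the plan is to prove each direction separately. For the forward direction I would suppose $(X,\Delta)$ admits a log minimal model $(X',\Delta')$, so that $K_{X'}+\Delta'$ is nef and in particular $K_X+\Delta$ is pseudo-effective. Choosing a common log resolution $p\colon W\to X$ and $q\colon W\to X'$, the discrepancy inequalities in the definition of log minimal model, combined with the negativity lemma (using nefness of $K_{X'}+\Delta'$), extend to give $a(D,X,\Delta)\leq a(D,X',\Delta')$ for every prime divisor $D$ on $W$, yielding $p^{*}(K_X+\Delta)=q^{*}(K_{X'}+\Delta')+E$ with $E\geq 0$ whose support consists of $q$-exceptional divisors and $\phi^{-1}$-exceptional divisors on $X'$ pulled back to $W$. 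The pullback $q^{*}(K_{X'}+\Delta')$ is nef, and since the components of $E$ all lie in the exceptional locus of $q$, subadditivity of $\sigma_{\Gamma}$ together with the standard fact that $q$-exceptional effective divisors coincide with their own Nakayama--Zariski negative parts yield $P_{\sigma}(p^{*}(K_X+\Delta))=q^{*}(K_{X'}+\Delta')$, which is nef.

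For the substantive reverse direction, assume $K_X+\Delta$ is pseudo-effective and $f\colon Y\to X$ is a resolution with $P:=P_{\sigma}(f^{*}(K_X+\Delta))$ nef. I would apply Lemma \ref{lembirequiv} to reduce to producing a log minimal model for a $\mathbb{Q}$-factorial dlt pair $(Y,\Gamma)$ with $K_Y+\Gamma=f^{*}(K_X+\Delta)+F$ for some $f$-exceptional $F\geq 0$. Writing the Nakayama--Zariski decomposition $f^{*}(K_X+\Delta)=P+N$, we obtain $K_Y+\Gamma=P+(N+F)$ with $P$ nef and $N+F\geq 0$ whose support lies in the non-nef locus of $P$ together with $\Exc(f)$. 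Consequently $N+F$ is ``very exceptional'' in the sense of \cite[Theorem 3.5]{birkar-flip}: it is not dominated by a movable class.

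Next I would run a $(K_Y+\Gamma)$-MMP with scaling of an ample divisor. The nefness of $P$ combined with the very-exceptional structure of $N+F$ forces each extremal ray contracted by the MMP to intersect $N+F$ negatively, so every step is supported inside $\Supp(N+F)$, and the Nakayama--Zariski negative part on successive models does not acquire new components. The main obstacle is termination: one must combine \cite[Theorem 3.5]{birkar-flip} on contraction of very exceptional divisors, an induction on the number of components of $N_{\sigma}$, and a careful analysis showing that the nef part $P$ persists and numerically governs the MMP throughout. Once $N+F$ is fully contracted after finitely many steps, the resulting pair has $K+\Gamma$ nef and gives the required log minimal model of $(Y,\Gamma)$, hence of $(X,\Delta)$ by Lemma \ref{lembirequiv}.
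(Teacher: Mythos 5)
First, a remark on the comparison: the paper does not prove Theorem \ref{thmzariski} at all --- it is imported verbatim from \cite[Theorem 1.1]{bhzariski}, so there is no in-paper argument to measure your attempt against. Your ``only if'' direction is the standard one and is essentially correct: on a common resolution the negativity lemma gives $p^{*}(K_{X}+\Delta)=q^{*}(K_{X'}+\Delta')+E$ with $E\geq 0$ genuinely $q$-exceptional (the coefficients along non-$q$-exceptional and along $\phi^{-1}$-exceptional divisors vanish), and Nakayama's lemma $N_{\sigma}(q^{*}D+E)=N_{\sigma}(q^{*}D)+E$ for $q$-exceptional $E\geq 0$ identifies $P_{\sigma}(p^{*}(K_{X}+\Delta))$ with the nef divisor $q^{*}(K_{X'}+\Delta')$.

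The ``if'' direction, however, has a genuine gap at exactly the point where the content of the theorem lies. Two concrete problems. (1) The divisor $N=N_{\sigma}(f^{*}(K_{X}+\Delta))$ is in general \emph{not} exceptional over $X$: Nakayama's results give $f_{*}N_{\sigma}(f^{*}D)=N_{\sigma}(D)$, so $N$ contains the strict transforms of the divisorial components of the negative part downstairs, which are honest divisors on $X$. More to the point, \cite[Theorem 3.5]{birkar-flip} requires $K_{Y}+\Gamma\sim_{\mathbb{R}}M$ over the base with $M\geq 0$ very exceptional over the base; here $K_{Y}+\Gamma=P+(N+F)$ with $P$ nef but not numerically trivial, so the hypothesis fails and the theorem cannot be invoked (over a point the only very exceptional divisor is $0$; the version used elsewhere in this paper is always relative to the birational morphism $f$, which does not help for an absolute minimal model). (2) The assertion that ``the nef part $P$ persists and numerically governs the MMP'' is unproved and is precisely the crux. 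It is true that every $(K_{Y}+\Gamma)$-negative extremal ray $R$ satisfies $(N+F)\cdot R<0$ because $P\cdot R\geq 0$, so the first step lives in $\Supp(N+F)$; but if $P\cdot R>0$, the pushforward of $P$ after that flip or divisorial contraction need not remain nef, and then the identification of the negative part of the new log canonical divisor with the transform of $N+F$, and hence the control of all subsequent steps, collapses. Overcoming this is what forces Birkar--Hu to develop the machinery of polarized pairs $(X,B+P)$ with $P$ nef and a substantially more delicate LMMP-with-scaling argument; your sketch names the ingredients (``induction on the number of components'', ``careful analysis'') but does not supply the argument. As written, the reverse implication is not established.
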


\begin{proof}[Proof of Lemma \ref{lemzariski}]
The proof is similar to that of \cite[Theorem 4.2]{bh}. 
In this proof, $P_{\sigma}(\,\cdot\,)$ denotes the positive part of Nakayama--Zariski decomposition. 
We note that we have $P_{\sigma}(D_{1})\sim_{\mathbb{R}}P_{\sigma}(D_{2})$ for any two $\mathbb{R}$-Cartier divisors $D_{1}$ and $D_{2}$ such that $D_{1}\sim_{\mathbb{R}}D_{2}$. 

We set $\pi_{S}=\pi|_{S}\colon S\to Z$. 
Then $K_{S}+{\rm Diff}(\Delta-S)\sim_{\mathbb{R}}\pi_{S}^{*}D$ by hypothesis. 
Assume that $(S,{\rm Diff}(\Delta-S))$ has a log minimal model. 
By Theorem \ref{thmzariski}, there is a resolution $\tau\colon T\to S$ such that the divisor $P_{\sigma}(\tau^{*}\pi_{S}^{*}D)$ is nef. 
Let $h\colon Z'\to Z$ be a resolution. 
Thanks to \cite[III, 5.17 Corollary]{nakayama}, we may replace $T$ with a higher birational model, so we can assume that the induced map $\pi_{T}\colon T\dashrightarrow Z'$ is a morphism. 
Since $P_{\sigma}(\pi_{T}^{*}h^{*}D)$ is nef, by \cite[III, 5.18 Corollary]{nakayama}, there is a projective birational morphism $h'\colon Z''\to Z'$ from a smooth variety $Z''$ such that $P_{\sigma}(h'^{*}h^{*}D)$ is nef. 
Put $D''=h'^{*}h^{*}D$.

Take a resolution $f\colon \widetilde{X} \to X$ such that the induced map $\widetilde{\pi}\colon \widetilde{X}\dashrightarrow  Z''$ is a morphism. 
Then $P_{\sigma}(\widetilde{\pi}^{*}D'')$ is nef by \cite[III, 5.17 Corollary]{nakayama}. 
Since $f^{*}(K_{X}+\Delta)\sim_{\mathbb{R}}f^{*}\pi^{*}D=\widetilde{\pi}^{*}D''$, we see that $P_{\sigma}(f^{*}(K_{X}+\Delta))$ is nef. 
From this fact and by Theorem \ref{thmzariski}, we see that $(X,\Delta)$ has a log minimal model. 
\end{proof}

Finally, we prove Lemma \ref{lem--reduction}. 

\begin{proof}[Proof of Lemma \ref{lem--reduction}]
We may assume that $K_{X}+B+A$ is pseudo-effective. 
Suppose that $K_{X}+B-\epsilon \llcorner B \lrcorner+A$ is not pseudo-effective for any $\epsilon>0$. 
Thanks to Theorem \ref{thm--abund-hu}, we only have to prove the existence of a log minimal model of $(X,B+A)$. 
But then we can apply Step \ref{step2abund-hu}--\ref{step6abund-hu} in the proof of Theorem \ref{thm--gen-abund-hu}. 
As in Step  \ref{step2abund-hu}--\ref{step4abund-hu}, we construct
\begin{itemize}
\item[(1)]
a log resolution $\phi \colon \bar{X} \to X$ of $(X,B)$ and a log smooth lc pair $(\bar{X},\bar{B}+\bar{A})$, where $\bar{A}=\phi^{*}A$, such that we may write $K_{\bar{X}}+\bar{B}+\bar{A}=\phi^{*}(K_{X}+B+A)+\bar{E}$ with $\bar{E}\geq0$ such that $\bar{E}$ and $\bar{B}+\bar{A}$ have no common components, 
\item[(2)]
a sequence of steps of log MMP $(\bar{X},\bar{B}+\bar{A})\dashrightarrow (\bar{X}'',\bar{B}''+\bar{A}'')$ with a contraction $\bar{X}''\to Z''$ such that $K_{\bar{X}''}+\bar{B}''+\bar{A}''\sim_{\mathbb{R},Z''}0$ and $(X, B+A)$ has a log minimal model if $(\bar{X}'',\bar{B}''+\bar{A}'')$ has a log minimal model, and 
\item[(3)]
a component $S$ of $\llcorner B \lrcorner$ with the birational transform $\bar{S}''$ on $\bar{X}''$ such that the induced morphism $\bar{S}''\to Z''$ is surjective. 
\end{itemize}
Let $\bar{S}$ be the birational transform of $S$ on $\bar{X}$. 
We define $B_{S}$, $B_{\bar{S}}$, and $B_{\bar{S}''}$ by adjunctions $K_{S}+B_{S}=(K_{X}+B)|_{S}$, $K_{\bar{S}}+B_{\bar{S}}=(K_{\bar{X}}+\bar{B})|_{\bar{S}}$, and  $K_{\bar{S}''}+B_{\bar{S}''}=(K_{\bar{X}''}+\bar{B}'')|_{\bar{S}''}$, respectively. 
Set $A_{S}=A|_{S}$, $A_{\bar{S}}=\bar{A}|_{\bar{S}}$, and $A_{\bar{S}''}=\bar{A}''|_{\bar{S}''}$. 
Since $K_{\bar{X}''}+\bar{B}''+\bar{A}''\sim_{\mathbb{R},Z''}0$ and $\bar{S}''\to Z''$ is surjective, by Lemma \ref{lemzariski}, $(\bar{X}'',\bar{B}''+\bar{A}'')$ has a log minimal model if $(\bar{S}'',B_{\bar{S}''}+A_{\bar{S}''})$ has a log minimal model, so it is sufficient to prove that $(\bar{S}'',B_{\bar{S}''}+A_{\bar{S}''})$ has a log minimal model by $(2)$. 
By (1), construction of $\bar{E}$ and Lemma \ref{lem--adjunction}, we have 
$K_{\bar{S}}+B_{\bar{S}}+A_{\bar{S}}=\phi|_{\bar{S}}^{*}(K_{S}+B_{S}+A_{S})+\bar{E}|_{\bar{S}}$
and $\bar{E}|_{\bar{S}}$ is an effective $\phi|_{\bar{S}}$-exceptional divisor. 
Let $\tau''\colon T\to \bar{S}''$ and $\tau\colon T\to \bar{S}$ be a common log resolution of $(\bar{S},B_{\bar{S}})\dashrightarrow (\bar{S}'', B_{\bar{S}''})$. 
As in the latter part of Step \ref{step4abund-hu}, we can replace $A$ by a general member of its $\mathbb{R}$-linear system. 
By Lemma \ref{lem--genmember} and Lemma \ref{lem--genmember2} and by replacing $A$, we may assume $A_{\bar{S}}\geq0$, $A_{\bar{S}''}\geq0$, $\tau^{*}A_{\bar{S}}\leq \tau_{*}''^{-1}A_{\bar{S}''}$, and $\tau''\colon T\to \bar{S}''$ is a log resolution of $(\bar{S}'', B_{\bar{S}''}+A_{\bar{S}''})$. 
Putting $A_{T}=\tau^{*}A_{\bar{S}}$, we may write 
$
K_{T}+\Psi+A_{T}=\tau''^{*}(K_{\bar{S}''}+B_{\bar{S}''}+A_{\bar{S}''})+E_{T}$
with $\Psi\geq0$ and $E_{T}\geq0$ such that $(T, \Psi+A_{T})$ is a $\mathbb{Q}$-factorial dlt pair and $\Psi+A_{T}$ and $E_{T}$ have no common components. 
Calculations as in Step \ref{step6abund-hu} show that the effective part of  
$K_{T}+\Psi+A_{T}-\tau^{*}\phi|_{\bar{S}}^{*}(K_{S}+B_{S}+A_{S})$ is $(\phi|_{\bar{S}} \circ \tau)$-exceptional. 
Finally, by the induction hypothesis of Theorem \ref{thmmain--hu} and by Lemma \ref{lem--ind-birat}, we see that  $(T, \Psi+A_{T})$ has a log minimal model. 
Then $(\bar{S}'',B_{\bar{S}''}+A_{\bar{S}''})$ also has a log minimal model, and so does $(\bar{X}'',\bar{B}''+\bar{A}'')$. 
Existence of a log minimal model of $(X,B+A)$ follows from this fact. 
\end{proof}

\subsection{Proof of Theorem \ref{thmmain--hu}: General case}

From now on, we prove Theorem \ref{thmmain--hu} in full generality. 

\begin{proof}[Proof of Theorem \ref{thmmain--hu}]
We prove the theorem by induction on ${\rm dim}X$. 
Put $n={\rm dim}X$, and suppose that Theorem \ref{thmmain--hu} holds for all projective lc pairs of dimension $\leq n-1$. By Lemma \ref{lem--thmmainklt}, we may assume that $(X,B)$ is not klt. 
By Theorem \ref{thm--abund-hu}, it is sufficient to show the existence of log minimal model of $(X,B+A)$ under the assumption that $K_{X}+B+A$ is pseudo-effective. 
By taking a dlt model, we may assume that $(X,B+A)$ is $\mathbb{Q}$-factorial dlt. 
By Theorem \ref{thm--gen-abund-hu}, the divisor $K_{X}+B+A$ is abundant. 
We prove Theorem \ref{thmmain--hu} with several steps. 

\begin{step4}\label{step1main-hu}
By Lemma \ref{lem--reduction}, we may assume there is $u>0$ such that $K_{X}+B-u \llcorner B\lrcorner +A$ is pseudo-effective. 
By using Lemma \ref{lem--thmmainklt}, we can find $D\geq0$ such that $D\sim_{\mathbb{R}}K_{X}+B+A$ and ${\rm Supp}D\supset {\rm Supp}\llcorner B\lrcorner$. 
Let $f\colon \widetilde{X}\to X$ be a log resolution of $(X,{\rm Supp}(B+D))$, and let $(\widetilde{X},\widetilde{B})$ be a log smooth model of $(X,B)$ (for the definition of log smooth model, see \cite[Definition 2.9]{has-trivial}). 
We can write $f^{*}D=\widetilde{G}+\widetilde{H}$ with $\widetilde{G}\geq0$ and $\widetilde{H}\geq0$ such that $\widetilde{G}$ and $\widetilde{H}$ have no common components and ${\rm Supp}\widetilde{G}={\rm Supp}\llcorner \widetilde{B}\lrcorner$. 
Because $\widetilde{H}$ and $\llcorner \widetilde{B} \lrcorner$ have no common components and $(\widetilde{X},{\rm Supp}(\widetilde{B}+\widetilde{H}))$ is log smooth, there is $\epsilon>0$ such that $(\widetilde{X},\widetilde{B}+\epsilon  \widetilde{H})$ is dlt. 
We construct a dlt blow-up $f'\colon (\widetilde{X}',\widetilde{B}')\to (X,B)$ by running a $(K_{\widetilde{X}}+\widetilde{B})$-MMP over $X$. 
Let $\widetilde{G}'$ and $\widetilde{H}'$ be the birational transforms of $\widetilde{G}$ and $\widetilde{H}$ on $\widetilde{X}'$, respectively. 
By replacing $\epsilon$ with a smaller one, we may assume that $\widetilde{X}\dashrightarrow \widetilde{X}'$ is a sequence of steps of the $(K_{\widetilde{X}}+\widetilde{B}+\epsilon \widetilde{H})$-MMP. 
Then $(\widetilde{X}',\widetilde{B}'+\epsilon  \widetilde{H}')$ is dlt. 
It is easy to check that we may replace $X\to Z$, $(X,B)$, $C$, $A_{Z}$ and $A$ by $\widetilde{X}'\to Z$, $(\widetilde{X}',\widetilde{B}')$, $f'^{*}C$, $A_{Z}$ and $f'^{*}A$, respectively. 

In this way, we may assume that there is an effective $\mathbb{R}$-divisors $G$ and $H$ such that 
\begin{itemize}
\item
$K_{X}+B+A\sim_{\mathbb{R}}G+H$, 
\item
${\rm Supp}G= {\rm Supp}\llcorner B \lrcorner$, 
\item
there is a real number $\epsilon>0$ such that $(X,B+\epsilon H)$ is $\mathbb{Q}$-factorial dlt. 
\end{itemize}
\end{step4}

\begin{step4}\label{step2main-hu}

By choosing $\epsilon>0$ in Step \ref{step1main-hu} sufficiently small, we can assume that $B-\epsilon G\geq0$. 
By replacing $A$, we may also assume that $A$ and $B+H$ have no common components,  $(X,B+A+\epsilon H)$ is dlt and any lc center of the pair is an lc center of $(X,B)$. 
In this step, we construct a strictly decreasing sequence $\{e_{i}\}_{i\geq1}$ of real numbers and a sequence of birational maps
\begin{equation*}
X \dashrightarrow X_{1}\dashrightarrow X_{2}\dashrightarrow \cdots \dashrightarrow X_{i}\dashrightarrow \cdots
\end{equation*}
such that
\begin{enumerate}
\item
$0<e_{i}<\epsilon$ for any $i$ and ${\rm lim}_{i\to \infty}e_{i}=0$, 
\item
$X \dashrightarrow X_{1}$ is a sequence of steps of a $(K_{X}+B+A+e_{1}H)$-MMP to a good minimal model, 
\item
for any $i\geq1$, the pair $(X_{i},B_{i}+A_{i}+e_{i}H_{i})$ is a good minimal model of both pairs $(X_{1},B_{1}+A_{1}+e_{i}H_{1})$ and $(X,B+A+e_{i}H)$, and
\item
for each $i\geq1$, the map $X_{i}\dashrightarrow X_{i+1}$ is an isomorphism or a sequence of steps of a $(K_{X_{i}}+B_{i}+A_{i}+e_{i+1}H_{i})$-MMP with scaling of $(e_{i}-e_{i+1})H_{i}$. 
\end{enumerate}
Here $B_{i}$, $A_{i}$ and $H_{i}$ are the birational transforms of $B$, $A$ and $H$ on $X_{i}$, respectively. 
By condition (4), the sequence of birational maps $X_{1}\dashrightarrow \cdots$ is a sequence of steps of a $(K_{X_{1}}+B_{1}+A_{1})$-MMP with scaling of $e_{1}H_{1}$.  

Pick a strictly decreasing infinite sequence $\{e_{i}\}_{i\geq1}$ of positive real numbers such that $e_{i}<\epsilon$ for any $i\geq1$ and ${\rm lim}_{i\to \infty}e_{i}=0$. 
Then $(X,B+A+e_{i}H)$ is dlt, $(X,B+A-\frac{e_{i}}{1+e_{i}}G)$ is klt and 
\begin{equation*}
K_{X}+B+A+e_{i}H\sim_{\mathbb{R}}(1+e_{i})\Bigl(K_{X}+B+A-\frac{e_{i}}{1+e_{i}}G\Bigr).
\end{equation*}
By Lemma \ref{lem--thmmainklt} and running a $(K_{X}+B+A+e_{i}H)$-MMP we get a good minimal model
$(X,B+A+e_{i}H)\dashrightarrow (X_{i},B_{i}+A_{i}+e_{i}H_{i})$. 
Then, any prime divisor contracted by the map $X\dashrightarrow X_{i}$ is a component of $G+H$, which does not depend on $i$. 
Therefore, by replacing $\{e_{i}\}_{i\geq1}$ with a subsequence, we may assume that all $X \dashrightarrow X_{i}$ contract the same divisors. 
Then all $X_{i}$ are isomorphic in codimension one. 

We show that $(X_{1},B_{1}+A_{1}+tH_{1})$ has a good minimal model for any $t\in(0,e_{1})$. 
Since $K_{X_{1}}+B_{1}+A_{1}+tH_{1}\sim_{\mathbb{R}}(1+t)(K_{X_{1}}+B_{1}+A_{1}-\frac{t}{1+t}G_{1}),$ it is sufficient to show that the pair $(X_{1},B_{1}+A_{1}-\frac{t}{1+t}G_{1})$ has a good minimal model, where $G_{1}$ is the birational transform of $G$ on $X_{1}$. 
Note that $(X_{1},B_{1}+A_{1}-\frac{t}{1+t}G_{1})$ is klt. 
By Lemma \ref{lem--abundantmmp} and the above relation, it is sufficient to show that $K_{X_{1}}+B_{1}+A_{1}+tH_{1}$ is abundant. 
We have $K_{X_{1}}+B_{1}+A_{1}+tH_{1}\sim_{\mathbb{R}}G_{1}+(1+t)H_{1}$. 
By Remark \ref{remdiv} (1), $K_{X_{1}}+B_{1}+A_{1}+tH_{1}$ is abundant if and only if $K_{X_{1}}+B_{1}+A_{1}+e_{1}H_{1}$ is abundant. 
Since the map $X\dashrightarrow X_{1}$ is a sequence of steps of the $(K_{X}+B+A+e_{1}H)$-MMP, it is sufficient to show that $K_{X}+B+A+e_{1}H$ is abundant. 
Using the relation $K_{X}+B+A+e_{1}H\sim_{\mathbb{R}}G+(1+e_{1})H$ and Remark \ref{remdiv} (1), we see that $K_{X}+B+A+e_{1}H$ is abundant if and only if $K_{X}+B+A$ is abundant, which is true by Theorem \ref{thm--gen-abund-hu}. 
Therefore, we see that $K_{X_{1}}+B_{1}+A_{1}+tH_{1}$ is abundant, and 
$(X_{1},B_{1}+A_{1}+tH_{1})$ has a good minimal model for any $t\in (0, e_{1})$. 

Put $X'_{1}=X_{1}$ (resp.~$B'_{1}=B_{1}$, $A'_{1}=A_{1}$, $H'_{1}=H_{1}$). 
By \cite[Lemma 2.14]{has-mmp}, we can run a $(K_{X'_{1}}+B'_{1}+A'_{1})$-MMP with scaling of $e_{1}H'_{1}$
\begin{equation*}
(X'_{1},B'_{1}+A'_{1})\dashrightarrow \cdots \dashrightarrow (X'_{j},B'_{j}+A'_{j})\dashrightarrow \cdots
\end{equation*}
such that if we set $\lambda_{j}={\rm inf}\!\set{\!\mu\in\mathbb{R}_{\geq0} | \text{$K_{X'_{j}}+B'_{j}+A'_{j}+\mu H'_{j}$ is nef}\!}$, where $H'_{j}$ is the birational transform of $H'_{1}$ on $X'_{j}$, then the $(K_{X'_{1}}+B'_{1}+A'_{1})$-MMP terminates after finitely many steps or we have ${\rm lim}_{j\to \infty}\lambda_{j}=0$ when it does not terminate. 

For any $i\geq1$, pick the minimum $k_{i}$ such that $K_{X'_{k_{i}}}+B'_{k_{i}}+A'_{k_{i}}+e_{i}H'_{k_{i}}$ is nef. 
Such $k_{i}$ exists since ${\rm lim}_{j\to \infty}\lambda_{j}=0$, and we have $k_{1}=1$. 
By the same argument as in Step \ref{step4relative} in the proof of Theorem \ref{thmmain}, we see that the pair $(X'_{k_{i}},B'_{k_{i}}+A'_{k_{i}}+e_{i}H'_{k_{i}})$ is a good minimal model of $(X'_{1}, B'_{1}+A'_{1}+e_{i}H'_{1})$ and it is also a good minimal model of $(X, B+A+e_{i}H)$. 

By abuse of notations, we put $X_{i}=X'_{k_{i}}$ (resp.~$B_{i}=B'_{k_{i}}$, $A_{i}=A'_{k_{i}}$, $H_{i}=H'_{k_{i}}$). 
Then we can check that  $\{e_{i}\}_{i\geq1}$ and 
\begin{equation*}
X \dashrightarrow X_{1}\dashrightarrow X_{2}\dashrightarrow \cdots \dashrightarrow X_{i}\dashrightarrow \cdots
\end{equation*}
satisfy (1), (2), (3) and (4) stated at the start of this step. 
Indeed, (1) and (2) follow from $k_{1}=1$ and the argument in the second paragraph.  
The condition (4) follows from the argument in the fourth paragraph and $X_{i}=X'_{k_{i}}$. 
The condition (3) follows from the fifth paragraph. 
\end{step4}

\begin{step4}\label{step3main-hu}
As in Step \ref{step5relative} in the proof of Theorem \ref{thmmain}, if the $(K_{X_{1}}+B_{1}+A_{1})$-MMP with scaling of $e_{1}H_{1}$ terminates, then $(X,B+A)$ has a log minimal model. 
Thus we only have to prove termination of the $(K_{X_{1}}+B_{1}+A_{1})$-MMP. 

Suppose by contradiction that the $(K_{X_{1}}+B_{1}+A_{1})$-MMP does not terminate. 
Since $K_{X_{1}}+B_{1}+A_{1}+e_{1}H_{1}\sim_{\mathbb{R}}(1+e_{1})\bigl(K_{X_{1}}+B_{1}+A_{1}-\frac{e_{1}}{1+e_{1}}G_{1}\bigr)$ and ${\rm Supp}G_{1}={\rm Supp}\llcorner B_{1}\lrcorner$, this log MMP occurs only in ${\rm Supp}\llcorner B_{1}\lrcorner$ (cf.~\cite[Step 2 in the proof of Proposition 5.4]{has-trivial}). 
We will get a contradiction by the argument of the special termination as in \cite{fujino-sp-ter}. 
\end{step4}

\begin{step4}\label{step4main-hu}
The goal of this step is to prove inequality $(\spadesuit)$ stated below. 
The argument is almost same as the second paragraph of Step \ref{step7relative} in the proof of Theorem \ref{thmmain}. 

Note that $(X_{1},B_{1}+A_{1}+e_{1}H_{1})$ is $\mathbb{Q}$-factorial dlt and any lc center of the pair is an lc center of $(X_{1},B_{1})$. 
So $(X_{i},B_{i}+A_{i}+e_{i}H_{i})$ is $\mathbb{Q}$-factorial dlt and any lc center of the pair is an lc center of $(X_{i},B_{i})$ for any $i\geq 1$. 
For any lc center $S$ of $(X,B)$, we set $A_{S}=A|_{S}$ and $H_{S}=H|_{S}$, and we define $B_{S}$ by adjunction $K_{S}+B_{S}=(K_{X}+B)|_{S}$. 
Note that $A_{S}$ and $H_{S}$ are effective $\mathbb{R}$-Cartier divisors, and $K_{S}+B_{S}$ is $\mathbb{R}$-Cartier.
Similarly, for any $i\geq 1$ and any lc center $S_{i}$ of $(X_{i},B_{i})$, we set $A_{S_{i}}=A_{i}|_{S_{i}}$ and $H_{S_{i}}=H_{i}|_{S_{i}}$, and we define $B_{S_{i}}$ by adjunction $K_{S_{i}}+B_{S_{i}}=(K_{X_{i}}+B_{i})|_{S_{i}}$. 
Then $A_{S_{i}}$ and $H_{S_{i}}$ are both effective $\mathbb{R}$-Cartier divisors, and $K_{S_{i}}+B_{S_{i}}$ is $\mathbb{R}$-Cartier. 
Moreover, the pairs $(S,B_{S}+A_{S}+e_{i}H_{S})$ and $(S_{i}, B_{S_{i}}+A_{S_{i}}+e_{i}H_{S_{i}})$ are all dlt. 

By construction of the map $(X,B+A+e_{i}H)\dashrightarrow(X_{i},B_{i}+A_{i}+e_{i}H_{i})$ (see (2) and (4) in Step \ref{step2main-hu}), for any $i\geq 1$ and any lc center $S_{i}$ of $(X_{i},B_{i})$, there is an lc center $S$ of $(X,B)$ such that the map $X\dashrightarrow X_{i}$  induces a birational map $S\dashrightarrow S_{i}$. 
By (2) and (4) in Step \ref{step2main-hu}, there is a common resolution $\overline{X}\to X$ and $\overline{X}\to {X_{i}}$ of the map $X\dashrightarrow X_{i}$ and a subvariety $\overline{S}\subset \overline{X}$ birational to $S$ and $S_{i}$ such that the induced morphisms $\overline{S}\to S$ and $\overline{S}\to S_{i}$ form a common resolution of $S\dashrightarrow S_{i}$. 
Because of condition (3) in Step \ref{step2main-hu}, by restricting the pullbacks of $K_{X}+B+A+e_{i}H$ and $K_{X_{i}}+B_{i}+A_{i}+e_{i}H_{i}$ to $\overline{S}$ and comparing coefficients of those restricted divisors, we obtain 
\begin{equation*}\tag{$\spadesuit$}
a(Q,S,B_{S}+A_{S}+e_{i}H_{S})\leq a(Q,S_{i},B_{S_{i}}+A_{S_{i}}+e_{i}H_{S_{i}})
\end{equation*}
 for any prime divisor $Q$ over $S$. 
\end{step4}

\begin{step4}\label{step5main-hu}
Now we can write $A=\sum_{l}r_{l}A^{(l)}$, where $0<r_{l}<1$ and $A^{(l)}$ are base point free Cartier divisors and general in their linear systems. 
The goal of this step is to replace $A$ with a sufficiently general one by replacing each $A^{(l)}$ with a sufficiently general member of its linear system so that all dlt pairs $(S_{i},B_{S_{i}}+A_{S_{i}}+e_{i}H_{S_{i}})$ have good properties. 
We note that all $(X_{i},B_{i}+A_{i}+e_{i}H_{i})$ are $\mathbb{Q}$-factorial dlt pairs with the same lc centers as lc centers of $(X_{i},B_{i})$, and $B+H$ and $A$ have no common components as far as all $A^{(l)}$ are general. 
Also, we note that all the log MMP and the good minimal models constructed in Step \ref{step2main-hu} and the inequalities of discrepancies in Step \ref{step4main-hu} do not depend on $\mathbb{R}$-linear equivalence class of $A$.
Therefore, after replacing $A$ and replacing $A_{i}$, $A_{S}$ and $A_{S_{i}}$ accordingly, we need not change other divisors, varieties and pairs. 

We fix an lc center $S_{i}$ of $(X_{i},B_{i})$, and let $S$ be an lc center of $(X,B)$ such that the map $X\dashrightarrow X_{i}$  induces a birational map $S\dashrightarrow S_{i}$. 
Let $\mathcal{C}$ be the set of components of $H_{S}$. 
For any boundary $\mathbb{R}$-divisor $\Delta_{S_{i}}$ such that $(S_{i},\Delta_{S_{i}})$ is dlt, we consider the set
\begin{equation*} 
\mathcal{C}_{\Delta_{S_{i}}}=\Set{ Q' 
| \begin{array}{l}\text{$Q'$ is a component of $H_{S}$ which is exceptional over $S_{i}$}\\ \text{such that $-1<a(Q',S_{i}, \Delta_{S_{i}})<0$} 
 \end{array}\!}.
 \end{equation*}
Note that we may have $\mathcal{C}_{\Delta_{S_{i}}}=\emptyset$. 
Since $\mathcal{C}_{\Delta_{S_{i}}}\subset \mathcal{C}$, 
there are only finitely many subsets $\mathcal{C}_{1},\cdots, \mathcal{C}_{j_{0}}$ of $\mathcal{C}$ such that for any dlt pair $(S_{i},\Delta_{S_{i}})$, there is $1\leq j\leq j_{0}$ satisfying $\mathcal{C}_{\Delta_{S_{i}}}=\mathcal{C}_{j}$. 
By discarding some $\mathcal{C}_{j}$, we may assume that for any $1\leq j\leq j_{0}$ there is a dlt pair $(S_{i},\Delta^{(j)})$ such that $\mathcal{C}_{j}=\mathcal{C}_{\Delta^{(j)}}$. 
Applying Lemma \ref{lem--extraction} to the dlt pairs $(S_{i},\Delta^{(j)})$, we obtain finitely many projective birational morphisms $\psi_{S_{i},j}\colon \bar{S}_{i,j}\to S_{i}$ such that $(\bar{S}_{i,j},0)$ are $\mathbb{Q}$-factorial klt and $\psi_{S_{i},j}^{-1}$ exactly extracts elements of $\mathcal{C}_{j}$. 
By construction, for any dlt pair $(S_{i},\Delta_{S_{i}})$, there is $1\leq j\leq j_{0}$ such that $\psi_{S_{i},j}^{-1}$ exactly extracts elements of $\mathcal{C}_{\Delta_{S_{i}}}$. 
For each $j$, let $E_{\bar{S}_{i,j}}$ be the sum of all $\psi_{S_{i},j}$-exceptional prime divisors and all exceptional prime divisors of the induced map $\bar{S}_{i,j}\dashrightarrow S$. 
We fix a log resolution $\tau_{S_{i},j}\colon \widetilde{S}_{i,j} \to \bar{S}_{i,j}$ of $\bigl(\bar{S}_{i,j}, \psi_{S_{i},j*}^{-1}(B_{S_{i}}+H_{S_{i}})+E_{\bar{S}_{i,j}}\bigr)$ such that the induced birational map $\sigma_{S_{i},j}\colon \widetilde{S}_{i,j}\dashrightarrow S$ is a morphism. 
\begin{equation*}
\xymatrix
{
\widetilde{S}_{i,j}\ar[d]_{\sigma_{S_{i},j}} \ar[r]^{\tau_{S_{i},j}}&\bar{S}_{i,j} \ar[d]^{\psi_{S_{i},j}}\\
S\ar@{-->}[r]&S_{i}
}
\end{equation*}
Note that the morphisms $\psi_{S_{i},j}$, $\tau_{S_{i},j}$ and $\sigma_{S_{i},j}$ do not depend on $A$. 
By Lemma \ref{lem--genmember} and Lemma \ref{lem--genmember2} and by replacing $A$ with a general one, for any $j$
we may assume that 
\begin{itemize}
\item
$\sigma_{S_{i},j}^{*}A_{S}=\sigma_{S_{i},j*}^{-1}A_{S}$ and 
$\sigma_{S_{i},j}^{*}A_{S}\leq (\psi_{S_{i},j}\circ \tau_{S_{i},j})_{*}^{-1}A_{S_{i}}$, and 
\item
$\tau_{S_{i},j}$ is a log resolution of $\bigl(\bar{S}_{i,j}, \psi_{S_{i},j*}^{-1}(B_{S_{i}}+A_{S_{i}}+H_{S_{i}})+E_{\bar{S}_{i,j}}\bigr)$. 
\end{itemize}

We apply the above discussion to all $S_{i}$ and we replace $A$ with a sufficiently general one. 
Then $A_{S}$ and $A_{S_{i}}$ satisfy the above two conditions for all $S_{i}$ and $j$. 
For each $S_{i}$ and the set $\mathcal{C}_{B_{S_{i}}+A_{S_{i}}+e_{i}H_{S_{i}}}$, we choose the corresponding index $j$ and we set 
\begin{equation*}
\begin{split}
T_{i}=\bar{S}_{i,j},&\quad \psi_{S_{i}}=\psi_{S_{i},j}\colon T_{i}\to S_{i}, \\
\widetilde{T}_{i}=\widetilde{S}_{i,j},&\quad\tau_{S_{i}}=\tau_{S_{i},j}\colon \widetilde{T}_{i} \to T_{i}\quad {\rm and} \quad \sigma_{S_{i}}=\sigma_{S_{i},j}\colon \widetilde{T}_{i}\to S.
\end{split}
\end{equation*} 
We define $\Psi_{T_{i}}$ by equation $K_{T_{i}}+\Psi_{T_{i}}=\psi_{S_{i}}^{*}(K_{S_{i}}+B_{S_{i}}+A_{S_{i}}+e_{i}H_{S_{i}})$. 
By construction, we have $\Psi_{T_{i}}\geq 0$, $(T_{i},0)$ is $\mathbb{Q}$-factorial klt and $(T_{i},\Psi_{T_{i}})$ is lc. 
By the second condition stated above, the morphism $\tau_{S_{i}}\colon \widetilde{T}_{i} \to T_{i}$ is a log resolution of $(T_{i},\Psi_{T_{i}})$. 
We note that the inclusion ${\rm Supp}\Psi_{T_{i}}\subset {\rm Supp}\bigl(\psi_{S_{i}*}^{-1}(B_{S_{i}}+A_{S_{i}}+e_{i}H_{S_{i}})+E_{T_{i}}\bigr)$ holds, where $E_{T_{i}}$ is the sum of all $\psi_{S_{i}}$-exceptional prime divisors and all exceptional prime divisors of $T_{i}\dashrightarrow S$. 
By the first condition stated above, we have $\sigma_{S_{i}}^{*}A_{S}=\sigma_{S_{i}*}^{-1}A_{S}$ and $\sigma_{S_{i}}^{*}A_{S}\leq (\psi_{S_{i}}\circ \tau_{S_{i}})_{*}^{-1}A_{S_{i}}$. 

From the above argument, by replacing $A$, we may assume that for any $i\geq1$ and any lc center $S_{i}$ of $(X_{i},B_{i})$ with corresponding lc center $S$ of $(X,B)$, there is a diagram  
\begin{equation*}
\xymatrix
{
\widetilde{T}_{i}\ar[d]_{\sigma_{S_{i}}} \ar[r]^{\tau_{S_{i}}}&T_{i} \ar[d]^{\psi_{S_{i}}}\\
S\ar@{-->}[r]&S_{i}
}
\end{equation*}
such that 
\begin{itemize}
\item[(a)]
$\psi_{S_{i}}^{-1}$ exactly extracts all components $Q'$ of $H_{S}$ which are exceptional over $S_{i}$ such that $-1<a(Q',S_{i},B_{S_{i}}+A_{S_{i}}+e_{i}H_{S_{i}} )<0$,
\item[(b)]
$(T_{i},0)$ is $\mathbb{Q}$-factorial klt,
\item[(c)]
if we define $\Psi_{T_{i}}$ by an equation $K_{T_{i}}+\Psi_{T_{i}}=\psi_{S_{i}}^{*}(K_{S_{i}}+B_{S_{i}}+A_{S_{i}}+e_{i}H_{S_{i}})$, then $(T_{i},\Psi_{T_{i}})$ is lc and $\tau_{S_{i}}$ is a log resolution of $(T_{i},\Psi_{T_{i}})$, and
\item[(d)]
$\sigma_{S_{i}}^{*}A_{S}=\sigma_{S_{i}*}^{-1}A_{S}$ and $\sigma_{S_{i}}^{*}A_{S}\leq (\psi_{S_{i}}\circ \tau_{S_{i}})_{*}^{-1}A_{S_{i}}$. 
\end{itemize}
This diagram and conditions will be used in steps \ref{step7main-hu}, \ref{step8main-hu} and  \ref{step9main-hu}. 
\end{step4}

\begin{step4}\label{step6main-hu}
We start the argument of the special termination. 
The basic strategy is similar to \cite[Proof of Theorem 1.2]{birkar-09}. 

There is $m>0$ such that for any lc center $S_{m}$ of $(X_{m},B_{m})$ and any $i \geq m$, the indeterminacy locus of the birational map $X_{m}\dashrightarrow X_{i}$ does not contain $S_{m}$ and the restriction of the map to $S_{m}$ induces a birational map $S_{m}\dashrightarrow S_{i}$ to an lc center $S_{i}$ of $(X_{i},B_{i})$. 
From now on, we prove that for any lc center $S_{m}$ of $(X_{m},B_{m})$, there is $i_{0}\geq m$ such that the induced map $(S_{i}, B_{S_{i}}+A_{S_{i}})\dashrightarrow (S_{i+1}, B_{S_{i+1}}+A_{S_{i+1}})$ is an isomorphism for any $i\geq i_{0}$. 
Assuming this, then we see that the $(K_{X_{1}}+B_{1}+A_{1})$-MMP terminates by the same argument as in \cite{fujino-sp-ter}, and we get a contradiction. 
So we prove the assertion, and we prove it by induction on the dimension of $S_{m}$. 
By arguments as in \cite{fujino-sp-ter}, replacing $m$, we may assume that the induced birational map $S_{m}\dashrightarrow S_{i}$ is small and the birational transforms of $B_{S_{m}}$ and $A_{S_{m}}$ on $S_{i}$ are equal to $B_{S_{i}}$ and $A_{S_{i}}$, respectively. 
Then we can easily check that it is sufficient to prove that $K_{S_{i}}+B_{S_{i}}+A_{S_{i}}$ is nef for some $i\geq m$ (see \cite[Step 5 in the proof of Theorem 3.5]{has-class}). 
\end{step4}

\begin{step4}\label{step7main-hu}
In the rest of the proof, unless otherwise stated all $i$ are assumed to be $i\geq m$. 
In this step, we define some varieties and divisors used in the proof. 

For each $i$, let $\psi_{S_{i}}\colon (T_{i},\Psi_{T_{i}})\to (S_{i},B_{S_{i}}+A_{S_{i}}+e_{i}H_{S_{i}})$ be the model constructed in Step \ref{step5main-hu}. 
By (4) in Step \ref{step2main-hu} and by \cite[Lemma 4.2.10]{fujino-sp-ter}, we have 
\begin{equation*}
\begin{split}
a(Q, S_{i},B_{S_{i}}+A_{S_{i}}+e_{i}H_{S_{i}})&\leq a(Q,S_{i},B_{S_{i}}+A_{S_{i}}+e_{i+1}H_{S_{i}})\\
&\leq a(Q,S_{i+1},B_{S_{i+1}}+A_{S_{i+1}}+e_{i+1}H_{S_{i+1}})
\end{split}
\end{equation*}
for any prime divisor $Q$ over $S_{i}$. 
By (a) in Step \ref{step5main-hu}, the induced birational map $T_{i}\dashrightarrow T_{i+1}$ is a birational contraction. 
Replacing $m$, we may assume that $T_{m}$ and $T_{i}$ are isomorphic in codimension one for all $i$. 
Let $\Psi_{i}^{(m)}$ be the birational transform of $\Psi_{T_{i}}$ on $T_{m}$. 
By the above relation and since we have $K_{T_{i}}+\Psi_{T_{i}}=\psi_{S_{i}}^{*}(K_{S_{i}}+B_{S_{i}}+A_{S_{i}}+e_{i}H_{S_{i}})$, we have 
\begin{equation*}
\Psi_{T_{m}}\geq \Psi_{m+1}^{(m)}\geq \cdots \geq \Psi_{i}^{(m)} \geq \cdots \geq0.
\end{equation*}
Thus the limit $\Psi_{\infty}^{(m)}:={\rm lim}_{i \to \infty}\Psi_{i}^{(m)}$ exists as an $\mathbb{R}$-divisor. 
Then the pair $(T_{m},\Psi_{\infty}^{(m)})$ is $\mathbb{Q}$-factorial lc. 

By (4) in Step \ref{step2main-hu}, we see that $(X_{m},B_{m}+A_{m}+e_{i}H_{m})\dashrightarrow (X_{i},B_{i}+A_{i}+e_{i}H_{i})$ is a sequence of steps of the $(K_{X_{m}}+B_{m}+A_{m}+e_{i}H_{m})$-MMP with scaling of $(e_{m}-e_{i})H_{m}$ to a good minimal model. 
By \cite[Lemma 4.2.10]{fujino-sp-ter}, we have 
\begin{equation*}
\begin{split}
a(Q, S_{m},B_{S_{m}}+A_{S_{m}}+e_{i}H_{S_{m}})&\leq a(Q,S_{i},B_{S_{i}}+A_{S_{i}}+e_{i}H_{S_{i}})=a(Q,T_{i},\Psi_{T_{i}})
\end{split}
\end{equation*}
for any prime divisor $Q$ over $S_{m}$. 
Since the map $T_{m}\dashrightarrow T_{i}$ is isomorphic in codimension one, we obtain $\psi_{S_{m}}^{*}(K_{S_{m}}+B_{S_{m}}+A_{S_{m}}+e_{i}H_{S_{m}})\geq K_{T_{m}}+\Psi_{i}^{(m)}$ for any $i$. 
By considering the limit $i\to \infty$, for any prime divisor $Q$ over $S_{m}$, we have 
\begin{equation*}\tag{$\clubsuit$}
a(Q,S_{m},B_{S_{m}}+A_{S_{m}})\leq  a(Q,T_{m}, \Psi_{\infty}^{(m)}).
\end{equation*}
\end{step4}

\begin{step4}\label{step8main-hu}
In this step, we prove that $(T_{m}, \Psi_{\infty}^{(m)})$ has a good minimal model. 
Note that the divisor $K_{T_{i}}+\Psi_{T_{i}}=\psi_{S_{i}}^{*}(K_{S_{i}}+B_{S_{i}}+A_{S_{i}}+e_{i}H_{S_{i}})$ is semi-ample, and therefore the divisor $K_{T_{m}}+\Psi_{\infty}^{(m)}$, which is the limit of movable divisors $K_{T_{m}}+\Psi_{i}^{(m)}$, is pseudo-effective. 

We use the diagram  
\begin{equation*}
\xymatrix
{
\widetilde{T}_{m}\ar[d]_{\sigma_{S_{m}}} \ar[r]^{\tau_{S_{m}}}&T_{m} \ar[d]^{\psi_{S_{m}}}\\
S\ar@{-->}[r]&S_{m}
}
\end{equation*}
and conditions (a)--(d) in Step \ref{step5main-hu}. 
Since the birational transform of $A_{S_{m}}$ on $S_{i}$ is $A_{S_{i}}$, by construction of $\Psi_{\infty}^{(m)}$, we have $\Psi_{\infty}^{(m)}\geq \psi_{S_{m}*}^{-1}A_{S_{m}}$. 
By (d) in Step \ref{step5main-hu}, we can write
\begin{equation*}
K_{\widetilde{T}_{m}}+\Theta+\sigma_{S_{m}}^{*}A_{S}=\tau_{S_{m}}^{*}(K_{T_{m}}+\Psi_{\infty}^{(m)})+E
\end{equation*}
with $\Theta\geq 0$ and a $\tau_{S_{m}}$-exceptional $\mathbb{R}$-divisor $E\geq 0$ such that $\Theta+\sigma_{S_{m}}^{*}A_{S}$ and $E$ have no common components. 
It is sufficient to prove that $(\widetilde{T}_{m}, \Theta+\sigma_{S_{m}}^{*}A_{S})$ has a good minimal model. 
Note that $(\widetilde{T}_{m}, \Theta+\sigma_{S_{m}}^{*}A_{S})$ is an lc pair by (c) in Step \ref{step5main-hu}. 
We may write
\begin{equation*}\tag{$*$}
K_{\widetilde{T}_{m}}+\Theta+\sigma_{S_{m}}^{*}A_{S}=\sigma_{S_{m}}^{*}(K_{S}+B_{S}+A_{S})+M-N
\end{equation*}
with $M\geq 0$ and $N\geq0$ which have no common components. 
Thanks to Lemma \ref{lem--ind-birat}, it is sufficient to prove that $M$ is $\sigma_{S_{m}}$-exceptional, which is equivalent to that any prime divisor $Q$ on $S$ satisfies $a(Q,S,B_{S}+A_{S})\leq a(Q, \widetilde{T}_{m},\Theta+\sigma_{S_{m}}^{*}A_{S})$. 

Pick any prime divisor $Q$ on $S$. 
Then $a(Q, \widetilde{T}_{m},\Theta+\sigma_{S_{m}}^{*}A_{S})={\rm min}\{0, a(Q,T_{m}, \Psi_{\infty}^{(m)})\}$ by construction of $(\widetilde{T}_{m}, \Theta+\sigma_{S_{m}}^{*}A_{S})$ and Lemma \ref{lem--discre}, and $a(Q,S,B_{S}+A_{S})\leq 0$. 
So, we only need to prove $a(Q,S,B_{S}+A_{S})\leq a(Q,T_{m}, \Psi_{\infty}^{(m)})$. 
The calculation is the same as in the second paragraph of Step \ref{step8relative} in the proof of Theorem \ref{thmmain}. 
If $Q$ is not a component of $H_{S}$, we have 
\begin{equation*}
\begin{split}
a(Q,S,B_{S}+A_{S})&=a(Q,S,B_{S}+A_{S}+e_{m}H_{S})\\
&\leq a(Q,S_{m},B_{S_{m}}+A_{S_{m}}+e_{m}H_{S_{m}})\leq a(Q,S_{m},B_{S_{m}}+A_{S_{m}})\\
&\leq  a(Q,T_{m}, \Psi_{\infty}^{(m)}).
\end{split}
\end{equation*}
Here, the first equality follows from that $Q$ is a divisor on $S$ and not a component of $H_{S}$, the second inequality follows from ($\spadesuit$) in Step \ref{step4main-hu}, 
the third inequality is clear from a property of discrepancies, and the final inequality follows from ($\clubsuit$) in Step \ref{step7main-hu}. 
So we may assume that $Q$ is a component of $H_{S}$. 
Then $-1<a(Q,S,B_{S}+A_{S}+e_{m}H_{S})$, and by using this and ($\spadesuit$) in Step \ref{step4main-hu} we obtain $-1<a(Q,S_{m},B_{S_{m}}+A_{S_{m}}+e_{m}H_{S_{m}})$. 
Suppose that $Q$ is exceptional over $T_{m}$. 
This implies that $Q$ is exceptional over $S_{m}$ and it is not extracted by $\psi_{S_{m}}^{-1}$. 
By (a) in Step \ref{step5main-hu}, we have $a(Q,S_{m},B_{S_{m}}+A_{S_{m}}+e_{m}H_{S_{m}})\geq0$. 
Since $a(Q,S,B_{S}+A_{S})\leq 0$, we have 
\begin{equation*}
\begin{split}
a(Q,S,B_{S}+A_{S})&\leq 0\leq a(Q,S_{m},B_{S_{m}}+A_{S_{m}}+e_{m}H_{S_{m}})\leq a(Q,S_{m},B_{S_{m}}+A_{S_{m}})\\&\leq  a(Q,T_{m}, \Psi_{\infty}^{(m)}).
\end{split}
\end{equation*}
Here we used ($\clubsuit$) in Step \ref{step7main-hu} to obtain the final inequality. 
Suppose that $Q$ is a divisor on $T_{m}$. Since $T_{m}\dashrightarrow T_{i}$ is small for any $i$, we have 
\begin{equation*}
\begin{split}
a(Q,S,B_{S}+A_{S}+e_{i}H_{S})&\leq a(Q,S_{i},B_{S_{i}}+A_{S_{i}}+e_{i}H_{S_{i}})=a(Q,T_{i},\Psi_{T_{i}})\\&=a(Q,T_{m}, \Psi_{i}^{(m)})
\end{split}
\end{equation*}
where, the first inequality follows from ($\spadesuit$) in Step \ref{step4main-hu}, the second equality follows from (c) in Step \ref{step5main-hu}, 
and the third equality follows from that $Q$ is a divisor on $T_{m}$ and $T_{i}$.  
By considering the limit $i\to \infty$, we have $a(Q,S,B_{S}+A_{S})\leq a(Q,T_{m}, \Psi_{\infty}^{(m)})$. 

Therefore, in any case, we have $a(Q,S,B_{S}+A_{S})\leq a(Q,T_{m}, \Psi_{\infty}^{(m)})$ for any divisor $Q$ on $S$. 
Then the divisor $M$ in ($*$) in this step is exceptional over $S$. 
By Lemma \ref{lem--ind-birat}, the pair $(\widetilde{T}_{m}, \Theta+\sigma_{S_{m}}^{*}A_{S})$ has a good minimal model, and so does $(T_{m}, \Psi_{\infty}^{(m)})$. 
\end{step4}

\begin{step4}\label{step9main-hu}
With this step we complete the proof. 
The argument is the same as Step \ref{step9relative} in the proof of Theorem \ref{thmmain}. 
But we write details for the reader's convenience. 

Since $(T_{m},0)$ is $\mathbb{Q}$-factorial klt ((b) in Step \ref{step5main-hu}), by Step \ref{step8main-hu} and Theorem \ref{thmtermi}, there is a sequence of steps of a $(K_{T_{m}}+\Psi_{\infty}^{(m)})$-MMP to a good minimal model 
\begin{equation*}
(T_{m}, \Psi_{\infty}^{(m)})\dashrightarrow (T', \Psi'_{\infty}).
\end{equation*}
Then $(T',0)$ is also $\mathbb{Q}$-factorial klt since the map $T_{m}\dashrightarrow T'$ is also a sequence of steps of the $(K_{T_{m}}+t\Psi_{\infty}^{(m)})$-MMP for some $t<1$. 
Moreover, this log MMP contains only flips because $K_{T_{m}}+\Psi_{\infty}^{(m)}$ is the limit of movable divisors $K_{T_{m}}+\Psi_{i}^{(m)}$ (see the second sentence of Step \ref{step8main-hu}). 
In particular, $T'$ and $T_{m}$ are isomorphic in codimension one. 
Let $\Psi'_{i}$ be the birational transform of $\Psi_{T_{i}}$ on $T'$. 
Then $\Psi'_{i}\geq \Psi'_{i+1}\geq 0$ for any $i$ and $\Psi'_{\infty}={\rm lim}_{i \to \infty}\Psi'_{i}$.  
Since the map $T_{m}\dashrightarrow T'$ is also a sequence of steps of the $(K_{T_{m}}+\Psi_{i}^{(m)})$-MMP for any $i\gg m$, the pair $(T',\Psi'_{i})$ is lc for any $i\gg m$. 
Recall that $K_{T_{i}}+\Psi_{T_{i}}$ is semi-ample, which follows from construction of the morphism $\psi_{S_{i}}\colon T_{i}\to S_{i}$ in Step \ref{step5main-hu}. 
Because $K_{T'}+\Psi'_{i}$ is the birational transform of $K_{T_{i}}+\Psi_{T_{i}}$ and since $T_{i}$ and $T'$ are isomorphic in codimension one, the pair $(T_{i}, \Psi_{T_{i}})$ is a weak lc model of $(T',\Psi'_{i})$ with semi-ample log canonical divisor. 
Therefore, the pair $(T',\Psi'_{i})$ has a good minimal model for any $i\gg m$. 
By Theorem \ref{thmtermi}, there is a sequence of steps of a $(K_{T'}+\Psi'_{i})$-MMP to a good minimal model. 
Fix an $i\gg m$ such that 
$\Psi'_{i}-\Psi'_{\infty}$ is sufficiently small so that $\Psi'_{i}$ satisfies the property of Lemma \ref{lem--polytope-1}, that is, for any sequence of steps of any $(K_{T'}+\Psi'_{i})$-MMP to a good minimal model 
\begin{equation*}
(T',\Psi'_{i})\dashrightarrow(T'',\Psi''_{i})
\end{equation*}
and the contraction $T''\to W$ induced by $K_{T''}+\Psi''_{i}$, the divisor $K_{T''}+\Psi''_{\infty}$ is nef and $K_{T''}+\Psi''_{\infty}\sim_{\mathbb{R},W}0$, where $\Psi''_{\infty}$ is the birational transform of $\Psi'_{\infty}$ on $T''$. 
In particular, it is $\mathbb{R}$-linearly equivalent to the pullback of a nef divisor on $W$. 
Note that this log MMP also contains only flips since $K_{T'}+\Psi'_{i}$ is movable. 
Therefore, $T''$ and $T'$ are isomorphic in codimension one, hence $T''$ and $T_{i}$ are isomorphic in codimension one. 

We focus on the following diagram
\begin{equation*}
\xymatrix
{
(T'',\Psi''_{i})\ar[d]&(T_{i},\Psi_{T_{i}})\ar[d]^{\psi_{S_{i}}} \ar@{-->}[l]\\
W&(S_{i},B_{S_{i}}+A_{S_{i}}+e_{i}H_{S_{i}})\ar@{-->}[l]
}
\end{equation*}
and recall $K_{T_{i}}+\Psi_{T_{i}}=\psi_{S_{i}}^{*}(K_{S_{i}}+B_{S_{i}}+A_{S_{i}}+e_{i}H_{S_{i}})$ is semi-ample. 
Here $S_{i}\dashrightarrow W$ is the induced rational map. 
Since $T''$ and $T_{i}$ are isomorphic in codimension one, by the negativity lemma, we see that the rational map $S_{i}\dashrightarrow W$ is a morphism. 
Furthermore, by definition of $\Psi_{\infty}^{(m)}$ and since ${\rm lim}_{i \to \infty}e_{i}=0$, the birational transform of $K_{T''}+\Psi''_{\infty}$ on $S_{i}$ is $K_{S_{i}}+B_{S_{i}}+A_{S_{i}}$. 
Since $K_{T''}+\Psi''_{\infty}$ is $\mathbb{R}$-linearly equivalent to the pullback of a nef divisor on $W$, its birational transform $K_{S_{i}}+B_{S_{i}}+A_{S_{i}}$ is also $\mathbb{R}$-linearly equivalent to the pullback of a nef divisor on $W$. 
In particular, $K_{S_{i}}+B_{S_{i}}+A_{S_{i}}$ is nef. 

By the argument in Step \ref{step6main-hu}, we see that $(X,B+A)$ has a log minimal model. 
Therefore, the pair $(X,B+A)$ has a good minimal model by Theorem \ref{thm--abund-hu}. 
\end{step4}
So we are done. 
\end{proof}

\subsection{From Theorem \ref{thmmain--hu} to Theorem \ref{thm--lcample}, and proof of Theorem \ref{thm--lcfano}}
\begin{thm}\label{thmrelative}
Let $(X,B)$ be an lc pair, and let $\pi \colon X \to Y$ and $\varphi\colon Y\to Z$ be projective morphisms of normal quasi-projective varieties. 
Let $A_{Y}$ be a $\varphi$-ample $\mathbb{R}$-divisor on $Y$ such that $(X,B+\pi^{*}A_{Y})$ is lc. 
Suppose that $(X,B)$ has a good minimal model over $Y$. 

If $K_{X}+B+\pi^{*}A_{Y}$ is pseudo-effective over $Z$, then $(X,B+\pi^{*}A_{Y})$ has a good minimal model over $Z$. 
\end{thm}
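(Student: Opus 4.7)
The plan is to reduce the theorem to an application of Theorem \ref{thmmain--hu} after passing to the contraction induced by the good minimal model over $Y$.

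First, I would reduce to the case where $X$, $Y$, and $Z$ are all projective using the lc-closure technique from Step \ref{step1relative} of the proof of Theorem \ref{thmmain}. After compactifying, choosing an ample divisor $H_{Z}$ on $Z$ and replacing $A_{Y}$ by $A_{Y}+m\varphi^{*}H_{Z}$ for sufficiently large $m$, I may further assume that $A_{Y}$ is ample on $Y$; this replacement is harmless because $m(\varphi\pi)^{*}H_{Z}$ is numerically trivial over $Z$, so the existence of a good minimal model over $Z$ is unchanged.

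Next, by the hypothesis that $(X,B)$ has a good minimal model over $Y$, I would run the associated MMP over $Y$ and replace $(X,B)$ accordingly. Since $\pi^{*}A_{Y}\sim_{\mathbb{R},Y}0$, this MMP is simultaneously a $(K_{X}+B+\pi^{*}A_{Y})$-MMP over $Y$, so $(X,B+\pi^{*}A_{Y})$ remains lc throughout. Let $\pi_{V}\colon X\to V$ be the contraction over $Y$ induced by the $Y$-semi-ample divisor $K_{X}+B$, and let $h\colon V\to Y$ be the induced morphism. Then $K_{X}+B\sim_{\mathbb{R},V}0$, and since $\pi^{*}A_{Y}=\pi_{V}^{*}h^{*}A_{Y}$ is pulled back from $V$, we also have $K_{X}+B+\pi^{*}A_{Y}\sim_{\mathbb{R},V}0$.

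I would then apply Theorem \ref{thmmain--hu} to the morphism $\pi_{V}\colon X\to V$, taking $(X, B+\pi^{*}A_{Y})$ as the starting lc pair, $C=0$ (valid by the $\sim_{\mathbb{R},V}$ relation just established), and $A'\sim_{\mathbb{R}}\pi_{V}^{*}A_{V}'$ with $A_{V}'$ an ample $\mathbb{R}$-divisor on $V$ and $A'$ chosen generally so that $(X,B+\pi^{*}A_{Y}+A')$ is lc. This yields that $(X,B+\pi^{*}A_{Y}+\pi_{V}^{*}A_{V}')$ has an absolute good minimal model or Mori fiber space, and any absolute good minimal model is in particular one over $Z$.

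The main obstacle is to pass from good minimal models of the perturbed pairs to one of $(X,B+\pi^{*}A_{Y})$ over $Z$. I would take $A_{V}'=\epsilon A_{V}^{(0)}$ for a fixed ample $A_{V}^{(0)}$ on $V$ and let $\epsilon\searrow 0$, and run a $(K_{X}+B+\pi^{*}A_{Y})$-MMP over $Z$ with scaling of an ample divisor; its termination follows from Theorem \ref{thmtermi} once one produces a log minimal model over $Z$, which is supplied for each $\epsilon>0$ by the output of Theorem \ref{thmmain--hu}. Lemma \ref{lem--polytope-1} then controls the birational models as $\epsilon$ approaches zero, while the assumption that $K_{X}+B+\pi^{*}A_{Y}$ is pseudo-effective over $Z$ rules out the Mori fiber space outcome in the limit and yields the desired good minimal model over $Z$.
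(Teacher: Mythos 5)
Your reduction steps (compactifying, making $A_{Y}$ ample, replacing $(X,B)$ by its good minimal model over $Y$, and passing to the contraction $\pi_{V}\colon X\to V$ with $K_{X}+B\sim_{\mathbb{R},V}0$) match the paper's proof. The gap is in how you invoke Theorem \ref{thmmain--hu}. You apply it with $(X,B+\pi^{*}A_{Y})$ as the base pair and an auxiliary ample $A'\sim_{\mathbb{R}}\pi_{V}^{*}A_{V}'$, which only yields good minimal models of the perturbed pairs $(X,B+\pi^{*}A_{Y}+\epsilon\pi_{V}^{*}A_{V}^{(0)})$, and your limiting argument as $\epsilon\searrow 0$ does not close. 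Theorem \ref{thmtermi} requires the existence of a log minimal model over $Z$ of the very pair $(X,B+\pi^{*}A_{Y})$ whose MMP you are running --- which is what you are trying to produce, so the appeal is circular --- and it also requires $X$ to be $\mathbb{Q}$-factorial with $(X,0)$ klt, neither of which is available after replacing $X$ by a (possibly non-$\mathbb{Q}$-factorial) good minimal model over $Y$. Likewise, Lemma \ref{lem--polytope-1} takes as hypothesis that the central divisor $K_{X}+\Delta_{0}$ (here $K_{X}+B+\pi^{*}A_{Y}$) is already nef over $Z$, which is again essentially the conclusion sought. Passing from minimal models of perturbations to one of the limit is exactly the hard content of Steps \ref{step4relative}--\ref{step9relative} in the proof of Theorem \ref{thmmain}, and it cannot be dispatched by these two citations.

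The perturbation is in fact unnecessary, and this is where the paper's proof diverges from yours. Since $K_{X}+B\sim_{\mathbb{R}}\pi_{V}^{*}A'$ for an $h$-ample divisor $A'$ on $V$ (where $h\colon V\to Y$ is the induced morphism), and $A_{Y}$ has been made ample on $Y$, the divisor $\delta A'+h^{*}A_{Y}$ is ample on $V$ for small $\delta\in(0,1)$, and
\begin{equation*}
K_{X}+B+\pi^{*}A_{Y}\sim_{\mathbb{R}}(1-\delta)(K_{X}+B)+\pi_{V}^{*}(\delta A'+h^{*}A_{Y})\sim_{\mathbb{R}}(1-\delta)\Bigl(K_{X}+B+\tfrac{1}{1-\delta}\pi_{V}^{*}H\Bigr)
\end{equation*}
with $H\sim_{\mathbb{R}}\delta A'+h^{*}A_{Y}$ chosen so that $(X,B+\tfrac{1}{1-\delta}\pi_{V}^{*}H)$ is lc. Now Theorem \ref{thmmain--hu} applies directly to $\pi_{V}\colon (X,B)\to V$ with $C=0$ and the ample divisor $\tfrac{1}{1-\delta}(\delta A'+h^{*}A_{Y})$ on $V$, and its output is a good minimal model of a pair whose log canonical divisor is a positive multiple of $K_{X}+B+\pi^{*}A_{Y}$; the pseudo-effectivity hypothesis rules out the Mori fiber space case, and no limit is needed. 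You should replace your last two paragraphs with this rescaling trick.
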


\begin{proof}
Let $Z\hookrightarrow Z^{c}$ be an open immersion to a normal projective variety $Z^{c}$. 
Then there is an open immersion $Y\hookrightarrow Y^{c}$ to a normal projective variety $Y^{c}$, a projective morphism $\varphi^{c}\colon Y^{c}\to Z^{c}$, and a $\varphi^{c}$-ample $\mathbb{R}$-divisor $A_{Y^{c}}$ on $Y^{c}$ such that $A_{Y^{c}}|_{Y}\sim_{\mathbb{R},Z}A_{Y}$. 
We take an lc closure $(X^{c},B^{c})$ of $(X,B)$ as in \cite[Corollary 1.3]{has-mmp}. 
Then any lc center of $(X^{c},B^{c})$ intersects $X$. 
By replacing $A_{Y^{c}}$ if necessary, we may assume that $(X^{c},B^{c}+\pi^{c*}A_{Y^{c}})$ is lc, where $\pi^{c}\colon X^{c}\to Y^{c}$ is the induced morphism. 
By \cite[Theorem 1.2]{has-mmp}, $(X^{c},B^{c})$ has a good minimal model over $Y^{c}$. 
If $(X^{c},B^{c}+\pi^{c*}A_{Y^{c}})$ has a good minimal model over $Z^{c}$, by restricting it over $Z$, we see that $(X,B+\pi^{*}A_{Y})$ has a good minimal model over $Z$. 
Thus, replacing $\pi \colon X\to Y$, $\varphi\colon Y\to Z$, $(X,B)$ and $A_{Y}$ by $\pi^{c}\colon X^{c}\to Y^{c}$, $\varphi^{c}\colon Y^{c}\to Z^{c}$, $(X^{c},B^{c})$ and $A_{Y^{c}}$ respectively, we may assume that $X$, $Y$ and $Z$ are projective. 

Let $A_{Z}$ be a sufficiently ample divisor on $Z$ such that $G_{Y}:=A_{Y}+\varphi^{*}A_{Z}$ is ample. 
We may assume that $(X,B+\pi^{*}G_{Y})$ is lc and $(X,B+\pi^{*}A_{Y})$ has a good minimal model over $Z$ if and only if $(X,B+\pi^{*}G_{Y})$ has a good minimal model. 
Hence, by replacing $A_{Y}$ with $G_{Y}$, we may assume that $Z$ is a point. 

By replacing $(X,B)$ with a good minimal model over $Y$, we may assume that $K_{X}+B$ is semi-ample over $Y$. 
Let $\pi'\colon X \to Y'$ be the contraction over $Y$ induced by $K_{X}+B$, and let $g\colon Y'\to Y$ be the natural morphism. 
Let $A'$ be a $g$-ample $\mathbb{R}$-divisor on $Y'$ such that $K_{X}+B\sim_{\mathbb{R}}\pi'^{*}A'$. 
Then there is $\delta\in (0,1)$ such that $\delta A'+g^{*}A_{Y}$ is ample. 
Pick $H\sim_{\mathbb{R}}\delta A'+g^{*}A_{Y}$ such that $(X,B+\tfrac{1}{1-\delta}\pi'^{*}H)$ is lc. 
With this $\delta$ and $H$, we may write
\begin{equation*}
K_{X}+B+\pi^{*}A_{Y}\sim_{\mathbb{R}}(1-\delta)(K_{X}+B)+\delta \pi'^{*}A'+ \pi'^{*}g^{*}A_{Y}\sim_{\mathbb{R}}(1-\delta)(K_{X}+B+\tfrac{1}{1-\delta}\pi'^{*}H).
\end{equation*}
So $(X,B+\pi^{*}A_{Y})$ has a good minimal model if and only if $(X,B+\tfrac{1}{1-\delta}\pi'^{*}H)$ has a good minimal model. 
Apply Theorem \ref{thmmain--hu} to $\pi'\colon X \to Y'$, $(X,B)$, $\tfrac{1}{1-\delta}H$ and $\tfrac{1}{1-\delta}\pi'^{*}H$ (put $C=0$ in the notations of Theorem \ref{thmmain--hu}). 
Then $(X,B+\tfrac{1}{1-\delta}\pi'^{*}H)$ has a good minimal model, and therefore $(X,B+\pi^{*}A_{Y})$ has a good minimal model. 
\end{proof}

\begin{proof}[Proof of Theorem \ref{thm--lcample}]
If $K_{X}+B+A$ is not pseudo-effective over $Z$, then $(X,B+A)$ has a Mori fiber space over $Z$. 
So we may assume $K_{X}+B+A$ is pseudo-effective over $Z$. 
Then the theorem follows from Theorem \ref{thmrelative} by putting $Y=X$ and $\pi={\rm id}_{X}$. 
\end{proof}

\begin{proof}[Proof of Theorem \ref{thm--lcfano}]
We may assume $B$ is a $\mathbb{Q}$-divisor. 
Let $A_{Y}$ be an ample $\mathbb{Q}$-divisor on $Y$ such that $-(K_{X}+B)-f^{*}A_{Y}$ is ample. 
We pick $A\sim_{\mathbb{Q}}-(K_{X}+B)-f^{*}A_{Y}$ such that $(X,B+A)$ is lc. 
Applying the canonical bundle formula to $(X,B+A)\to Y$, there is a generalized lc pair $(Y,\Delta_{Y}+M_{Y})$, where $\Delta_{Y}$ is the discriminant part and $M_{Y}$ is the moduli part, such that 
$K_{X}+B+A\sim_{\mathbb{Q}}f^{*}(K_{Y}+\Delta_{Y}+M_{Y}).$  
By replacing $A_{Y}$ with a general one, we may assume that $A_{Y}\geq0$ and $(Y,\Delta_{Y}+A_{Y}+M_{Y})$ is a generalized lc pair with the discriminant part $\Delta_{Y}+A_{Y}$. 
Note that $K_{Y}+\Delta_{Y}+M_{Y}+A_{Y}\sim_{\mathbb{Q}}0$. 

We first treat the case when $X$ is $\mathbb{Q}$-factorial. 
By \cite[Appendix]{bh} by Jinhyung Park, $X$ is a Mori dream space, therefore we see that $Y$ is a  (non-$\mathbb{Q}$-factorial) Mori dream space by \cite[Theorem 9.3]{okawa}. Moreover, by \cite[Theorem 2.3]{coxrings} there is a small $\mathbb{Q}$-factorialization $Y'\to Y$. Note that $Y'$ is a Mori dream space. 
Let $M_{Y'}$ be the birational transform of $M_{Y}$ on $Y'$. 
The divisor $M_{Y}$ is the birational transform of a nef divisor by a property of the canonical bundle formula, and, since $Y\dashrightarrow Y'$ is small $M_{Y'}$ is also the birational transform of a nef divisor. 
From this we see that $M_{Y'}$ is the limit of movable divisors, so there is a small birational map $Y'\dashrightarrow Y''$ to a projective $\mathbb{Q}$-factorial variety $Y''$ such that the birational transform $M_{Y''}$ of $M_{Y'}$ on $Y''$ is semi-ample. 
Then $(Y'', \Delta_{Y''}+A_{Y''}+M_{Y''})$ is generalized lc, where $\Delta_{Y''}$ and $A_{Y''}$ are birational transforms of $\Delta_{Y}$ and $A_{Y}$ on $Y''$, respectively. 
Then $(Y'',\Delta_{Y''}+A_{Y''})$ is lc, so there is $0\leq R_{Y''}\sim_{\mathbb{Q}}M_{Y''}$ such that $(Y'',\Delta_{Y''}+R_{Y''}+A_{Y''})$ is lc. 
By construction, we have $K_{Y''}+\Delta_{Y''}+R_{Y''}+A_{Y''}\sim_{\mathbb{Q}}0$. 
Let $R_{Y}$ be the birational transform of $R_{Y''}$ on $Y$, and set $B_{Y}=\Delta_{Y}+R_{Y}$. 
Then $B_{Y}$ is the desired boundary $\mathbb{Q}$-divisor. 
Indeed, by taking a common resolution of $Y\dashrightarrow Y''$, which is small by construction, and by the negativity lemma, we see that 
$K_{Y}+B_{Y}+A_{Y}\sim_{\mathbb{Q}}0$ and $(Y,B_{Y}+A_{Y})$ is lc. 
So $(Y,B_{Y})$ is lc and $-(K_{Y}+B_{Y})$ is ample.

Next we treat the case when $Y$ is $\mathbb{Q}$-factorial. 
By assumption, $M_Y$ is $\mathbb{Q}$-Cartier. 
Put $M=f^*M_Y$, and pick a rational number $\delta>0$ such that $-(K_{X}+B)+\delta M$ is ample. 
Then we can find an effective $\mathbb{Q}$-divisor $G\sim_{\mathbb{Q}}-(K_{X}+B)+\delta M$ such that $(X,B+G)$ is lc. 
By Theorem \ref{thm--lcample} and since we have $\delta M \sim_{\mathbb{Q}} K_X+B+G$, we deduce that $M$ birationally has a Nakayama--Zariski decomposition with semi-ample positive part. By \cite[III, 5.18 Corollary]{nakayama} and \cite[III, 5.17 Corollary]{nakayama}, we see that $M_Y$ birationally has a Nakayama--Zariski decomposition with semi-ample positive part. 

Let $\pi\colon Y' \to Y$ be a resolution on which $\pi^*M_Y=P+N$ where $P=P_\sigma(\pi^*M_Y)$ is a semi-ample $\mathbb{Q}$-divisor and $N=N_\sigma(\pi^*M_Y)$. 
By definition and possibly by replacing $Y'$ we can write $K_{Y'}+\Delta_{Y'}+M_{Y'}=\pi^*(K_{Y}+\Delta_{Y}+M_{Y})$ such that $(Y',\Delta_{Y'})$ is sub-lc and $M_{Y'}$ is nef. 
By the negativity lemma, we can find a $\pi$-exceptional $\mathbb{Q}$-divisor $E\geq 0$ such that $\pi^{*}M_{Y}=M_{Y'}+E$. 
By \cite[III, 1.14 (2) Proposition]{nakayama}, we have $E- N\geq 0$, so
\begin{equation*}
K_{Y'}+\Delta_{Y'}+P+(N-E)=\pi^*(K_{Y}+\Delta_{Y}+M_{Y})
\end{equation*}
and the pair $(Y',\Delta_{Y'}+N-E)$ is sub-lc, and $\pi_{*}(\Delta_{Y'}+N-E)=\Delta_{Y}$ because $E$ is $\pi$-exceptional and $E- N\geq 0$. 
We pick $0\leq P'\sim_{\mathbb{Q}}P$ such that $(Y',\Delta_{Y'}+N-E+P')$ is sub-lc. 
Setting $B_{Y}$ as $\pi_{*}(\Delta_{Y'}+N-E+P')$, we see that $B_{Y}\geq 0$, $B_{Y}\sim_{\mathbb{Q}}\Delta_{Y}+M_{Y}$, and $K_{Y'}+\Delta_{Y'}+P'+(N-E)=\pi^*(K_{Y}+B_{Y})$. 
The second and the third condition show $K_{Y}+B_{Y}+A_{Y}\sim_{\mathbb{Q}}0$ and that $(Y,B_{Y})$ is lc. 
So we complete the proof. 
\end{proof}

A contraction $X\to Y$ of normal projective varieties is said to be a {\em Fano fibration} if $-K_{X}$ is ample over $Y$ and ${\rm dim}X >{\rm dim}Y$. 
We close this section with a result on existence of a birational contraction to a Fano fibration. 

\begin{thm}\label{thm--cont-fanofib}
Let $(X,0)$ be a projective lc pair such that $K_X$ is not pseudo-effective. 
Let $A$ be an ample $\mathbb{R}$-divisor on $X$, and let $t$ be the pseudo-effective threshold of $A$ with respect to $K_X$, that is, the unique real number $t>0$ such that $K_{X}+tA$ is pseudo-effective but not big. 

Then there is a birational contraction $X\dashrightarrow Y$ and a Fano fibration $Y\to Z$ with  ${\rm dim} Z = \kappa_{\sigma} (X,K_X+tA)$.
\end{thm}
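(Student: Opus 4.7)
My plan is to first apply Theorem \ref{thm--lcample} to produce a good minimal model associated to the threshold $t$, and then upgrade the induced semi-ample contraction to a Fano fibration by running a further relative MMP. First, replace $A$ by a general $\mathbb{R}$-linearly equivalent divisor so that $(X, tA)$ is lc (possible because $A$ is ample and $(X,0)$ is lc). Since $K_X + tA$ is pseudo-effective by definition of $t$, Theorem \ref{thm--lcample} applied to $(X, 0)$ with ample divisor $tA$ produces a good minimal model $\phi\colon (X, tA) \dashrightarrow (Y_0, tA_{Y_0})$ (the Mori fiber space alternative is ruled out by pseudo-effectivity), with $K_{Y_0} + tA_{Y_0}$ semi-ample, inducing a contraction $\pi_0 \colon Y_0 \to Z$. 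By birational invariance of the numerical dimension and by abundance of the log canonical divisor on a good minimal model, $\dim Z = \kappa_{\iota}(K_{Y_0} + tA_{Y_0}) = \kappa_{\sigma}(X, K_X + tA)$; moreover $\dim Z < \dim X$ because $K_X + tA$ is not big.

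To obtain a Fano fibration, observe that $K_{Y_0} + tA_{Y_0} \sim_{\mathbb{R}, Z} 0$ gives $-K_{Y_0} \sim_{\mathbb{R}, Z} tA_{Y_0}$. Since $\phi$ is a birational contraction and $A$ is ample, $A_{Y_0}$ is big on $Y_0$; decomposing $A_{Y_0} \sim_{\mathbb{R}} H + E$ with $H$ ample and $E \geq 0$ and restricting to a general fiber $F$ of $\pi_0$ not contained in $\Supp E$ shows that $A_{Y_0}|_F$ is big, hence $-K_F$ is big and $K_{Y_0}$ is not pseudo-effective over $Z$. Applying Theorem \ref{thm--mmp-scaling} to $(Y_0, 0) \to Z$ with scaling of a suitable ample divisor yields a $K_{Y_0}$-MMP over $Z$ that terminates at a (non-$\mathbb{Q}$-factorial) Mori fiber space $Y \to W$ over $Z$ with $-K_Y$ ample over $W$ and $\rho(Y/W) = 1$. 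The composite $X \dashrightarrow Y_0 \dashrightarrow Y$ is a birational contraction.

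The main obstacle is identifying $W$ with $Z$. The relation $K_{Y_0} + tA_{Y_0} \sim_{\mathbb{R}, Z} 0$ is preserved throughout the MMP (each divisorial contraction pushes forward the relation, each flip is an isomorphism in codimension one), so $K_Y + tA_Y \sim_{\mathbb{R}, Z} 0$ on the final model, giving $tA_Y \sim_{\mathbb{R}, Z} -K_Y$; combined with $-K_Y$ ample over $W$, this forces $tA_Y$ to be ample over $W$. The coexistence of the relative triviality of $K_Y + tA_Y$ over $Z$ with the relative ampleness of $-K_Y$ over $W$ then forces $W \to Z$ to be finite (else a curve on $W$ contracted to a point of $Z$ would lift to a curve on $Y$ contradicting these two conditions simultaneously), so after replacing $Z$ by $W$ (which has the same dimension) we obtain the desired Fano fibration $Y \to Z$ with $\dim Z = \kappa_{\sigma}(X, K_X + tA)$. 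The crux is this last finiteness step, which turns on the interplay between the semi-ample structure of $K_{Y_0} + tA_{Y_0}$ on $Y_0$ and the relative ampleness of $-K_Y$ arising from the Mori fiber space structure of $Y \to W$.
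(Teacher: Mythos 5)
Your first two paragraphs are fine: producing the good minimal model $(Y_0,tA_{Y_0})$ of $(X,tA)$ via Theorem \ref{thm--lcample}, the identification $\dim Z=\kappa_{\sigma}(X,K_X+tA)$ from semi-ampleness, and the observation that $-K_{Y_0}\sim_{\mathbb{R},Z}tA_{Y_0}$ is big on the general fibre of $\pi_0$, so that $K_{Y_0}$ is not pseudo-effective over $Z$, are all correct. The gap is in the last step. Theorem \ref{thm--mmp-scaling} hands you a Mori fiber space $Y\to W$ \emph{over} $Z$, but the only information this gives is that $-K_Y$ is ample over $W$ and that $\dim W<\dim Y$; nothing forces $\dim W=\dim Z$. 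Your finiteness argument does not work: if $C\subset W$ is a curve contracted by $W\to Z$ and $\tilde C\subset Y$ dominates it, then $\tilde C$ is contracted by $Y\to Z$ but \emph{not} by $Y\to W$, so relative ampleness of $-K_Y$ over $W$ imposes no condition on $-K_Y\cdot\tilde C$, while $(K_Y+tA_Y)\cdot\tilde C=0$ merely says $-K_Y\cdot\tilde C=tA_Y\cdot\tilde C$, with $A_Y$ big but not nef. The two conditions coexist happily; for instance the very first $K$-negative extremal contraction over $Z$ could be a fibre-type contraction onto a $W$ with $\dim Z<\dim W<\dim Y$ (think of $\pi_0$ having fibres isomorphic to $\mathbb{P}^1\times\mathbb{P}^1$ and the MMP contracting one ruling). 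So you end up with $-K_Y$ ample over the wrong base, and ampleness over $W$ does not descend to ampleness over the smaller base $Z$.

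What is needed is relative ampleness of $-K_Y$ over $Z$ itself, and the way to get it is not a $K$-MMP to a Mori fiber space but the \emph{log canonical model over $Z$} of the perturbed pair. This is what the paper does: since $K_X+(1+\epsilon)A\sim_{\mathbb{R},Z}\epsilon A\sim_{\mathbb{R},Z}-\epsilon K_X$ over $Z$, a run of the $(K+(1+\epsilon)A)$-MMP over $Z$ followed by the contraction induced by its semi-ample log canonical divisor lands on a model $Y$ on which $K_Y+(1+\epsilon)A_Y\sim_{\mathbb{R},Z}-\epsilon K_Y$ is ample over $Z$, which is exactly the Fano fibration $Y\to Z$. (The paper also passes through a dlt blow-up of $(X,0)$ and checks that its exceptional divisor is contracted on the log canonical model of $(X,(1+\epsilon)A)$, so that the composite $X\dashrightarrow Y$ really is a birational contraction; you would need the analogous check.) If you want to keep your framework, replace the application of Theorem \ref{thm--mmp-scaling} by running a $(K_{Y_0}+(1+\epsilon)tA_{Y_0})$-MMP over $Z$ with $\epsilon$ small and taking the induced contraction over $Z$; the Mori-fiber-space route cannot be repaired as written.
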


\begin{proof}
By rescaling and replacing $A$, we may assume that $t=1$ and $(X,(1+\epsilon)A)$ is lc for some $\epsilon>0$. 
By Theorem \ref{thm--lcample}, $(X,A)$ and $(X,(1+\epsilon)A)$ have good minimal models. 
Let $(Y',E')$ be a dlt blow-up of $(X,0)$, and set $G'$ and $G'_{\epsilon}$ as the pullbacks of $A$ and $(1+\epsilon) A$, respectively. 
Note that $(1+\epsilon)G'=G'_{\epsilon}$. 
By Theorem \ref{thmtermi}, there is a sequence of steps of a $(K_{Y'}+E'+G')$-MMP to a good minimal model $(Y'',E''+G'')$, and there is a contraction $Y''\to Z$ induced by $K_{Y''}+E''+G''$. 
Then ${\rm dim}Z=\kappa_{\sigma} (X,K_X+A)<{\rm dim}Y''$. 
Let $G''_{\epsilon}$ be the birational transform of $G'_{\epsilon}$ on $Y''$. 
By choosing $\epsilon>0$ sufficiently small, we may assume that the map $Y'\dashrightarrow Y''$ is a sequence of steps of a $(K_{Y'}+E'+G'_{\epsilon})$-MMP and any $(K_{Y''}+E''+G''_{\epsilon})$-MMP is a $(K_{Y''}+E''+G''_{\epsilon})$-MMP  over $Z$. 
By Theorem \ref{thmtermi}, we can run a $(K_{Y''}+E''+G''_{\epsilon})$-MMP, which is also a $(K_{Y''}+E''+G''_{\epsilon})$ over $Z$, to a good minimal model $(Y''', E'''+G'''_{\epsilon})$. 
Since $K_{Y''}+E''+G''\sim_{\mathbb{R},Z}0$ and $G''_{\epsilon}=(1+\epsilon) G''$, we see that $-(K_{Y''}+E'')$ is big over $Z$ and the log MMP is a $(-(K_{Y''}+E''))$-MMP over $Z$. 
Let $Y'''\to Y$ be the contraction over $Z$ induced by $K_{Y'''}+E'''+G'''_{\epsilon}\sim_{\mathbb{R},Z}-\epsilon(K_{Y'''}+E''')$. 
Since $\epsilon$ is sufficiently small, by construction of $(Y',E'+G'_{\epsilon})$ and $Y'\dashrightarrow Y$, we see that $(Y,E+G_{\epsilon})$ is the log canonical model of $(X,(1+\epsilon) A)$, so $E'$ is contracted by $Y'\dashrightarrow Y$. 
Then the map $X\dashrightarrow Y$ is a birational contraction and $-K_{Y}$ is ample over $Z$ since $K_{Y}+G_{\epsilon}\sim_{\mathbb{R},Z}-\epsilon K_{Y}$. 
Therefore $Y\to Z$ is the desired contraction. 
\end{proof}

\section{Termination of log MMP with scaling}\label{sec6}

In this section, we prove Theorem \ref{thm--mmp-scaling}. 
In this section, we do not assume log minimal models and Mori fiber spaces to be $\mathbb{Q}$-factorial. 
We recall basic facts on non-$\mathbb{Q}$-factorial log MMP. 
For definition of non-$\mathbb{Q}$-factorial log MMP, see \cite[4.9]{fujino-book}. 

\begin{rem}\label{remmmp}
Let $(X_{1},\Delta_{1})$ be an lc pair, and let $X_{1}\to Z$ be a projective morphism of  normal quasi-projective varieties. 
Let
\begin{equation*}
(X_{1},\Delta_{1})\dashrightarrow (X_{2},\Delta_{2})\dashrightarrow \cdots \dashrightarrow (X_{i},\Delta_{i})\dashrightarrow \cdots
\end{equation*}
be a sequence of steps of a $(K_{X_{1}}+\Delta_{1})$-MMP over $Z$. 
\begin{enumerate}
\item[(1)]
The map $X_{i}\dashrightarrow X_{i+1}$ is a birational contraction for any $i\geq 1$. 
\item[(2)]
By the cone theorem, for any $i$ and any $\mathbb{Q}$-Cartier divisor $D_{i}$ on $X_{i}$, the birational transform $D_{i+1}$ on $X_{i+1}$ is $\mathbb{Q}$-Cartier. 
\item[(3)]
There is a diagram
\begin{equation*}
\xymatrix
{
(Y_{1},\Gamma_{1})\ar[d]_{f_{1}}\ar@{-->}[r] &\cdots\ar@{-->}[r]&(Y_{k_{i}},\Gamma_{k_{i}})\ar[d]_{f_{i}}\ar@{-->}[r] &\cdots\\
(X_{1},\Delta_{1})\ar@{-->}[r] &\cdots\ar@{-->}[r]&(X_{i},\Delta_{i})\ar@{-->}[r] &\cdots
}
\end{equation*}
where $f_{i}$ are dlt blow-ups and upper horizontal sequence of birational maps is a sequence of steps of a $(K_{Y_{1}}+\Gamma_{1})$-MMP over $Z$ (see also \cite[Section 2]{bh}).
\end{enumerate}
\end{rem}

First, we prove a weaker result than Theorem \ref{thm--mmp-scaling}. 

\begin{prop}\label{prop--mmplc}
Let $\pi\colon X\to Z$ be a projective morphism of normal quasi-projective varieties, and let $(X,B)$ be an lc pair. 
Suppose that $(X,B)$ has a log minimal model over $Z$ or $K_{X}+B$ is not pseudo-effective over $Z$.  
  
Then there is a sequence of birational contractions 
\begin{equation*}
(X,B)=(X_{1},B_{1})\dashrightarrow (X_{2},B_{2})\dashrightarrow \cdots \dashrightarrow (X_{l},B_{l})
\end{equation*}
of a non-$\mathbb{Q}$-factorial $(K_{X}+B)$-MMP over $Z$ that terminates. 
\end{prop}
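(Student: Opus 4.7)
The plan is to reduce termination to a scaling MMP upstairs on a dlt blow-up, where Theorem \ref{thmtermi} applies, and then descend the upstairs sequence to a non-$\mathbb{Q}$-factorial MMP on $X$.

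First I would take a dlt blow-up $f\colon(Y,\Gamma)\to(X,B)$. From $K_{Y}+\Gamma=f^{*}(K_{X}+B)+E$ with $E\ge 0$ and $f$-exceptional, pushforward shows that $K_{Y}+\Gamma$ is pseudo-effective over $Z$ iff $K_{X}+B$ is, and Lemma \ref{lembirequiv} transfers the log minimal model hypothesis. So $(Y,\Gamma)$ satisfies the same disjunction. Fix an ample $\mathbb{R}$-divisor $A$ on $Y$ such that $(Y,\Gamma+A)$ is lc and $K_{Y}+\Gamma+A$ is nef over $Z$, and run a $(K_{Y}+\Gamma)$-MMP over $Z$ with scaling of $A$. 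In the log-minimal-model case, termination follows from Theorem \ref{thmtermi}, noting that $(Y,0)$ is klt since $Y$ is $\mathbb{Q}$-factorial dlt. In the not-pseudo-effective case, the scaling thresholds $\lambda_{i}$ form a decreasing sequence with limit $\lambda_{\infty}$: if $\lambda_{\infty}>0$, then the tail of the MMP is an infinite $(K_{Y}+\Gamma+\tfrac{1}{2}\lambda_{\infty}A)$-MMP with scaling, and Theorem \ref{thm--lcample} provides a log minimal model for this pair, contradicting Theorem \ref{thmtermi}; if $\lambda_{\infty}=0$, the relation $K_{Y}+\Gamma+\lambda_{i}A_{i}$ nef over $Z$ realizes $K_{Y}+\Gamma$ as a limit of relatively nef divisors, contradicting non-pseudo-effectiveness. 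Either way the upstairs MMP terminates at some $(Y_{L},\Gamma_{L})$.

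Next I would descend this terminating sequence to a non-$\mathbb{Q}$-factorial $(K_{X}+B)$-MMP on $X$. Inductively maintain a dlt blow-up $f_{j}\colon (Y_{k_{j}},\Gamma_{k_{j}})\to (X_{j},B_{j})$, starting from $k_{1}=1$ and $f_{1}=f$. For each upstairs step $(Y_{i},\Gamma_{i})\dashrightarrow(Y_{i+1},\Gamma_{i+1})$, two cases arise: if the contracted divisor or flipping locus lies over the exceptional locus of $f_{j}$, leave $X_{j}$ unchanged and merely update $f_{j}$ (taking a further dlt blow-up if necessary); otherwise, the extremal ray pushes forward to a $(K_{X_{j}}+B_{j})$-negative extremal ray on $X_{j}$, and we perform the corresponding non-$\mathbb{Q}$-factorial divisorial contraction or lc flip on $X_{j}$, using existence of non-$\mathbb{Q}$-factorial lc flips from \cite{fujino-fund}. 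Because the upstairs MMP has finitely many steps, so does the induced downstairs sequence.

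The main obstacle will be the descent step: one must verify that each $\mathbb{Q}$-factorial step on $Y_{i}$ either preserves $X_{j}$ (when it is purely $f_{j}$-exceptional) or induces a genuine $(K_{X_{j}}+B_{j})$-negative extremal contraction on $X_{j}$, and that the dlt blow-up structure can be consistently updated. This is essentially the reverse of the construction in Remark \ref{remmmp}(3), and the delicate bookkeeping required to sort the upstairs steps into the two cases above---together with ensuring the required non-$\mathbb{Q}$-factorial lc flips exist at each stage---is where the technical work lies.
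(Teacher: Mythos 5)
Your first paragraph (termination of the $\mathbb{Q}$-factorial MMP with scaling on a dlt blow-up $(Y,\Gamma)$, using Theorem \ref{thmtermi} and Theorem \ref{thm--lcample}) is fine, but the proposal breaks down at the descent step, and this is not a bookkeeping issue that can be patched: it is the part of the argument that fails. The pushforward under $f_{j}$ of a $(K_{Y_{i}}+\Gamma_{i})$-negative extremal ray is in general \emph{not} an extremal ray of $\overline{NE}(X_{j}/Z)$ --- it is merely a ray lying in the $(K_{X_{j}}+B_{j})$-negative part of the cone, and there need be no contraction morphism of $X_{j}$ associated to it. Remark \ref{remmmp}(3) only constructs an upstairs MMP lying over a \emph{given} downstairs MMP; reversing the direction is precisely what one cannot do. Moreover, in the case you set aside as "purely $f_{j}$-exceptional," after finitely many such steps the map $Y_{k_{j}}\dashrightarrow Y_{i}$ is no longer a morphism to $X_{j}$ in general, so "updating $f_{j}$ by a further dlt blow-up" does not keep the inductive structure alive. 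So your outline does not produce a non-$\mathbb{Q}$-factorial $(K_{X}+B)$-MMP on $X$, which is what the proposition asserts.

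The paper's proof avoids descent entirely: it runs the MMP directly on $X$ (non-$\mathbb{Q}$-factorial extremal contractions and lc flips exist by the cone and contraction theorems and the existence of lc flips), and the whole difficulty is moved into forcing termination. This is done in two stages. First, a dlt blow-up is used only as a counting device: each divisorial contraction, and each step where $\rho(X_{i}/Z)$ jumps, strictly decreases a Picard number attached to the dlt models, so after finitely many replacements one may assume every $(K_{X}+B)$-MMP over $Z$ consists of flips inducing isomorphisms $N^{1}(X_{i}/Z)_{\mathbb{R}}\simeq N^{1}(X_{i+1}/Z)_{\mathbb{R}}$. Second, one scales by $A=\sum_{k}\alpha_{k}A^{(k)}$ with the $\alpha_{k}$ linearly independent over the field $\mathbb{Q}(r_{1},\dots,r_{m})$ generated by the coefficients of $B$; an equality $\lambda_{i}=\lambda_{i+1}$ of consecutive scaling thresholds would force two distinct extremal rays to be proportional in $N_{1}(X_{i+1}/Z)_{\mathbb{R}}$, a contradiction, so the $\lambda_{i}$ are strictly decreasing. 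An infinite sequence would then give $\lambda=\lim\lambda_{i}$ with $\lambda\neq\lambda_{i}$ for all $i$ and a log minimal model of $(X,B+\lambda A)$ over $Z$ (by Theorem \ref{thm--lcample} or the hypothesis), contradicting \cite[Theorem 4.1 (iii)]{birkar-flip}. If you want to salvage your approach you would need to supply a genuine descent mechanism, which is not available; otherwise the argument should be restructured along the lines above.
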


\begin{proof}
For any variety $V$ with a projective morphsim $V\to Z$, we denote the $\mathbb{R}$-vector space of relative numerical classes of $\mathbb{R}$-Cartier divisors by $N^{1}(V/Z)_{\mathbb{R}}$, in other words, we set $N^{1}(V/Z)_{\mathbb{R}}:={\rm Pic}(V)_{\mathbb{R}}/\equiv_{Z}$. 
We denote the relative Picard number ${\rm dim}N^{1}(V/Z)_{\mathbb{R}}$ by $\rho(V/Z)$. 

Fix a dlt blow-up $(Y,\Gamma)\to (X,B)$. 
Suppose that there exists a sequence of steps of a $(K_{X}+B)$-MMP $(X,B)\dashrightarrow \cdots \dashrightarrow(X_{i},B_{i})$ over $Z$ such that $X_{i-1}\dashrightarrow X_{i}$ contracts a divisor. 
Then we can construct a diagram
\begin{equation*}
\xymatrix
{
(Y,\Gamma)\ar[d]\ar@{-->}[r] &\cdots\ar@{-->}[r]&(Y_{k_{i-1}},\Gamma_{k_{i-1}})\ar[d]\ar@{-->}[r] &(Y_{k_{i}},\Gamma_{k_{i}})\ar[d]\\
(X,B)\ar@{-->}[r] &\cdots\ar@{-->}[r]&(X_{i-1},B_{i-1})\ar@{-->}[r] &(X_{i},B_{i})
}
\end{equation*}
as in Remark \ref{remmmp} (3). 
Then the log MMP $(Y_{k_{i-1}},\Gamma_{k_{i-1}})\dashrightarrow(Y_{k_{i}},\Gamma_{k_{i}})$ contracts a divisor. 
Therefore, we have $\rho(Y/Z)\geq \rho(Y_{k_{i-1}}/Z)>\rho(Y_{k_{i}}/Z)$. 
We replace $(X,B)$ and $(Y,\Gamma)$ by $(X_{i},B_{i})$ and $(Y_{k_{i}},\Gamma_{k_{i}})$ respectively, and repeat the above argument. 
Since the relative Picard number $\rho(Y/Z)$ strictly decreases, the process eventually stops. 
Thus, replacing $(X,B)$ we may assume that any $(K_{X}+B)$-MMP 
$(X,B)\dashrightarrow \cdots \dashrightarrow(X_{i},B_{i})\dashrightarrow \cdots$ over $Z$ contains only small birational maps, in other words, the varieties $X_{i}$ and $X_{i+1}$ are isomorphic in codimension one. 

By Remark \ref{remmmp} (2) and the negativity lemma, all steps $\phi_{i}\colon (X_{i},B_{i})\dashrightarrow (X_{i+1},B_{i+1})$ of any $(K_{X}+B)$-MMP over $Z$ induce linear maps $\phi_{i*}\colon N^{1}(X_{i}/Z)_{\mathbb{R}}\to N^{1}(X_{i+1}/Z)_{\mathbb{R}}$, and all $\phi_{i*}$ are injective. 
Suppose that there is a sequence of steps of a $(K_{X}+B)$-MMP $(X,B)\dashrightarrow \cdots \dashrightarrow(X_{i},B_{i})\dashrightarrow \cdots$ over $Z$ such that $\rho(X/Z)<\rho(X_{i}/Z)$ for some $i$. 
Then we replace $(X,B)$ with $(X_{i},B_{i})$ for the $i$. 
With notations as in the diagram in the previous paragraph, we can check that $\rho(Y/Z)-\rho(X/Z)$ strictly decreases after replacing $(X,B)$. 
So this process eventually stops. 
Replacing $(X,B)$, we may assume $\rho(X_{i}/Z)=\rho(X_{i+1}/Z)$ for all steps $\phi_{i}\colon(X_{i},B_{i})\dashrightarrow (X_{i+1},B_{i+1})$ of any $(K_{X}+B)$-MMP over $Z$.  
Then the linear map $\phi_{i*}\colon N^{1}(X_{i}/Z)_{\mathbb{R}}\to N^{1}(X_{i+1}/Z)_{\mathbb{R}}$ is an isomorphism. 

By the standard argument of convex geometry, we can find positive real numbers $r_{1},\cdots,r_{m}$  and $\mathbb{Q}$-divisors $B^{(1)},\cdots, B^{(m)}$ on $X$ such that $\sum_{j=1}^{m}r_{j}=1$, all $(X,B^{(j)})$ are lc and $B=\sum_{j=1}^{m}r_{j}B^{(j)}$. 
We put $d=\rho(X/Z)$. 
Fix positive real numbers $\alpha_{1},\cdots,\alpha_{d}$ such that $ \alpha_{1},\cdots, \alpha_{d}$ are linearly independent over $\mathbb{Q}(r_{1},\cdots, r_{m})$, where $\mathbb{Q}(r_{1},\cdots, r_{m})$ is the field over $\mathbb{Q}$ generated by $r_{1},\cdots, r_{m}$. 
We pick sufficiently ample $\mathbb{Q}$-divisors $A^{(1)},\cdots ,A^{(d)}$ so that they form a basis of $N^{1}(X/Z)_{\mathbb{R}}$, and we put $A=\sum_{k=1}^{d}\alpha_{k}A^{(k)}$. 
Since $A^{(1)},\cdots ,A^{(d)}$ are sufficiently ample, we may assume $K_{X}+B+A$ is nef over $Z$. 

We run a $(K_{X}+B)$-MMP over $Z$ with scaling of $A$
\begin{equation*}
(X,B)=(X_{1},B_{1})\dashrightarrow \cdots \dashrightarrow(X_{i},B_{i})\dashrightarrow \cdots
\end{equation*}
and set $\lambda_{i}={\rm inf}\!\set{\!\mu \in \mathbb{R}_{\geq0} | \text{$K_{X_{i}}+B_{i}+\mu A_{i}$ is nef over $Z$}\!}$, where $A_{i}$ is the birational transform of $A$ on $X_{i}$. 
We show that $\lambda_{i}>\lambda_{i+1}$ for any $i\geq1$. 
Suppose by contradiction that $\lambda_{i}=\lambda_{i+1}$ for an $i$. 
Let $X_{i}\to V_{i}$ and $X_{i+1}\to V_{i+1}$ be the extremal contractions of the log MMP. 
Then $X_{i+1}$ is not isomorphic to $V_{i}$ because $\rho(X_{i}/Z)=\rho(X_{i+1}/Z)$ by the argument of the third paragraph. 
So there is a curve $\xi$ on $X_{i+1}$ contracted by $X_{i+1}\to V_{i}$. 
Pick any curve $\xi'$ on $X_{i+1}$ contracted by $X_{i+1}\to V_{i+1}$. 
By Remark \ref{remmmp} (2), all $K_{X_{i+1}}+B_{i+1}^{(j)}$ are $\mathbb{Q}$-Cartier. 
Since $K_{X}+B=\sum_{j=1}^{m}r_{j}(K_{X}+B^{(j)})$, the intersection numbers $(K_{X_{i+1}}+B_{i+1})\cdot \xi$ and $(K_{X_{i+1}}+B_{i+1})\cdot \xi'$ are elements of $\mathbb{Q}(r_{1},\,\cdots, r_{m})$. 
Furthermore, we have $A_{i+1}=\sum_{k=1}^{d}\alpha_{k}A_{i+1}^{(k)}$, where $A_{i+1}^{(k)}$ is the birational transform of $A^{(k)}$ on $X_{i+1}$, and $\{A_{i+1}^{(k)}\}_{k}$ form a basis of $N^{1}(X_{i+1}/Z)_{\mathbb{R}}$. 
Since $A^{(k)}$ are $\mathbb{Q}$-divisors, by Remark \ref{remmmp} (2), the intersection numbers $(A_{i+1}^{(k)}\cdot \xi)$ and $(A_{i+1}^{(k)}\cdot \xi')$ are rational numbers for any $1\leq k\leq d$. 
By the definition of $\lambda_{i}$ and $\lambda_{i+1}$, we have 
\begin{equation*}
\begin{split}
&(K_{X_{i+1}}+B_{i+1}+\lambda_{i}A_{i+1})\cdot \xi=0\quad {\rm and}\quad (K_{X_{i+1}}+B_{i+1}+\lambda_{i+1}A_{i+1})\cdot \xi'=0. 
\end{split}
\end{equation*}
Since we assume $\lambda_{i}=\lambda_{i+1}$ and since $(K_{X_{i+1}}+B_{i+1})\cdot \xi'<0$, 
we have 
\begin{equation*}
\frac{(A_{i+1}\cdot \xi)}{(A_{i+1}\cdot \xi')}=\frac{(K_{X_{i+1}}+B_{i+1})\cdot \xi}{(K_{X_{i+1}}+B_{i+1})\cdot \xi'}.
\end{equation*}
We put $\beta=\frac{(K_{X_{i+1}}+B_{i+1})\cdot \xi}{(K_{X_{i+1}}+B_{i+1})\cdot \xi'}\in \mathbb{Q}(r_{1},\cdots, r_{m})$. 
Then $A_{i+1}^{(k)}\cdot(\xi-\beta \xi')$ is an element of $\mathbb{Q}(r_{1}, \cdots, r_{m})$ for any $k$ and 
$\sum_{k=1}^{d}\alpha_{k}A_{i+1}^{(k)}\cdot(\xi-\beta \xi')=0.$
Since $ \alpha_{1},\cdots, \alpha_{d}$ are linearly independent over $\mathbb{Q}(r_{1},\cdots, r_{m})$, we have $A_{i+1}^{(k)}\cdot(\xi-\beta \xi')=0$ for any $1\leq k\leq d$. 
Since $A_{i+1}^{(k)}$ are the basis of $N^{1}(X_{i+1}/Z)_{\mathbb{R}}$, we see that $\xi-\beta \xi'=0$ in $N_{1}(X_{i+1}/Z)_{\mathbb{R}}$. 
So $\xi$ and $\xi'$ generate the same half line in $N_{1}(X_{i+1}/Z)_{\mathbb{R}}$. 
But it is impossible because $\xi$ is contracted by $X_{i+1}\to V_{i}$ and $\xi'$ is contracted by $X_{i+1}\to V_{i+1}$. 
Hence we have $\lambda_{i}>\lambda_{i+1}$. 

Finally, we prove that the $(K_{X}+B)$-MMP over $Z$ with scaling of $A$ must terminate. 
If it does not terminate, set $\lambda={\rm lim}_{i\to \infty}\lambda_{i}$. 
Then $\lambda\neq \lambda_{i}$ for any $i$ by the above argument, and the divisor $K_{X}+B+\lambda A$ is pseudo-effective over $Z$. 
Moreover, the pair $(X,B+\lambda A)$ has a log minimal model over $Z$ by Theorem \ref{thm--lcample} or hypothesis of Proposition \ref{prop--mmplc}. 
But it contradicts \cite[Theorem 4.1 (iii)]{birkar-flip}. 
So the log MMP terminates. 
\end{proof}

From now on, we prove Theorem \ref{thm--mmp-scaling}. 

\begin{proof}[Proof of Theorem \ref{thm--mmp-scaling}]
The strategy is the same as \cite[Proof of Lemma 2.14]{has-mmp}. 
In this proof, for any $\mathbb{R}$-divisor $D$ on $X$, any $i\in \mathbb{Z}_{>0}$ and any birational contraction $X\dashrightarrow X_{i}$, $D_{i}$ denotes the birational transform of $D$ on $X_{i}$. 

Put $(X,B)=(X_{1},B_{1})$ and set 
\begin{equation*}
\lambda_{1}={\rm inf}\!\set{\!\mu\in\mathbb{R}_{\geq 0}| \text{$K_{X_{1}}+B_{1}+\mu A_{1}$ is nef over $Z$}\!}.
\end{equation*}
If $\lambda_{1}=0$, there is nothing to prove. 
If $\lambda_{1}>0$, by the argument of length of extremal rays, we can find $0\leq \lambda'_{1}<\lambda_{1}$ such that the birational transform of $K_{X_{1}}+B_{1}+\lambda_{1}A_{1}$ is trivial over extremal contraction in each step of any $(K_{X_{1}}+B_{1}+\lambda'_{1}A_{1})$-MMP over $Z$ (cf.~\cite[Proposition 3.2 (5)]{birkar-existII} or \cite[Lemma 2.12]{has-mmp}). 
We note that non-$\mathbb{Q}$-factorial lc case of \cite[Proposition 3.2 (5)]{birkar-existII} and \cite[Lemma 2.12]{has-mmp} hold because we can apply results in \cite[Section 18]{fujino-fund}. 
From this, it follows that any $(K_{X_{1}}+B_{1}+\lambda'_{1}A_{1})$-MMP over $Z$ is also a $(K_{X_{1}}+B_{1})$-MMP over $Z$ with scaling of $A_{1}$. 
By Proposition \ref{prop--mmplc}, there exists a sequence of steps of the $(K_{X_{1}}+B_{1}+\lambda'_{1}A)$-MMP over $Z$
\begin{equation*}
(X_{1},B_{1}+\lambda'_{1}A_{1})\dashrightarrow (X_{2},B_{2}+\lambda'_{1}A_{2})\dashrightarrow \cdots \dashrightarrow (X_{k_{1}},B_{k_{1}}+\lambda'_{1}A_{k_{1}})
\end{equation*}
to a log minimal model or a Mori fiber space $(X_{k_{1}},B_{k_{1}}+\lambda'_{1}A_{k_{1}})$ over $Z$. 
Note that this sequence of birational maps is a sequence of step of a $(K_{X_{1}}+B_{1})$-MMP over $Z$ with scaling of $A_{1}$. 
If $(X_{k_{1}},B_{k_{1}}+\lambda'_{1}A_{k_{1}})$ is a Mori fiber space over $Z$, then we complete the proof because $(X_{k_{1}},B_{k_{1}})$ is also a Mori fiber space over $Z$ by construction of the above log MMP. 
If $(X_{k_{1}},B_{k_{1}}+\lambda'_{1}A_{k_{1}})$ is a log minimal model over $Z$, we set 
\begin{equation*}
\lambda_{2}={\rm inf}\!\set{\!\mu\in\mathbb{R}_{\geq 0} | \text{$K_{X_{k_{1}}}+B_{k_{1}}+\mu A_{k_{1}}$ is nef over $Z$}\!}.
\end{equation*}
Then $\lambda_{2}\leq \lambda'_{1}<\lambda_{1}$ by construction. 
If $\lambda_{2}=0$, we stop the discussion. 
If $\lambda_{2}>0$, then there is $0\leq \lambda'_{2}<\lambda_{2}$ and a sequence of steps of the $(K_{X_{k_{1}}}+B_{k_{1}}+\lambda'_{2}A_{k_{1}})$-MMP over $Z$
\begin{equation*}
(X_{k_{1}},B_{k_{1}}+\lambda'_{2}A_{k_{1}})\dashrightarrow (X_{k_{2}},B_{k_{2}}+\lambda'_{2}A_{k_{2}})
\end{equation*}
to a log minimal model or a Mori fiber space $(X_{k_{2}},B_{k_{2}}+\lambda'_{2}A_{k_{2}})$ over $Z$ such that $K_{X_{k_{1}}}+B_{k_{1}}+\lambda_{2} A_{k_{1}}$ is trivial over each extremal contraction of the log MMP. 
Then the map $X_{1}\dashrightarrow X_{k_{2}}$ is a sequence of steps of the $(K_{X_{1}}+B_{1})$-MMP over $Z$ with scaling of $A_{1}$. 
By repeating this discussion, we obtain a sequence of steps of a $(K_{X_{1}}+B_{1})$-MMP over $Z$ with scaling of $A_{1}$
\begin{equation*}
(X_{1},B_{1})\dashrightarrow \cdots \dashrightarrow (X_{i},B_{i})\dashrightarrow \cdots.
\end{equation*}
We show that this log MMP terminates.
Suppose that this log MMP does not terminate. 
Set $\tilde{\lambda}_{i}={\rm inf}\{\mu\in \mathbb{R}_{\geq0}|K_{X_{i}}+B_{i}+\mu A_{i}{\rm \; is\;nef\; over\;}Z\}$
and $\lambda:={\rm lim}_{i\to \infty}\tilde{\lambda}_{i}$. 
By construction, $\lambda\neq \tilde{\lambda}_{j}$ for any $j$, and the divisor $K_{X}+B+\lambda A$ is pseudo-effective over $Z$. 
Furthermore, the pair $(X,B+\lambda A)$ has a log minimal model over $Z$ by Theorem \ref{thm--lcample} or hypothesis of Theorem \ref{thm--mmp-scaling}. 
But it contradicts \cite[Theorem 4.1 (iii)]{birkar-flip}. 
So the log MMP terminates. 
\end{proof}


\end{document}